\newcommand{\cal}[1]{\mathcal{#1}}
\theoremstyle{plain}
\newtheorem{theorem}{Theorem}
\newtheorem*{coro}{Corollary}
\newtheorem{lemma}{Lemma}[section]
\newtheorem{theo}[lemma]{Theorem}
\newtheorem{proposition}[lemma]{Proposition}
\newtheorem{corollary}[lemma]{Corollary}
\let\egthree=\phi
\let\phi=\varphi
\let\varphi=\egthree
\begin{document}
\title{Geometry of the mapping class groups II: \\
A biautomatic structure}
\author{Ursula Hamenst\"adt}
\thanks
{AMS subject classification: 20F65}
\date{November 30, 2009}

\begin{abstract}
We show that
the mapping class group ${\cal M\cal C\cal G}(S)$ of an
oriented surface $S$ of genus $g$ with $m$
punctures and $3g-3+m\geq 2$
admits a 
biautomatic structure.
We also show that various subgroups of ${\cal M\cal C\cal G}(S)$
are undistorted.
\end{abstract}

\maketitle

\tableofcontents

\section{Introduction}

Let $\Gamma$ be a finitely generated group and let
${\cal G}$ be a finite symmetric set of generators
for $\Gamma$.
Then ${\cal G}$ defines a \emph{word norm}
$\vert \,\vert$ on $\Gamma$ by assigning to
an element $g\in \Gamma$ the smallest length
$\vert g\vert$ of a word in the generating
set ${\cal G}$ which represents $g$.
Any two such word norms $\vert\,\vert,\vert\,\vert_0$
are \emph{equivalent}, i.e. there is a constant
$L>1$ (depending on the generating sets defining
the norms) such that
$\vert g\vert/L\leq \vert g\vert_0\leq
L\vert g\vert$ for all $g\in \Gamma$.
This means that the \emph{word metrics} $d,d_0$ on 
$\Gamma$ defined by $d(g,h)=\vert g^{-1}h\vert$
and $d_0(g,h)=\vert g^{-1}h\vert_0$ are 
bilipschitz equivalent. The action of
$\Gamma$ on itself by left translation preserves
a word metric.

Let $\Gamma$ be a finitely generated group equipped with 
a word metric $d$.
An \emph{automatic
structure} for 
$\Gamma$ consists of a finite \emph{alphabet} $A$,
a (not necessarily injective) map $\pi:A\to \Gamma$
and a \emph{regular language} $L$ in 
$A$ with the following properties (Theorem 2.3.5 of \cite{E92}).
\begin{enumerate}
\item
The set $\pi(A)$ generates $\Gamma$ as a semi-group.
\item 
Via concatenation,
every word $w$ in the alphabet $A$ is mapped
to a word $\pi(w)$ in the generators $\pi(A)$ of $\Gamma$.
 The restriction of the map $\pi$ to
the set of all words from the language $L$ maps
$L$ onto $\Gamma$.
\item There is a number $\kappa >0$ with the following
property. For all $x\in A$ and each word
$w\in L$ of length $k\geq 0$, the word
$wx$ defines
a path $s_{wx}:[0,k+1]\to\Gamma$ connecting the unit
element to $\pi(wx)$. 
Since $\pi(L)=\Gamma$, there is a word 
$w^\prime\in L$ of length $\ell >0$
with $\pi(w^\prime)=\pi(wx)$.
Let $s_{w^\prime}:[0,\ell]\to \Gamma$ be the corresponding
path in $\Gamma$. Then 
$d(s_{wx}(i),s_{w^\prime}(i))\leq \kappa$ 
for 
every $i\leq \min\{k+1,\ell\}$. 
\end{enumerate}

A \emph{biautomatic structure} for the group $\Gamma$
is an automatic structure $(A,L)$ with the following
additional property.
The alphabet $A$ 
admits an inversion $\iota$ with 
$\pi(\iota a)=\pi(a)^{-1}$ for all $a$, and 
\[d(\pi(x)s_w(i),s_{w^\prime}(i))\leq \kappa\]
for all $w\in L$ all $x\in A$, for 
any $w^\prime\in L$ with 
$\pi(w^\prime)=\pi(xw)$ and all $i$
(Lemma 2.5.5 of \cite{E92}).

Examples of groups which admit a biautomatic structure
are word hyperbolic groups \cite{E92} and groups which 
admit a proper cocompact action on a ${\rm CAT}(0)$ 
cubical complex (Corollary 8.1 of \cite{S06}).
The existence of a biautomatic structure for a group $\Gamma$
has strong implications for the structure of $\Gamma$.
For example, the word problem is solvable in linear
time, and solvable subgroups are virtually abelian
\cite{E92,BH99}.

Now let $S$ be an oriented surface of finite type, i.e. $S$ is a
closed surface of genus $g\geq 0$ from which $m\geq 0$
points, so-called \emph{punctures},
have been deleted. We assume that $3g-3+m\geq 2$,
i.e. that $S$ is not a sphere with at most $4$
punctures or a torus with at most $1$ puncture.
We then call the surface $S$ \emph{non-exceptional}.
The \emph{mapping
class group} ${\cal M\cal C\cal G}(S)$ of all isotopy classes of
orientation preserving self-homeomorphisms of $S$ is finitely
generated.
We refer to the
survey of Ivanov \cite{I02} for the basic properties
of the mapping class group and for references.
Mosher \cite{M95}
showed that the mapping class group ${\cal M\cal C\cal G}(S)$
admits an automatic structure.
The main goal of this paper is to strengthen this result and to show

\begin{theorem}\label{thm1} 
The mapping class group of a non-exceptional
surface of finite type admits a biautomatic structure.
\end{theorem}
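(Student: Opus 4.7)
The plan is to refine Mosher's automatic structure into a biautomatic one. Mosher's structure is built from splitting sequences of complete train tracks: fixing a basepoint train track $\tau_0$, each $g\in \mathcal{MCG}(S)$ is represented by a splitting sequence carrying $\tau_0$ to a train track equivalent to $g\tau_0$. Concatenation of such sequences realizes the right fellow-traveller condition essentially by construction; the entire burden of the theorem is the additional \emph{left} fellow-traveller property $d(\pi(x)s_w(i),s_{w'}(i))\leq \kappa$.

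The first step is to import the Masur--Minsky subsurface projection machinery into the train-track setting. For every essential subsurface $Y\subsetneq S$ (annular and non-annular) one obtains a coarsely Lipschitz projection $\pi_Y$ from complete train tracks into the curve graph $\mathcal{C}(Y)$. The distance formula identifies the word metric on $\mathcal{MCG}(S)$, up to multiplicative and additive constants, with a thresholded sum $\sum_Y [d_Y(\tau_0,g\tau_0)]_k$. This translates the fellow-traveller question from the word metric into a question about subsurface projections along splitting sequences.

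The next step, which I expect to be the technical heart of the argument, is a \emph{stability} statement for splitting sequences: if $\sigma,\sigma'$ are splitting sequences starting at $\tau_0$ whose terminal train tracks have uniformly bounded subsurface projection distance in every $Y$, then $\sigma$ and $\sigma'$ fellow-travel at uniformly bounded word distance. The key ingredient should be an \emph{active-interval} principle: for each $Y$ the quantity $d_Y(\tau_0,\sigma(t))$ is essentially monotone and changes only during a contiguous sub-interval of $\sigma$, outside of which it is bounded. This is a train-track analogue of the Masur--Minsky hierarchy theorem, and combining it with the distance formula yields a synchronization of $\sigma$ and $\sigma'$: at each time one locates the currently active subsurface and uses that the two paths agree coarsely on the relevant projection.

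With stability in place, biautomaticity follows almost formally. Given a generator $x$ and a normal form $w$ for $g$, the translate $x\cdot s_w$ is a splitting path from $x\tau_0$ to $xg\tau_0$, while the normal form $w'$ for $xg$ is a splitting path from $\tau_0$ to $xg\tau_0$. After paying a uniformly bounded cost to pass from $\tau_0$ to $x\tau_0$, both paths terminate at points that agree in every subsurface projection up to a constant depending only on $x$, hence by stability they fellow-travel at bounded distance, which is the required bound. The main obstacle is therefore the stability/active-interval analysis for splitting sequences; everything else is a bookkeeping reduction from the word metric to subsurface projections via the distance formula.
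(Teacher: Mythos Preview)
Your outline identifies the correct difficulty (the left fellow-traveller property) and proposes a plausible-sounding route via subsurface projections, but there is a genuine gap at the synchronization step, and the paper's approach is substantially different.

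The gap is the parametrization problem, which your stability statement suppresses. Two splitting sequences with the same endpoints need not have comparable lengths: if $c$ is a curve embedded as a twist connector in $\tau$, a single split realizes the Dehn twist $\theta_c$, while a shift-equivalent train track may need two splits per twist; iterating, the lengths of splitting sequences to $\theta_c^k\tau$ diverge linearly in $k$ even though the endpoints coincide. So ``$\sigma$ and $\sigma'$ fellow-travel'' is meaningless until you specify a reparametrization, and the obvious arclength parametrization fails. Your active-interval principle, even if established, gives only Hausdorff closeness of the images, not a synchronous fellow-traveller bound; extracting the latter from the distance formula requires a canonical clock on splitting sequences, which you have not supplied. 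A second, smaller issue: your stability statement is stated for a common starting point $\tau_0$, but the left condition compares $x\cdot s_w$ (based at $x\tau_0$) with $s_{w'}$ (based at $\tau_0$), and prepending a short arc from $\tau_0$ to $x\tau_0$ does not produce a splitting sequence, so the stability hypothesis does not apply.

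The paper does not use subsurface projections or the distance formula at all. Instead it works entirely inside the train track complex and solves the parametrization problem directly by introducing \emph{balanced splitting paths}: one groups splits into $\eta$-\emph{moves}, where each move performs simultaneously all the splits along every maximal reduced $\eta$-path or $\eta$-circle in the current train track. This accelerated parametrization is the canonical clock; the bulk of Section~6 is a sequence of lemmas showing that balanced paths are weight-$L$ fellow travellers under change of endpoint (Lemma~6.5), change of starting point within a cone (Lemma~6.6), a single shift (Lemma~6.8), carrying (Lemma~6.9), and finally arbitrary bounded perturbation of both endpoints (Theorem~6.4). Section~5 supplies the coarse-projection machinery (Proposition~5.10) replacing your appeal to Masur--Minsky, and Section~7 packages the resulting bicombing as a regular path system in the sense of \'Swi\k{a}tkowski, from which biautomaticity follows formally. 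Your projection-based strategy could likely be made to work (and is close in spirit to later hierarchical-hyperbolicity arguments), but it would still need an explicit synchronization device comparable to the $\eta$-move; that is precisely the content you have deferred.
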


Most of the known consequences of the existence of 
a biautomatic structure are well known for mapping class
groups. Perhaps
the only improvement of 
known results is a strengthening 
of a result of Hemion \cite{He79} who showed 
that the conjugacy problem in ${\cal M\cal C\cal G}(S)$ 
is solvable. From a biautomatic structure
we obtain a uniform
exponential bound on the length of a conjugating
element \cite{BH99}.

\begin{coro}\label{cor1}
Let ${\cal G}$ be a finite symmetric set of generators
of ${\cal M\cal C\cal G}(S)$ and let ${\cal F}({\cal G})$ be
the free group generated by ${\cal G}$.
There is a constant $\mu>0$ such that
words $u,v\in {\cal F}({\cal G})$ represent
conjugate elements of ${\cal M\cal C\cal G}(S)$ if and only if there
is a word $w\in {\cal F }({\cal G})$ of length
at most $\mu^{\max\{\vert u\vert,\vert v\vert\}}$ with
$w^{-1}uw=v$ in ${\cal M\cal C\cal G}(S)$.
\end{coro}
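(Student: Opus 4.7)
The plan is to derive Corollary \ref{cor1} as a black-box consequence of Theorem \ref{thm1} combined with a general fact about biautomatic groups due to Gersten--Short, packaged in the form quoted in \cite{BH99}: for any biautomatic group $\Gamma$, there is a constant $\mu>0$ (depending on the biautomatic structure) such that whenever two elements $u,v\in\Gamma$ are conjugate, one can find a conjugator $w$ with $w^{-1}uw=v$ and $\vert w\vert \leq \mu^{\max\{\vert u\vert,\vert v\vert\}}$. Once Theorem \ref{thm1} is in hand, the only thing to check is that this general result applies in the form stated in the corollary, with the conjugator chosen in the free group on a given symmetric generating set $\cal G$.

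The mechanism behind the general biautomatic bound is as follows. Fix the biautomatic structure $(A,L)$ for ${\cal M\cal C\cal G}(S)$ from Theorem \ref{thm1}, and let $\pi:A\to{\cal M\cal C\cal G}(S)$ be the associated evaluation. For $u,v\in {\cal M\cal C\cal G}(S)$ represented by words of length at most $N$, pick normal form representatives $\alpha,\beta\in L$ with $\pi(\alpha)=u$, $\pi(\beta)=v$; the asynchronous automatic structure immediately yields $\vert\alpha\vert,\vert\beta\vert \leq CN$ for a constant $C$ depending only on $(A,L)$. Next, given any conjugator $w\in{\cal M\cal C\cal G}(S)$, the biautomatic fellow-traveler property applied simultaneously to left multiplication by $u$ and right multiplication by $v$ on a normal form word for $w$ shows that the sequence of prefixes of that normal form lies in a uniformly bounded neighborhood of the $\cal G$-ball around the shortest conjugator. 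A standard pigeonhole argument then bounds the length of the shortest normal form for a conjugator exponentially in $\max\{\vert u\vert,\vert v\vert\}$; this is the content of the argument in III.$\Gamma$.4.10 of \cite{BH99}. Re-expressing this conjugator in the alphabet $\cal G$ produces the word $w\in {\cal F}({\cal G})$ required by the corollary, at the cost of inflating $\mu$ by a constant factor.

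The only step that is not a direct citation is a mild bookkeeping one: the biautomatic structure produced by Theorem \ref{thm1} uses an abstract alphabet $A$ together with an evaluation map $\pi:A\to{\cal M\cal C\cal G}(S)$, whereas the corollary is stated with respect to an arbitrary finite symmetric generating set $\cal G$. This is not a genuine obstacle because the word metrics on ${\cal M\cal C\cal G}(S)$ associated with $\pi(A)$ and with $\cal G$ are bilipschitz equivalent, and one may absorb the bilipschitz constant into $\mu$. The remaining ingredient, namely the bilipschitz equivalence of any two word norms on a finitely generated group, is the elementary observation recalled in the opening paragraph of the introduction. With these identifications in place, the corollary reduces to Theorem \ref{thm1} plus the Gersten--Short bound, with no further work required.
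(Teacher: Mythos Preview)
Your proposal is correct and matches the paper's approach: the corollary is stated as an immediate consequence of Theorem~\ref{thm1} together with the general exponential conjugacy bound for biautomatic groups from \cite{BH99}, and the paper gives no further argument. If anything, you supply more detail than the paper does, including the sketch of the Gersten--Short mechanism and the bookkeeping for passing between generating sets.
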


For pseudo-Anosov elements
in ${\cal M\cal C\cal G}(S)$, the conjugacy problem can be solved in
linear time \cite{MM00}, and Mosher \cite{M86} gave an explicit algorithm
for the solution in this case. For arbitrary elements of 
the mapping class group, the statement of the corollary
was recently
also established by Jing Tao \cite{JT09}.

A finite symmetric set ${\cal G}^\prime$ of generators
of a subgroup $\Gamma^\prime$
of a finitely generated group $\Gamma$
can be extended 
to a finite symmetric set of generators of $\Gamma$. Thus
for any two word norms $\vert\,\vert$ 
on $\Gamma$ and $\vert\,\vert^\prime$ of $\Gamma^\prime$ 
there is a number $L>1$ such that
$\vert g\vert\leq L\vert g\vert^\prime$ for every $g\in \Gamma^\prime$.
However, in general the word norm in $\Gamma$ of an element
$g\in \Gamma^\prime$ can not be estimated from below
by $L^\prime \vert g\vert^\prime$ for a universal constant
$L^\prime >0$.
Define the finitely generated 
subgroup $\Gamma^\prime$ of $\Gamma$
to be \emph{undistorted} in $\Gamma$ 
if there is a constant $c>1$
such that $\vert g\vert^\prime\leq c\vert g\vert$ for all
$g\in \Gamma^\prime$. 
Thus $\Gamma^\prime<\Gamma$ is undistorted if and 
only if the inclusion $\Gamma^\prime\to \Gamma$ 
is a quasi-isometric embedding.

There are 
subgroups of ${\cal M\cal C\cal G}(S)$ 
with particularly nice geometric descriptions. To begin with, 
an \emph{essential subsurface} of $S$ is a bordered surface $S_0$ which
is embedded in $S$ as a closed subset and such that
the following two additional requirements
are satisfied.
\begin{enumerate}
\item The homomorphism $\pi_1(S_0)\to \pi_1(S)$ 
induced by the inclusion is injective.
\item Each boundary component of $S_0$ is 
an \emph{essential} simple closed curve in  
$S$, i.e. a simple closed curve which 
is neither contractible nor freely homotopic into
a puncture.
\end{enumerate}
A finite index subgroup ${\cal M\cal C\cal G}_0(S_0)$ 
of the mapping class group
${\cal M\cal C\cal G}(S_0)$ of $S_0$ can be
identified with the subgroup of ${\cal M\cal C\cal G}(S)$ of all elements
which can be represented by a homeomorphism of $S$
fixing $S-S_0$ pointwise. We give an alternative proof of the
following result of 
Masur and Minsky (the result is
implicitly but not explicitly contained in Theorem 6.12 of
\cite{MM00}).

\begin{theorem}\label{thm2}
If $S_0\subset S$ is an 
essential subsurface of 
a non-exceptional surface $S$ 
of finite type
then ${\cal M\cal C\cal G}_0(S_0)< {\cal M\cal C\cal G}(S)$ is
undistorted.
\end{theorem}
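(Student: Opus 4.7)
The plan is to show that for every $g\in{\cal M\cal C\cal G}_0(S_0)$, the word length $\vert g\vert$ in ${\cal M\cal C\cal G}(S)$ and the word length $\vert g\vert_0$ in ${\cal M\cal C\cal G}(S_0)$ are comparable. The inequality $\vert g\vert\le C\vert g\vert_0$ is automatic, since a finite generating set of ${\cal M\cal C\cal G}_0(S_0)$ extends to one of ${\cal M\cal C\cal G}(S)$ and ${\cal M\cal C\cal G}_0(S_0)$ has finite index in ${\cal M\cal C\cal G}(S_0)$. For the reverse inequality I would use the subsurface projection technology of Masur--Minsky, which also underlies the construction of the biautomatic structure of Theorem \ref{thm1}.

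Fix a complete clean marking $\mu$ of $S$ whose base curve system contains all components of $\partial S_0$, so that $\mu_0:=\mu\cap S_0$ is a complete marking of $S_0$. For any $g\in {\cal M\cal C\cal G}_0(S_0)$, the marking $g\mu$ coincides with $\mu$ on $S\setminus S_0$ (in particular on $\partial S_0$) and restricts to $g\mu_0$ on $S_0$. By the quasi-isometry between the mapping class group and the graph of complete clean markings, $\vert g\vert$ is comparable to the marking-graph distance from $\mu$ to $g\mu$, and analogously $\vert g\vert_0$ is comparable to the marking-graph distance from $\mu_0$ to $g\mu_0$ in $S_0$.

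The Masur--Minsky distance formula expresses the marking-graph distance in $S$ up to uniform constants as a threshold sum of subsurface projection distances $d_Y(\mu,g\mu)$ over essential subsurfaces $Y\subseteq S$. I would then argue by cases on $Y$. If $Y$ is disjoint from $S_0$, then $g$ acts trivially on $\mu\cap Y$ and $d_Y(\mu,g\mu)=0$. If $Y\subseteq S_0$, then $\mu\cap Y=\mu_0\cap Y$ and $g\mu\cap Y=g\mu_0\cap Y$, so $d_Y(\mu,g\mu)=d_Y(\mu_0,g\mu_0)$ is exactly the term appearing in the distance formula for $S_0$. If $Y$ is not contained in $S_0$ but intersects $\partial S_0$ essentially, then $\partial S_0$ lies in both $\mu$ and $g\mu$, so $\pi_Y(\partial S_0)$ lies in both $\pi_Y(\mu)$ and $\pi_Y(g\mu)$; the bounded diameter of the projection of a fixed multicurve then forces $d_Y(\mu,g\mu)$ to be bounded by a universal constant, which is killed by a sufficiently large threshold. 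Consequently only subsurfaces $Y\subseteq S_0$ contribute, so the two distance formulas agree up to uniform error and $\vert g\vert\asymp\vert g\vert_0$.

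\textbf{Main obstacle.} The delicate step is the control of the transition subsurfaces $Y\not\subseteq S_0$ that meet $\partial S_0$ essentially: one must prove a uniform, $g$-independent bound on $d_Y(\mu,g\mu)$, relying on the bounded projection diameter of a multicurve together with the fact that $g\in {\cal M\cal C\cal G}_0(S_0)$ fixes $\partial S_0$ setwise. A secondary concern is reconciling the distance formula in $S$ with the one in $S_0$ so that thresholds and additive constants match uniformly. If the biautomatic structure of Theorem \ref{thm1} is built via hierarchy-type normal forms rather than through the distance formula directly, the analogous argument should still succeed, by showing that a hierarchy path between $\mu$ and $g\mu$ can be chosen so that its large-scale geometry is supported in $S_0$ whenever $g\in {\cal M\cal C\cal G}_0(S_0)$.
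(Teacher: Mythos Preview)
Your approach is correct and is precisely the Masur--Minsky argument that the paper alludes to when it says the result is ``implicitly but not explicitly contained in Theorem 6.12 of \cite{MM00}''. The paper, however, deliberately gives a different proof internal to its train-track machinery: it extends a complete train track on $\hat S_0$ (the subsurface with boundary replaced by punctures) to a complete train track on $S$ by filling in the complement with a fixed piece $\tau_1$, and then invokes Proposition~\ref{inducing} to show that a splitting sequence on $\hat S_0$ induces a splitting sequence of comparable length on $S$. Since splitting sequences are uniform quasi-geodesics in both $\mathcal{TT}(\hat S_0)$ and $\mathcal{TT}(S)$ (Theorem~\ref{cubicaleuclid}), and since Proposition~\ref{density1} makes splitting sequences coarsely ubiquitous, this yields the quasi-isometric embedding. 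The boundary Dehn twists are then handled separately via the exact sequence $0\to\mathbb{Z}^p\to\mathcal{PMCG}(S_0)\to\mathcal{PMCG}(\hat S_0)\to 0$ and the twist-connector structure. Your route is quicker if one grants the distance formula as a black box; the paper's route is self-contained within the train-track framework and serves to illustrate Proposition~\ref{inducing}.

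One small gap in your case analysis: the annular subsurfaces $Y$ whose core is a component $c$ of $\partial S_0$ are not covered by any of your three cases (such a $Y$ is neither disjoint from $S_0$, nor contained in the interior of $S_0$, nor does it intersect $\partial S_0$ essentially, since $\pi_Y(c)$ is undefined). These annuli carry exactly the boundary-twist contribution to $\vert g\vert_0$, so they must be counted on both sides; the fix is to declare them subsurfaces of $S_0$ for the purposes of the distance formula on $S_0$ (equivalently, to work with markings of the bordered surface $S_0$ rather than of $\hat S_0$), after which $d_Y(\mu,g\mu)=d_Y(\mu_0,g\mu_0)$ holds and both formulas match. This is routine, but you should say it.
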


In the case that the subsurface $S_0$ of $S$ is a disjoint union of 
essential annuli, the group 
${\cal M\cal C\cal G}_0(S_0)$
equals the free abelian group generated by the
Dehn twists about the center core curves of the
annuli. In this case the above theorem was
shown by Farb, Lubotzky and Minsky \cite{FLM01}
(see also \cite{H09} for an alternative proof).

Now let $S$ be a closed surface of genus $g\geq 2$ and let 
$\Gamma<{\cal M\cal C\cal G}(S)$ be any \emph{finite}
subgroup. By the solution of the Nielsen realization
problem \cite{Ke83}, $\Gamma$ can be realized as
a subgroup of the automorphism group of a marked complex
structure $h$ on $S$. Then the quotient 
$(S,h)/\Gamma$ is a compact Riemann surface,
and the projection
$S\to S/\Gamma$ is a branched covering ramified
over a finite set $\Sigma$ of points. 
Let $S_0=S/\Gamma-\Sigma$ and 
let $N(\Gamma)$ be the normalizer of $\Gamma$ in 
${\cal M\cal C\cal G}(S)$.
Then there is an exact sequence  
\[0\to \Gamma\to N(\Gamma)\to {\cal M\cal C\cal G}_0(S_0)\to 0\]
where ${\cal M\cal C\cal C\cal G}_0(S_0)$ is the subgroup
of the mapping class group of $S_0$ of all elements
which can be represented by a homeomorphism which lifts
to a homeomorphism of $S$ \cite{BH73}.
We use this to observe (compare \cite{RS07} for a similar
statement)

\begin{theorem}\label{thm3} Let $S$ be a closed
surface of genus $g\geq 2$ and let 
$\Gamma<{\cal M\cal C\cal G}(S)$ be a finite subgroup. Then 
the normalizer of $\Gamma$ is undistorted in 
${\cal M\cal C\cal G}(S)$.
\end{theorem}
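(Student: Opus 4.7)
The plan is to derive the theorem from Theorem \ref{thm1} by invoking a general principle about biautomatic groups: centralizers of finite subsets are undistorted subgroups. By Theorem \ref{thm1}, ${\cal M\cal C\cal G}(S)$ admits a biautomatic structure. A theorem of Gersten and Short (see Chapter 4 of \cite{E92}) asserts that the centralizer of any element of a biautomatic group is a rational subgroup, and that rational subgroups of a biautomatic group are themselves biautomatic and quasi-isometrically embedded. Rational subgroups of a biautomatic group are moreover closed under finite intersections, so the centralizer
$$C(\Gamma)=\bigcap_{\gamma\in \Gamma}C_{{\cal M\cal C\cal G}(S)}(\gamma)$$
of the finite group $\Gamma$ is rational, and therefore undistorted, in ${\cal M\cal C\cal G}(S)$.

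Because $\Gamma$ is finite, its automorphism group is finite, and the conjugation homomorphism $N(\Gamma)\to \mathrm{Aut}(\Gamma)$ has kernel $C(\Gamma)$. Thus $C(\Gamma)$ is a subgroup of finite index in $N(\Gamma)$, so the inclusion $C(\Gamma)\hookrightarrow N(\Gamma)$ is a quasi-isometry for the induced word metrics. Composing this quasi-isometry with the quasi-isometric embedding $C(\Gamma)\hookrightarrow {\cal M\cal C\cal G}(S)$ yields the desired undistortion of $N(\Gamma)$. Via the exact sequence $0\to \Gamma\to N(\Gamma)\to {\cal M\cal C\cal G}_0(S_0)\to 0$ preceding the statement, one then reads off, through any set-theoretic lift, a quasi-isometric embedding of ${\cal M\cal C\cal G}_0(S_0)$ into ${\cal M\cal C\cal G}(S)$.

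The only technical step that requires attention is verifying that the particular biautomatic structure on ${\cal M\cal C\cal G}(S)$ produced in Theorem \ref{thm1} fits the abstract hypotheses of the Gersten--Short theorem; however, this is routine from the axioms of biautomaticity recalled in the introduction, and the main difficulty of the theorem is therefore concentrated entirely in Theorem \ref{thm1}.
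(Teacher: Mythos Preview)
Your argument is correct, but it follows an entirely different route from the paper's own proof (Proposition~\ref{normalizer}). The paper does \emph{not} invoke Theorem~\ref{thm1}; in fact, in the paper's logical order Theorems~2--4 are established in Section~4 using only the density result Proposition~\ref{density1} and the fact that splitting sequences are uniform quasi-geodesics (Theorem~\ref{cubicaleuclid}), well before the biautomatic structure is constructed in Sections~5--7. Concretely, the paper realizes $\Gamma$ via Nielsen realization, passes to the branched quotient $S_0=S/\Gamma-\Sigma$, and then lifts complete train tracks and splitting sequences from $S_0$ through the unbranched cover $S_1\to S_0$ and on to $S$; the lifted splitting sequences furnish explicit uniform quasi-geodesics inside a bounded neighborhood of $N(\Gamma)$, yielding undistortion directly.

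Your approach, by contrast, treats Theorem~\ref{thm1} as a black box and applies the Gersten--Short machinery: centralizers of finite subsets in biautomatic groups are rational, hence undistorted, and then $[N(\Gamma):C(\Gamma)]<\infty$ finishes. This is shorter and conceptually clean, and it works for any biautomatic group with no special input from surface topology. The trade-offs are: (i) logically your proof depends on the full strength of Theorem~\ref{thm1}, which is by far the hardest result in the paper, whereas the paper's proof needs only the much lighter Section~3; (ii) the paper's explicit train-track argument yields additional geometric byproducts noted in the remark following Proposition~\ref{normalizer}, such as a quasi-isometric embedding of the curve graph of $S/\Gamma$ into that of $S$, which are not visible from the abstract Gersten--Short route.
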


There are other relations between mapping class groups
which can be described by exact sequences of groups.
An example of such a relation is as follows.
Let $S_0$ be any non-exceptional surface
of finite type and let $S$ be the surface obtained from
$S_0$ by deleting a single point $p$. Then there is an
exact sequence \cite{B74}
\[0\to \pi_1(S_0)\to {\cal M\cal C\cal G}(S)\stackrel{\Pi}\to
{\cal M\cal C\cal G}(S_0)\to 0.\]
The image in ${\cal M\cal C\cal G}(S)$
of an element $\alpha\in \pi_1(S_0)$ is
the mapping class obtained by dragging the puncture 
$p$ of $S$ 
along a simple closed curve in the homotopy class $\alpha$.
The projection $\Pi:{\cal M\cal C\cal G}(S)\to 
{\cal M\cal C\cal G}(S_0)$ is induced 
by the map $S\to S_0$ defined by closing the puncture $p$.

Define a \emph{coarse section} for the projection
$\Pi$ to be a map 
$\Psi:{\cal M\cal C\cal G}(S_0)\to {\cal M\cal C\cal G}(S)$
with the property that there exists a number 
$\kappa >0$ such that  
\[d(\Pi\Psi(g),g)\leq \kappa\]
for all $g\in {\cal M\cal C\cal G}(S_0)$.

\begin{theorem}\label{thm4}
The projection ${\cal M\cal C\cal G}(S)\to 
{\cal M\cal C\cal G}(S_0)$ admits a coarse section which
is a quasi-isometric embedding.
\end{theorem}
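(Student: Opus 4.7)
The plan is to construct $\Psi$ by lifting clean Masur--Minsky markings from $S_0$ to $S$ together with a canonically chosen curve isolating the distinguished puncture $p$, and then to verify the quasi-isometric embedding property via the Masur--Minsky distance formula.

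Fix a clean marking $\mu_e$ of $S_0$ with pants decomposition $P_e$; exactly one pair of pants $\Pi_e$ in $P_e$ contains the deleted point $p$ in its interior. Inside the four-holed sphere $\Pi_e \setminus \{p\}\subset S$ pick an essential simple closed curve $\gamma_e$ separating $p$ from a prescribed boundary component of $\Pi_e$, so that $P_e\cup\{\gamma_e\}$ is a pants decomposition of $S$ extending canonically to a clean marking $\tilde\mu_e$ of $S$ projecting to $\mu_e$. For $g\in{\cal M\cal C\cal G}(S_0)$, the pants decomposition $gP_e$ has a unique pair of pants $g\Pi_e$ containing $p$, and we let $\gamma_g\subset g\Pi_e\setminus\{p\}$ be the curve separating $p$ from the $g$-image of the prescribed boundary component. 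This yields a clean marking $\tilde\mu_g$ of $S$ projecting to $g\mu_e$. Define $\Psi(g)\in{\cal M\cal C\cal G}(S)$ to be any mapping class sending $\tilde\mu_e$ to a marking of $S$ at uniformly bounded distance from $\tilde\mu_g$ in the marking graph; since the marking graph is quasi-isometric to ${\cal M\cal C\cal G}(S)$, this is well-defined up to a bounded ambiguity.

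The composition $\Pi\Psi(g)$ sends $\mu_e$ to a marking within bounded distance of $g\mu_e$ in the marking graph of $S_0$, so $\Pi\Psi(g)=g$ up to bounded error and $\Psi$ is a coarse section. For the quasi-isometric embedding property, apply the Masur--Minsky distance formula
\[
d_{{\cal M\cal C\cal G}(S)}(\Psi(g),\Psi(h))\asymp \sum_{Y\subseteq S}[d_Y(\tilde\mu_g,\tilde\mu_h)]_K
\]
summed over isotopy classes of essential subsurfaces $Y\subseteq S$. Each such $Y$ yields, by filling in $p$, an essential subsurface $\bar Y\subseteq S_0$. When $p$ is not a puncture of $Y$ the projection distances $d_Y(\tilde\mu_g,\tilde\mu_h)$ agree with $d_{\bar Y}(g\mu_e,h\mu_e)$, and summing over such $Y$ recovers $d_{{\cal M\cal C\cal G}(S_0)}(g,h)$. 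The lower bound $d_{{\cal M\cal C\cal G}(S_0)}(g,h)\leq L\,d_{{\cal M\cal C\cal G}(S)}(\Psi(g),\Psi(h))$ is automatic because $\Pi$ is Lipschitz.

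The main obstacle is controlling the remaining subsurfaces -- those $Y\subseteq S$ for which $p$ is a puncture of $Y$, corresponding to $\bar Y\subseteq S_0$ with $p$ in the interior of $\bar Y$. These contribute extra terms to the distance formula for ${\cal M\cal C\cal G}(S)$ with no counterpart on the $S_0$ side, and one must show their total is bounded by $K\,d_{{\cal M\cal C\cal G}(S_0)}(g,h)+K$. The point is that the canonical construction of $\gamma_g$ makes its position in $g\Pi_e$ depend on $g$ only through a single chosen boundary curve of the pants $\Pi_e$. Using the Behrstock inequality and the bounded geodesic image theorem of Masur--Minsky, one bounds each extra projection $d_Y(\tilde\mu_g,\tilde\mu_h)$ in terms of $d_{\bar Y}(g\mu_e,h\mu_e)$ plus a uniform additive error. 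Summing the estimates yields the desired upper bound, completing the proof that $\Psi$ is a quasi-isometric embedding.
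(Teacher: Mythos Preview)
Your approach via markings and the Masur--Minsky distance formula is genuinely different from the paper's, which works entirely with train tracks: the paper extends a complete train track on $S_0$ to one on $S$ by placing the puncture $p$ inside a complementary trigon and adding two branches, then shows that splitting sequences on $S_0$ induce splitting sequences of comparable length on $S$ (Proposition~\ref{inducing}); since splitting sequences are uniform quasi-geodesics (Theorem~\ref{cubicaleuclid}) and connect a coarsely dense set of pairs (Proposition~\ref{density1}), the quasi-isometric embedding follows. No subsurface projections or hierarchies appear.

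Your argument, however, has a genuine gap in the treatment of the ``extra'' subsurfaces. You assert that for each $Y\subseteq S$ with $p$ a puncture of $Y$ one has $d_Y(\tilde\mu_g,\tilde\mu_h)\leq d_{\bar Y}(g\mu_e,h\mu_e)+C$ via Behrstock and bounded geodesic image, but you do not explain how. In fact the inequality as stated is not even well-posed in key cases: when $Y$ is the four-holed sphere $\Pi\setminus\{p\}$ with $\Pi$ a pair of pants of $gP_e$, the filled surface $\bar Y=\Pi$ is a three-holed sphere with no essential subsurface projection, so the right-hand side is undefined while the left-hand side is exactly the term you need to control. Similarly, when $Y$ is the annulus about $\gamma_g$, the filled curve $\bar\gamma_g$ is isotopic to a pants curve $\alpha$ of $gP_e$, and relating twisting about $\gamma_g$ in $S$ to twisting about $\alpha$ in $S_0$ involves the point-pushing subgroup, which is precisely what is \emph{not} controlled by data on $S_0$. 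Even granting a termwise bound, summing ``plus a uniform additive error'' over unboundedly many subsurfaces does not obviously yield a linear bound.

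The fix is to bypass the distance formula for the upper bound entirely. The map $g\mu_e\mapsto\tilde\mu_g$ is coarsely Lipschitz on the marking graph: if $h\mu_e$ is obtained from $g\mu_e$ by a single twist or flip, one checks directly (the only nontrivial case being a flip across a boundary curve of the pair of pants containing $p$, which takes place in a surface of bounded complexity) that $\tilde\mu_h$ is at uniformly bounded marking distance from $\tilde\mu_g$. Combined with your observation that $\Pi$ is Lipschitz, this gives both inequalities. This is also closer in spirit to what the paper does with train tracks.
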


Theorem \ref{thm4} contrasts a result of 
Braddeus, Farb and Putman \cite{BFP07} who showed 
that the normal subgroup $\pi_1(S_0)$ of
${\cal M\cal C\cal G}(S)$ is exponentially distorted.

The proof of Theorem \ref{thm1} builds on the
results of \cite{H09}. In that paper we constructed
a locally finite 
connected directed graph ${\cal T\cal T}$ whose
vertex set ${\cal V}({\cal T\cal T})$ 
is the set of all isotopy classes of
complete train tracks on $S$. The mapping class group
${\cal M\cal C\cal G}(S)$ acts property and 
cocompactly on ${\cal T\cal T}$ as a group of 
simplicial isometries.

A \emph{discrete path} in a metric space $(X,d)$ 
is a map $\rho:[0,k_\rho]\cap \mathbb{N}\to X$. 
For a number $L>1$, such a path is called an
\emph{$L$-quasi-geodesic} if 
\[\vert j-i\vert/L-L\leq d(\rho(i),\rho(j))+
L\vert j-i\vert +L\]
for all $i,j\in [0,k_\rho]\cap \mathbb{N}$. For convenience,
we often consider $\rho$ as an eventually constant
map $\mathbb{N}\to X$ by setting $\rho(j)=\rho(k_\rho)$
for all $j\geq k_\rho$. When referring to these 
eventually constant maps as $L$-quasi-geodesics, we
mean that their restrictions to 
$[0,k_\rho]\cap \mathbb{N}$ are
$L$-quasi-geodesics. 

A \emph{(discrete) bicombing} of a metric space $X$ 
assigns to any pair of points $x,y\in X$ a
discrete path $\rho_{x,y}:[0,k_\rho]\cap \mathbb{N}\to X$ 
so that $\rho_{x,y}(0)=x,\rho_{x,y}(k_\rho)=y$.
The bicombing is called \emph{quasi-geodesic} if there
is a number $L>1$ such that
for all $x,y\in X$ the path $\rho_{x,y}$ is a discrete
$L$-quasi-geodesic connecting $x$ to $y$. 
The bicombing is called \emph{bounded} if there is a 
number $L>1$ such that  
\[d(\rho_{x,y}(i),\rho_{z,u}(i))\leq L(d(x,z)+d(y,u))+L\]
for all $x,y,z,u\in X$ and for all $i$ (here the combing
paths $\rho_{x,y},\rho_{z,u}$ are viewed as eventually constant 
maps $\mathbb{N}\to X$).

We use directed edge-paths in ${\cal T\cal T}$ to
construct an ${\cal M\cal C\cal G}(S)$-equivariant
bounded bicombing of 
${\cal T\cal T}$. 
We proceed in three steps.

\begin{enumerate}
\item Show that directed edge-paths connect a coarsely
dense set of pairs of points in ${\cal T\cal T}$.  
\item If $x$ can be connected to $y$ by a directed 
edge-path,
single out a specific such path so that
the resulting path system is invariant under the action of the
mapping class group.
\item Show that these specific directed edge-paths  
can be equipped with paramet\-ri\-za\-tions in 
such a way that the resulting path system defines an
${\cal M\cal C\cal G}(S)$-equivariant 
bounded bicombing of ${\cal T\cal T}$.
\end{enumerate}

Section 3 of this paper is devoted to 
the first step above.
We show that while in general for two vertices
$x,y\in {\cal V}({\cal T\cal T})$ there is no directed
edge path connecting $x$ to $y$, for any pair of
points $x,y\in {\cal T\cal T}$ there is a 
pair of vertices $x^\prime,y^\prime\in 
{\cal V}({\cal T\cal T})$ 
within a uniformly bounded distance 
of $x,y$ so that $x^\prime$ can be connected
to $y^\prime$ by a directed edge path.

The second and the third step of the proof are 
much more involved, and they are carried out in 
Section 5 and Section 6. First we have a closer look
at the geometry of the graph ${\cal T\cal T}$. Using the
fact that directed edge paths are uniform quasi-geodesics,
we give in Section 5 a fairly explicit description
of the geometry of ${\cal T\cal T}$. Then we 
single out for every pair of vertices 
$(x,y)\in {\cal V}({\cal T\cal T})\times 
{\cal V}({\cal T\cal T})$ with the property that $x$ can be connected
to $y$ by a directed edge path a specific such path.  
In Section 6 we show that these paths 
equipped with suitably chosen
parametrizations
define a ${\cal M\cal C\cal G}(S)$-equivariant 
bounded bicombing of 
${\cal T\cal T}$.

In Section 7 we observe that this bicombing of 
${\cal T\cal T}$ is a regular
path system in the sense of \cite{S06}.
Theorem \ref{thm1} then follows from the results
of \cite{S06}.

Theorems 2-4 are derived 
in Section 4 from coarse density of pairs of vertices
in ${\cal T\cal T}$ which can be connected by
a directed edge path.
Namely, let
$\Gamma$ be any finitely generated group equipped with
some word metric and
let $\phi:\Gamma\to {\cal M\cal C\cal G}(S)$ be any
homomorphism with the property that 
$d(\phi g,\phi h)\leq \kappa d(g,h)$ for all
$g,h\in \Gamma$ and some $\kappa >1$. 
Assume moreover that for any $R>0$ there
is some $r>0$ such that $d(\phi(g),\phi(h))\geq R$
whenever $d(g,h)\geq r$. Then 
$\phi$ is a quasi-isometric
embedding if for some choice $\tau$ of a basepoint 
in ${\cal T\cal T}$ and for
any two points $g,h\in \Gamma$ there
is a directed edge path in ${\cal T\cal T}$
connecting $\phi(g)\tau$ to 
a point in a uniform neighborhood of
$\phi(h)\tau$ and 
which is entirely contained in a uniformly bounded neighborhood
of $\phi(\Gamma)\tau$.
In the situations described in Theorems 2-4, a
map $\phi$ which has these
properties can fairly easily be constructed.

In Section 2 we summarize the properties
of the train track complex 
${\cal T\cal T}$ of $S$ which are needed for our purpose.
The appendix contains a technical result about train tracks
which is used in Section 5 but which can be established
independently of the rest of the paper.

\bigskip

{\bf Acknowledgement:} I am very grateful to Mary
Rees for useful comments which made me aware of a gap in 
a preliminary
version of this paper.
I am also grateful to Graham Niblo for pointing out
the paper \cite{S06} to me.

\section{The complex of train tracks}

In this section we summarize some results from
\cite{H09} which will be used throughout the paper.

Let $S$ be an
oriented surface of
genus $g\geq 0$ with $m\geq 0$ punctures and where $3g-3+m\geq 2$.
A \emph{train track} on $S$ is an embedded
1-complex $\tau\subset S$ whose edges
(called \emph{branches}) are smooth arcs with
well-defined tangent vectors at the endpoints. At any vertex
(called a \emph{switch}) the incident edges are mutually tangent.
Through each switch there is a path of class $C^1$
which is embedded
in $\tau$ and contains the switch in its interior. In
particular, the half-branches which are incident
on a fixed switch are divided into two classes according to
the orientation of an inward
pointing tangent at the switch. Each closed curve component of
$\tau$ has a unique bivalent switch, and all other switches are at
least trivalent.
The complementary regions of the
train track have negative Euler characteristic, which means
that they are different from discs with $0,1$ or
$2$ cusps at the boundary and different from
annuli and once-punctured discs
with no cusps at the boundary.
A train track is called \emph{maximal} if each of
its complementary components either is a trigon,
i.e. a topological disc with three cusps at the boundary,
or a once punctured monogon, i.e. a once punctured disc
with one cusp at the boundary. 
We always identify train
tracks which are isotopic.
The book \cite{PH92} contains a comprehensive treatment
of train tracks which we refer to throughout the paper.

A train track is called \emph{generic} if all switches are
at most trivalent.
The train track $\tau$ is called \emph{transversely recurrent} if
every branch $b$ of $\tau$ is intersected by an embedded simple
closed curve $c=c(b)\subset S$ of class $C^1$ 
which intersects $\tau$
transversely and is such that $S-\tau-c$ does not contain an
embedded \emph{bigon}, i.e. a disc with two corners at the
boundary.

A \emph{trainpath} on a train track $\tau$ is a $C^1$-immersion
$\rho:[m,n]\to \tau\subset S$ which maps each interval $[k,k+1]$
$(m\leq k\leq n-1)$ onto a branch of $\tau$. The integer $n-m$ is
called the \emph{length} of $\rho$. We sometimes identify a
trainpath with its image in $\tau$. Each complementary
region of $\tau$ is bounded by a finite number of
(not necessarily embedded) trainpaths which
either are closed curves or terminate at the cusps of the
region.
A \emph{subtrack} of a train track $\tau$ is a subset $\sigma$ of
$\tau$ which itself is a train track. Thus every switch of
$\sigma$ is also a switch of $\tau$, and every branch of $\sigma$
is a trainpath on $\tau$. We write $\sigma<\tau$ if
$\sigma$ is a subtrack of $\tau$.

A \emph{transverse measure} on a train track $\tau$ is a
nonnegative weight function $\mu$ on the branches of $\tau$
satisfying the \emph{switch condition}:
for every switch $s$ of $\tau$, 
the half-branches incident on $s$ are divided into two
classes, and the sums of the weights
over all half-branches in each of the two classes 
coincide.
The train track is called
\emph{recurrent} if it admits a transverse measure which is
positive on every branch. We call such a transverse measure $\mu$
\emph{positive}, and we write $\mu>0$.
If $\mu$ is any transverse measure on a train track
$\tau$ then the subset of $\tau$ consisting of all
branches with positive $\mu$-weight is a recurrent
subtrack of $\tau$.
A train track $\tau$ is called \emph{birecurrent} if
$\tau$ is recurrent and transversely recurrent.
We call $\tau$ \emph{complete} if $\tau$ is generic,
maximal and birecurrent.

\bigskip

{\bf Remark:} As in \cite{H09}, we require every train track
to be generic. Unfortunately this leads to a slight 
inconsistency of our terminology with the terminology found
in the literature.

\bigskip

There is a 
special collection of complete train tracks on $S$ which were introduced
by Penner and Harer \cite{PH92}. Namely, a \emph{pants decomposition}
$P$ for $S$ is a collection of  $3g-3+m$ simple closed curves
which decompose $S$ into $2g-2+m$ pairs of pants. 
Here a pair of pants is a planar orientable bordered surface
of Euler characteristic $-1$ which may be non-compact.
Define a
\emph{marking} of $S$ (or complete clean marking in the terminology of 
\cite{MM00})  
to consist of a pants decomposition
$P$ for $S$ and a system of
\emph{spanning curves} for $P$. For each pants curve $\gamma\in P$
there is a unique simple closed spanning curve which is contained
in the connected component $S_0$ of $S-(P-\gamma)$ 
containing $\gamma$, which 
is not freely homotopic into the boundary or a puncture of this
component and which intersects $\gamma$ in the minimal number of points´
(one point if $S_0$ is a one-holed torus and two points
if $S_0$ if a four-holed sphere).
Note that any two choices of such a spanning curve differ by a
\emph{Dehn twist} about $\gamma$.

For each marking $F$ of $S$ we can construct a collection of
finitely many maximal transversely recurrent train tracks
as follows. Let $P$ be the pants decomposition of the marking. Choose
an open neighborhood $A$ of $P$ in $S$ whose closure in $S$ is
homeomorphic to the disjoint union of $3g-3+m$ closed annuli. Then
$S-A$ is the disjoint union of $2g-2+m$ pairs of pants.
We require that each train
track $\tau$ from our collection intersects a component of $S-A$
which does not contain a puncture of $S$ in a train
track with stops which is isotopic to
one of the four \emph{standard models} shown in Figure A
(see Figure 2.6.2 of \cite{PH92}).
\begin{figure}[ht]
\begin{center}
\psfrag{Figure C}{Figure A} 
\includegraphics 
[width=0.7\textwidth] 
{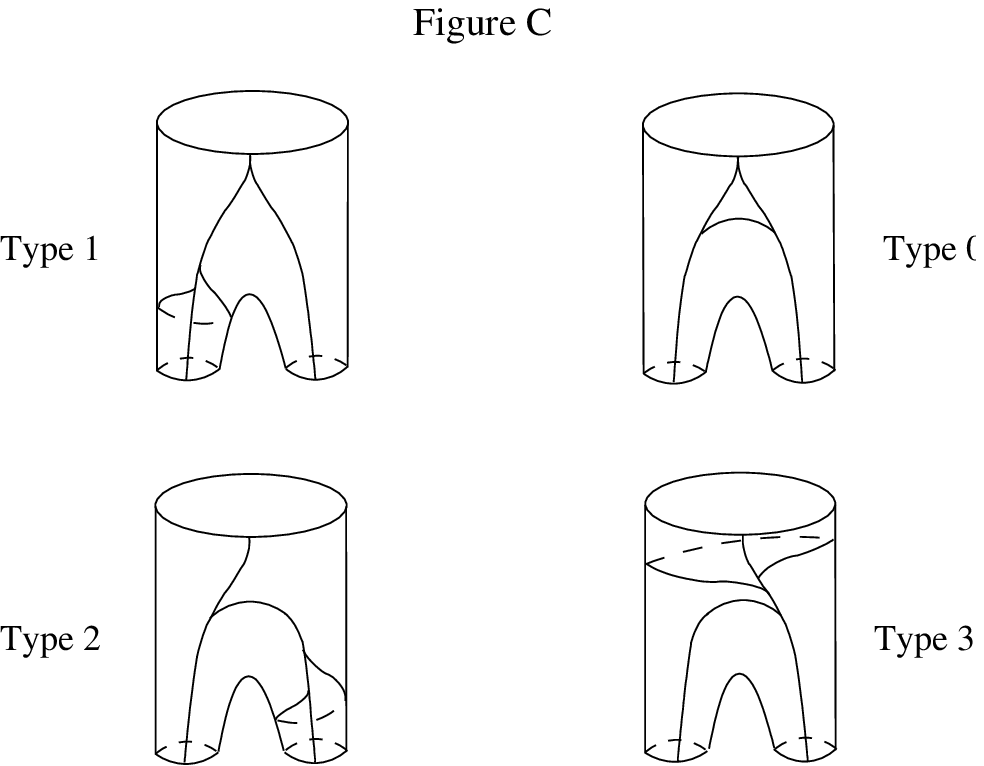}
\end{center}
\end{figure}

If $S_0$ is a component of
$S-A$ which
contains precisely one puncture, then
we require that $\tau$ intersects $S_0$ in a
train track with
stops which we obtain up to diffeomorphism of $S_0$ 
from the standard model of
type 2 or of type 3
by replacing the top boundary curve
by a puncture and by deleting the
branch which is incident on  the stop of
this boundary component. If $S_0$ is a component of $S-A$
which contains two punctures, then we require that $\tau$
intersects $S_0$ in a train track with stops which
we obtain up to diffeomorphism of $S_0$ 
from the standard model of type 1 by
replacing the two
lower boundary components by a puncture and
by deleting the two branches which are incident on the stops of
these boundary components.

The intersection of $\tau$ with a
component of the collection $A$ of $3g-3+m$
annuli is one of the
following four \emph{standard connectors} which are shown in
Figure B (see Figure 2.6.1 of \cite{PH92}).
\begin{figure}[ht]
\begin{center}
\psfrag{Figure D}{Figure B} 
\includegraphics 
[width=0.7\textwidth] 
{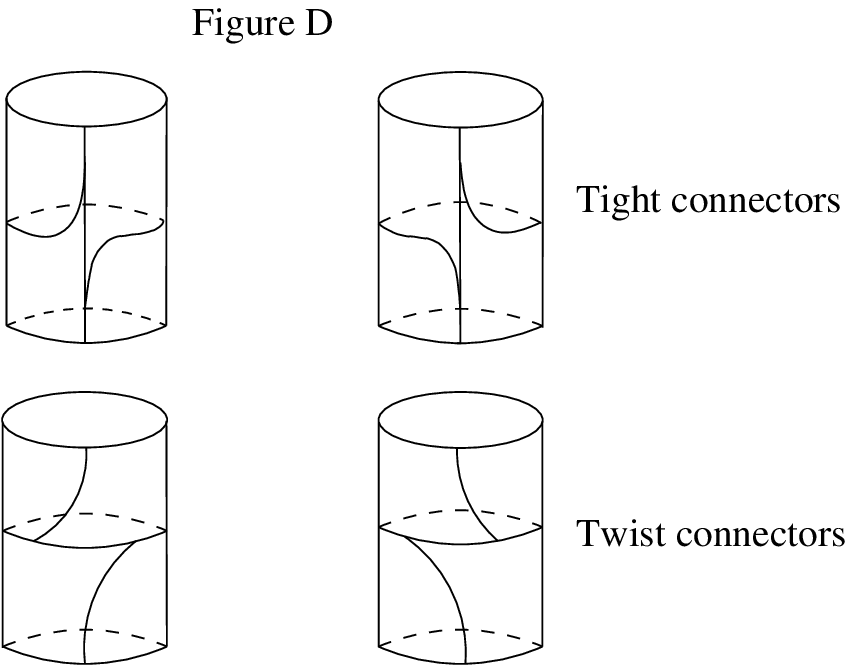}
\end{center}
\end{figure}

From the above standard pieces we can build a train track $\tau$
on $S$ by choosing for each component of $S-A$ one of the standard
models as described above and choosing for each component of $A$
one of the four standard connectors. These train tracks with stops
are then glued at their stops to a connected train track on $S$.
Any two train tracks constructed in this way from the same pants
decomposition $P$, the same choices of standard models for the
components of $S-A$ and the same choices of connectors for the
components of $A$ differ by Dehn twists about the pants curves of
$P$. The spanning curves of the marking $F$ determine a specific
choice of such a glueing \cite{PH92}. We call each of
the resulting train tracks \emph{in standard form for $F$}
provided that it is complete (see p.147 of 
\cite{PH92} for examples
of train tracks built in this way which are not
recurrent and hence not complete).

A \emph{geodesic lamination} for a complete
hyperbolic structure on $S$ of finite volume is
a \emph{compact} subset of $S$ which is foliated into simple
geodesics. A geodesic lamination $\lambda$ is \emph{minimal}
if each of its half-leaves is dense in $\lambda$.
A geodesic lamination is \emph{maximal}
if its complementary regions are all ideal triangles
or once punctured monogons (note that a minimal geodesic
lamination can also be maximal).
The space of geodesic laminations on $S$
equipped with the \emph{Hausdorff topology} is
a compact metrizable space. 

A geodesic lamination $\lambda$
is called \emph{complete} if $\lambda$ is maximal and
can be approximated in the Hausdorff topology by
simple closed geodesics. The space ${\cal C\cal L}$
of all complete geodesic laminations equipped with
the Hausdorff topology is compact. The mapping
class group ${\cal M\cal C\cal G}(S)$ naturally
acts on ${\cal C\cal L}$ as a group of 
homeomorphisms.
Every geodesic lamination $\lambda$
which is a disjoint union of finitely many minimal components
is a \emph{sublamination} of
a complete geodesic lamination, i.e. there
is a complete geodesic lamination which contains
$\lambda$ as a closed subset (Lemma 2.2 of \cite{H09}).

A train track or a geodesic lamination $\sigma$ is
\emph{carried} by a transversely recurrent train track $\tau$ if
there is a map $\phi:S\to S$ of class $C^1$ which is homotopic to the
identity and maps $\sigma$ into $\tau$ in such a way 
that the restriction of the differential of $\phi$
to the tangent space of $\sigma$ vanishes nowhere;
note that this makes sense since a train track has a tangent
line everywhere. We call the restriction of $\phi$ to
$\sigma$ a \emph{carrying map} for $\sigma$.
Write $\sigma\prec
\tau$ if the train track $\sigma$ is carried by the train track
$\tau$. Then every geodesic lamination $\lambda$ which is carried
by $\sigma$ is also carried by $\tau$. A train track
$\tau$ is complete if and only if it is generic and
transversely recurrent and if it carries
a complete geodesic lamination.
The space
of complete geodesic laminations carried by a complete
train track $\tau$ is open and closed in ${\cal C\cal L}$
(Lemma 2.3 of \cite{H09}).
In particular, the space ${\cal C\cal L}$
is totally disconnected.

For every pants decomposition $P$ of $S$ there is a finite set
of complete geodesic laminations on $S$ which contain
(the geodesic representatives of) the components of
$P$ as their minimal components. 
We call such a geodesic lamination
\emph{in standard form} for $P$. 
If $\lambda$ is a geodesic lamination in standard form for $P$
then for each component
$S_0$ of $S-P$ which does not contain a puncture of $S$ there 
are precisely three leaves of $\lambda$ contained in $S_0$
which spiral about the three different boundary components of $S_0$.
The leaves of $\lambda$ spiraling from two different sides
about a component $\gamma$ of $P$ 
define opposite orientations near $\gamma$
(as shown in Figure A of \cite{H09}).
If $S_0$ contains exactly one puncture of $S$ there there
are two leaves of $\lambda$ contained in $S_0$ which
spiral about the two boundary components of $S_0$.

For every marking $F$ of $S$
with pants decomposition $P$ and every train track 
$\tau$ in standard form for $F$ with only twist connectors
there is a unique complete geodesic lamination 
in standard form for $P$ which is
carried by $\tau$. This implies that for every marking $F$
of $S$ with pants decomposition $P$ there is a bijection 
between the complete train tracks in standard form for $F$ 
with only twist connectors and the complete geodesic laminations
in standard form for $P$.
The set of all complete geodesic laminations in standard form
for some pants decomposition
$P$ is invariant under the action of the mapping class
group, moreover there are only finitely many 
${\cal M\cal C\cal G}(S)$-orbits 
of such complete geodesic laminations.

Define the \emph{straightening} of 
a train track $\tau$ on $S$ with respect to some
complete finite volume 
hyperbolic structure $g$ on $S$ 
to be the edgewise immersed
graph in $S$ whose vertices are the switches of $\tau$ and whose
edges are the unique geodesic arcs which are homotopic with
fixed endpoints to the branches of $\tau$. For a number
$\epsilon >0$ we say that the train track $\tau$ \emph{$\epsilon$-follows}
a geodesic lamination $\lambda$ if the tangent lines of the
straightening of $\tau$ are contained in 
the $\epsilon$-neighborhood of the projectivized tangent
bundle $PT\lambda$ of $\lambda$ 
(with respect to the distance function induced by the metric $g$)
and if moreover the 
straightening of every trainpath on $\tau$ is a piecewise
geodesic whose exterior angles at the breakpoints are not
bigger than $\epsilon$. 

\begin{lemma}\label{fatening}
Let $\lambda\in {\cal C\cal L}$ be a 
complete geodesic lamination on $S$ 
in standard form for a pants decomposition $P$ of $S$
and let
$\epsilon >0$. Then there is a 
complete train track $\tau$ on $S$
in standard form for a marking $F$ of $S$ with pants decomposition
$P$ which carries $\lambda$ and 
$\epsilon$-follows $\lambda$.
\end{lemma}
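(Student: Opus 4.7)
The strategy is the classical Penner--Harer fattening of a lamination, adapted so that the resulting train track has the rigid local structure of a standard-form train track. Fix once and for all a complete hyperbolic metric $g$ on $S$. The construction will depend on a small parameter $\delta>0$, and we will take $\delta$ sufficiently small at the end so that the $\epsilon$-following condition is satisfied.

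First I would fix the annular decomposition. Choose an open annular neighborhood $A=\sqcup_{\gamma\in P}A_\gamma$ of $P$ whose components $A_\gamma$ are geodesic tubes of width at most $\delta$ about the geodesic representatives of the pants curves $\gamma\in P$. The complement $S-A$ is a disjoint union of pairs of pants. Because $\lambda$ is in standard form for $P$, the non-pants leaves of $\lambda$ spiral into each $\gamma$ from both sides, with opposite orientations near $\gamma$. For $\delta$ small, each component of $\lambda\cap A_\gamma$ is either the core geodesic $\gamma$ itself or a very long arc winding many times around $\gamma$ almost parallel to it.

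Next I would build the local pieces. Let $S_0$ be a component of $S-A$. The intersection $\lambda\cap S_0$ is a finite union of geodesic arcs, coming from the at most three spiraling leaves of $\lambda$ in the pair of pants of $S-P$ containing $S_0$; each such arc enters $S_0$ through one boundary component and exits through another (or the same) after turning for a bounded amount. After possibly making $\delta$ smaller, the combinatorics of this finite collection of arcs -- which boundary component each arc connects to which, and with what cusp pattern -- matches, up to a diffeomorphism of $S_0$ fixing the boundary setwise, exactly one of the four standard models of Figure A (with the appropriate modification for punctured components). I would then thicken the arcs of $\lambda\cap S_0$ to a smooth train-track-with-stops $\tau_{S_0}$ isotopic to this standard model and carrying $\lambda\cap S_0$, by taking a narrow neighborhood of $\lambda\cap S_0$ in the projectivized tangent bundle and collapsing transversally. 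Simultaneously, inside each $A_\gamma$ I insert the twist connector of Figure B whose two small branches are attached to the stops of the two adjacent standard pieces and whose large branch wraps around $\gamma$ in the direction dictated by the spiraling of the leaves of $\lambda$ on each side. The opposite-orientation convention for leaves of $\lambda$ near $\gamma$ guarantees that a single twist connector accommodates the spiraling leaves from both sides.

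After gluing, the resulting $1$-complex $\tau$ is by construction a complete train track on $S$ in standard form for a marking $F$ of $S$ with pants decomposition $P$: the standard pieces are generic and trivalent, their complementary regions are trigons or once-punctured monogons, transverse recurrence is built into the standard models of Penner--Harer, and recurrence follows from the fact that $\lambda$ -- together with any arc of a complete geodesic lamination filling in the complement -- supplies a carrying object whose preimage under the carrying map $\phi$ endows $\tau$ with a transverse measure positive on every branch. The spanning curves of $F$ are the ones arising from the twist data just built. By construction, $\tau$ carries $\lambda$ via a smooth carrying map.

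Finally, the $\epsilon$-following property. Here the main quantitative input is that by choosing $\delta$ small and placing the switches of each $\tau_{S_0}$ on points which lie on leaves of $\lambda$, the straightening of each branch of $\tau$ is a geodesic arc homotopic with fixed endpoints to a trainpath which stays $C^1$-close to $\lambda$; standard comparison of the geodesic representative with the trainpath (using negative curvature and boundedness of the lengths of individual branches, the latter coming from the finite combinatorial type of the standard pieces) then forces its tangent lines into any preassigned neighborhood of $PT\lambda$. At a switch, both incident half-branches straighten to arcs tangent to (the same) leaf of $\lambda$ passing through the switch, so the exterior angle is as small as we wish. Choosing $\delta=\delta(\epsilon)$ small enough gives the required $\epsilon$-following.

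The main obstacle is the second step: verifying that the combinatorics of $\lambda\cap S_0$ in each pair of pants, and in particular how the three spiraling ends interconnect, really reproduce (up to diffeomorphism) one of the four standard models, and that the twist connectors glue consistently. This is a finite, case-by-case check that must be done uniformly in the choice of $\lambda$, but given the rigidity of standard-form laminations with respect to $P$ it reduces to bookkeeping on four topological pictures.
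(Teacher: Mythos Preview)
Your approach is correct and is essentially the same as the paper's: obtain $\tau$ by collapsing a sufficiently small tubular neighborhood of $\lambda$ in the sense of Penner--Harer. The paper's own proof is a two-line citation to Theorem~1.6.5 of \cite{PH92} and to Lemma~3.2 of \cite{H09}, whereas you have unpacked the construction into explicit local pieces on the pairs of pants and the annuli; your identification of the main obstacle (matching the combinatorics of $\lambda\cap S_0$ to the standard models of Figure~A, and checking consistency of the twist connectors) is exactly what the cited references are absorbing. One small wrinkle worth tightening: your justification of recurrence via ``$\lambda$ together with any arc of a complete geodesic lamination filling in the complement'' is vague, since $\lambda$ itself need not carry a transverse measure; for standard-form train tracks recurrence is a direct combinatorial check (cf.\ Lemma~2.6.1 of \cite{PH92}), and it is cleaner to cite that rather than invent a carrying argument.
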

\begin{proof}
The train track $\tau$ can be obtained from $\lambda$ by
collapsing a sufficiently small tubular neighborhood of $\lambda$.
We refer to Theorem 1.6.5 of \cite{PH92} 
and to Lemma 3.2 of \cite{H09} and its proof 
for more details of this construction.
\end{proof}

Note that in Lemma \ref{fatening}, 
the marking $F$ of $S$
depends on the number $\epsilon$ as well as on choices
made in the construction.

A \emph{measured geodesic lamination} is a geodesic
lamination equipped with a transverse translation
invariant measure of full support. The space 
${\cal M\cal L}$ of measured geodesic laminations
on $S$ equipped with the 
weak$^*$-topology is homeomorphic to the product of 
a sphere of dimension
$6g-7+2m$ with the real line. A measured geodesic lamination $\mu$ 
is carried by a train track $\tau$ if its support
is carried by $\tau$. Then $\mu$ defines a transverse measure
on $\tau$, and every transverse measure on $\tau$
arises in this way \cite{PH92}.

We use measured geodesic laminations to establish
another relation between train tracks
in standard form for a marking of $S$ and complete geodesic
laminations which is a variant of a result of 
Penner and Harer (Theorem 2.8.4 of \cite{PH92}).

\begin{lemma}\label{standardform}
For any marking $F$ of $S$, every complete geodesic
lamination on $S$ is carried by a unique train track
in standard form for $F$. 
\end{lemma}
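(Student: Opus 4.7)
The plan is to combine a measured-lamination argument with a variant of Penner--Harer's Theorem 2.8.4 from \cite{PH92}. A complete geodesic lamination $\lambda$ has finitely many minimal components, each of which is either a simple closed geodesic or a minimal lamination supporting transverse measures of full support. Choosing a positive transverse measure on each minimal component produces a measured lamination $\mu\in{\cal M\cal L}$ whose support is exactly the union of the minimal components of $\lambda$.

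Next, I would invoke (a variant of) the Penner--Harer theorem: the finitely many train tracks in standard form for $F$ give measured-lamination cones $V(\tau_i)$ whose union is all of ${\cal M\cal L}$, with pairwise disjoint interiors. Hence $\mu$ lies in some $V(\tau)$, which means the support of $\mu$ is carried by some $\tau$ in standard form for $F$ via a carrying map $\psi$. The key step in the existence part is to upgrade carrying of the support of $\mu$ to carrying of the full $\lambda$. Every leaf of $\lambda$ outside the support of $\mu$ is isolated and spirals onto the minimal components. Since $\lambda$ is maximal, its complementary regions are trigons or once-punctured monogons, matching those of $\tau$, so each isolated spiralling leaf has a unique route along $\tau$ consistent with $\psi$, and this provides an extension of $\psi$ to a carrying map for $\lambda$.

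For uniqueness, if $\tau$ and $\tau'$ are two train tracks in standard form for $F$ both carrying $\lambda$, then both carry $\mu$; moreover, the combinatorial pieces of $\tau$ and $\tau'$ are forced by $\lambda$. Specifically, the arc pattern of $\lambda$ inside each pair of pants component of $S-A$ pins down the standard model, while the spiralling or crossing pattern of $\lambda$ inside each annular component of $A$ pins down the connector (compare with Figures A and B). Since the gluing at the stops is prescribed by the spanning curves of $F$, we conclude $\tau=\tau'$.

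The main obstacle will be the upgrading argument in the existence part: showing that the isolated spiralling leaves of $\lambda$ actually lie in the image of an extended carrying map into $\tau$, rather than merely being isotopic to leaves of $\tau$. This requires a careful local analysis near each pants curve, and is especially delicate when a pants curve is itself a minimal component of $\lambda$, since then the "spiralling data" partially degenerates and must be reconstructed from the global combinatorics of $\lambda$ together with the explicit bijection, mentioned earlier in the excerpt, between complete geodesic laminations in standard form for $P$ and train tracks in standard form for $F$ with only twist connectors.
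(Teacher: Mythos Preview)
Your approach differs substantially from the paper's in both halves, and the gaps you yourself flag are real.

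For existence, the paper sidesteps the upgrading problem entirely. Since $\lambda$ is complete, it is a Hausdorff limit of simple closed geodesics $c_i$; each $c_i$ is carried by some train track in standard form for $F$ (Theorem 2.8.4 of \cite{PH92}), and by finiteness a single $\tau$ carries infinitely many of them. The set of laminations carried by $\tau$ is closed in the Hausdorff topology (Lemma 3.2 of \cite{H09}), so $\tau$ carries $\lambda$. No analysis of isolated spiralling leaves is needed. Your upgrading step, by contrast, is genuinely delicate: the claim that ``each isolated spiralling leaf has a unique route along $\tau$ consistent with $\psi$'' is not automatic. The carrying map $\psi$ is only defined on the support of $\mu$, and an isolated leaf of $\lambda$ may approach a switch of $\tau$ from a direction that does not match the existing branch structure. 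You would need to verify compatibility at every switch where an isolated leaf enters, and you have not done this.

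For uniqueness, the paper again avoids your combinatorial case analysis. If $\tau$ and $\eta$ both carry $\lambda$, then by Lemmas 3.2 and 3.3 of \cite{H09} there is a complete train track $\sigma$ carried by both; $\sigma$ in turn carries a measured lamination whose support is minimal \emph{and maximal}, and such a lamination lies in the interior of its Penner--Harer cone, so Theorem 2.8.4 of \cite{PH92} forces $\tau=\eta$. Your argument does not work as stated: the measured lamination $\mu$ you build is supported only on the minimal components of $\lambda$, which need not be maximal, so $\mu$ may lie on a common face of several cones $V(\tau_i)$. You then fall back on a combinatorial claim that $\lambda$ itself pins down the standard models and connectors, but making this precise requires exactly the leaf-by-leaf carrying analysis you already identified as the main obstacle in the existence part.
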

\begin{proof}
A complete geodesic lamination $\lambda$ can be approximated in the
Hausdorff topology by a sequence $\{c_i\}$ of 
simple closed geodesics. For a fixed
marking $F$ of $S$, each such
geodesic is carried by a train track in standard form for $F$
(this is contained in Theorem 2.8.4 of \cite{PH92}).
Since there are only finitely many train tracks in standard form
for $F$, there is a fixed train track $\tau$ in standard form
for $F$ which carries infinitely many of the curves $c_i$.
By Lemma 3.2 of \cite{H09}, 
the set of geodesic laminations carried by a fixed train
track $\tau$ is closed in the Hausdorff topology and hence the 
geodesic lamination $\lambda$ is carried by $\tau$. 

Now assume that there is a second train track $\eta$
in standard form for $F$ which carries $\lambda$.
By Lemma 3.2 and Lemma 3.3 of \cite{H09}, there 
is a complete train track $\sigma$ which is
carried by both $\tau$ and $\eta$. The train track
$\sigma$ carries a measured geodesic lamination $\mu$ 
whose support is both minimal and maximal (see
the top of p.556 of \cite{H09} for a detailed 
discussion of this fact).
Thus $\mu$ is carried by two distinct train tracks
in standard form for $F$ which violates 
Theorem 2.8.4 of \cite{PH92}. 
\end{proof}

A half-branch $\hat b$ in a generic train track $\tau$ incident on
a switch $v$ of $\tau$ is called
\emph{large} if every trainpath containing $v$ in its interior
passes through $\hat b$. A half-branch which is not large
is called \emph{small}.
A branch
$b$ in a generic train track
$\tau$ is called
\emph{large} if each of its two half-branches is
large; in this case $b$ is necessarily incident on two distinct
switches. A branch is called
\emph{small} if each of its two half-branches is small. A branch
is called \emph{mixed} if one of its half-branches is large and
the other half-branch is small (see p.118 of \cite{PH92}).

There are two simple ways to modify a complete train track $\tau$
to another complete train track. First, we can \emph{shift}
$\tau$ along a mixed branch $b$ to a train track $\tau^\prime$ as shown
in Figure C. 
\begin{figure}[ht]
\begin{center}
\psfrag{Figure A}{Figure C} 
\includegraphics 
[width=0.7\textwidth] 
{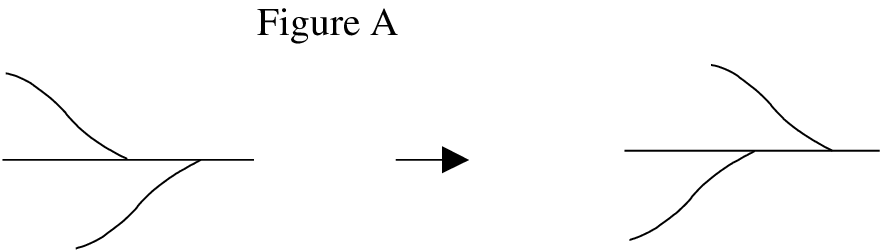}
\end{center}
\end{figure}
If $\tau$ is complete then the same is true for
$\tau^\prime$. Moreover, a train track or a geodesic 
lamination is carried by $\tau$ if and
only if it is carried by $\tau^\prime$ (see \cite{PH92} p.119).
In particular, the shift $\tau^\prime$ of $\tau$ is
carried by $\tau$. There is a natural
bijection $\phi(\tau,\tau^\prime)$ 
of the set of branches of $\tau$ onto
the set of branches of $\tau^\prime$ which is 
induced by the identity of the complement of a small
neighborhood of $b$ in $S$. The bijection $\phi(\tau,\tau^\prime)$
also induces a bijection of the set of half-branches 
of $\tau$ onto the set of half-branches of $\tau^\prime$ which we
denote again by $\phi(\tau,\tau^\prime)$.

Second, if $e$ is a large branch of $\tau$ then we can perform a
right or left \emph{split} of $\tau$ at $e$ as shown in Figure D.
\begin{figure}[ht]
\begin{center}
\psfrag{Figure B}{Figure D} 
\includegraphics 
[width=0.7\textwidth] 
{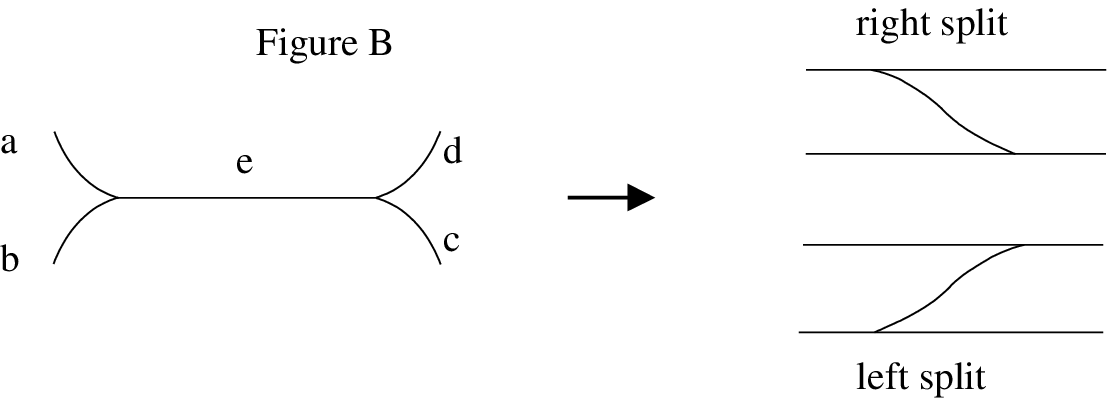}
\end{center}
\end{figure}
Note that a right split at $e$ is uniquely
determined by the orientation of $S$ and does not
depend on the orientation of $e$.
Using the labels in the figure, in the case of a right
split we call the branches $a$ and $c$ \emph{winners} of the
split, and the branches $b,d$ are \emph{losers} of the split. If
we perform a left split, then the branches $b,d$ are winners of
the split, and the branches $a,c$ are losers of the split.
The split $\tau^\prime$ of a train track $\tau$ is carried
by $\tau$, and there is a natural choice of a carrying map which
maps the switches of $\tau^\prime$ to the switches of $\tau$. The
image of a branch of $\tau^\prime$ is then a trainpath on $\tau$
whose length either equals one or two. There is
a natural bijection $\phi(\tau,\tau^\prime)$ 
of the set of branches
of $\tau$ onto the set of branches of $\tau^\prime$ which
maps the branch $e$ to a small branch $e^\prime$ which we call the
\emph{diagonal} of the split.
This bijection is induced by the identity 
on the complement of a small neighborhood
of $e$ in $S$.
The map $\phi(\tau,\tau^\prime)$ also induces a bijection of 
the set of half-branches of $\tau$ onto the set of 
half-branches of $\tau^\prime$ again denoted
by $\phi(\tau,\tau^\prime)$.

Occasionally we also have to consider the
\emph{collision} of a train track $\eta$
at a large branch $e$. This collision is
obtained from $\eta$ by a split at $e$ and 
removal of the diagonal in the split track.
Such a collision is shown in Figure 2.1.2 of \cite{PH92}.

A split of a maximal
transversely recurrent generic train track is maximal,
transversely recurrent and generic. 
If $\tau$ is a complete train track and if
$\lambda\in {\cal C\cal L}$ is carried by $\tau$, then 
for every large branch $e$ of $\tau$ there is a
unique choice of a right or left split of $\tau$ at 
a large branch $e$ of $\tau$ with the
property that the split track $\tau^\prime$ carries $\lambda$
(see p. 557 of \cite{H09} for a more complete discussion).
We call such a split a \emph{$\lambda$-split}.
The train track $\tau^\prime$ is
complete. In particular, a complete train track $\tau$ can always
be split at any large branch $e$ to a complete train track
$\tau^\prime$; however there may be a choice of a right or left
split at $e$ such that the resulting train 
track is not recurrent any
more (compare p.120 in \cite{PH92}).

For a number $L\geq 1$, an
\emph{$L$-quasi-isometric embedding} of a metric space
$(X,d)$ into a metric space $(Y,d)$ is a map
$\phi:X\to Y$ such that
\[d(x,y)/L-L\leq  d(\phi(x),\phi(y)) \leq Ld(x,y)+L\]
for all $x,y\in X$. The map $\phi$ is called
an \emph{$L$-quasi-isometry} if moreover
the $L$-neighborhood of $\phi X$ in $Y$ is all of $Y$.
An \emph{$L$-quasi-geodesic} in a metric space
$(X,d)$ is an $L$-quasi-isometric embedding of a closed connected
subset of $\mathbb{R}$ or of the intersection of 
such a closed connected subset of $\mathbb{R}$ with 
$\mathbb{Z}$.

Denote by ${\cal T\cal T}$ the directed metric graph whose set
${\cal V}({\cal T\cal T})$ of 
vertices is the set of 
isotopy classes of complete train tracks on $S$
and whose edges are determined as follows. The train track
$\tau\in {\cal V}({\cal T\cal T})$
is connected to the train track $\tau^\prime$
by a directed edge of length one 
if and only if $\tau^\prime$ can be obtained
from $\tau$ by a single split.
The graph ${\cal T\cal T}$ is connected (Corollary 2.7 of \cite{H09}).
The mapping class group ${\cal M\cal C\cal G}(S)$ of $S$ acts properly
and cocompactly on ${\cal T\cal T}$ as a group of
simplicial isometries. In particular, ${\cal T\cal T}$
is ${\cal M\cal C\cal G}(S)$-equivariantly quasi-isometric to
${\cal M\cal C\cal G}(S)$ equipped with any word metric 
(Corollary 4.4 of \cite{H09}).

Define a \emph{splitting sequence} 
in ${\cal T\cal T}$ to be a sequence 
$\{\alpha(i)\}_{0\leq i\leq m}\subset {\cal V}({\cal T\cal T})$ with
the property that for every $i\geq 0$ the train track
$\alpha(i+1)$ can be obtained from $\alpha(i)$ by
a single split.
Thus splitting sequences in ${\cal V}({\cal T\cal T})$ 
correspond precisely to directed edge-paths in ${\cal T\cal T}$.
If $\tau$ can be connected to $\eta$ by a splitting
sequence then we say that $\tau$ is \emph{splittable} to $\eta$.
If $\{\alpha(i)\}_{0\leq i\leq m}$ is a splitting sequence then
the composition 
\[\phi(\alpha(0),\alpha(m))=\phi(\alpha(m-1),\alpha(m))\circ
\dots \circ \phi(\alpha(0),\alpha(1))\]
is a bijection of the branches (or half-branches) of $\alpha(0)$
onto the 
branches (or half-branches) of $\alpha(m)$
which does not depend on the choice of the splitting sequence
connecting $\alpha(0)$ to $\alpha(m)$ (Lemma 5.1 of \cite{H09}).

For a complete train track $\tau$ and a complete geodesic
lamination $\lambda$ carried by $\tau$ 
define the \emph{cubical Euclidean cone} $E(\tau,\lambda)$
to be the full subgraph of ${\cal T\cal T}$ whose
vertices consist of the complete train tracks
which can be obtained from $\tau$ by a splitting sequence
and which carry $\lambda$. Then $E(\tau,\lambda)$ is a connected
subgraph of ${\cal T\cal T}$ and hence it can
be equipped with an intrinsic path-metric $d_E$.
We showed (Theorem 2 of \cite{H09})

\begin{theo}\label{cubicaleuclid}
There is a number $L_0>1$
such that for every
$\tau\in {\cal V}({\cal T\cal T})$ and every complete
geodesic lamination $\lambda$ carried by $\tau$ the
inclusion
$(E(\tau,\lambda),d_E)\to {\cal T\cal T}$ is an
$L_0$-quasi-isometric embedding.
\end{theo}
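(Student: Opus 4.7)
The inequality $d_{\mathcal{T}\mathcal{T}}(x,y)\le d_E(x,y)$ is free, since $E(\tau,\lambda)$ is a subgraph of $\mathcal{T}\mathcal{T}$. The whole content of the theorem is the reverse inequality $d_E(x,y)\le L_0 d_{\mathcal{T}\mathcal{T}}(x,y)+L_0$ for some universal $L_0$. My plan is to first understand $E(\tau,\lambda)$ combinatorially as something like a lattice, then bound $d_{\mathcal{T}\mathcal{T}}$ from below by a lattice distance.

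First I would analyze the local structure of $E(\tau,\lambda)$. At any vertex $\sigma\in E(\tau,\lambda)$, a splitting step that remains in $E(\tau,\lambda)$ must be a $\lambda$-split; since $\lambda$ is carried by $\sigma$, the direction (right or left) of the split at each large branch of $\sigma$ is forced by $\lambda$. Moreover, $\lambda$-splits at disjoint large branches commute up to a bijection of branches, so $E(\tau,\lambda)$ locally looks like a cube complex. Using the well-definedness of the branch bijection $\phi(\alpha(0),\alpha(m))$ along a splitting sequence (Lemma 5.1 of \cite{H09}, cited earlier), I would define, for each vertex $\sigma\in E(\tau,\lambda)$ and each branch $b$ of $\tau$, a non-negative integer $\nu_\sigma(b)$ counting how many times a descendant of $b$ is split as a large branch along any splitting sequence from $\tau$ to $\sigma$. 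The main claim to prove is that $\nu_\sigma$ is independent of the splitting sequence and that
\[
d_E(x,y)\;\asymp\;\sum_{b}\bigl|\nu_x(b)-\nu_y(b)\bigr|
\]
up to a uniform multiplicative constant, where the sum ranges over branches of a suitable common refinement of $x$ and $y$ inside $E(\tau,\lambda)$. This reduces the lower bound to showing that the $\ell^1$-norm $\|\nu_x-\nu_y\|_1$ is bounded above by a constant multiple of $d_{\mathcal{T}\mathcal{T}}(x,y)$.

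Second, I would try to realize these coordinates via transverse measures. By Lemma \ref{fatening}, I can fix a complete train track $\tau_0$ carrying $\lambda$, realize $\lambda$ as a sublamination of a measured geodesic lamination $\mu$ transverse to some fixed multicurve that crosses every branch of $\tau$, and use the weights of $\mu$ on branches of $\sigma$. When $\sigma\in E(\tau,\lambda)$ is obtained from $\tau$ by a single $\lambda$-split at a large branch $e$, the weights change in an explicit additive way that exactly reproduces the change in $\nu_\sigma$. The final step would be to argue that these same weight-type quantities, now attached to an arbitrary complete train track $\eta\in\mathcal{T}\mathcal{T}$ near $E(\tau,\lambda)$ (via the carrying of a common splitting-descendant, or via a coarsely well-defined projection to $E(\tau,\lambda)$), vary by a uniformly bounded amount under any single split or shift in $\mathcal{T}\mathcal{T}$. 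Concatenating along a $d_{\mathcal{T}\mathcal{T}}$-geodesic from $x$ to $y$ would then give the desired lower bound.

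The main obstacle is the last step: a geodesic in $\mathcal{T}\mathcal{T}$ between $x,y\in E(\tau,\lambda)$ has no reason to stay inside, or even near, $E(\tau,\lambda)$, and individual splits in $\mathcal{T}\mathcal{T}$ can destroy $\lambda$-carrying. So one must construct a coarse Lipschitz retraction $r:\mathcal{T}\mathcal{T}\to E(\tau,\lambda)$, or at least a coarse Lipschitz extension of the coordinates $\nu$ to all of $\mathcal{T}\mathcal{T}$. I expect this to require a careful analysis of what happens when a split is performed on a branch that is not a branch of some $\lambda$-carrying descendant of $\tau$: one would like to show that after at most a uniformly bounded number of further splits and shifts in $\mathcal{T}\mathcal{T}$, one lands again on a train track carrying $\lambda$, and that the resulting change in coordinates is uniformly bounded. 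This is the technically delicate point, and it is where I would spend most of the effort, leveraging finiteness of the combinatorial types of neighborhoods in $\mathcal{T}\mathcal{T}$ under the action of $\mathcal{M\!C\!G}(S)$ together with openness of the set of laminations carried by a complete train track.
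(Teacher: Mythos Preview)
This theorem is not proved in the present paper; it is quoted in Section~2 as Theorem~2 of \cite{H09} and used throughout as a black box. So there is no in-paper proof to compare your proposal against.

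Your outline correctly identifies the overall structure: the intrinsic $d_E$-geometry of $E(\tau,\lambda)$ is governed by split-counting coordinates $\nu_\sigma$, and this is indeed the ``cubical'' content (cf.\ Lemma~5.1, Corollary~5.2, Lemma~5.4 of \cite{H09}, all invoked later in this paper). But your proposal is not a proof, because you stop precisely at the substantive step. You correctly name the obstacle --- a $d_{\mathcal{T}\mathcal{T}}$-geodesic between $x,y\in E(\tau,\lambda)$ need not stay near $E(\tau,\lambda)$, so one needs a coarse Lipschitz retraction $\mathcal{T}\mathcal{T}\to E(\tau,\lambda)$ or a Lipschitz extension of the coordinates $\nu$ --- and then offer only the expectation that local finiteness under $\mathcal{M\!C\!G}(S)$ will make it work. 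That is not an argument: the whole difficulty is to produce a concrete projection and verify that it moves a bounded amount under a single split of an \emph{arbitrary} complete train track. This requires specific carrying and splitting control of the kind the paper repeatedly imports from \cite{H09} (e.g.\ Lemma~6.7 and Proposition~A.6 there), not a soft compactness appeal.

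There is also a confusion in your second step. The lamination $\lambda$ is \emph{complete}, hence maximal, so it cannot be a proper sublamination of any measured geodesic lamination $\mu$; at best the support of a measured lamination can be a sublamination of $\lambda$, which is the opposite direction. And Lemma~\ref{fatening} concerns train tracks that $\epsilon$-follow a lamination in standard form for a pants decomposition, which has nothing to do with building transverse-measure coordinates on $E(\tau,\lambda)$. Your combinatorial split-counts $\nu_\sigma$ (well-defined by Lemma~5.1 of \cite{H09}) are already the right coordinates; the attempt to recast them via measured laminations is both unnecessary and not correctly set up.
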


By Corollary 5.2 of \cite{H09},
directed edge-paths in $(E(\tau,\lambda),d_E)$
are geodesics
and hence we obtain as an immediate consequence that
directed edge-paths in ${\cal T\cal T}$
are $L_0$-quasi-geodesics.

\section{Density of splitting sequences}

The goal of this section is to show the following proposition
which is the first step in the proof of Theorem \ref{thm1}
from the introduction and which is 
the main technical tool for the proof of 
Theorem 2-4.

\begin{proposition}\label{density1}
There is a number $d_0>0$ with the
following property. For any train tracks $\tau,\sigma\in
{\cal V}({\cal T\cal T})$ 
there is a train track $\tau^\prime$ which is contained in
the $d_0$-neighborhood of $\tau$ and
which is splittable to a train
track $\sigma^\prime$ contained in the $d_0$-neighborhood of
$\sigma$.
\end{proposition}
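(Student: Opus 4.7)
The strategy is to construct the splittable pair $(\tau',\sigma')$ by approximating $\sigma$ by a train track in standard form, extracting the distinguished complete geodesic lamination carried by this approximation via Lemma~\ref{standardform}, and then arguing that a bounded perturbation of $\tau$ also carries this lamination so that the desired splitting sequence exists.

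First, using the cocompactness of the $\mathcal{MCG}(S)$-action on $\mathcal{TT}$ together with Lemma~\ref{fatening}, I would produce a complete train track $\sigma_0$ in standard form for some marking $F$ with pants decomposition $P$, lying within a uniformly bounded distance of $\sigma$; existence reduces to a finite check since there are only finitely many $\mathcal{MCG}(S)$-orbits of complete train tracks. By Lemma~\ref{standardform} (and the bijection preceding it between standard-form tracks for $F$ with only twist connectors and complete standard-form laminations for $P$), the track $\sigma_0$ is distinguished by a complete geodesic lamination $\lambda$ in standard form for $P$ carried by $\sigma_0$.

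The main technical claim is that there is a complete train track $\tau'$ within uniformly bounded distance of $\tau$ which also carries $\lambda$. The idea is to exploit the openness in the Hausdorff topology of the set $O_{\tau'}$ of complete laminations carried by a given complete train track (Lemma~2.3 of~\cite{H09}) together with the cocompactness of the $\mathcal{MCG}(S)$-action: up to a bounded perturbation, the combinatorial structure of $\tau$ can be aligned with the marking $F$ and the pants decomposition $P$ so that a nearby train track carries $\lambda$. Once such a $\tau'$ is produced, the $\lambda$-splitting sequence from $\tau'$ (p.~557 of~\cite{H09}) yields complete train tracks each carrying $\lambda$; by uniqueness in Lemma~\ref{standardform} and the fact that sufficiently long $\lambda$-splits $\epsilon$-follow $\lambda$, this sequence reaches a train track $\sigma'$ within uniformly bounded distance of $\sigma_0$. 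The uniform bound is controlled by the quasi-isometric embedding of the cubical Euclidean cone $E(\tau',\lambda)$ of Theorem~\ref{cubicaleuclid} together with the quasi-geodesicity of directed edge paths noted immediately after it. Setting $\sigma'$ equal to this reached train track then gives the desired splittable pair.

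The principal obstacle is the construction of the perturbation $\tau'$ carrying $\lambda$. This step is the technical heart of the proposition: it requires a delicate coordination between the standard-form approximation of $\tau$ and the specific $\lambda$ determined by $\sigma_0$, and will likely proceed via a case analysis over the finite set of $\mathcal{MCG}(S)$-orbit types of complete train tracks combined with a judicious choice of marking $F$ compatible with the local structure of $\tau$.
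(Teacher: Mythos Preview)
Your Step~3 can indeed be made to work, though your exposition conflates two markings: once you fix the marking $F$ near $\sigma$ (so that $\sigma_0$ and hence $\lambda$ are determined), you must pick a \emph{second} marking $F'$ near $\tau$ and take $\tau'$ to be the unique train track in standard form for $F'$ carrying $\lambda$ (Lemma~\ref{standardform}). The ``judicious choice of $F$ compatible with $\tau$'' you mention cannot be the same $F$ you already fixed near $\sigma$.

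The real gap is Step~4. Your claim that the $\lambda$--splitting sequence from $\tau'$ must pass within uniformly bounded distance of $\sigma_0$ is false as stated, and the justification via ``$\epsilon$--following $\lambda$'' does not repair it. The lamination $\lambda$ you chose is in standard form for the pants decomposition $P$; its minimal components are exactly the pants curves. Every Dehn twist $\theta_c$ about a curve $c\in P$ fixes $\lambda$, so the train tracks $\theta_c^n(\sigma_0)$ all carry $\lambda$ and all $\epsilon$--follow $\lambda$ for $\epsilon$ as small as you like, yet they are pairwise arbitrarily far apart in $\mathcal{TT}$. Concretely: $\lambda$--splitting a train track containing $c$ as a reduced twist connector twists about $c$ in the single direction dictated by the spiraling of $\lambda$. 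If $\tau'$ happens to sit at $\theta_c^{N}(\sigma_0)$ with $N$ large and of the same sign as that spiraling direction, then every train track in $E(\tau',\lambda)$ lies at twist parameter $\geq N$ about $c$ and the cone never comes near $\sigma_0$. Theorem~\ref{cubicaleuclid} only tells you $E(\tau',\lambda)$ is quasi-isometrically embedded; it says nothing about $\sigma_0$ lying near it.

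This twist obstruction is exactly what the paper's argument confronts head--on. The paper works in the opposite direction from you: it first fixes the marking $F$ near $\tau$, then builds machinery (Lemma~\ref{tightcontrol}, Proposition~\ref{inducing}, Lemma~\ref{specialcontrol}) to split a train track in standard form for $F$ down to one containing the pants decomposition $Q$ of the target marking as an embedded subtrack, with uniform control on intersection numbers. At that point the remaining discrepancy is precisely a multi--twist about $Q$, and the paper absorbs it by \emph{reversing the twist connectors} (equivalently, replacing $\lambda$ by a different standard--form lamination $\lambda'$ whose spiraling directions match the signs of the required twists) and passing to the corresponding train track in standard form for $F$. Without an analogous mechanism for choosing the spiraling directions of $\lambda$ compatibly with the relative position of $\tau$ and $\sigma$, your approach cannot close.
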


To simplify the argument
we reduce Proposition \ref{density1} to the following

\begin{proposition}\label{density}
There is a number $d_1>0$ with the following
property. Let $F$ be any marking of $S$. Then for 
any train track
$\sigma\in {\cal V}({\cal T\cal T})$ there
is a train track $\tau\in {\cal V}({\cal T\cal T})$ 
in standard form for $F$ 
which carries a train track 
$\sigma^\prime$ contained in the $d_1$-neighborhood of 
$\sigma$. If $\sigma$ is in standard form for a marking $G$ 
with pants decomposition $Q$ then $\sigma^\prime$ can
be chosen to contain the pants decomposition $Q$ 
as an embedded subtrack.
\end{proposition}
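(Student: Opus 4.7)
My plan is to reduce Proposition \ref{density} to its second assertion and then build $\sigma^\prime$ as a common carrier of $\sigma$ and of the train track in standard form for $F$ associated to $\sigma$ via Lemma \ref{standardform}. For the reduction, I fix once and for all a marking $F_0$ and a train track $\tau_0$ in standard form for $F_0$. For any $\phi\in {\cal M\cal C\cal G}(S)$ the translate $\phi\tau_0$ is in standard form for the marking $\phi F_0$; since the action of ${\cal M\cal C\cal G}(S)$ on ${\cal T\cal T}$ is cocompact, every vertex of ${\cal T\cal T}$ lies within a uniformly bounded distance of some train track in standard form for some marking. After replacing $\sigma$ by such a nearby vertex at a universally bounded cost, I may assume that $\sigma$ itself is in standard form for a marking $G$ with pants decomposition $Q$, and aim for the second, stronger assertion.

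Given such $\sigma$, I would choose a complete geodesic lamination $\lambda$ carried by $\sigma$ which is in standard form for $Q$, i.e.\ which contains the geodesic representatives of $Q$ as its minimal components. Existence of such a $\lambda$ follows from the bijection, discussed just before Lemma \ref{fatening}, between complete train tracks in standard form for $G$ with only twist connectors and complete geodesic laminations in standard form for $Q$, up to a bounded preparatory shift of $\sigma$ replacing non-twist connectors by twist connectors. By Lemma \ref{standardform}, $\lambda$ is carried by a unique complete train track $\tau$ in standard form for $F$. Because $\sigma$ and $\tau$ both carry the complete lamination $\lambda$, the argument used in the proof of Lemma \ref{standardform} (which invokes Lemma 3.3 of \cite{H09}) produces a complete train track $\sigma^\prime$ that is carried by both. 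Since $Q$ lies inside $\lambda$ as a union of minimal components and $\lambda$ is carried by $\sigma^\prime$, a suitable choice of $\sigma^\prime$ within its splitting class realizes each pants curve of $Q$ as an embedded closed loop of $\sigma^\prime$; this yields the extra subtrack property demanded by the second assertion.

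The main obstacle is the uniform distance bound $d(\sigma,\sigma^\prime)\leq d_1$. Because $\sigma^\prime$ is carried by $\sigma$, it lies in the cubical Euclidean cone $E(\sigma,\lambda)$, and Theorem \ref{cubicaleuclid} reduces the bound to controlling the number of $\lambda$-splits in a splitting sequence from $\sigma$ to $\sigma^\prime$. My approach is to make a ${\cal M\cal C\cal G}(S)$-equivariant and canonical choice of $\sigma^\prime$ (for instance as a splitting-maximal common carrier of $\sigma$ and $\tau$ inside $E(\sigma,\lambda)$) and to argue that the splits in the resulting sequence are localized near the places where the standard models and connectors of $\sigma$ and of $\tau$ disagree. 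Since the standard models and connectors belong to a finite list and each has bounded combinatorics, a local reconciliation analysis around each pants curve of $Q$ and across each standard piece should give a universal bound on the number of required splits, hence the desired constant $d_1$. Verifying this local boundedness uniformly and ensuring compatibility with the embedded-subtrack condition on $Q$ is the principal technical task, and it is where careful use of the standard-form structure described in Section 2 will be essential.
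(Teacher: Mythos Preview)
Your reduction to the case where $\sigma$ is in standard form for some marking $G$ is correct and matches the paper. Choosing a complete lamination $\lambda$ in standard form for $Q$ carried by $\sigma$, and finding via Lemma~\ref{standardform} the train track $\tau$ in standard form for $F$ carrying $\lambda$, is also exactly how the paper begins. The gap is in everything that follows.

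You take $\sigma'$ to be a common \emph{carried} train track of $\sigma$ and $\tau$, i.e.\ $\sigma'\prec\sigma$ and $\sigma'\prec\tau$, and then try to bound $d(\sigma,\sigma')$. But since the minimal components of $\lambda$ are precisely the curves of $Q$, and $Q$ is already embedded in $\sigma$, the cone $E(\sigma,\lambda)$ consists (up to bounded error) of multi-twists of $\sigma$ about the pants curves of $Q$; see Corollary~\ref{Dehn}. So your $\sigma'$ is essentially $\theta_Q^{\,n}\sigma$ for some multi-index $n$, and the requirement $\sigma'\prec\tau$ forces $\vert n\vert$ to be large enough that the twisted $\sigma$ becomes carried by $\tau$. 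That threshold depends on $d(\sigma,\tau)$ and is not universally bounded. Your proposed ``local reconciliation analysis around each pants curve of $Q$ and across each standard piece'' cannot work as stated, because $\sigma$ and $\tau$ are in standard form for \emph{different} pants decompositions $P$ and $Q$: there are no common ``standard pieces'' to compare, and the finiteness of the list of model pieces says nothing about how the $P$-pieces of $\tau$ sit relative to the $Q$-pieces of $\sigma$.

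The paper does \emph{not} ask that $\sigma'$ be carried by $\sigma$. Instead it splits from the $F$-side: starting from $\tau$, it uses the subtrack $\tau(Q)$ and the machinery of induced sequences (Proposition~\ref{inducing}) to split $\tau$ down until $Q$ becomes an embedded multicurve, controlling the process via the intersection numbers $i_\sigma(\cdot,\cdot)$. This is packaged in Lemma~\ref{specialcontrol}, which produces a set ${\cal D}$ of train tracks in standard form for markings with pants decomposition $Q$, all carried by train tracks in standard form for $F$, and of uniformly bounded diameter. A final multi-twist about $Q$ then adjusts the marking from $G'$ to $G$. The bound $d(\sigma,\sigma')\le d_1$ comes from the diameter of ${\cal D}$, which in turn comes from the intersection-number estimate of Proposition~\ref{inducing}, not from any local comparison of standard pieces.
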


We begin with explaining how Proposition \ref{density1} 
follows from Proposition \ref{density}. 
The mapping class group acts properly and cocompactly
on ${\cal T\cal T}$ preserving the set of train tracks
in standard form for some marking of $S$.
Thus every complete train track is
contained in a uniformly bounded neighborhood of a 
train track in standard form for some marking $F$ of $S$.
The mapping class group acts on 
the set of all markings of $S$, with finitely
many orbits. Therefore 
the diameter in ${\cal T\cal T}$ of any
set of train tracks in standard form for a fixed
marking is uniformly bounded.
This implies that 
there is a number $d_2>0$
and for every complete
train track $\tau$ on $S$ there is a marking $F$ of $S$
such that $d(\tau,\eta)\leq d_2$ for every train track
$\eta$ in standard form for $F$.

By Lemma 6.6 of \cite{H09}, there is a number
$p>0$ and for two complete train tracks
$\sigma\prec\tau$ there is a train track $\zeta$ which
can be obtained from $\tau$ by a splitting sequence
and such that $d(\sigma,\zeta)\leq p$.
As a consequence,
Proposition \ref{density1} follows from Proposition \ref{density}.

The idea of proof for Proposition \ref{density} is 
as follows.
Define a \emph{splitting and shifting sequence}
to be a sequence $\{\alpha(i)\}_{0\leq i\leq m}$ 
with the property that for every $i\geq 0$ 
the train track $\alpha(i+1)$ can be obtained
from $\alpha(i)$ by a sequence of shifts followed
by a single split. 
Theorem 2.4.1 of \cite{PH92} relates splitting and
shifting to carrying.

\begin{proposition}\label{splitshiftcarry}
If $\sigma\in {\cal V}({\cal T\cal T})$ is carried
by $\tau\in {\cal V}({\cal T\cal T})$ then 
$\tau$ can be connected to $\sigma$ by a splitting
and shifting sequence.
\end{proposition}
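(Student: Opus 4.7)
The plan is to reduce the statement to Theorem 2.4.1 of Penner--Harer, which gives the analogous fact in their more general setting (allowing non-generic train tracks and including ``combings'' among the elementary moves). The essential task is to verify that when both $\tau$ and $\sigma$ are generic and complete, combings are never required and the sequence can be realized by splits and shifts alone.

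I would set up an induction on a natural complexity attached to a fixed carrying map $\phi:\sigma\to\tau$. Define $w(b)$ to be the number of branches of $\sigma$ whose $\phi$-image crosses a given branch $b$ of $\tau$. Using the definition of a carrying map together with the switch condition for $\sigma$, one checks that $w$ is a transverse measure on $\tau$; recurrence of $\sigma$ and the fact that $\tau$ is maximal then force $w$ to be positive on every branch of $\tau$. The total $\Vert w\Vert=\sum_b w(b)$ is the induction parameter, bounded below by the number of branches of $\tau$, with equality exactly when $\phi$ is a bijection on branches. In the equality case both train tracks are complete and have the same combinatorial branch data, so $\sigma$ is obtained from $\tau$ by a finite sequence of shifts.

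If $\Vert w\Vert$ is not minimal, I would locate a large branch $e$ of $\tau$ at which the combinatorics of $\phi$ dictate a specific right or left split $\tau\to\tau'$ such that $\tau'$ still carries $\sigma$ and the induced weight function has strictly smaller total. A few preliminary shifts at mixed branches adjacent to $e$ may be needed to expose $e$ in the required configuration. Iterating, the complexity drops to its minimum in finitely many steps, and a final sequence of shifts completes the matching to $\sigma$.

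The main obstacle is the combinatorial step identifying a splittable large branch and pinning down the split direction: I expect this to follow from tracing, in a small neighborhood of $e$, how the incoming small half-branches of $\tau$ at one endpoint of $e$ are paired with the outgoing small half-branches at the other endpoint through trainpaths of $\phi(\sigma)$. Genericity of $\sigma$ ensures a unique such pairing, which in turn prescribes a unique right or left split and rules out the need for a collision or combing. Once this local analysis is in place, the inductive descent together with the cited Penner--Harer theorem yield the proposition.
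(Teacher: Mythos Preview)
Your approach is essentially correct as a sketch of the proof of Theorem 2.4.1 of \cite{PH92}, but the paper does not reprove this result at all: it simply cites Penner--Harer's theorem directly and moves on. So you are doing considerably more work than the paper does.

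Your worry about combings appears to be misplaced. Theorem 2.4.1 of \cite{PH92} is already formulated for generic birecurrent train tracks and produces a sequence of splits and shifts (and isotopies); combings only enter elsewhere in \cite{PH92} when one has to pass through non-generic tracks. Since complete train tracks are generic by the paper's convention, the citation applies verbatim, which is why the paper gives no further argument.

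That said, your sketch is sound and close to how Penner--Harer actually argue. Two small points if you want to tighten it: first, in the base case $\Vert w\Vert$ minimal, the carrying map is a $C^1$-homeomorphism of train tracks homotopic to the identity, so $\sigma$ and $\tau$ are already isotopic---no shifts are needed. Second, your key claim that $w>0$ on every branch (both initially and after each split) is exactly what rules out collisions, but it deserves a word of justification: since $\sigma$ is complete it carries a complete geodesic lamination $\lambda$, and a maximal lamination carried by a maximal train track must meet every branch (an Euler-characteristic count on complementary regions), so $\lambda$ and hence $\sigma$ fills each intermediate track. This is what guarantees the diagonal of every split receives positive weight, so the process never stalls at a collision and $\Vert w\Vert$ strictly decreases.
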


Now let $F,G$ be any two markings of $S$. 
We attempt to construct
a splitting and shifting sequence
connecting some train track in standard form for $F$ to
some train track in standard form for $G$.

A train track in standard form for $G$ with only twist connectors
carries a complete
geodesic lamination $\lambda$ in standard form 
for the pants decomposition $Q$ 
of the marking $G$ of $S$.
By Lemma \ref{standardform}, every complete geodesic
lamination $\lambda$ on $S$ is carried by a unique train track $\tau$
in standard form for $F$. We modify $\tau$
with a sequence of 
splits as efficiently as possible
to a train track $\xi$ which carries $\lambda$ and contains
the pants decomposition $Q$ as a subtrack.
This train track $\xi$ carries a train track
$\eta$ in standard form for
some marking of $S$ with pants decomposition $Q$ whose
distance to $\xi$ is uniformly bounded.
There is a \emph{multi-twist} $\phi$ (i.e. a concatenation
of mutually commuting Dehn twists) 
about the pants curves of $Q$ which
maps $\eta$ to a train track $\phi\eta$ in standard form for $G$.
In general, $\phi\eta$ is not carried by $\eta$, 
but we obtain enough control that we
can find a perhaps different train track in standard form 
for $F$ which can be connected with a splitting and
shifting sequence to a train track in a uniformly
bounded neighborhood of $\phi\eta$ which is in standard
form for a marking with pants decomposition $Q$.

To carry out this strategy we use the pants decomposition $Q$ 
for the construction of 
splitting sequences. However,
$Q$ may not \emph{fill} $\tau$, i.e. a carrying map
$Q\to \tau$ may not be surjective. Therefore we are lead
to investigate splitting sequences of complete train tracks
which are determined 
by modifications of subtracks. This will occupy the
major part of this section and will also be of crucial importance
in Section 5 and Section 6.

Fix a complete Riemannian metric on $S$ of finite volume.
With respect to this metric, a complementary
region $C$ of a 
train track $\sigma$ on $S$ is a hyperbolic surface whose
metric completion $\overline{C}$ is a bordered surface
with boundary $\partial C$. This boundary 
consists of a finite
number of arcs of class $C^1$, called \emph{sides}
of $C$ or of $\overline{C}$.
Each side of $C$ either is a closed curve of class $C^1$ (i.e. the
boundary component containing the 
side does not contain any cusp)
or an arc with endpoints at two not
necessarily distinct
cusps of the component. 
We call a side of $C$ which does not
contain cusps a \emph{smooth side} of $C$.
The closure of $C$ in $S$ can be obtained from
$\overline{C}$ by some identifications of subarcs
of sides (the inclusion $C\to S$ extends to an immersion
of each side of $C$, but the image arc may have
tangential self-intersections or may meet another
side tangentially). For simplicity we call the 
image in $\sigma$ of a side of $C$  
a side of $C$ as well (i.e. most of the time we view a side
of $C$ as an immersed arc of class $C^1$ in $\sigma$).
Using this abuse
of notation, a side of $C$ is just an immersed
arc or an immersed closed
curve of class $C^1$ in $\sigma$
with only tangential self-intersections.
However, we reserve the notation $\overline{C}$ for the
metric completion of $C$. 

If $T\subset \partial C$ is a smooth side
of a complementary region $C$ of $\sigma$
then we mark a point on $T$. We view this point 
as a point on the boundary of the completion
$\overline{C}$ of $C$, i.e. even in the case that
the point corresponds to a point of tangential
self-intersection of the image of $\partial C$ in $\sigma$, 
passing once through $T$ means crossing the point 
precisely once, and not passing through $T$ 
means not crossing through the point.

If $C$ is a complementary region of $\sigma$ whose
boundary contains precisely $k\geq 0$ cusps, then the \emph{Euler
characteristic} $\chi(C)$ is defined by $\chi(C)=\chi_0(C)-k/2$
where $\chi_0(C)$ is the usual Euler characteristic of
the compact topological surface with boundary
$\overline{C}$. Note that the
sum of the Euler characteristics of the complementary regions
of $\sigma$ is just the Euler characteristic of $S$
(see the discussion in Chapter 1.1 of \cite{PH92}).

A \emph{complete extension} of a train track
$\sigma$ is a complete train track $\tau$
containing $\sigma$ as a subtrack and whose switches
are distinct from the images in
$\sigma$ of the marked points on smooth boundary components
of complementary regions of $\sigma$.
Such a complete extension $\tau$ intersects
each complementary region $C$ of $\sigma$ in an
embedded graph with smooth edges. The closure of $\tau\cap C$ 
in the completion $\overline{C}$ of $C$ 
is a graph whose univalent
vertices are contained in the complement of the
cusps and marked points of the boundary $\partial C$
of $\overline{C}$.  At a univalent vertex, the graph 
is tangential to $\partial C$. 
We call two such graphs $\tau\cap C,\tau^\prime\cap C$
\emph{equivalent}
if there is a smooth isotopy of $\overline{C}$ which fixes the
cusps and the marked points in $\partial C$ and which
maps $\tau\cap C$ onto $\tau^\prime\cap C$.
The complete extensions
$\tau,\tau^\prime$ of $\sigma$ are called
\emph{$\sigma$-equivalent} if for each complementary region
$C$ of $\sigma$ the graphs $\tau\cap C$ and
$\tau^\prime\cap C$ are equivalent in this sense.
The purpose of marking a point on a smooth
boundary component $T$ of a complementary region of $\sigma$
is to control the amount of relative twisting about 
$T$ of two complete extensions $\tau,\tau^\prime$ of $\sigma$.

For two complete extensions $\tau,\tau^\prime$
of $\sigma$
define the \emph{intersection number}
$i_\sigma(\tau,\tau^\prime)$ to be the minimal number
of intersection points contained in $S-\sigma$ between
any two complete extensions $\eta,\eta^\prime$ of $\sigma$
which are $\sigma$-equivalent to $\tau,\tau^\prime$
and with the following additional properties.
\begin{enumerate}
\item[a)] A switch $v$ of $\eta$ (or $\eta^\prime$)
is also a switch of $\eta^\prime$ (or $\eta$) if and only
if $v$ is a switch of $\sigma$.
\item[b)] A switch of $\eta$ (or $\eta^\prime$)
contained in the interior
of a complementary region $C$ of $\sigma$ is not contained
in $\eta^\prime$ (or $\eta$), i.e. an intersection
point of $\eta$ with $\eta^\prime$ contained in $C$
is an interior point
of a branch of $\eta$ and of a branch of $\eta^\prime$.
\end{enumerate}
Since the number of switches of a complete train track on $S$ only
depends on the topological type of $S$, for any
complete extension $\tau$ of $\sigma$ the intersection number
$i_\sigma(\tau,\tau)$ is bounded from above by a universal constant
neither depending on $\sigma$ nor $\tau$. Moreover,
for every number
$m>0$ there is a number $q(m)>0$ not depending on $\sigma$
so that for every complete extension $\tau$ of $\sigma$
the number of $\sigma$-equivalence classes
of complete extensions $\tau^\prime$ of $\sigma$
with $i_\sigma(\tau,\tau^\prime)\leq m$ is bounded
from above by $q(m)$.

To simplify the notation we do not
distinguish between $\sigma$ as a subgraph of $\tau$ (and hence
containing switches of $\tau$ which are bivalent in $\sigma$) and
$\sigma$ viewed as a subtrack of $\tau$, i.e. the graph from which
the bivalent switches not contained in simple closed curve
components have been removed. A branch $e$ of $\sigma$
defines an embedded trainpath $\rho:[0,m]\to \tau$, unique
up to orientation, whose image is precisely $e$.
We call $\tau$
\emph{tight} at $e$ if
$e$ is a branch in $\tau$, i.e. if the length $m$ of $\rho$ 
equals one. If $e$ is a large branch of $\sigma$ then 
 $\rho$ begins and ends with a large
half-branch and hence $\rho[0,m]$ contains a large branch of 
$\tau$ (see Lemma 2.7.2 of \cite{PH92}). 

A \emph{proper subbranch} of a branch $e$ of $\sigma$ 
is a branch $b$ of $\tau$ which is a proper subset of $e$.
Then $b$ is incident on at least one switch $v$ of $\tau$ which
is not a switch of $\sigma$. There is a half-branch $c$ of
$\tau$ which is incident on $v$ and not contained in $\sigma$.
We call $c$ a \emph{neighbor} of $\sigma$ at $v$.
We distinguish three different types of 
\emph{large} proper subbranches
$b$ of a branch $e$ of $\sigma$. These types
are shown in Figure E. Note that a large branch
of any train track on $S$ is embedded in $S$.
\begin{figure}[ht]
\begin{center}
\psfrag{Figure D}{Figure E} 
\includegraphics 
[width=0.7\textwidth] 
{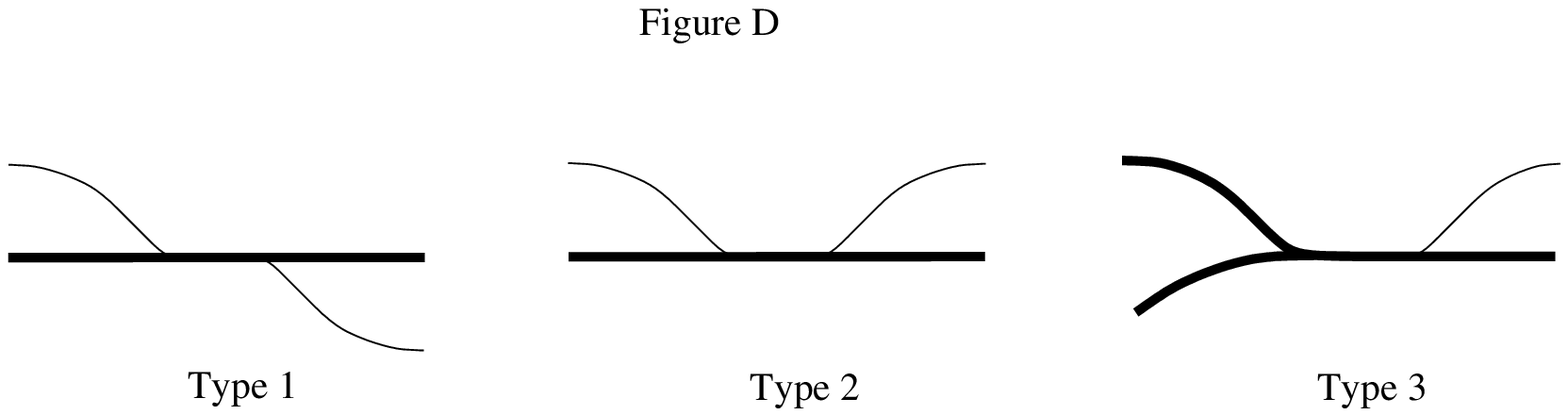}
\end{center}
\end{figure}

{\sl Type 1:} $b$ is contained in the interior of 
$e$ and the two neighbors of $\sigma$ at
the two endpoints of $b$ lie on different sides
of $e$ in a small tubular neighborhood of $b$ in $S$.

{\sl Type 2:} $b$ is contained in the interior of $e$ and
both neighbors of $\sigma$ at the two endpoints of $b$ lie
on the same side of $e$ in a small tubular neighborhood
of $b$ in $S$. 

{\sl Type 3:} One endpoint of $b$ is incident on a switch
of $\sigma$.

A split of $\tau$ at a large proper 
subbranch $b$ of $\sigma$ (i.e. of a large branch of
$\tau$ which is a proper subbranch of a branch of $\sigma$)
is called a \emph{$\sigma$-split} if the split track 
contains $\sigma$ as a subtrack.
Note that such a split always exists.
If $b$ is a large proper subbranch of $\sigma$ of type 2
then any split of $\tau$ at $b$ is a $\sigma$-split.

Let $q$ be the number of branches of a complete train track on $S$.
The number of switches of a complete train track
on $S$ then equals $2q/3<q$. For a subtrack $\sigma$ of
a complete train track $\tau$ 
let $\beta(\tau,\sigma)$ be the number of neighbors of
$\sigma$ in $\tau$,
i.e. the number of half-branches of $\tau-\sigma$ 
which are incident on a switch contained in $\sigma$.
If $\tau_1$ is obtained from $\tau$ by a split
at a large proper subbranch $b$ of $\sigma$ of type 2
then 
$\beta(\tau_1,\sigma)=\beta(\tau,\sigma)-1$.

Now let $\sigma$ be a recurrent train track on $S$. Then
there is a measured geodesic lamination $\nu$ on $S$ 
which is carried by $\sigma$ and which defines a positive
transverse measure on $\sigma$. We call such a measured
geodesic lamination \emph{filling} for $\sigma$.
For every complete extension $\tau$ of $\sigma$ 
there is a complete geodesic
lamination $\lambda$ which is carried by $\tau$ and contains
the support of $\nu$
as a sublamination. 
Namely, the positive transverse measure 
on $\sigma$ defined by $\nu$ 
can be approximated by positive transverse
measures $\mu_i$ on $\tau$ which define a 
measured geodesic lamination whose support is 
a minimal and maximal
geodesic lamination carried by $\tau$ (see
p.556 of \cite{H09} for a detailed proof of this fact).
Since the space ${\cal C\cal L}$ of all complete
geodesic laminations on $S$ is compact,
as $\mu_i\to \nu$ in the
space of transverse measures on $\tau$, up to passing to 
a subsequence the 
supports of $\mu_i$ converge in the Hausdorff topology to
a complete geodesic lamination $\lambda$ which 
contains the support of $\nu$ as a sublamination.
By Lemma 2.3 of \cite{H09}, 
$\lambda$ is carried by $\tau$.
We call $\lambda$ a \emph{complete
$\tau$-extension} of $\nu$. 

The following observation will be used throughout the paper.

\begin{lemma}\label{tightcontrol}
Let $\sigma$ be a recurrent subtrack of a complete train track $\tau$
and let $\lambda$ be a complete $\tau$-extension of a $\sigma$-filling
measured geodesic lamination. 
Then for every large branch $e$ of $\sigma$ there is a 
unique train track $\tau^\prime$ with the following properties.
\begin{enumerate}
\item $\tau^\prime$ can be obtained from $\tau$ by at 
most $q^2$ $\sigma$-splits
at large proper subbranches of $e$. 
In particular, $\tau^\prime$ contains
$\sigma$ as a subtrack.
\item $\tau^\prime$ carries $\lambda$.
\item $\tau^\prime$ is tight at $e$.
\end{enumerate}
Moreover, given any complete extension $\eta$ of $\sigma$,
if no marked point on a smooth side of a complementary region
$C$ of $\sigma$ is mapped to the branch $e$ then
\[i_\sigma(\tau^\prime,\eta)\leq 
i_\sigma(\tau,\eta)+q(\beta(\tau,\sigma)-
\beta(\tau^\prime,\sigma)).\]
Otherwise we have $i_\sigma(\tau^\prime,\eta)\leq i_\sigma(\tau,\eta)+q^3$.
\end{lemma}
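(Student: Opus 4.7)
The plan is to build $\tau'$ by iterating $\lambda$-splits at large branches of $\tau$ lying strictly inside the fixed branch $e$ of $\sigma$. The large branch $e$ is realized in $\tau$ by an embedded trainpath $\rho_e:[0,m_0]\to\tau$ with $m_0\leq q$, whose first and last half-branches are large in $\tau$. If $m_0=1$ then $\tau$ is already tight at $e$ and I set $\tau'=\tau$. Otherwise, by Lemma 2.7.2 of \cite{PH92}, the interior of $\rho_e$ contains a large branch $b$ of $\tau$, which is a large proper subbranch of $e$. The $\lambda$-split of $\tau$ at $b$ is unique and yields a complete train track $\tau^*$ carrying $\lambda$. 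The key observation is that this split is automatically a $\sigma$-split: the support of the $\sigma$-filling measured lamination $\nu$ is a sublamination of $\lambda$, so it is carried by $\tau^*$; because $\nu$ assigns positive weight to every branch of $\sigma$, the $\nu$-positive subtrack of $\tau^*$ realizes $\sigma$ combinatorially, and hence $\sigma\subset\tau^*$.

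Iterating yields a sequence $\tau=\tau_0,\tau_1,\dots$ of $\sigma$-splits at large proper subbranches of $e$, each carrying $\lambda$. The process halts when the trainpath representing $e$ has length one, i.e. when the current train track is tight at $e$. For the quantitative bound, the trainpath for $e$ in $\tau_k$ has at most $q$ branches, and each $\lambda$-split replaces the large branch it acts on by a small diagonal which cannot reappear as a large subbranch of $e$ afterwards; a potential argument counting the total number of half-branches of $\tau_k$ contained in a fixed small tubular neighborhood of $e$ bounds the total number of splits by $q^2$. Uniqueness of $\tau'$ is forced by the uniqueness of each $\lambda$-split combined with the fact that $\lambda$-splits at disjoint large branches commute up to isotopy.

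For the intersection number estimate I would fix representatives $\hat\tau,\hat\eta$ of the $\sigma$-equivalence classes of $\tau,\eta$ realizing $i_\sigma(\tau,\eta)$. Each subsequent $\sigma$-split modifies $\hat\tau$ only inside an arbitrarily small tubular neighborhood $U_b$ of $b$, disjoint from $\sigma$, so new intersections with $\hat\eta$ occur only inside $U_b$ and are bounded by the number of strands of $\hat\eta$ crossing $b$. When no marked point on a smooth side of a complementary region is mapped to $e$, the relative twisting of $\hat\eta$ across such marked points does not traverse $b$, so the number of $\hat\eta$-strands across $b$ is controlled by the number of neighbors of $\sigma$ at interior switches of $\rho_e$. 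Since only type-$2$ splits (both neighbors of $\sigma$ on the same side of $e$) decrease $\beta(\tau,\sigma)$, and since type-$1$ and type-$3$ splits contribute no new intersections with $\hat\eta$, summation over the sequence yields
\[i_\sigma(\tau',\eta)\leq i_\sigma(\tau,\eta)+q\bigl(\beta(\tau,\sigma)-\beta(\tau',\sigma)\bigr).\]
If a marked point is mapped to $e$, then the relative twisting of $\hat\eta$ about that smooth boundary component may add a universal constant amount per split, and summing over the at-most-$q^2$ splits yields the weaker bound $i_\sigma(\tau',\eta)\leq i_\sigma(\tau,\eta)+q^3$.

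The main obstacle is this last intersection-number accounting: the representatives of the two $\sigma$-equivalence classes must be chosen simultaneously and compatibly with the entire splitting sequence, and the three types of large proper subbranches from Figure E must be treated separately so that every type-$2$ split charges its intersection increase to a unit decrease of $\beta$, while the marked-point case is absorbed by the universal $q^3$ bound.
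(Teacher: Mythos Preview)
Your approach is the same as the paper's, and the overall shape of the argument is correct, but two of your subsidiary justifications have gaps.

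First, the termination bound. Your claim that ``the small diagonal cannot reappear as a large subbranch of $e$'' does not give a monotone potential: after a type-1 or type-3 $\sigma$-split the diagonal becomes small, but an adjacent subbranch of $e$ typically becomes large, so the number of large proper subbranches of $e$ need not decrease, and the length of $\rho_e$ is unchanged. The paper instead introduces the explicit potential $c(\tau,e)=\sum_a c(a,e)$, where the sum is over neighbors $a$ of $\sigma$ attached to interior switches of $\rho_e$, and $c(a,e)$ is the length of the trainpath in $e$ from the attachment switch of $a$ to the endpoint of $e$ determined by the tangent of $a$. A type-1 or type-3 split moves one neighbor one step toward its endpoint, so $c$ drops by at least one; a type-2 split absorbs a neighbor, so $c$ drops and $\beta$ drops. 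Since $c(\tau,e)\le q^2$ initially, this gives the bound.

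Second, your argument that the $\lambda$-split is automatically a $\sigma$-split is circular as written: you deduce $\sigma\subset\tau^*$ from the claim that the $\nu$-positive subtrack of $\tau^*$ ``realizes $\sigma$ combinatorially'', which already presupposes what you want. The paper proceeds by cases: for types 1 and 3 there is a \emph{unique} split containing $\sigma$, and since $\nu$ fills $\sigma$ this is the unique split carrying $\nu$, hence the $\lambda$-split; for type 2 both splits contain $\sigma$. Finally, in the intersection estimate your accounting is right in outline, but the reason a single type-2 split (or a split across a marked point) costs at most $q$ is that the moving half-branch of $\tau$ can cross at most the $\le q$ half-branches of $\eta$ attached to the relevant side of $C$, not ``the number of neighbors of $\sigma$''.
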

\begin{proof} If $\tau$ is tight at the large branch $e$ 
of $\sigma$ then
$\tau=\tau^\prime$ satisfies the requirements
in the lemma.  

Otherwise let 
$a$ be a neighbor of $\sigma$ at a switch of $\tau$
contained in $e$.
There is a unique maximal trainpath $\rho:[-1,m]\to \tau$ with
$\rho[-1/2,0]=a$ and such that $\rho[0,m]\subset e$. Then 
$\rho(m)$ is a switch of $\sigma$ on which $e$ is incident.
Let $c(a,e)=m\leq q$ be the length of the intersection of
the trainpath $\rho$ with $e$ and let 
\[c(\tau,e)=\sum_ac(a,e)\] where the sum is taken over 
all neighbors of $e$ in $\tau$. 
Then $c(\tau,e)\leq q^2$, and 
$c(\tau,e)=0$ if and only if $\tau$ is tight at $e$. 

Let $b$ be a large proper subbranch of $e$.
Let $\lambda$ be a complete $\tau$-extension of a $\sigma$-filling
measured geodesic lamination $\nu$ and let $\tau_1$ be the 
train track obtained from $\tau$ by a $\lambda$-split at $b$.
We distinguish two cases according to the type of $b$.

If $b$ is of type 1 or of type 3 then  
there is a unique choice of a right or left split of $\tau$
at $b$ such that
the split track $\tau_1^\prime$ contains $\sigma$ as a subtrack. 
Since $\nu$ is a $\sigma$-filling measured geodesic lamination,
$\tau_1^\prime$ is also the unique train track
obtained from $\tau$ by a split at $b$  
which carries $\nu$. Now $\lambda$ is a complete
extension of $\nu$ and therefore 
$\tau_1^\prime=\tau_1$.
The natural bijection $\phi(\tau,\tau_1)$ 
of the half-branches of $\tau$ onto
the half-branches of $\tau_1$ maps any neighbor
$a$ of $\sigma$ in $\tau$ to a neighbor 
$\phi(\tau,\tau_1)(a)$ of $\sigma$
in $\tau_1$. Moreover, we have
$c(\phi(\tau,\tau_1)(a),e)\leq c(a,e)$.
If $a$ is a neighbor of $\sigma$ at an endpoint of $b$
then $c(\phi(\tau,\tau_1)(a),e)= c(a,e)-1$
(see Figure D), 
To summarize, we have $c(\tau_1,e)\leq c(\tau,e)-1$.

If $b$ is of type 2 then once again,
the train track $\tau_1$ contains $\sigma$ as a subtrack.
Moreover, we have $\beta(\tau_1,\sigma)=\beta(\tau,\sigma)-1$ and 
$c(\tau_1,e)<c(\tau,e)$. As a consequence, a splitting sequence
of length at most $q^2$ at large proper subbranches of $e$
transforms $\tau$ to a train track $\tau^\prime$ which contains
$\sigma$ as a subtrack, is tight at $e$ and carries $\lambda$.
By uniqueness of sequences of $\lambda$-splits up to order
(Lemma 5.1 of \cite{H09}), the train track
$\tau^\prime$ is uniquely determined by $\tau,\sigma,\lambda,e$.

To estimate intersection numbers between $\tau,\tau^\prime$ and
an arbitrary complete extension $\eta$ of $\sigma$, 
let again $b$ be a large proper subbranch of $e$ of type 
1 or type 3 and let $\tau_1$ be the train track obtained from
$\tau$ by a $\lambda$-split at $b$.
If $e$ does not contain the image of any marked point
on a smooth boundary component of a complementary region
of $\sigma$, then 
$\tau$ and $\tau_1$ are $\sigma$-equivalent. 

Otherwise
there are one or two (not necessarily distinct) 
complementary regions $C_1,C_2$ of $\sigma$ and smooth
sides $T_i$ of $C_i$ whose images in $\sigma$ contain $e$. 
Up to isotopy, 
a split of $\tau$ at $b$ can be realized by moving 
one of the neighbors of $\sigma$ incident on an
endpoint of $b$, say the neighbor $a$, 
across $b$ while leaving the second neighbor 
(or the branches of $\sigma$ incident on an endpoint of 
$e$ in case the branch $b$ is of type 3) fixed.
Assume that the half-branch
$a$ is contained in the complementary region 
$C_1$ of $\sigma$ and that $a$ terminates at a 
point in the smooth boundary component $T_1$ of $C_1$.
There are at most $q$ half-branches of 
$\eta$ contained in $\eta\cap C_1$ which
terminate at a point in $T_1$.
Up to isotopy of $\overline{C_1}\cup \overline{C_2}$ 
preserving the cusps and
the marked points, moving the half-branch 
$a$ of $\tau$ across the marked point in $T_1$
increases the number of
intersection points between $\tau$ and $\eta$ by 
at most $q$. Namely, up to isotopy such a move creates
at most one additional
intersection point with  
any half-branch of $\eta$ with endpoint $T_1$.
As a consequence, we have
\begin{equation}\label{type1split}
i_\sigma(\tau_1,\eta)\leq i_\sigma(\tau,\eta)+q.
\end{equation}

If $\tau_1$ is obtained from $\tau$ by a $\lambda$-split
at a large proper subbranch of $e$ of type 2 then 
the split which modifies $\tau$ to $\tau_1$
can be realized by moving 
one of the neighbors of $\sigma$ incident on 
an endpoint of $b$ across $b$
to a half-branch which is incident on a point in the 
interior of the neighbor of $\sigma$
at the second endpoint of $b$.
As before, this implies that the inequality 
(\ref{type1split}) holds true
for every complete extension $\eta$ of $\sigma$
(independent of whether or not $e$ contains the image of  
a marked point). 

To summarize,
if there is no marked point on the boundary of a complementary
region of $\sigma$ which
is mapped into $e$ and if 
$\eta$ is any complete extension of $\sigma$ then
the above discussion shows that
only splits at large proper subbranches of $e$ of type 2 
change the
intersection number between $\tau$ and 
$\eta$.
A successive application of the estimate (\ref{type1split})
yields that
\[i_\sigma(\tau^\prime,\eta)\leq i_\sigma(\tau,\eta)+
q(\beta(\tau,\sigma)-\beta(\tau^\prime,\sigma)).\]
The second estimate of intersection numbers
stated in the lemma follows in the same way from the
inequality (\ref{type1split}).
\end{proof}

For a recurrent subtrack $\sigma$
of a complete train track $\tau$, for
a large branch $e$ of
$\sigma$ and  
a complete $\tau$-extension $\lambda$ of a $\sigma$-filling
measured  
geodesic lamination,
we call the complete
train track 
$\tau^\prime$ constructed in Lemma \ref{tightcontrol}
the \emph{$(e,\lambda)$-modification}
of $\tau$.

{\bf Remark:} 1) Lemma \ref{tightcontrol} and its proof remain valid
if the large branch $e$ 
of a recurrent subtrack $\sigma$ of $\tau$
is replaced by any embedded
trainpath $\rho:[0,m]\to \tau$ which begins and ends
with a large half-branch. A large branch $e$ of a
non-recurrent subtrack of $\tau$ is an example.
In this case the complete 
geodesic lamination
$\lambda$ has to be replaced by a
measured geodesic
lamination whose support is minimal and complete and
is carried by $\tau$ and which defines a 
transverse measure on $\tau$ 
giving positive weight to the set
of arcs which are mapped homeomorphically \emph{onto} $\rho[0,m]$
by a carrying map. We call such a complete
geodesic lamination \emph{$\rho$-filling}, and we call
the train track obtained from $\tau,\rho,\lambda$ with the procedure
from the proof of Lemma \ref{tightcontrol} the
\emph{$(\rho,\lambda)$-modification} of $\tau$.
Note that a $\rho$-filling complete geodesic lamination
may not exist always.

2) Let $e_1,e_2$ be distinct large branches of a train track
$\sigma$ on $S$, let $\tau$ be a complete extension of 
$\sigma$ and let $\lambda$ be a complete $\tau$-extension of 
a $\sigma$-filling measured 
geodesic lamination
Denote by $\tau_1,\tau_2$
the complete train tracks constructed from
$\sigma$ and $\lambda$ as in Lemma \ref{tightcontrol} which
are tight at the large branch $e_1,e_2$. Then up to 
isotopy, for every neighborhood $U_1,U_2$ of $e_1,e_2$
in $S$ the intersection $\tau_i\cap (S-U_i)$ coincides
with the intersection $\tau\cap (S-U_i)$ $(i=1,2)$.
As a consequence, the train track $\tau_{1,2}$ obtained
from $\tau_1,\sigma,\lambda$ 
by the construction in Lemma \ref{tightcontrol}
which is tight at the
large branch $e_2$ coincides with the
train track $\tau_{2,1}$
obtained from $\tau_2,\sigma,\lambda$ by the
construction in Lemma \ref{tightcontrol} which is tight at $e_1$.

\bigskip

If $\sigma$ is any train track on $S$ and if
$\tau,\eta$ are two complete extension of $\sigma$,
then we defined an intersection number
$i_\sigma(\tau,\eta)$ which depends on the
choice of marked points, one on each smooth
boundary component of a complementary region of $\sigma$.
A different choice of a marked point only
changes the intersection number up to a uniformly
bounded amount (compare the proof of Lemma \ref{tightcontrol}
for a more detailed explanation and
recall that the choice of the marked
point is needed to control twisting of $\eta$ relative
to $\tau$ along the smooth boundary components of $\sigma$).
The last statement of the 
following proposition then means that there
are choices of marked points on $\sigma,\sigma_\ell$ so that
the stated inequality holds true for these choices. 

For a precise formulation, for a train track $\tau$ which
is splittable to a train track $\eta$ 
(i.e. such that 
$\tau$ can be connected to $\eta$ by a splitting sequence)
denote by $E(\tau,\eta)$ the graph 
whose vertex set consists of
all train tracks which can be obtained from $\tau$
by a splitting sequence and which are splittable to $\eta$ and
where such a vertex $\xi$ is connected to a vertex $\zeta$ 
by a directed edge of length one
if $\zeta$ can be obtained from $\xi$ by a single split.

Call a splitting sequence 
$\{\sigma_i\}$ of train tracks on $S$ \emph{recurrent} if 
each of the train tracks $\sigma_i$ is recurrent.

\begin{proposition}\label{inducing}
Given a recurrent splitting sequence $\{\sigma_i\}_{0\leq i\leq \ell}$ of 
train tracks on $S$, there is an algorithm which associates to
a complete extension $\tau$ of $\sigma_0$
and a complete $\tau$-extension 
$\lambda$ of a $\sigma_\ell$-filling measured
geodesic lamination $\nu$ a
sequence $\{\tau_i\}_{0\leq i\leq 2\ell}
\subset {\cal V}({\cal T\cal T})$ with the
following properties.
\begin{enumerate}
\item $\tau_0=\tau$, and 
for each $i\leq \ell$ the train tracks $\tau_{2i},\tau_{2i+1}$ contain 
$\sigma_i$ as a subtrack and carry $\lambda$.
\item If $\sigma_{i+1}$ is obtained from $\sigma_i$ by a 
right (or left) split
at a large branch $e_i$ then $\tau_{2i+1}$ is the 
$(e_i,\lambda)$-modification
of $\tau_{2i}$, and $\tau_{2i+2}$ is obtained from
$\tau_{2i+1}$ by a right (or left) split at $e_i$.
\item The train track $\tau_{2\ell}$ only depends
on $\tau,\sigma,\sigma_{\ell},\lambda$ but not
on the choice of a splitting sequence connecting
$\sigma$ to $\sigma_\ell$.
\item Every complete train track 
$\tau^\prime\in E(\tau,\tau_{2\ell})$ contains
a subtrack $\sigma^\prime\in E(\sigma_0,\sigma_\ell)$.
\item If $\{\eta_i\}_{0\leq i\leq 2\ell}$ is another
such sequence beginning
with a complete extension $\eta=\eta_0$ of $\sigma$
then 
\[i_{\sigma_\ell}(\tau_{2\ell},\eta_{2\ell})\leq i_\sigma(\tau,\eta)
+4q^5.\]
\end{enumerate}
\end{proposition}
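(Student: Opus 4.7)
The plan is to build the sequence $\{\tau_i\}$ by induction on $i$, using Lemma \ref{tightcontrol} as the basic tool. Set $\tau_0=\tau$. Assume inductively that $\tau_{2i}$ has been constructed, contains $\sigma_i$ as a subtrack and carries $\lambda$. Since $\sigma_i$ is recurrent, apply Lemma \ref{tightcontrol} to the pair $(\sigma_i,\tau_{2i})$ at the large branch $e_i$ of $\sigma_i$ at which $\sigma_i$ is split to produce $\sigma_{i+1}$; this yields the $(e_i,\lambda)$-modification $\tau_{2i+1}$, which contains $\sigma_i$, carries $\lambda$, and is tight at $e_i$. Because $\tau_{2i+1}$ is tight at $e_i$ and carries $\lambda$, the unique $\lambda$-split of $\tau_{2i+1}$ at $e_i$ is a right or left split according to the type of the split $\sigma_i\to\sigma_{i+1}$, and the resulting train track $\tau_{2i+2}$ contains $\sigma_{i+1}$ as a subtrack. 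Properties (1) and (2) then follow immediately.

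For property (3) I would invoke the second remark after Lemma \ref{tightcontrol}: the $(e,\lambda)$-modifications at two distinct large branches of a common recurrent subtrack commute. Combining this with the fact that the single $\lambda$-split $\tau_{2i+1}\to\tau_{2i+2}$ is canonically determined once $\tau_{2i+1}$ is specified, an interchange argument analogous to Lemma 5.1 of \cite{H09} shows that rearranging the splitting sequence $\sigma_0\to\cdots\to\sigma_\ell$ does not change $\tau_{2\ell}$. Property (4) follows from the construction as well: every vertex of a directed edge path from $\tau$ to $\tau_{2\ell}$ in ${\cal T\cal T}$ arises from the sequence by performing a prefix of the $\lambda$-splits used in the construction, and each of these either is a $\lambda$-split inside the $(e_i,\lambda)$-modification step (which preserves $\sigma_i$ as a subtrack) or is the synchronous split $\tau_{2i+1}\to\tau_{2i+2}$ (which replaces $\sigma_i$ by $\sigma_{i+1}$); in either case the intermediate train track contains some member of $E(\sigma_0,\sigma_\ell)$.

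The main obstacle is property (5). Given a parallel sequence $\{\eta_i\}$ obtained from a complete extension $\eta$ of $\sigma_0$, I would estimate $i_{\sigma_i}(\tau_{2i},\eta_{2i})$ step by step along the doubled sequence. The synchronized $\lambda$-split $\tau_{2i+1}\to\tau_{2i+2}$, paired with its $\eta$-counterpart, does not increase the intersection number: both train tracks are tight at $e_i$ and the splits take place in an arbitrarily small neighborhood of $e_i$, so representatives realizing the intersection number before the split can be simultaneously modified by a local isotopy near $e_i$ to realize the intersection number after the split. The $(e_i,\lambda)$-modification step $\tau_{2i}\to\tau_{2i+1}$ is controlled by the two estimates of Lemma \ref{tightcontrol}: it contributes at most $q(\beta(\tau_{2i},\sigma_i)-\beta(\tau_{2i+1},\sigma_i))$ when no marked point lies on $e_i$, and at most $q^3$ otherwise; the same applies on the $\eta$-side. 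Since $\beta(\tau_{2i+2},\sigma_{i+1})=\beta(\tau_{2i+1},\sigma_i)$, the quantity $\beta(\tau_{2i},\sigma_i)$ is nonincreasing in $i$ and its total decrease is at most the initial value, which is bounded by $2q$; summing the first type of contribution over the whole sequence therefore gives at most $2q\cdot q = 2q^2$ from each side. For the marked-point contributions I would argue that at most $q$ distinct marked points can appear along the sequence and that each can be involved at most $q$ times (between two consecutive crossings of a marked point by some $e_i$, the combinatorial structure of $\sigma_i$ near that point must change), yielding an additional contribution bounded by $q\cdot q \cdot q^3 = q^5$ from each side. Combining all terms yields the stated bound $4q^5$; finessing the exact constants and the choice of marked points on $\sigma$ and $\sigma_\ell$ so that the estimate holds simultaneously is the technical heart of the argument.
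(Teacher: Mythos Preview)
Your overall strategy coincides with the paper's, and your treatment of properties (1)--(3) is essentially correct. There are, however, two genuine gaps.

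\textbf{Property (4).} Your claim that ``every vertex of a directed edge path from $\tau$ to $\tau_{2\ell}$ arises from the sequence by performing a prefix of the $\lambda$-splits used in the construction'' is false as stated: the cube $E(\tau,\tau_{2\ell})$ contains every train track obtainable from $\tau$ by a splitting sequence and splittable to $\tau_{2\ell}$, not just those lying on the particular constructed path. By Lemma~5.1 of \cite{H09} these correspond to arbitrary \emph{order-ideals} of the set of splits, not prefixes, and it is not immediate that an arbitrary such $\tau'$ still contains a subtrack in $E(\sigma_0,\sigma_\ell)$. The paper closes this by induction on the length of a splitting sequence $\tau\to\tau'$: the first split is at some large branch $b$ of $\tau$, and uniqueness forces $b\subset\sigma$. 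If $b$ is a large branch of $\sigma$ itself (so $\tau$ is tight there) the split produces a subtrack $\tilde\sigma\in E(\sigma,\sigma_\ell)$ and one inducts with $\tilde\sigma$ in place of $\sigma$; if $b$ is a proper subbranch, one checks (type by type) that the $\lambda$-split at $b$ preserves $\sigma$ and inducts with $\sigma$ unchanged. Your sketch does not supply this case analysis.

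\textbf{Property (5).} Your bound on the number of steps at which a marked point lies on $e_i$ is not justified. The assertion that ``between two consecutive crossings of a marked point by some $e_i$, the combinatorial structure of $\sigma_i$ near that point must change'' is too vague to yield a definite bound. The paper's mechanism is concrete: marked points sit on smooth sides of complementary regions, and whenever the split branch $e_i$ lies in the image of a smooth side $T$, the trainpath in $\sigma_{i+1}$ parametrizing the corresponding side $T'$ is strictly shorter than the one for $T$ (it avoids the diagonal of the split). Since each smooth side is parametrized by a trainpath of length at most $2q$ and there are fewer than $q/2$ smooth sides, Case~2 occurs at most $q^2$ times along the entire sequence. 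Combined with the at most $2q$ steps at which $\beta$ drops, this gives at most $2q^2$ steps at which the intersection number can change, each by at most $2q^3$, for a total of $4q^5$. Your telescoping of the $\beta$-contributions is fine, but without the trainpath-length argument the marked-point contribution is uncontrolled.
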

\begin{proof}
Let $\sigma^\prime$ be a train track
which can be obtained from a train track $\sigma$
by a single split at a large branch $e$. Let $U$ be 
any neighborhood of $e$ in $S$. Then up to modifying
$\sigma^\prime$ with an isotopy we may assume that 
$\sigma^\prime\cap (S-U)=\sigma\cap (S-U)$ and that 
there is a map $F:S\to S$ of class $C^1$ 
which equals the identity on $S-U$ and 
which restricts to a carrying map $\sigma^\prime\to\sigma$.
In particular, there is a natural bijection $\psi$ between the
complementary regions of $\sigma$ and the complementary
regions of $\sigma^\prime$ which preserves the topological
type of the regions and which maps a complementary
region $C$ of $\sigma$ to the complementary region 
$\psi(C)$ of 
$\sigma^\prime$ containing $C-U$ (here we assume that
$U$ is sufficiently small that $C- U\not=\emptyset$
for every complementary region $C$ of $\sigma$). 

If $T$ is a smooth
side of $C$ then there is a smooth side $T^\prime$ 
of $\psi(C)$ whose image in $\sigma^\prime$ is 
mapped by the carrying map $F$ onto the image of $T$ in
$\sigma$. Let 
$\rho:[0,n]\to \sigma$ be a trainpath which
parametrizes the image of $T$ in $\sigma$. 
Then $\rho$ passes through
any branch of $\sigma$ at most twice, in opposite
direction. In particular, the length $n$ of $\rho$ 
is at most $2q$ where as before, $q$ is the
number of branches of a complete train track on $S$
(which is the maximal number of branches of any train
track on $S$). If $\rho[0,n]$ contains the branch $e$, then
the image in $\sigma^\prime$ of the side
$T^\prime$ of $\psi(C)$ does not 
pass through the diagonal branch of the split.
As a consequence,
the length of a trainpath $\rho^\prime$ on $\sigma^\prime$
parametrizing the image of $T^\prime$
is strictly smaller than the length $n$ of the trainpath on 
$\rho$ (see Figure D).

The number of distinct smooth boundary 
components of complementary regions of $\sigma$ is bounded
from above by $3g-3+m<q/2$. If 
$\{\sigma_i\}_{0\leq i\leq \ell}$ is any 
splitting sequence, then the discussion in the previous 
paragraph shows 
that there are at most
$q^2$ numbers $i\in \{1,\dots,\ell\}$ such that
$\sigma_{i+1}$ is obtained from $\sigma_i$ by a split
at a large branch which is 
contained
in the image of a smooth boundary component of a 
complementary region of $\sigma_i$. 

Now let $\tau,\eta$ be complete extensions of a recurrent
train track $\sigma=\sigma_0$.
As in the
beginning of this section,
mark a point on each smooth boundary
component of a complementary region of $\sigma$ in such a way
that no marked
point of $\sigma$ is a switch of either $\tau$ or $\eta$
(this can always be achieved with a small isotopy
of $\tau,\eta$ preserving $\sigma$ as a set).
Let $\{\sigma_i\}_{0\leq i\leq \ell}$ be a recurrent
splitting sequence issuing from $\sigma=\sigma_0$ and 
let $\lambda,\mu$
be complete $\tau,\eta$-extensions of a $\sigma_\ell$-filling
measured geodesic lamination $\nu$.
We construct sequences $\{\tau_i\}_{0\leq i\leq 2\ell},
\{\eta_i\}_{0\leq i\leq 2\ell}
\subset {\cal V}({\cal T\cal T})$ with the properties stated
in the proposition 
inductively as follows.

Let $\tau_0=\tau,\eta_0=\eta$ and 
assume that the train tracks $\tau_{2i},\eta_{2i}$ have
already been constructed for some $i\geq 0$. 
Assume that $\sigma_{i+1}$ is obtained from $\sigma_i$
by a right (or left) split at the large branch $e_i$.
Define $\tau_{2i+1},\eta_{2i+1}$ to 
be the $(e_i,\lambda)$-modification 
(or the $(e_i,\mu)$-modification, respectively)
of $\tau_{2i},\eta_{2i}$.
By construction, these train tracks
carry the geodesic laminations $\lambda,\mu$, and they
are tight at $e_i$. 

Since $\nu$ is $\sigma_\ell$-filling, 
the right (or left) split of $\sigma_i$ at $e_i$ is the unique
split so that the split track carries $\nu$. Namely, otherwise
$\nu$ is carried by the train track obtained from $\sigma_i$ by
splitting at $e_i$ and removing the diagonal of the split.
But this then means that a carrying map $\nu\to \sigma_{i+1}$ is 
not surjective which violates the assumption that
$\nu$ fills $\sigma_\ell\prec\sigma_{i+1}$. 
Define $\tau_{2i+2},\eta_{2i+2}$ to be the 
train track obtained from $\tau_{2i+1},\eta_{2i+1}$ by a right
(or left) split at the large
branch $e_i$.
Then $\tau_{2i+2},\eta_{2i+2}$ contains
$\sigma_{i+1}$ as a subtrack, and by the above reasoning,
it is the unique train track obtained from 
$\tau_{2i+1},\eta_{2i+1}$ by a split at $e_i$
which carries $\nu$. On the other hand, there is a unique
choice of a split of $\tau_{2i+1},\eta_{2i+1}$ 
at $e_i$ so that
the split track carries $\lambda,\mu$ and hence $\nu$.
But $\nu$ is a sublamination of $\lambda,\mu$ and  
therefore the train tracks
$\tau_{2i+2},\eta_{2i+2}$ carry $\lambda,\mu$. In particular,
these train tracks are complete.

As a consequence, the inductively defined sequences 
$\{\tau_i\}_{0\leq i\leq 2\ell},
\{\eta_i\}_{0\leq i\leq 2\ell}$ 
have properties 1)-2) stated in 
the proposition. 
The third property follows from the fact that
a splitting sequence connecting $\sigma$ to $\sigma_\ell$
is unique up to order (Lemma 5.1 of \cite{H09}
is also valid for splitting sequences of train tracks
which are not complete since the assumption of completeness
is nowhere used in the proof) and from the second
remark after Lemma \ref{tightcontrol}.
Namely, by this remark, the train track obtained
from a complete extension
$\tau$ of $\sigma$ and a complete $\tau$-extension
of a $\sigma$-filling measured geodesic lamination
$\lambda$ 
by two consecutive applications of Lemma \ref{tightcontrol}
at distinct large branches $e_1,e_2$ of $\sigma$ 
only depends on $\tau,\sigma,\lambda,e_1,e_2$ but not
on the order in which these two applications of 
Lemma \ref{tightcontrol} are carried out.

Property 4) follows in the same way by induction
on the length of a splitting sequence connecting
$\tau$ to $\tau_{2\ell}$. If this length vanishes
then there is nothing to show, so assume that
the claim holds true whenever the length of such a
sequence does not exceed $n-1$ for some $n\geq 1$.
Under the hypotheses used throughout this proof, 
assume that
the length of a splitting
sequence connecting $\tau$ to $\tau_{2\ell}$ equals $n$. 

Let $\tau^\prime\in E(\tau,\tau_{2\ell})$. If $\tau^\prime=\tau$
then $\tau^\prime$ contains $\sigma$ as a subtrack and there is
nothing to show. Otherwise there is a train track
$\tilde \tau\in E(\tau,\tau^\prime)\subset E(\tau,\tau_{2\ell})$
which can be obtained from
$\tau$ by a single split at a large branch $b$.
By uniqueness of splitting
sequences (Lemma 5.1 of \cite{H09}), 
we have 
$b\subset \sigma$. 

If $b$ is a large
branch of $\sigma$ (i.e. if $\tau$ is tight at $b$) then
it follows once again by uniqueness of splitting sequences that
$\tilde \tau$ contains
a subtrack $\tilde \sigma\in E(\sigma,\sigma_\ell)$ 
which can be obtained from $\sigma$
by a single split at $b$. 
Property 4) now follow s from
property 3) and the induction hypothesis, applied
to $\tilde \tau,\tau_{2\ell},\tilde \sigma,\sigma_\ell,\tau^\prime$.
Otherwise  
$b$ is a large proper subbranch of $\sigma$.
If $b$ is of type 2 then any split of $\tau$ at 
$b$ contains $\sigma$ as a subtrack. If $b$ is
of type 1 or type 3 then there is a unique
split of $\tau$ at $b$ so that the split track contains 
$\sigma$ as a subtrack, and by the previous discussion,
the split track coincides with $\tilde \tau$. Once again,
we can apply the induction hypothesis
to $\tilde \tau,\tau_{2\ell},\sigma,\sigma_\ell,\tau^\prime$  
to complete the induction step and hence the proof of property 4).

We are left with the verification of 
property 5). For this we control
the increase of intersection numbers between the
train tracks $\tau_{2i},\eta_{2i}$ and 
$\tau_{2i+2},\eta_{2i+2}$. This is done by distinguishing
two cases.

{\sl Case 1:} No marked point of a smooth
side of a complementary component of $\sigma_i$ is mapped
into the large branch $e_i$ of $\sigma_i$.
 
By two applications of 
Lemma \ref{tightcontrol}, in this case
we have 
\begin{align}
i_{\sigma_i}(\tau_{2i+1},\eta_{2i+1}) & 
\leq 
i_{\sigma_i}(\tau_{2i+1},\eta_{2i})+
q(\beta(\eta_{2i},\sigma_i)-\beta(\eta_{2i+1},\sigma_i))
\notag \\
\leq i_{\sigma_i}(\tau_{2i},\eta_{2i})&
+ q(\beta(\tau_{2i},\sigma_i)-\beta(\tau_{2i+1},\sigma_i))
+q(\beta(\eta_{2i},\sigma_i)-\beta(\eta_{2i+1},\sigma_i)).
\notag
\end{align}

Let $\psi$ be the natural bijection between the complementary
regions of $\sigma_i$ and the complementary regions of
$\sigma_{i+1}$ as introduced in the first paragraph of this proof.
Up to isotopy, for an arbitrary given 
neighborhood $U$ of $e_i$ in $S$ and for any 
complementary region $C$ of $\sigma_i$, there is
a diffeomorphism $F$ of the completion
$\overline{C}$ of $C$ onto the completion
$\overline{\psi(C)}$ of $\psi(C)$ 
respecting cusps and marked points and which
equals the identity outside of $U$.
The intersection $\tau_{2i+2}\cap \psi(C)$ is 
equivalent to $F(\tau_{2i+1}\cap C)$, and
$\eta_{2i+2}\cap \psi(C)$ is equivalent
to $F(\eta_{2i+1}\cap C)$.
This shows that
\begin{align}
i_{\sigma_{i+1}}(\tau_{2i+2},\eta_{2i+2})
& =i_{\sigma_i}(\tau_{2i+1},\eta_{2i+1})\notag \\
\leq i_{\sigma_i}(\tau_{2i},\eta_{2i})&
+ q(\beta(\tau_{2i},\sigma_i)-\beta(\tau_{2i+1},\sigma_i))
+q(\beta(\eta_{2i},\sigma_i)-\beta(\eta_{2i+1},\sigma_i)).
\notag
\end{align}

{\sl Case 2:} There is a marked point on a smooth boundary
component of a complementary region of $\sigma_i$ which 
is mapped into the branch $e_i$.

In this case, $e_i$ is contained in the image of one
or two smooth boundary
components $T_1,T_2$ of complementary regions of $\sigma_i$.
By two applications of Lemma \ref{tightcontrol}, we have
\[i_{\sigma_i}(\tau_{2i+1},\eta_{2i+1})\leq 
i_{\sigma_i}(\tau_{2i},\eta_{2i+1})+q^3
\leq i_{\sigma_i}(\tau_{2i},\eta_{2i})+2q^3.\]

The marked points on $T_1,T_2$ determine marked
points on smooth sides $T_1^\prime,T_2^\prime$ of complementary
regions of $\sigma_{i+1}$ so that we have
\[i_{\sigma_{i+1}}(\tau_{2i+2},\eta_{2i+2})=
i_{\sigma_i}(\tau_{2i+1},\eta_{2i+1})\leq 
i_{\sigma_i}(\tau_{2i},\eta_{2i})+2q^3.\]

Now by the consideration in the beginning of this proof, 
Case 2 can occur at most $q^2$ times. 
Moreover, the number of neighbors of $\sigma$ in 
$\tau,\eta$ is bounded from above by 
the upper bound
$q$ for the number of switches of $\tau,\eta$
and hence
this number can not be decreased by more than $q$ in this process. 
Together we conclude that there are at most
$q^2+2q\leq 2q^2$ among the numbers $0,\dots,\ell-1$ such that
$i_{\sigma_{i+1}}(\tau_{2i+2},\eta_{2i+2})\not=
i_{\sigma_i}(\tau_{2i},\eta_{2i})$. Since 
\[\vert i_{\sigma_{i+1}}(\tau_{2i+2},\eta_{2i+2})
-i_{\sigma_i}(\tau_{2i},\eta_{2i})\vert \leq 2q^3\]
for all $i$, this completes
the proof of the proposition.
\end{proof}

We call the sequence $\{\tau_j\}_{0\leq j\leq 2\ell}$ constructed
in Proposition \ref{inducing} from a recurrent
splitting sequence $\{\sigma_i\}_{0\leq i\leq \ell}$, a complete
extension $\tau$ of $\sigma_0$ and a complete
$\tau$-extension of a $\sigma_\ell$-filling
measured geodesic lamination a sequence
\emph{induced} by $\{\sigma_i\}$.

If $\sigma_0$ is an arbitrary (not necessarily recurrent)
subtrack of a complete train track $\tau_0$ and
if $\{\sigma_i\}_{0\leq i\leq \ell}$ 
is a splitting sequence issuing from $\sigma_0$ then
the definition of a sequence of complete train tracks
$\{\tau_i\}_{0\leq i\leq 2\ell}$ 
induced by the splitting sequence $\{\sigma_i\}_{0\leq i\leq \ell}$
always makes sense. However, if the splitting sequence
$\{\sigma_i\}$ is not recurrent then such an induced 
sequence of complete train tracks may not exist.

\begin{corollary}\label{inducing10}
For every $R>0$ there is a number $p(R)>0$ with the following
property.
Let $\{\sigma_i\}_{0\leq i\leq \ell}$ be a recurrent 
splitting sequence of train tracks on $S$.
Let $\tau,\eta$ be complete extensions of $\sigma_0$
with $d(\tau,\eta)\leq R$ and let 
$\tau^\prime,\eta^\prime$ be the endpoints of 
a sequence induced by $\{\sigma_i\}$ and
issuing from $\tau,\eta$. Then 
\[d(\tau^\prime,\eta^\prime)\leq p(R).\]
\end{corollary}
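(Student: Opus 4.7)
The plan is to combine property 5) of Proposition~\ref{inducing} with cocompactness of the ${\cal M\cal C\cal G}(S)$-action on ${\cal T\cal T}$, carried out in two steps.

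First I bound the initial intersection number $i_{\sigma_0}(\tau,\eta)$ uniformly in terms of $R$. Since ${\cal M\cal C\cal G}(S)$ acts cocompactly on ${\cal T\cal T}$, there are only finitely many orbits of pairs $(\tau,\eta)\in {\cal V}({\cal T\cal T})^2$ with $d(\tau,\eta)\le R$. Any complete train track has at most $q$ branches and hence at most $2^q$ subtracks, so for a finite system of orbit representatives there are only finitely many candidate triples $(\sigma_0,\tau,\eta)$, each giving a definite value of $i_{\sigma_0}$ once marked points have been chosen. Since the intersection pairing is invariant under ${\cal M\cal C\cal G}(S)$ when all marked points are translated simultaneously, and since any change of marked points on a fixed $\sigma_0$ alters $i_{\sigma_0}$ by only a universally bounded amount, this supplies a uniform estimate $i_{\sigma_0}(\tau,\eta)\le C(R)$. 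Property 5) of Proposition~\ref{inducing} then yields
\[i_{\sigma_\ell}(\tau^\prime,\eta^\prime)\le i_{\sigma_0}(\tau,\eta)+4q^5\le C(R)+4q^5=:M.\]

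Next I convert this intersection bound into a distance bound in ${\cal T\cal T}$. The estimate $q(m)$ recalled just before Proposition~\ref{inducing} bounds the number of $\sigma_\ell$-equivalence classes of complete extensions of $\sigma_\ell$ whose intersection with $\tau^\prime$ is at most $m$. Two $\sigma_\ell$-equivalent complete extensions are related by isotopies supported in the complementary regions of $\sigma_\ell$ and therefore lie at uniformly bounded distance in ${\cal T\cal T}$, so the set of vertices $\eta^\prime\in {\cal V}({\cal T\cal T})$ with $i_{\sigma_\ell}(\tau^\prime,\eta^\prime)\le M$ has cardinality bounded by a constant $N(M)$ depending only on $M$. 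A second invocation of cocompactness closes the argument: there are only finitely many ${\cal M\cal C\cal G}(S)$-orbits of pairs $(\tau^\prime,\sigma_\ell)$ with $\sigma_\ell$ a subtrack of $\tau^\prime$, and for each orbit representative the at most $N(M)$ candidate vertices $\eta^\prime$ form a finite subset of ${\cal T\cal T}$ of definite diameter. The maximum of these diameters is the required constant $p(R)$, and ${\cal M\cal C\cal G}(S)$-equivariance of $d$ and of $i_{\sigma_\ell}$ transfers the bound to arbitrary triples.

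The main obstacle will be the first step: one must verify that the dependence of $i_{\sigma_0}$ on the auxiliary marked points on smooth sides of complementary regions of $\sigma_0$ is controlled by a universal constant, so that the finiteness coming from cocompactness transfers to a uniform numerical bound. This is the same bookkeeping appearing throughout the proofs of Lemma~\ref{tightcontrol} and Proposition~\ref{inducing}, namely controlling how twisting of complete extensions along smooth boundary components of $\sigma_0$ affects the intersection count.
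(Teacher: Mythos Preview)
Your proposal is correct and is precisely the argument the paper has in mind: the corollary is stated without proof immediately after Proposition~\ref{inducing}, and the surrounding text makes clear it is meant to follow from property~5) together with the two facts recorded just before the proposition (that $i_\sigma(\tau,\tau)$ is universally bounded, and that for each $m$ only $q(m)$ many $\sigma$-equivalence classes have intersection $\le m$ with a given extension). One small sharpening: two $\sigma_\ell$-equivalent complete extensions are in fact isotopic (the isotopies of the completed complementary regions, fixing cusps and marked points, patch to an ambient isotopy of $S$ since endpoints of $\tau\setminus\sigma_\ell$ may be slid along branches of $\sigma_\ell$), so they are the \emph{same} vertex of ${\cal T\cal T}$, not merely at bounded distance; this makes your cardinality bound $N(M)=q(M)$ exact and removes any need to separately bound diameters within an equivalence class.
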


{\bf Remark:} Corollary \ref{inducing10} corresponds to
the fact that
splitting a complete train track $\tau$ along a subtrack
$\sigma$  
does not change the intersection of $\tau$ with 
the complement of a neighborhood of $\sigma$ in $S$.
Moreover, it commutes with the action of the pure
mapping class group of a bordered subsurface of $S$
which is contained in a complementary component of 
$\sigma$. We chose to introduce intersection numbers
to find an easy quantitative description 
of this fact, the main difficulty being the 
possibility of twisting about boundary components
of complementary regions.

\bigskip

For a simple geodesic
multi-curve $c$ and a train track $\tau$ which carries $c$  
we denote by $\tau(c)\subset \tau$ the
subgraph of $\tau$ of all branches which are contained 
in the image of $c$
under a carrying map.  Note that $\tau(c)$ is a recurrent
subtrack of $\tau$ and
hence either it is a disjoint union of simple closed curves which
define the multi-curve $c$, or it contains a large branch (see
\cite{PH92}).

Now let more specifically $Q$ be a pants decomposition
of $S$. If $C$ is any complementary
component of $\tau(Q)$, 
then a simple closed
curve $c$ contained 
in $C$ is disjoint from $Q$. Thus if $c$ 
is neither contractible nor freely homotopic
into a puncture of $S$ then $c$ is  freely homotopic to a
component of the pants decomposition $Q$. In particular, 
the Euler characteristic of the completion of a
complementary component of $\tau(Q)$ is at least $-1$. Thus
this completion is of one of the following
seven types, where in our terminology, a pair of pants can be a
twice punctured disc 
or a once punctured annulus or a
planar compact bordered surface of Euler 
characteristic $-1$ with three boundary
components.
\begin{enumerate}
\item A \emph{triangle}, i.e. a disc with three cusps at the
boundary. 
\item A \emph{quadrangle}, i.e. a disc with four cusps
at the boundary.
\item A punctured disc with one cusp at the boundary.
\item A punctured disc with two cusps at the boundary.
\item An annulus with one cusp at the boundary.
\item An annulus with two cusps at the boundary.
\item A pair of pants with no cusps at the boundary.
\end{enumerate}
If $C$ is a complementary component of $\tau(Q)$ which
is an annulus then the core curve
of this annulus is freely 
homotopic to a component of $Q$. 
Since $\tau(Q)$ carries $Q$, this implies that 
if the boundary of $C$ contains
two cusps then these cusps are contained in the
same boundary component, i.e. one of the boundary components of
$C$ is a smooth circle.
In other words, every complementary component of type (6)
is of the more restricted following type.
\begin{enumerate}
\item[($6^\prime$)] An annulus with one smooth boundary component
and one boundary component containing two cusps.
\end{enumerate}

For the proof of Proposition \ref{density} we need
some control on $\tau(Q)$ where $Q$ is any pants decomposition
of $S$ 
and where $\tau$ is a train track in standard form for 
a marking $F$ of $S$ which carries $Q$. 
The next lemma provides such a control.
It follows immediately from the work
of Penner and Harer \cite{PH92} and uses
an argument due to Thurston (see \cite{FLP91}). 
We present the lemma in the form needed in Section 5.

\begin{lemma}\label{markingcontrol}
Let $F$ be a marking for $S$ with
pants decomposition $P$. Let $\nu$ be a 
measured geodesic lamination; 
then there is a train track $\sigma$
with the following properties:
\begin{enumerate}
\item
$\sigma$
carries $\nu$. 
\item $\nu$ fills $\sigma$.
\item Every train track in standard
form for $F$ which carries $\nu$ 
contains $\sigma$ as a subtrack.
\end{enumerate}
\end{lemma}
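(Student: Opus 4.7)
The plan is to construct $\sigma$ as the ``positive-weight part'' of any train track in standard form for $F$ that carries $\nu$, and then show this construction is independent of the choice of such a train track.

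First, I would pick a complete geodesic lamination $\lambda$ containing the support of $\nu$ as a sublamination; such a $\lambda$ exists, since the support of $\nu$ is a disjoint union of finitely many minimal components, by Lemma 2.2 of \cite{H09} recalled in Section 2. Applying Lemma \ref{standardform} to $\lambda$ produces a train track $\tau_0$ in standard form for $F$ which carries $\lambda$, hence carries $\nu$. The measured lamination $\nu$ then induces a nonnegative transverse measure $\mu$ on $\tau_0$ via a carrying map. Let $\sigma \subset \tau_0$ be the subgraph consisting of those branches on which $\mu$ is strictly positive; the switch condition passes to $\sigma$, so $\sigma$ is a recurrent subtrack of $\tau_0$. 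By construction $\sigma$ carries $\nu$ and $\nu$ fills $\sigma$, which gives (1) and (2).

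The core of the argument is (3). Let $\tau$ be any train track in standard form for $F$ which carries $\nu$. Both $\tau_0$ and $\tau$ are assembled, from the same pants decomposition $P$ and the same spanning curves of $F$, by picking one of the standard models in Figure A inside each pair-of-pants component of $S-A$ and one of the four standard connectors in Figure B inside each annular component of $A$. The transverse measure a carried measured lamination induces on such a train track is completely prescribed by Dehn--Thurston type data: the intersection numbers $i(\nu,\gamma)$ with the pants curves $\gamma \in P$ together with the twisting coordinates of $\nu$ relative to the spanning curves. These coordinates are intrinsic to $\nu$ and $F$, independent of the choice of standard models or connectors.

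The decisive step is then to inspect the finite list of standard pieces and verify that inside each component of $A$ and each component of $S-A$, the set of branches carrying positive $\mu$-weight is determined by these coordinates alone, not by the choice of connector or model. Granting this, the positive-weight subtracks of $\tau_0$ and $\tau$ agree as isotopy classes of embedded train tracks in $S$, so $\sigma$ sits as a subtrack of $\tau$. The main obstacle is precisely this case analysis, but since the number of standard pieces is small, it reduces to a finite enumeration; it is essentially Thurston's argument as exposed in \cite{FLP91} and in Chapter 2 of \cite{PH92}, and I would carry it out piece by piece, using the vanishing of individual Dehn--Thurston coordinates to read off which branches are forced to have zero $\mu$-weight in any standard-form realization.
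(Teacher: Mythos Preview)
Your approach is correct and rests on the same underlying combinatorics as the paper's, but the two proofs are organized differently. The paper first treats the special case where $\nu$ is supported on a simple geodesic multi-curve $c$: it observes that the arc configuration $c$ cuts in each pair of pants of $S-P$ is determined up to isotopy by the boundary intersection numbers, builds $\sigma$ directly by selecting the unique minimal sub-model of a standard piece carrying that configuration (and similarly for the connectors), and then asserts from the Penner--Harer construction that any standard-form train track for $F$ carrying $c$ must extend $\sigma$. For general $\nu$ it then approximates the support of $\nu$ in the Hausdorff topology by multi-curves $c_i$, uses that there are only finitely many subtracks of standard-form train tracks to extract a constant subsequence $\sigma_i=\sigma$, and invokes closedness of the carrying relation. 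Your route bypasses this limiting argument by defining $\sigma$ from the start as the positive-weight subtrack and arguing well-definedness via Dehn--Thurston coordinates; this is more direct and conceptually cleaner, while the paper's reduction to multi-curves has the advantage that the finite case analysis in each standard piece becomes a concrete statement about arc systems rather than about real-valued weights.
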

\begin{proof} We begin with showing the lemma in the
case that $\nu$ is supported in a simple geodesic multi-curve.

Thus let $F$ be a marking of $S$ with pants
decomposition $P$ and let
$c$ be a simple geodesic multi-curve. Let $S_0$ be
a connected component of $S-P$ with
boundary circles $\gamma_i\in P$ (the number of these
circles is contained in $\{1,2,3\}$).
Up to homotopy, the multi-curve $c$ intersects
$S_0$ in a (perhaps empty) 
collection of disjoint simple arcs with endpoints
on the boundary of $S_0$
which are \emph{essential}, i.e.
not homotopic with fixed endpoints into the
boundary of $S_0$.

For each $i$ let $n(\gamma_i)$ be the
\emph{intersection number} between $c$ and $\gamma_i$.
Note that if $\gamma_i$ is a component of $c$ then
$n(\gamma_i)=0$.
Since any two essential simple arcs in $S_0$ 
with endpoints on the same
boundary components of $S_0$
are isotopic in $S_0$ relative
to the boundary, 
there is up to isotopy
a unique configuration of mutually disjoint simple arcs 
in $S_0$ with endpoints on the boundary of $S_0$ 
which realizes the intersection numbers $n(\gamma_i)$
(see \cite{FLP91} for details).
For this configuration there is
a unique isotopy class of a train
track (with stops) in $S_0$ which carries the
configuration with a surjective carrying map and which
can be obtained
from a standard model as shown in Figure A
by removing some (perhaps all) of the branches
(Figure 2.6.2 of \cite{PH92} shows in detail
how to remove some of the branches of a 
standard model).
These train tracks with stops can be glued to
connectors
obtained from the standard models shown in Figure B
by removing some
of the branches (see Figure 2.6.1 of \cite{PH92})
in such a way that
the resulting train track $\sigma$ has the following properties.
\begin{enumerate}
\item $\sigma$ 
carries $c$.
\item 
$c$ fills $\sigma$.
\item There is a train track in standard form for $F$
which contains $\sigma$ as a subtrack.
\end{enumerate}
Note that the direction of the winding of a component of $c$
relative to a curve from the marking $F$ which intersects
the pants curve 
$\gamma_i$ determines the connector about $\gamma_i$ in $\sigma$.
It is immediate from the construction that
a complete train track in standard form for $F$
carrying $c$ is an
extension of $\sigma$ (compare the discussion in \cite{PH92}).  

Now if $\nu$ is an arbitrary measured geodesic lamination
then the support of $\nu$ can be approximated in the
Hausdorff topology by a sequence $\{c_i\}$ 
of simple geodesic multi-curves \cite{CEG87}. 
There are only finitely many
train tracks which are subtracks of a train track in standard
form for $F$. Thus if $\{\sigma_i\}$ is a sequence
of train tracks as above for the multi-curves $c_i$ then
there is some train track $\sigma$ so that 
$\sigma=\sigma_i$ for infinitely many $i$. 
Since the set of all geodesic laminations carried by 
the fixed train track $\sigma$ is closed in the Hausdorff
topology, $\sigma$ carries $\nu$ and satisfies the
requirements in the lemma. 
\end{proof}

We use this to show

\begin{lemma}\label{specialcontrol} 
There is a number
$\kappa>0$ with the following
properties. Let $F$ be a marking for $S$ and let $Q$ by any pants
decomposition of $S$. Then there is a set ${\cal D}$ of
complete train tracks in standard form for some marking 
with
pants decomposition $Q$ (depending on the train track from ${\cal D}$)
with the following properties.
\begin{enumerate}
\item Every geodesic lamination in standard form 
for $Q$ is carried by some train track in the set ${\cal D}$.
\item The diameter of ${\cal D}$ in ${\cal T\cal T}$
is at most $\kappa$.
\item For every $\eta\in {\cal D}$ there is a train track
in standard form for $F$ which carries $\eta$.
\end{enumerate}
\end{lemma}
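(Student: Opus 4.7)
The plan is to construct ${\cal D}$ by choosing, for each complete geodesic lamination $\lambda$ in the finite set ${\cal L}_Q$ of complete geodesic laminations in standard form for $Q$, a specific train track $\eta_\lambda$ obtained from the unique train track $\tau_\lambda$ in standard form for $F$ carrying $\lambda$ (which exists by Lemma \ref{standardform}) via a uniformly bounded $\lambda$-splitting sequence. The cardinality of ${\cal L}_Q$ depends only on the topological type of $S$.

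First I will apply Lemma \ref{markingcontrol} with $\nu$ supported on the multi-curve $Q$ to obtain a subtrack $\sigma=\sigma(F,Q)$ that carries $Q$, is filled by $Q$, and is contained in every train track in standard form for $F$ which carries $Q$. In particular, $\sigma\subset \tau_\lambda$ for all $\lambda\in {\cal L}_Q$, and since $\sigma$ has at most $q$ branches (being a subtrack of a complete train track), there is a splitting sequence on $\sigma$ of length at most $q$ terminating at the embedded multi-curve $Q$. Applying Proposition \ref{inducing} to this sequence, with $\tau=\tau_\lambda$ and $\lambda$ itself as the complete $\tau_\lambda$-extension of the $Q$-filling measure $\nu$, lifts it to a splitting sequence of complete train tracks of length at most $2q$ from $\tau_\lambda$ to a complete train track $\tau_\lambda^*$ which carries $\lambda$ and contains $Q$ as an embedded subtrack.

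Second, in each pair of pants complementary to $Q$ in $S$, the train track $\tau_\lambda^*$ restricts to a train track with stops that carries the prescribed spiraling leaves of $\lambda$. I will perform an additional $\lambda$-splitting sequence of uniformly bounded length to normalize these local structures so that each coincides with one of the four standard models from Figure~A (up to deletion of branches at punctures) and so that each annular neighborhood of a component of $Q$ carries a standard connector from Figure~B. The resulting $\eta_\lambda$ is then in standard form for some marking $G_\lambda$ with pants decomposition $Q$. Setting ${\cal D}=\{\eta_\lambda:\lambda\in {\cal L}_Q\}$, property (1) is immediate, and property (3) follows because $\eta_\lambda$ is obtained from $\tau_\lambda$ by a splitting sequence and hence is carried by $\tau_\lambda$, which is in standard form for $F$. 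For property (2), the diameter bound follows from two observations: the set of train tracks in standard form for $F$ is finite with uniformly bounded diameter in ${\cal T\cal T}$ (by finiteness of the choices of standard models per marking combined with cocompactness of the ${\cal M\cal C\cal G}(S)$-action), and each $\eta_\lambda$ is connected to $\tau_\lambda$ by a splitting sequence of universally bounded length.

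The main obstacle is the local normalization step: verifying that in each pair of pants complementary to $Q$, the train track $\tau_\lambda^*$ can be put into one of the standard-model forms by a uniformly bounded sequence of $\lambda$-splits. This will require analyzing the possible local configurations using the classification of the seven types of complementary regions of $\tau(Q)$ from the discussion preceding Lemma \ref{markingcontrol}, together with the constrained local structure of $\lambda$ (as a complete geodesic lamination in standard form for $Q$, it contributes two or three leaves spiraling about the boundary components of each pair of pants) to ensure termination in a number of steps depending only on the topological type of $S$.
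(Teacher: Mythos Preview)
There is a genuine gap in your argument, located in the sentence ``since $\sigma$ has at most $q$ branches \ldots\ there is a splitting sequence on $\sigma$ of length at most $q$ terminating at the embedded multi-curve $Q$.'' This is false. A split (in the sense of this paper) preserves the number of branches, so no splitting sequence can take $\sigma=\tau(Q)$ to the multi-curve $Q$, which has strictly fewer branches. Even if one augments splits by collisions, the process of reducing $\sigma$ to $Q$ is not bounded by $q$: between successive collisions one must first perform a splitting sequence until the collision at the chosen large branch still carries $Q$, and the length of that splitting sequence is governed by the total $Q$-weight of $\sigma$, i.e.\ essentially by the intersection number of $Q$ with the pants decomposition $P$ of $F$. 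This quantity is not bounded independently of $F$ and $Q$.

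The consequence is that your step (b) in the diameter argument --- ``each $\eta_\lambda$ is connected to $\tau_\lambda$ by a splitting sequence of universally bounded length'' --- fails, and with it the bound on the diameter of ${\cal D}$. Indeed $d(\tau_\lambda,\eta_\lambda)$ is comparable to $d(F,G_\lambda)$ and is certainly not bounded by a constant depending only on the topology of $S$. The paper's proof circumvents this entirely: rather than bounding the length of the splitting sequence, it tracks the \emph{intersection number} $i_{\sigma_i}(\tau_i,\tau_i')$ between the two induced complete extensions along the (possibly very long) process. The crucial input is Proposition~\ref{inducing}(5), which shows that this intersection number increases by at most $4q^5$ over an induced sequence of \emph{arbitrary} length. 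After at most $q$ iterations of ``maximal $Q$-filling splitting sequence followed by a collision,'' one reaches train tracks $\eta,\eta'$ containing $Q$ as embedded curves with $i_Q(\eta,\eta')$ bounded by a universal constant; bounded intersection number then gives bounded distance in ${\cal T\cal T}$ by cocompactness. Your construction of each individual $\eta_\lambda$ is essentially correct, but the diameter control requires this comparison mechanism rather than a length bound.
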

\begin{proof} By the discussion in the beginning of this
section, it suffices to
show the existence of a number $\chi>0$ with the following
properties. Let $F$ be a marking of $S$ and let
$Q$ by any pants decomposition of $S$.
Then for every geodesic lamination $\lambda$ in 
standard form for $Q$ there is a train track
$\tau(\lambda)$ with the following properties.
\begin{enumerate}
\item $\tau(\lambda)$ 
carries $\lambda$.
\item $\tau(\lambda)$ is in standard form for 
a marking with pants decomposition $Q$.
\item $\tau(\lambda)$ 
is carried by a train track $\tau_0(\lambda)$
in standard form for $F$.
\item If $\lambda^\prime$ is any 
other geodesic lamination in standard form
for $Q$ then we have $i_Q(\tau(\lambda),\tau(\lambda^\prime))
\leq \chi$.
\end{enumerate}

Thus let $F$ be a marking for $S$ with pants decomposition $P$, let $Q$
be a second pants decomposition and let
$\lambda,\lambda^\prime$ be two
geodesic laminations in standard form for $Q$. 
By Lemma \ref{standardform}, there are
unique train tracks $\tau,\tau^\prime$
in standard form for $F$ which carry
$\lambda,\lambda^\prime$; in particular,
$\tau,\tau^\prime$ carry $Q$. Let as before
$\tau(Q),\tau^\prime(Q)$
be the subtrack of $\tau,\tau^\prime$ of all branches of positive
$Q$-weight.
By Lemma \ref{markingcontrol}, 
the train tracks $\tau(Q),\tau^\prime(Q)$
are isotopic. This 
means that $\tau,\tau^\prime$
are complete extensions
of $\tau(Q)$. We equip the smooth boundary components of
complementary regions of $\tau(Q)$ with marked points and
use these marked points to define the intersection number
between the complete extensions $\tau,\tau^\prime$
of $\tau(Q)$. Then
$i_{\tau(Q)}(\tau,\tau^\prime)$ is bounded from
above by a universal constant.

Assume first that $\tau(Q)=Q$. 
By invariance under the
action of the mapping class group and cocompactness, 
Lemma \ref{fatening} together with Lemma 3.3 of \cite{H09} shows that
there is a number $\beta>0$ and there is a train track
$\eta$ in standard form for some marking 
of $S$ with pants decomposition $Q$ which carries
$\lambda$, which is carried by $\tau$
and such that $d(\tau,\eta)\leq \beta$.
Thus in this case the lemma is obvious (this can also
easily seen with a direct combinatorial argument).
Therefore we may assume that $\tau(Q)$ contains a
large branch.

Let $\{\sigma_i\}_{0\leq i\leq s}$ be a splitting
sequence issuing from $\sigma_0=\tau(Q)$ 
so that for each $i\leq s$ the pants decomposition 
$Q$ is carried by $\sigma_i$ and
fills $\sigma_i$. 
Then for each $i$ the 
pants decomposition $Q$ defines an integral transverse counting
measure on $\sigma_i$ by assigning to a branch $b$
the number of connected components of the preimage
of $b$ under a carrying map $Q\to \sigma_i$.
For $i<s$ 
the total $Q$-weight of $\sigma_{i+1}$, i.e. the sum of the weights 
of this counting measure over all branches of $\sigma_{i+1}$,
is bounded from above by the total $Q$-weight of $\sigma_i$
minus two. Namely, if $\sigma_{i+1}$ is obtained
from $\sigma_i$ by a split at the large branch $e$ and
if $e^\prime$ is the diagonal branch of the split in
$\sigma_{i+1}$, then the $Q$-weight of $e$ equals the sum of the
$Q$-weights of $e^\prime$ and the $Q$-weights of the two 
losing branches of the split. These weights are all
positive and integral. The weights of the branches
of $\sigma_i$ which are distinct from $e$ coincide with
the weights of their images in $\sigma_{i+1}$ under
the natural bijection $\phi(\sigma_i,\sigma_{i+1})$
of the branches of $\sigma_i$ onto the branches of
$\sigma_{i+1}$. Therefore the length
of the splitting sequence $\{\sigma_i\}$ is bounded from
above by the total $Q$-weight of $\tau(Q)$.

Assume that the sequence $\{\sigma_i\}_{0\leq i\leq s}$ is 
of maximal length. This means that for every large
branch $e$ of $\sigma_s$ the pants
decomposition $Q$ is carried by a collision of $\sigma_s$
at $e$ (i.e. a split followed by the removal of the diagonal).

By Proposition \ref{inducing}, there are 
complete extensions $\tau_1,\tau_1^\prime$ of 
$\sigma_s$ with 
$\tau_1(Q)=\tau_1^\prime(Q)=\sigma_s$ so that
$\tau_1$ carries $\lambda$, $\tau_1^\prime$ carries
$\lambda^\prime$ and that moreover
\begin{equation}\label{firstcut}
i_{\tau_1(Q)}(\tau_1,\tau_1^\prime)\leq 
i_{\tau(Q)}(\tau,\tau^\prime)+4 q^5.
\end{equation}

Let $e$ be any large branch of $\tau_1(Q)=\sigma_s$. 
The pants decomposition 
$Q$ is carried by the train track $\xi$ obtained from 
$\sigma_s$ by the collision at $e$. 
Let $\tau_2,\tau_2^\prime$ be the 
$(\tau_1(Q),\lambda)$-modification (or the
$(\tau^\prime_1(Q),\lambda^\prime)$-modification)
of $\tau_1,\tau_1^\prime$ at $e$.
Two applications of 
Lemma \ref{tightcontrol} show that
\begin{equation}\label{secondcut}
i_{\tau_2(Q)}(\tau_2,\tau_2^\prime)\leq 
i_{\tau_1(Q)}(\tau_1,\tau_1^\prime)+2q^2.\end{equation}

The train tracks $\tau_2,\tau_2^\prime$ are tight at $e$.
Let 
$\tau_3,\tau_3^\prime$ be the train tracks obtained from
$\tau_2,\tau_2^\prime$ by a split at $e$ with the property
that the split tracks carry $\lambda,\lambda^\prime$.
The number of branches of $\tau_3(Q)=\tau^\prime_3(Q)=\xi$
is strictly smaller than the number of branches of
$\tau_1(Q)$. The diagonal branch
$d=\phi(\tau_2,\tau_3)(e)$ of the split of $\tau_2$ at $e$
is a small branch of $\tau_3$ which is contained in a
complementary region $C$ of $\xi=\tau_3(Q)$ and 
which is attached at both
endpoints to a side of $C$. 
Let $d^\prime=\phi(\tau_2^\prime,\tau_3^\prime)(e)$
be the diagonal of the split of $\tau_2^\prime$ at $e$.

Let $U$ be a neighborhood of $e$ in $S$ which is
sufficiently small that $\sigma_s$ intersects
$U$ in the union of $e$ with the 
four half-branches of $\sigma_s$ which are incident on the
endpoints of $e$. Up to isotopy, the intersections of 
$\sigma_s,\xi$ with $S-U$ coincide. Thus 
if there is a complementary region $C$ of $\xi$
containing a smooth boundary component $T$ which newly
arises in the process then this boundary component
intersects $U$. Mark a point on $T$ which is mapped into  
$U$. Note that any marked point on
a smooth boundary component of a complementary region
of $\sigma_s$ induces a marked point on a smooth 
boundary component of a complementary region of 
$\xi$.

Let $\zeta,\zeta^\prime$ be complete
extensions of $\sigma_s$ which are $\sigma_s$-equivalent to 
$\tau_2,\tau_2^\prime$ and
which have the minimal number
of intersection points in 
$S-\sigma_s$, i.e. which realize the
intersection number $i_{\sigma_s}(\tau_2,\tau_2^\prime)$.
By the definition of equivalence, 
after perhaps replacing $\zeta,\zeta^\prime$ by
equivalent train tracks and after perhaps a modification 
with an isotopy we may assume 
that $\zeta,\zeta^\prime$ are tight at $e$.

Using once more the definition of equivalence,
the train tracks $\zeta,\zeta^\prime$ can be 
modified with a single split at $e$ to 
train tracks $\zeta_0,\zeta_0^\prime$ which
are complete extensions of $\xi$ and which
are equivalent to the complete extensions
$\tau_3,\tau_3^\prime$ of $\xi$. 
If $\tau_3,\tau_3^\prime$ is obtained
from $\tau_2,\tau_2^\prime$ by a right (or left) split at $e$
then $\zeta_0,\zeta_0^\prime$ is obtained from
$\zeta,\zeta^\prime$ by a right (or left) split at
$e$.

Let $d_0,d_0^\prime$ be the diagonal of the
split in $\zeta_0,\zeta_0^\prime$.
The train track $\xi$ intersects $U$ in two
disjoint embedded arcs which are joined by the two
branches $d_0,d_0^\prime$ of $\zeta_0,\zeta_0^\prime$.
We may assume that the branches $d_0,d_0^\prime$ either
are disjoint 
(if the train tracks $\tau_3,\tau_3^\prime$ are
both 
obtained from $\tau_2,\tau_2^\prime$ by the same
type of split, left or right) 
or that they intersect transversely in a
single point. 
By the definition of intersection numbers, this shows that
\begin{equation}\label{nomark}
i_{\tau_3(Q)}(\tau_3,\tau_3^\prime)=
i_\xi(\zeta_0,\zeta_0^\prime)
\leq 
i_{\tau_2(Q)}(\tau_2,\tau_2^\prime)+1.\end{equation}
Inequalities (\ref{nomark}) and (\ref{secondcut})
now yield that
\[i_{\tau_3(Q)}(\tau_3,\tau_3^\prime)\leq 
i_{\tau_2(Q)}(\tau_2,\tau_2^\prime)+1
\leq
i_{\tau_1(Q)}(\tau_1,\tau_1^\prime)+2q^2+1\]
and hence from the estimate (\ref{firstcut}) we obtain
(since $q\geq 2$) 
\[i_{\tau_3(Q)}(\tau_3,\tau_3^\prime)\leq 
i_{\tau(Q)}(\tau,\tau^\prime)+6q^5.\]

Repeat this procedure with the train track
$\xi=\tau_3(Q)$. After at most $k\leq q$ such steps where
$k$ is the number of branches of $\tau(Q)$ we
arrive at train tracks $\eta,\eta^\prime$ which contain
$Q$ as a disjoint union of simple closed curves and
carry $\lambda,\lambda^\prime$. 

To summarize, we obtain in at most $q$ steps 
two splitting sequences connecting
$\tau,\tau^\prime$ to train tracks
$\eta,\eta^\prime$ so that $\eta(Q)=\eta^\prime(Q)=Q$
and that $\eta,\eta^\prime$ carry $\lambda,\lambda^\prime$.
Each of these steps increases intersection 
numbers by at most $6q^5$. In particular,
the intersection number
$i_{Q}(\eta,\eta^\prime)$ is uniformly bounded and hence
the distance in ${\cal T\cal T}$ between $\eta,\eta^\prime$
is uniformly bounded as well.

Now $\lambda,\lambda^\prime$ is carried by 
$\eta,\eta^\prime$
and is in standard form for $Q$. Hence
by the reasoning in the third paragraph of this
proof, there are 
train tracks $\beta,\beta^\prime$ 
in standard form for some marking with 
pants decomposition $Q$ which carry $\lambda,\lambda^\prime$,
which are carried by $\eta,\eta^\prime$ and such that
$d(\eta,\beta)\leq \kappa,d(\eta^\prime,\beta^\prime)\leq 
\kappa$. As a consequence, the distance between 
$\beta,\beta^\prime$ is uniformly bounded.
Since $\lambda,\lambda^\prime$ were arbitrarily chosen
geodesic
laminations in standard form for $Q$ the lemma follows.
\end{proof}

Now we are ready to complete the proof of 
Proposition \ref{density}.  
Let $F,G$ be markings of  $S$ with 
pants decompositions $P,Q$. Let $\lambda$ be
a complete geodesic lamination in standard
form for $Q$ and let $\tau(\lambda)\in 
{\cal V}({\cal T\cal T})$ be as in Lemma \ref{specialcontrol}.
Then $\tau(\lambda)$ is in standard form
for a marking $G^\prime$ with pants decomposition
$Q$. Any two markings with pants decomposition
$Q$ differ from each other by a multi-twist about
the pants curves of $Q$. Thus if we write $k=3g-3+m$ for simplicity
of notation and if we let
$\theta_1,\dots,\theta_{k}$
be the positive Dehn twists about the components
$\gamma_1,\dots,\gamma_{k}$ of 
$Q$ then there is an integral vector 
$(n_1,\dots,n_{k})\in \mathbb{Z}^k$ such that
\[G=\theta_1^{n_1}\cdots \theta_{k}^{n_{k}}G^\prime.\]

Every pants curve $\gamma_i$ of $Q$
is the core curve of a twist connector for $\tau(\lambda)$.
Splitting a standard twist connector at the
large branch, with the small branch of the connector
as a winner, results
in deforming a train track by a (positive or negative)
Dehn twist about
the core curve of the connector.
The sign of the
twist is determined by the 
type of the twist connector which in turn is determined
by the 
spiraling direction 
of the geodesic lamination $\lambda$ in standard form
for $Q$ about the pants curve $\gamma_i$.
 
Assume after reordering that for some $p\leq k$ and for 
every $i\leq p$, either $n_i=0$ or 
the sign of $n_i$ coincides with the
sign determined by the spiraling direction of $\lambda$
about $\gamma_i$, 
and that for $i>p$ we have $n_i\not=0$ and
the sign of $n_i$ differs from this direction.
Let $\sigma$ be a train track in standard
form for the marking $G^\prime$ 
which is obtained from $\tau(\lambda)$ by reversing the
directions in the twist connectors about the curves
$\gamma_{{p+1}},\dots,\gamma_{k}$.
By Lemma 2.6.1 of \cite{PH92},  
$\sigma$ is complete, and 
$\sigma$
is splittable to the train track 
$\theta_1^{n_1}\cdots\theta_{k}^{n_{k}}\sigma$ in standard form for $G$.
The train track $\sigma$ carries a complete geodesic lamination  
$\lambda^\prime$ in standard form for $Q$.
By equivariance under the action of the mapping
class group and cocompactness, there is a universal
constant $\chi >0$ such that
\[d(\sigma,\tau(\lambda))\leq \chi.\]

By Lemma \ref{specialcontrol} there is a train track
$\tau(\lambda^\prime)$ 
which is in standard form
for a marking with pants decomposition $Q$,
which carries $\lambda^\prime$
and such that \[d(\tau(\lambda),\tau(\lambda^\prime))\leq 
\kappa.\]
Since ${\cal M\cal C\cal G}(S)$ acts isometrically
on ${\cal T\cal T}$, we have
\[d(\theta_1^{n_1}\cdots\theta_k^{n_k}\sigma,\,
\theta_1^{n_1}\cdots\theta_k^{n_k}\tau(\lambda^\prime))\leq 
\kappa +\chi.\]
But $\theta_1^{n_1}\cdots \theta_k^{n_k}\sigma$ is in
standard form for $G$ and therefore 
the train track 
$\eta=\theta_1^{n_1}\cdots \theta_{k}^{n_{k}}\tau(\lambda^\prime)$
is at distance at most $\kappa+\chi$ from a train track in
standard form for $G$. It contains the pants decomposition
$Q$ as an embedded subtrack.

On the other hand, there is a train track in standard 
form for $F$ which carries $\tau(\lambda^\prime)$, 
and $\tau(\lambda^\prime)$ carries $\eta$
by construction. Therefore there is train track in standard form 
for $F$ which carries $\eta$.
This completes the proof of Proposition \ref{density}.

\bigskip

{\bf Remark:} The results in this section are also valid
if the surface $S$ is a once puncture torus or a four punctured
sphere.

\section{Quasi-isometric embeddings}

In this section we use Proposition \ref{density1} to derive
Theorems 2-4 from the introduction.

We begin with an investigation of the mapping class group of
an essential subsurface $S_0$ of $S$. This means that $S_0$ is
a bordered subsurface of $S$ with the property that 
the inclusion $S_0\to S$ induces an injection 
$\pi_1(S_0)\to \pi_1(S)$ of
fundamental groups and that moreover 
every boundary component of $S_0$ 
is an essential simple closed curve in $S$. Let
${\cal P\cal M\cal C\cal G}(S_0)$ be the 
\emph{pure mapping class group} of $S_0$, i.e. the
subgroup of the mapping class group ${\cal M\cal C\cal G}(S_0)$ 
of $S_0$ of all mapping classes which fix each of the
boundary components and each of the punctures. Then 
${\cal P\cal M\cal C\cal G}(S_0)$ 
is a subgroup of ${\cal M\cal C\cal G}(S_0)$ of finite index. 
It can be identified
with the subgroup of the 
mapping class group of $S$ of all elements
which can be realized by a homeomorphism preserving
$S-S_0$ pointwise and fixing each of the punctures of $S$.

As in the introduction, call a finitely generated subgroup
$\Gamma$ of ${\cal M\cal C\cal G}(S)$ 
\emph{undistorted} if the inclusion
$\Gamma\to {\cal M\cal C\cal G}(S)$ is a 
quasi-isometric embedding. For example, every
subgroup of ${\cal M\cal C\cal G}(S)$ 
of finite index is undistorted.
The following result is implicitly
but not explicitly contained in Theorem 6.12 of \cite{MM00}.

\begin{proposition}\label{subsurface} For
an essential subsurface $S_0\subset S$
the subgroup ${\cal P\cal M\cal C\cal G}(S_0)$ of 
${\cal M\cal C\cal G}(S)$ is undistorted.
\end{proposition}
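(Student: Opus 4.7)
The plan is to verify the quasi-isometric embedding criterion sketched at the end of Section 1, applied to the inclusion $\phi\colon {\cal P\cal M\cal C\cal G}(S_0)\hookrightarrow {\cal M\cal C\cal G}(S)$. It is enough to fix a basepoint $\tau\in{\cal V}({\cal T\cal T})$ so that, for every pair $g,h\in{\cal P\cal M\cal C\cal G}(S_0)$, some directed edge-path in ${\cal T\cal T}$ connects a vertex within uniformly bounded distance of $g\tau$ to a vertex within uniformly bounded distance of $h\tau$, while remaining in a uniformly bounded neighborhood of the orbit ${\cal P\cal M\cal C\cal G}(S_0)\tau$. Since such a path is an $L_0$-quasi-geodesic in ${\cal T\cal T}$ (the remark following Theorem \ref{cubicaleuclid}), its length $n$ is bounded above by $L_0\bigl(d_{{\cal T\cal T}}(g\tau,h\tau)+L_0\bigr)$, and sampling nearby ${\cal P\cal M\cal C\cal G}(S_0)$-translates of $\tau$ along the path expresses $g^{-1}h$ as a word in ${\cal P\cal M\cal C\cal G}(S_0)$ of length comparable to $n$; this yields the linear bound $|g^{-1}h|_{{\cal P\cal M\cal C\cal G}(S_0)}\le c\,|g^{-1}h|_{{\cal M\cal C\cal G}(S)}+c$ required for a quasi-isometric embedding. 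The properness condition in the criterion is automatic from properness of the ${\cal M\cal C\cal G}(S)$-action on ${\cal T\cal T}$.

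For the basepoint, I extend $\partial S_0$ to a pants decomposition $P$ of $S$, fix a marking $F$ of $S$ with pants decomposition $P$, and let $\tau$ be a complete train track in standard form for $F$. Then $\partial S_0$ sits inside $\tau$ as the cores of the annulus connectors about its components, and the branches of $\tau$ partition into those in the closure of $S-S_0$ and those in the interior of $S_0$. Let $\sigma\subset\tau$ be the recurrent subtrack comprising $\partial S_0$ together with the former class. Since every $g\in{\cal P\cal M\cal C\cal G}(S_0)$ fixes $S-S_0$ pointwise, $g$ preserves $\sigma$ setwise and $g\tau$ agrees with $\tau$ outside $S_0$; the restriction $g\tau\mapsto g\tau\cap S_0$ defines an ${\cal P\cal M\cal C\cal G}(S_0)$-equivariant correspondence between the orbit ${\cal P\cal M\cal C\cal G}(S_0)\tau\subset{\cal V}({\cal T\cal T})$ and the orbit of $\tau\cap S_0$ in the train track complex ${\cal T\cal T}(S_0)$ of the subsurface, and this correspondence is uniformly coarsely biLipschitz because a split at a large branch interior to $S_0$ is simultaneously an edge of ${\cal T\cal T}(S_0)$ and of ${\cal T\cal T}(S)$. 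The remark at the end of Section 3 allows the density results to be applied to $S_0$ even in the exceptional cases, while annulus components of $S_0$ reduce to the theorem of Farb--Lubotzky--Minsky quoted after Theorem \ref{thm2}.

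Given $g,h\in{\cal P\cal M\cal C\cal G}(S_0)$, I apply Proposition \ref{density1} inside $S_0$ to produce complete train tracks $\tau'_0,\tau''_0$ on $S_0$ within uniformly bounded $d_{{\cal T\cal T}(S_0)}$-distance of $g\tau\cap S_0$ and $h\tau\cap S_0$ respectively, with $\tau'_0$ splittable to $\tau''_0$. Every split in this splitting sequence occurs at a large branch lying in the interior of $S_0$ and so lifts to a split of the corresponding complete extension of $\sigma$ on $S$ without disturbing $\sigma$. The lifted sequence is a directed edge-path in ${\cal T\cal T}$ from a vertex near $g\tau$ to a vertex near $h\tau$, and every intermediate vertex is a complete extension of $\sigma$ whose restriction to $S_0$ lies in the ${\cal P\cal M\cal C\cal G}(S_0)$-orbit of $\tau\cap S_0$ up to a uniformly bounded error in ${\cal T\cal T}(S_0)$; via the coarsely biLipschitz correspondence of the previous paragraph, each intermediate vertex is within uniformly bounded distance of ${\cal P\cal M\cal C\cal G}(S_0)\tau$, as required by the criterion.

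The main obstacle is making the phrase ``uniformly bounded error'' quantitative. Concretely, by cocompactness of the ${\cal M\cal C\cal G}(S_0)$-action on ${\cal T\cal T}(S_0)$ and the finite index of ${\cal P\cal M\cal C\cal G}(S_0)$ in ${\cal M\cal C\cal G}(S_0)$, each intermediate $\tau_i$ in the $S_0$-splitting sequence is within bounded $d_{{\cal T\cal T}(S_0)}$-distance of some $k_i(\tau\cap S_0)$ with $k_i\in{\cal P\cal M\cal C\cal G}(S_0)$, and one must lift a short path $\tau_i\leftrightarrow k_i(\tau\cap S_0)$ from ${\cal T\cal T}(S_0)$ to ${\cal T\cal T}(S)$ while keeping $\sigma$ fixed, in order to witness that the lifted intermediate vertex lies near $k_i\tau\in{\cal P\cal M\cal C\cal G}(S_0)\tau$. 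This is exactly the kind of estimate packaged by Proposition \ref{inducing} applied with the subtrack $\sigma$; the intersection-number bound it provides, together with the finiteness of $\sigma$-equivalence classes of complete extensions of $\sigma$ having bounded intersection number (noted after the definition of $i_\sigma$), furnishes the required uniform neighborhood containment and completes the argument.
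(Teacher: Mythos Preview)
Your overall strategy is sound, but there are two substantive gaps that the paper's proof addresses and yours does not.

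\textbf{The bordered surface and the boundary twists.} You repeatedly invoke ``the train track complex ${\cal T\cal T}(S_0)$'' and ``complete train tracks on $S_0$'', but $S_0$ is a \emph{bordered} surface and the paper's machinery (Proposition~\ref{density1}, Theorem~\ref{cubicaleuclid}) is developed only for finite-type surfaces with punctures. If you replace $S_0$ by the punctured surface $\hat S_0$ obtained by deleting $\partial S_0$, your restriction map $g\tau\mapsto g\tau\cap S_0$ no longer distinguishes $g$ from $\theta_c g$ for a Dehn twist $\theta_c$ about a boundary curve $c$: such twists are trivial in ${\cal M\cal C\cal G}(\hat S_0)$, so the correspondence you describe collapses the entire kernel $\mathbb{Z}^p$ of the projection ${\cal P\cal M\cal C\cal G}(S_0)\to{\cal P\cal M\cal C\cal G}(\hat S_0)$. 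Concretely, the splitting sequence that realises $\theta_c$ on $S$ is a split at the large branch \emph{inside} the twist connector about $c$, which lies in your subtrack $\sigma$ and not in the interior of $S_0$; your splits ``at large branches interior to $S_0$'' cannot produce it. The paper handles this by working explicitly with the exact sequence $0\to\mathbb{Z}^p\to{\cal P\cal M\cal C\cal G}(S_0)\to{\cal P\cal M\cal C\cal G}(\hat S_0)\to 0$, using induced splitting sequences (from ${\cal T\cal T}(\hat S_0)$) for the quotient and explicit twist-connector splits for the $\mathbb{Z}^p$ part.

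\textbf{The direction of Proposition~\ref{inducing}.} Your final paragraph invokes Proposition~\ref{inducing} ``applied with the subtrack $\sigma$'' to lift short paths in ${\cal T\cal T}(S_0)$ to short paths in ${\cal T\cal T}$. But Proposition~\ref{inducing} takes as input a \emph{splitting sequence of the subtrack} $\{\sigma_i\}$ and outputs an induced sequence of complete extensions; it says nothing when $\sigma$ is held fixed and one varies the extension by splits in the complementary regions. In the paper's argument the roles are reversed: the subtrack that is being split is the complete train track on $\hat S_0$ (viewed as a subtrack of its extension to $S$), while what you call $\sigma$ (namely $\tau_1=\tau\setminus{\rm int}\,S_0$) sits in the complement and is fixed. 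Proposition~\ref{inducing} then genuinely applies, and its intersection-number estimate controls exactly the ambiguity in the extension that you are trying to bound by hand.
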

\begin{proof}
If $S_0=S_1\cup S_2$ for two disjoint 
essential subsurfaces $S_1,S_2$ 
of $S$ whose fundamental groups 
as subgroups of $\pi_1(S)$ have trivial intersection
then 
\[{\cal P\cal M\cal C\cal G}(S_0)={\cal P\cal M\cal C\cal G}(S_1)
\times {\cal P\cal M\cal C\cal G}(S_2).\]
Now a subgroup of a finitely generated group which is 
a direct product of two undistorted subgroups
is undistorted and hence
it suffices to show the proposition for connected
essential subsurfaces of $S$.
The case that $S_0$ is an essential
annulus is treated in detail in \cite{FLM01,H09}, 
so we assume that
the Euler characteristic of $S_0$ is negative.
If $S_0$ is a thrice punctured sphere then
${\cal P\cal M\cal C\cal G}(S_0)$ equals 
the free abelian group of Dehn twists about the 
boundary components of $S_0$.
Thus we also may assume that
$S_0$ is different from a thrice punctured sphere. 

Our goal is to show that any two elements of 
${\cal P\cal M\cal C\cal G}(S_0)$ can be connected
by a uniform quasi-geodesic in ${\cal M\cal C\cal G}(S)$
which is entirely contained in 
${\cal P\cal M\cal C\cal G}(S_0)$. 
For this 
let $\hat S_0$ be the surface which we obtain from
$S_0$ by replacing each boundary component by a puncture.
There is an exact sequence
\[0\to \mathbb{Z}^p\to {\cal P\cal M\cal C\cal G}(S_0) 
\stackrel{\Pi}\to {\cal P\cal M\cal C\cal G}(\hat S_0)\to 0\]
where $\mathbb{Z}^p$ is identified with the free abelian
group of Dehn twists about the boundary components of 
$S_0$.

Choose a pants decomposition $P$ for $S$ 
which contains the boundary of $S_0$
as a subset. Let $\tau$ be a complete train track in 
standard form for a marking $F$ with pants
decomposition $P$ and only twist connectors.
Let $\tau_1$ be the subtrack of $\tau$
which we obtain from $\tau$ by removing all
branches contained in the interior of
$S_0$. We can choose $\tau$ in such
a way that any two points in the same
connected component of 
$\tau_1$ can be connected
by a trainpath in $\tau_1$ (however, in general $\tau_1$
is neither connected not recurrent).

Let $c_1,\dots,c_p$ be the boundary circles of $S_0$.
Every complete train track $\sigma$
on $\hat S_0$ is a subtrack of a complete train track $\eta$ on $S$
which contains $\tau_1$ as a subtrack. 
Namely, up to isotopy, each boundary component $c_i$ of $S_0$ 
is contained 
in a complementary once punctured monogon region $C_i$ 
of $\sigma$. It cuts $C_i$ into an annulus $A_i\subset S_0$ and 
a once punctured disc.
Add a small branch $b_i$ to $\sigma\cup \tau_1$ 
which is contained in the closure $\overline{A_i}$ 
of the annulus $A_i$ and
connects the boundary of $C_i$ to
the boundary circle $c_i$ of $S_0$. 
Since $\sigma$ is complete and hence 
non-orientable, if for each $i\leq p$
we connect the branch $b_i$ 
to the circle $c_i$ in such a way that 
the resulting train track $\eta$ intersects 
an annulus neighborhood 
of $c_i$ in a twist connector as shown in Figure B,
then
$\eta$ is recurrent \cite{PH92}.
The train track $\eta$ is also very easily seen to 
be transversely recurrent and hence it is
complete.

The train track $\eta$ is not uniquely determined by $\tau_1$ and
$\sigma$. The choices made are the
positions of the additional switches
on the boundaries of the
complementary regions $C_i$, the 
inward pointing tangents of the
added branches $b_i$ at these switches and the homotopy
class with fixed endpoints of the branch 
$b_i\subset \overline{A_i}$. 
By invariance under the 
action of the group ${\cal P\cal M\cal C\cal G}(\hat S_0)$
and cocompactness, for any two 
such choices $\eta,\eta^\prime$ there is a
multi-twist $\phi$ about the multi-curve $c=\cup_ic_i$ 
such that 
the distance in ${\cal T\cal T}$ between $\eta,\phi\eta^\prime$
is uniformly bounded. The set ${\cal E}$ of all such 
extensions of all complete train tracks on $\hat S_0$ is 
invariant under the action of the group
${\cal P\cal M\cal C\cal G}(S_0)$, with finitely
many orbits and finite point stabilizers.

Let $F$ be a marking for $\hat S_0$.
By Theorem \ref{cubicaleuclid} and by 
Proposition \ref{density} and the following remark,
any complete train track $\eta$ on $\hat S_0$ can
be obtained from a train track $\sigma$ 
in standard form for
$F$ by a uniform quasi-geodesic 
in the train track complex ${\cal T\cal T}(\hat S_0)$ of 
$\hat S_0$ which is a concatenation
of a splitting sequence with an edge-path of 
uniformly bounded length.

For every train track $\sigma$ on $\hat S_0$ in standard form
for $F$ choose 
an extension $\Psi(\sigma)\in {\cal V }({\cal T\cal T})$ as above.
By Proposition \ref{inducing}, there is a universal 
number $p>0$ (depending on the topological type of $S$)
and for
every splitting sequence $\{\sigma_i\}_{0\leq i\leq m}$ 
of complete train tracks on $\hat S_0$ 
issuing from a train track $\sigma_0=\sigma$ in standard form
for $F$  
and for every complete $\Psi(\sigma)$-extension
of a $\sigma_m$-filling
measured geodesic lamination 
there is an induced
sequence $\{\tau_j\}_{0\leq j\leq 2m}\subset {\cal T\cal T}$ 
connecting $\tau_0=\Psi(\sigma)$ to 
a train track $\tau_{2m}$ which contains
$\sigma_m$ as well as $\tau_1$ as a subtrack.
There is a universal number $s>0$ such that for 
every $i<m$ the train track track $\tau_{2i+2}$ can
be obtained from $\tau_{2i}$ by a splitting
sequence whose length is contained $[1,s]$.

Since
splitting sequences are
uniform quasi-geodesics in both 
${\cal T\cal T}$ and the train track complex ${\cal T\cal T}(\hat S_0)$
of $\hat S_0$ \cite{H09},
this shows that there is a number
$c>0$ with the following property. For every train track 
$\xi\in {\cal V}({\cal T\cal T}(\hat S_0))$
there is a train track $\Psi(\xi)\in {\cal E}$ which contains
both $\xi$ and $\tau_1$ as a subtrack and 
is such that the distance in ${\cal T\cal T}(\hat S_0)$ 
between $\xi$ and a train track $\sigma$ in
standard form for $F$ is not bigger than 
$cd(\Psi(\sigma),\Psi(\xi))+c$.

The resulting
map $\Psi:{\cal V}({\cal T\cal T}(\hat S_0))\to {\cal E}$ 
is used to define a map
$\rho:{\cal P\cal M\cal C\cal G}(\hat S_0)\to 
{\cal P\cal M\cal C\cal G}(S_0)$
as follows. Let $\sigma$ be a fixed train track
in standard form for $F$. For 
$g\in {\cal P\cal M\cal C\cal G}(\hat S_0)$ define 
$\rho(g)\in {\cal P\cal M\cal C\cal G}(S_0)$ in such a way
that the distance between
$\Psi(g\sigma)$ and $\rho(g)(\Psi(\sigma))$ is uniformly bounded.
Since ${\cal P\cal M\cal C\cal G}(S_0)$ acts on 
${\cal E}$ with finitely many orbits and finite
point stabilizers and since ${\cal T\cal T}(\hat S_0)$ is
equivariantly quasi-isometric to 
${\cal P\cal M\cal C\cal G}(\hat S_0)$, 
the map $\rho$ is a coarse section of 
the projection 
$\Pi$. By this we mean that there is a universal constant 
$\kappa >0$ such that $d(\Pi\rho(g),g)\leq \kappa$ for all $g$.

The image
of ${\cal V}({\cal T\cal T}(\hat S_0))$ 
under the map $\Psi$ consists of train tracks which
contain each boundary component $c_i$ of $S_0$ as the core
curve of a twist connector. Splitting 
such a train track $\tau$ at the large branch in this
twist connector, with the small branch as the winner,
results in replacing $\tau$ by $\theta_c(\tau)$ where
$\theta_c$ is a Dehn twist about $c$ whose direction
(positive or negative) depends on the twist connector.
Thus if $\Gamma$ denotes the semi-group of Dehn
twists about the boundary components of $S_0$ determined
by the train track $\tau_1$ then 
for every 
$g\in \rho({\cal P\cal M\cal C\cal G}(\hat S_0))$ 
and every $\phi\in \Gamma$ there is a uniform
quasi-geodesic in ${\cal M\cal C\cal G}(S)$ 
connecting the identity to $\phi\rho(g)$ and which is
entirely contained in ${\cal P\cal M\cal C\cal G}(S_0)$.
However, the choice of the twist connector in the train track
$\tau_1$ was arbitrary and consequently
the unit element in ${\cal M\cal C\cal G}(S)$ can be
connected to any mapping class 
$g\in {\cal P\cal M\cal C\cal G}(S_0)$
by a uniform quasi-geodesic in ${\cal M\cal C\cal G}(S)$
which is entirely contained in ${\cal P\cal M\cal C\cal G}(S_0)$.
By invariance of the word metrics under left translation, 
this just means that 
${\cal P\cal M\cal C\cal G}(S_0)<{\cal M\cal C\cal G}(S)$ is 
undistorted.
\end{proof}

Now let $S_0$ be any non-exceptional surface of genus $g\geq 0$
with $m\geq 0$ punctures and let
$S$ be the surface $S_0$ punctured at one additional point $p$.
There is an exact sequence \cite{B74}
\[0\to \pi_1(S_0)\to {\cal M\cal C\cal G}(S)\stackrel{\Pi}{\to} 
{\cal M\cal C\cal G}(S_0)
\to 0.\]
The projection $\Pi$ is induced by the map $S\to S_0$ which consists in
closing the puncture $p$.
An element $\alpha$ of 
the fundamental group $\pi_1(S_0)$ of $S_0$ is mapped to the
element of ${\cal M\cal C\cal G}(S)$ obtained by dragging the
point $p$ along a loop in $S_0$ in the homotopy class 
$\alpha$. Braddeus, Farb and Putman \cite{BFP07}
showed that $\pi_1(S_0)$ is an exponentially distorted subgroup of
${\cal M\cal C\cal G}(S)$. We next observe that in contrast, the
projection $\Pi:{\cal M\cal C\cal G}(S)\to 
{\cal M\cal C\cal G}(S_0)$ has a coarse section which is
a quasi-isometric embedding. Here by a coarse section we mean
a map $\Psi:{\cal M\cal C\cal G}(S_0)\to 
{\cal M\cal C\cal G}(S)$ such that 
\[d(\Pi\Psi(g),g)\leq \kappa\]
for all $g\in {\cal M\cal C\cal G}(S)$ 
where $\kappa\geq 0$ is a universal constant.

\begin{proposition}\label{qiembeddingclosed}
Let $S_0$ be a non-exceptional
surface of genus $g\geq 0$ with $m\geq 0$ punctures
and let $S$ be the surface of genus $g$ with 
$m+1$ punctures. Then there is a 
coarse section for the projection  
$\Pi:{\cal M\cal C\cal G}(S)\to 
{\cal M\cal C\cal G}(S_0)$ which is a quasi-isometric embedding.
\end{proposition}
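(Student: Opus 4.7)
The plan follows the strategy of the proof of Proposition \ref{subsurface}, with the roles of ambient and sub-surface reversed. The starting point is that any complete train track $\sigma$ on $S_0$ can be extended to a complete train track on $S$ containing $\sigma$ as a subtrack: the extra puncture $p$ lies in exactly one complementary region $R_\sigma$ of $\sigma$ (a trigon or a once-punctured monogon), and we attach a small local ``cap'' train track $\kappa_\sigma$ supported near $p$ inside $R_\sigma$, chosen from a fixed finite family of combinatorial models, so that $\sigma\cup\kappa_\sigma$ is complete on $S$. Fix a basepoint $\sigma_*\in{\cal V}({\cal T\cal T}(S_0))$ and such an extension $\tau_*\in {\cal V}({\cal T\cal T}(S))$. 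For each $g\in{\cal M\cal C\cal G}(S_0)$, use Theorem \ref{cubicaleuclid} and Proposition \ref{density1} to connect $\sigma_*$ to $g\sigma_*$ (after adjustment by uniformly bounded distance) by a splitting sequence $\{\sigma_i\}_{0\leq i\leq n}$ on $S_0$, which is automatically recurrent because complete train tracks are recurrent. Apply Proposition \ref{inducing} to this sequence, to $\tau_*$ as a complete extension of $\sigma_0=\sigma_*$, and to a complete $\tau_*$-extension of a $\sigma_n$-filling measured lamination: this produces an induced splitting sequence $\{\tau_j\}_{0\leq j\leq 2n}$ on $S$ ending at a complete train track $\tau_{2n}$ containing $\sigma_n$ as a subtrack. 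Define $\Psi(g)\in{\cal M\cal C\cal G}(S)$ to be any element with $\Psi(g)\tau_*$ at uniformly bounded distance from $\tau_{2n}$; cocompactness of the ${\cal M\cal C\cal G}(S)$-action on ${\cal V}({\cal T\cal T}(S))$ with finite point stabilizers makes $\Psi$ well defined up to bounded ambiguity.

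The coarse-section property $d(\Pi\Psi(g),g)\leq\kappa$ follows from the inclusion $\sigma_n\subset \tau_{2n}$: the projection $\Pi$ is induced by ``closing'' the puncture $p$, under which any complete train track on $S$ containing a complete train track $\sigma$ on $S_0$ as a subtrack is sent (after collapsing the cap) to a train track at uniformly bounded ${\cal T\cal T}(S_0)$-distance from $\sigma$. Hence $\Pi\Psi(g)\cdot\sigma_*$ lies close to $\sigma_n$ and hence to $g\sigma_*$ in ${\cal T\cal T}(S_0)$, and properness of the ${\cal M\cal C\cal G}(S_0)$-action on ${\cal T\cal T}(S_0)$ then gives the required bound on $\Pi\Psi(g)\,g^{-1}$. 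The Lipschitz upper bound $d_S(\Psi g,\Psi h)\leq L\,d_{S_0}(g,h)+L$ is the main content: applying the induced-sequence construction to two splitting sequences on $S_0$ from $\sigma_*$ to $g\sigma_*$ and $h\sigma_*$, intersection-number estimate 5) of Proposition \ref{inducing} combined with the fact that splitting sequences are uniform quasi-geodesics (Theorem \ref{cubicaleuclid}) shows that a splitting sequence of length $O(d_{S_0}(g,h))$ on $S_0$ connecting $g\sigma_*$ to $h\sigma_*$ lifts to a sequence of length $O(d_{S_0}(g,h))$ on $S$ ending within bounded distance of $\Psi(h)\tau_*$; the equivariant quasi-isometry between ${\cal T\cal T}(S)$ and ${\cal M\cal C\cal G}(S)$ converts this into the desired group-theoretic bound. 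The matching lower bound is then automatic, because $\Pi$ is a surjective homomorphism between finitely generated groups and hence coarsely Lipschitz, so
\[d_{S_0}(g,h)\leq d_{S_0}(\Pi\Psi g,\Pi\Psi h)+2\kappa\leq C\,d_S(\Psi g,\Psi h)+C'.\]

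The principal obstacle is translating the intersection-number control provided by Proposition \ref{inducing} into genuine ${\cal T\cal T}(S)$-distance control. The cap $\kappa_\sigma$ is determined by $\sigma$ only up to the infinite family of choices differing by Dehn twists about an essential curve bounding a neighborhood of $p$, and splits of $\sigma$ at large branches with an endpoint on $\partial R_\sigma$ can a priori force compensating twists of this kind that accumulate along long splitting sequences. The marked-point bookkeeping underlying Proposition \ref{inducing}, applied to the cap boundary, is exactly what absorbs these potential twists into a uniformly bounded correction at each step, in direct analogy with the role played by the subtrack $\tau_1$ in the proof of Proposition \ref{subsurface}.
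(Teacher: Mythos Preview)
Your approach is essentially the paper's: extend a complete train track on $S_0$ to one on $S$ by inserting a local cap near the new puncture, lift splitting sequences from $S_0$ to $S$ via Proposition~\ref{inducing}, and combine Theorem~\ref{cubicaleuclid} with Proposition~\ref{density1} to conclude. The coarse-section and lower-bound arguments are also the same.

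The one genuine misconception is in your final paragraph. A simple closed curve bounding a neighborhood of the puncture $p$ in $S$ is freely homotopic into that puncture and hence is \emph{not} essential; the Dehn twist about it is trivial in ${\cal M\cal C\cal G}(S)$. There is no infinite family of caps differing by such twists. The ambiguity in the cap is genuinely finite: which complementary trigon of $\sigma$ receives it, on which side the attaching switch sits, the tangent direction there, and how to subdivide the resulting fourgon. This is exactly what the paper records, and it is why the paper's argument is shorter than yours. Rather than invoking part 5) of Proposition~\ref{inducing} and intersection numbers, the paper simply observes that each train track along the induced sequence which contains $\tau_i$ as a subtrack is itself one of these finitely many possible values of $\Psi(\tau_i)$ (splitting $\sigma$ preserves the topological type of each complementary region and leaves the cap untouched), hence lies at uniformly bounded distance from $\Psi(\tau_i)$. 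That single remark gives both Lipschitz bounds simultaneously for pairs connected by splitting sequences, and Proposition~\ref{density1} then extends this to all pairs. Your detour through intersection numbers is correct but unnecessary, and the twist obstacle you resolve does not arise.
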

\begin{proof} Let ${\cal T\cal T}(S)$ and
${\cal T\cal T}(S_0)$ be the train track complex
of $S$ and of $S_0$.
We first define a map
$\Psi:{\cal V}({\cal T\cal T}(S_0))\to {\cal V}({\cal T\cal T}(S))$ 
as follows.

For a complete train track $\tau$ on $S_0$ choose any complementary
trigon $C$ of $\tau$. Mark a point $p$ in the interior of $C$ and
add two switches $v_1,v_2$ and two branches 
$b_1,b_2\subset C-\{p\}$ to $\tau$
in the following way. The switch 
$v_1$ is an interior point of a branch
of $\tau$ contained in a side of $C$, $v_2\not=p$ is a point
in the interior of $C$, $b_1$ connects $v_1$ to $v_2$ and 
$b_2$ is a small branch contained in the interior of $C$ 
whose endpoints are both 
incident on $v_2$
and which is the boundary of a subdisc of  $C$ 
containing $p$ in its interior.  
Since $\tau$ is complete, Proposition 1.3.7 of \cite{PH92}
shows that the resulting train track $\eta_0$ on 
$S=S_0-\{p\}$ is recurrent. It is also easily seen to be
transversely recurrent. The train track $\eta_0$ 
decomposes $S$ into 
trigons, once punctured monogons and one fourgon.
The fourgon can
be subdivided into two trigons by adding a single
small branch. The resulting
train track $\eta$ on $S$ is complete, and 
it contains $\tau$ as a subtrack.
This construction defines  
a map $\Psi:{\cal V}({\cal T\cal T}(S_0))\to 
{\cal V}({\cal T\cal T}(S))$. 

The map $\Psi$
depends on some choices among a finite set of possibilities:
The choice of the complementary trigon $C$, the choice of the
position of the switch $v_1$ on a side of $C$, the orientation
of the inward pointing tangent of the branch $b_1$ at 
the switch $v_1$
and the choice of the small branch subdividing the fourgon.
Any train track constructed in this way 
contains $\tau$ as
a subtrack. Moreover, for $g\in {\cal M\cal C\cal G}(S_0)$
there is some $h\in {\cal M\cal C\cal G}(S)$ such that
$h(\Psi(\tau))$ is one of the possibilities for $\Psi(g\tau)$.
Since there are only finitely many
orbits of complete train tracks on $S_0$ under
the action of the mapping class group,
by coarse equivariance of the construction we conclude that
there is a universal number $\kappa_0>0$ such that 
for any other choice $\Psi^\prime$ of such a map
we have $d(\Psi(\tau),\Psi^\prime(\tau))\leq \kappa_0$
for all $\tau\in {\cal V}({\cal T\cal T}(S_0))$.

We use the map $\Psi$ to define a map
$\Phi:{\cal M\cal C\cal G}(S_0)\to {\cal M\cal C\cal G}(S)$
as follows. The mapping class groups of $S_0,S$ act
properly and cocompactly on ${\cal T\cal T}(S_0)$,
${\cal T\cal T}(S)$. Choose $\tau\in 
{\cal V}({\cal T\cal T}(S_0))$ 
and a fundamental domain
$D$ for the action of ${\cal M\cal C\cal G}(S)$ on 
${\cal T\cal T}(S)$ 
containing $\Psi(\tau)$.
For $g\in {\cal M\cal C\cal G}(S_0)$
choose $\Phi(g)\in {\cal M\cal C\cal G}(S)$ in
such a way that $\Psi(g\tau)\in \Phi(g)D$. If
$\Phi^\prime$ is any other such map then
$d(\Phi(g),\Phi^\prime(g))\leq \kappa_1$ where
$\kappa_1>0$ is a universal constant (and $d$ is 
any distance on ${\cal M\cal C\cal G}(S)$ defined by 
a word norm of a finite symmetric generating set).

By construction, the map 
$\Phi:{\cal M\cal C\cal G}(S_0)\to {\cal M\cal C\cal G}(S)$ 
is a coarse section for the projection
${\cal M\cal C\cal G}(S)\to {\cal M\cal C\cal G}(S_0)$.
Thus we are left with showing that $\Phi$ 
is a quasi-isometric
embedding, and this holds true if this is the case
for the map $\Psi$. 
To this end, note that
$\tau$ is a subtrack of $\Psi(\tau)$.
By Proposition \ref{inducing}, a splitting sequence
$\{\tau_i\}_{0\leq i\leq \ell}\subset {\cal T\cal T}(S_0)$ 
issuing from $\tau_0=\tau$ 
induces a splitting sequence in ${\cal T\cal T}(S)$ 
issuing from $\Psi(\tau)$.
The length of this sequence is not smaller than the
length $\ell$ of the splitting sequence of $\tau$, and it
is not bigger than $q\ell$ for 
a universal constant $q>0$.
On the other hand, a point on the induced
sequence which contains $\tau_i$ as a subtrack
is a possible choice for $\Psi(\tau_i)$ and hence it 
is at uniformly bounded distance to 
$\Psi(\tau_i)$.

By Theorem \ref{cubicaleuclid} and the remark thereafter, 
splitting sequences in ${\cal T\cal T}(S)$
are uniform quasi-geodesics, and the same holds
true for splitting sequences in 
${\cal T\cal T}(S_0)$. 
As a consequence, there is a number $c>1$ such that
\[d(\tau_0,\tau_\ell)/c-c\leq 
d(\Psi(\tau_0),\Psi(\tau_\ell))\leq cd(\tau_0,\tau_\ell)+c\]
whenever $\tau_0\in {\cal V}({\cal T\cal T}(S_0))$ 
is splittable to $\tau_\ell\in {\cal V}({\cal T\cal T}(S_0))$. 
By Proposition \ref{density1}, splitting sequences connect
a coarsely dense set of pairs of points in the train 
track complex ${\cal T\cal T}(S_0)$.
This implies that the map
$\Psi:{\cal V}({\cal T\cal T}(S_0))\to {\cal V}({\cal T\cal T}(S))$
defines a quasi-isometric embedding and hence the same
holds true for 
$\Phi:{\cal M\cal C\cal G}(S_0)\to {\cal M\cal C\cal G}(S)$.
\end{proof}

Finally, for a closed surface
of genus $g\geq 2$ we investigate the normalizer 
of a finite subgroup $\Gamma$ of 
${\cal M\cal C\cal G}(S)$ (see \cite{RS07} for 
an earlier proof of this result, stated a bit differently).

\begin{proposition}\label{normalizer}
For a closed surface $S$ of genus $g\geq 2$, 
the normalizer in ${\cal M\cal C\cal G}(S)$ of a finite subgroup
is undistorted. 
\end{proposition}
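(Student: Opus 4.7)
The plan is to adapt the strategy of Proposition \ref{qiembeddingclosed}. By the Nielsen realization theorem we may assume that $\Gamma$ acts on $S$ by isometries of some hyperbolic metric, so the projection $\pi:S\to S/\Gamma$ is a branched covering ramified over a finite set $\Sigma \subset S/\Gamma$; write $\tilde\Sigma=\pi^{-1}(\Sigma)\subset S$ and $S_0=S/\Gamma-\Sigma$. In view of the exact sequence
\[0\to \Gamma\to N(\Gamma)\to {\cal M\cal C\cal G}_0(S_0)\to 0\]
from the introduction and the finiteness of $\Gamma$, it is enough to exhibit a coarse section
\[\Phi:{\cal M\cal C\cal G}_0(S_0)\to N(\Gamma)\subset {\cal M\cal C\cal G}(S)\]
of the quotient map which is a quasi-isometric embedding for the restriction of any word metric on ${\cal M\cal C\cal G}(S)$.

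First I would build a map $\Psi:{\cal V}({\cal T\cal T}(S_0))\to {\cal V}({\cal T\cal T}(S))$ sending a complete train track $\tau$ on $S_0$ to a $\Gamma$-invariant complete train track $\Psi(\tau)$ on $S$ containing $\pi^{-1}(\tau)$ as a subtrack. The preimage $\pi^{-1}(\tau)\subset S-\tilde\Sigma$ is a $\Gamma$-invariant train track whose complementary regions in $S$ are either trigons (over trigons of $\tau$) or $d$-gons with a single interior ramification point of local degree $d\geq 2$ (over once-punctured monogons of $\tau$). For each such $d$-gon, the cyclic stabilizer of the ramification point acts by rotation, and we subdivide equivariantly by adding a small switch near the ramification point together with $d$ branches to the midpoints of the sides, turning the region into $d$ trigons. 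Recurrence is secured exactly as in Proposition \ref{qiembeddingclosed} via Proposition 1.3.7 of \cite{PH92}, and transverse recurrence is immediate, so $\Psi(\tau)$ is a complete $\Gamma$-invariant train track. As in the proof of Proposition \ref{qiembeddingclosed}, the finitely many choices made change $\Psi(\tau)$ by a uniformly bounded amount in ${\cal T\cal T}(S)$, so fixing a base train track and a fundamental domain in ${\cal T\cal T}(S)$ for the action of ${\cal M\cal C\cal G}(S)$ yields the desired coarse section $\Phi$, taking values in $N(\Gamma)$ by $\Gamma$-invariance of $\Psi(\tau)$.

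It then remains to show that $\Psi$ is a quasi-isometric embedding. A splitting sequence $\{\tau_i\}_{0\leq i\leq \ell}\subset {\cal V}({\cal T\cal T}(S_0))$ lifts to a recurrent splitting sequence of $\Gamma$-invariant (non-complete) train tracks $\sigma_i=\pi^{-1}(\tau_i)$ on $S$: a single split of $\tau_i$ at a large branch lifts to at most $|\Gamma|$ consecutive splits of $\sigma_i$, one at each preimage. Applying Proposition \ref{inducing} to $\{\sigma_i\}$, starting from the complete extension $\Psi(\tau_0)$ together with a complete $\Psi(\tau_0)$-extension of a $\sigma_\ell$-filling measured lamination, produces an induced splitting sequence in ${\cal T\cal T}(S)$ of length bounded above and below by linear functions of $\ell$ and ending at a complete extension of $\sigma_\ell$ at uniformly bounded distance from $\Psi(\tau_\ell)$. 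Since splitting sequences are uniform quasi-geodesics in both ${\cal T\cal T}(S_0)$ and ${\cal T\cal T}(S)$ by Theorem \ref{cubicaleuclid} and the remark thereafter, and since by Proposition \ref{density1} every pair of points in ${\cal T\cal T}(S_0)$ is coarsely connected by a splitting sequence, both the upper and the lower Lipschitz bound for $\Psi$ follow. This implies the analogous statement for $\Phi$ and finishes the argument.

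The main obstacle is the equivariant completion step: one must check that the $\Gamma$-invariant subdivision of the lifted $d$-gons yields a \emph{birecurrent} train track on $S$ (not merely a $1$-complex), and that $\Psi$ is coarsely canonical with respect to the action of ${\cal M\cal C\cal G}_0(S_0)$ on $S_0$ via its lifted representatives in $N(\Gamma)$, so that $\Phi$ does land in $N(\Gamma)$ and is a coarse section. Once this is in place, the reduction to the splitting-sequence geometry is parallel to the proof of Proposition \ref{qiembeddingclosed}.
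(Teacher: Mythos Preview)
Your overall strategy---lift train tracks through the branched cover and transport splitting sequences via Proposition~\ref{inducing}---is exactly the paper's, and you correctly flag the equivariant completion as the crux. But the difficulty there is more fundamental than a birecurrence check: a $\Gamma$-equivariant completion of $\pi^{-1}(\tau)$ to a complete (in particular generic and maximal) train track on the \emph{closed} surface $S$ need not exist at all.

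Two concrete obstructions. First, if some ramification point has local degree $d=2$ (as for the hyperelliptic involution in genus~$2$), the component of $S-\pi^{-1}(\tau)$ containing it is a \emph{bigon}, so $\pi^{-1}(\tau)$ is not even a train track on $S$. Second, for $d\geq 4$ there is no $\mathbb{Z}/d$-equivariant subdivision of the corresponding $d$-gon into trigons with only trivalent switches: the rotation has a unique interior fixed point $p$, which would have to lie either inside a complementary trigon (forcing $d\mid 3$), in the interior of a branch, or at a switch---but a branch or a trivalent switch carries a well-defined tangent line at $p$, and this line cannot be preserved by a rotation of order $\geq 3$. In particular your ``switch near the ramification point with $d$ spokes'' is either off-centre (hence not invariant) or, if placed at $p$, cannot be a train-track switch for $d\geq 3$.

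The paper sidesteps both issues by working on the punctured surface $S_1=S-\tilde\Sigma$ instead of $S$. There the complementary regions over the branch points are once-\emph{punctured} $d$-gons, which are legitimate train-track regions for every $d\geq 2$ and can be subdivided---with no equivariance requirement---into trigons and once-punctured monogons as in the proof of Proposition~\ref{qiembeddingclosed}. This produces a quasi-isometric embedding ${\cal M\cal C\cal G}_0(S_0)\to{\cal M\cal C\cal G}(S_1)$ landing near the image of $N(\Gamma)$; the passage back to ${\cal M\cal C\cal G}(S)$ is then made through the puncture-forgetting coarse section of Proposition~\ref{qiembeddingclosed}. Routing your construction through $S_1$ in this way repairs the argument.
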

\begin{proof}
Let ${\cal T}(S)$ be the Teichm\"uller space of $S$.
By the Nielsen realization problem, a finite 
subgroup 
$\Gamma$ of ${\cal M\cal C\cal G}(S)$ fixes 
a  point $x\in {\cal T}(S)$
\cite{Ke83}. This means that 
$\Gamma$ can be realized as a finite group of 
biholomorphisms of $(S,x)$. The quotient $(S,x)/\Gamma$
is a Riemann surface, and the projection
$\pi:(S,x)\to (S,x)/\Gamma$ is a branched covering ramified
over a finite number of points $p_1,\dots,p_\ell\in 
(S,x)/\Gamma$. The marked complex structure $x$ on $S$
projects to a marked complex structure on $(S,x)/\Gamma$.

Let $(S_1,x_1),(S_0,x_0)$ 
be the punctured Riemann surfaces which are obtained from
$(S,x),(S,x)/\Gamma$ by removing the branch points of the covering
$(S,x)\to (S,x)/\Gamma$. The projection $\pi$ restricts
to an unbranched covering $S_1\to S_0$.
The Teichm\"uller spaces ${\cal T}(S_0)$ of 
$S_0$, ${\cal T}(S_1)$ of $S_1$ are contractible. For every
point $y\in {\cal T}(S_0)$ which is sufficiently close
to $x_0$ there is a covering $\Psi(y)\in {\cal T}(S_1)$ of 
the Riemann surface $(S_0,y)$ 
which is of the same topological
type as the covering $(S_1,x_1)\to (S_0,x_0)$.
The marking of $\Psi(y)$ is determined in such a way that the
map $\Psi$ is continuous near $x_0$. 
This construction defines a developing map
$\Psi:{\cal T}(S_0)\to {\cal T}(S_1)$. Since
${\cal T}(S_0)$ is simply connected, the developing map
is in fact single-valued. Moreover, it is clearly
injective and hence an embedding. 
(In fact, its is not hard
to see that this construction defines an isometric
embedding of ${\cal T}(S_0)$ into ${\cal T}(S_1)$ for the
Teichm\"uller metrics). 
There is a natural projection 
$\Pi:{\cal T}(S_1)\to {\cal T}(S)$ defined by filling
in the punctures.

Let ${\cal M\cal C\cal G}_0(S_0)$ be the 
subgroup of the mapping class group 
${\cal M\cal C\cal G}(S_0)$ of $S_0$ of all
mapping classes realizable by a 
homeomorphism of $S_0$ which lifts to a homeomorphism of 
$S$. Let $N(\Gamma)$ be the normalizer of $\Gamma$
in ${\cal M\cal C\cal G}(S)$.
Then there is an exact sequence \cite{BH73}
\[0\to \Gamma\to N(\Gamma)\to {\cal M\cal C\cal G}_0(S_0)\to 0.\]
(Theorem 3 in \cite {BH73} states this only in the
case that the group $\Gamma$ is cyclic.
However, as pointed out explicitly
in \cite{BH73}, the result for all finite groups is
immediate from the argument given there and the 
Nielsen realization problem).

Since the group $\Gamma$ is finite,
the groups
${N}(\Gamma)$ and ${\cal M\cal C\cal G}_0(S_0)$ are 
quasi-isometric. Thus to show the proposition it is enough to
show that there is quasi-isometric embedding of
${\cal M\cal C\cal G}_0(S_0)$ into ${\cal M\cal C\cal G}(S)$ 
whose image is contained in a uniformly boundedd neighborhood
of $N(\Gamma)$. 
Following Proposition \ref{qiembeddingclosed}, 
it suffices in fact to show that there is
a quasi-isometric embedding of 
${\cal M\cal C\cal G}(S_0)$ 
into ${\cal M\cal C\cal G}(S_1)$ whose image 
is contained in a uniformly bounded neighborhood of 
the image of $N(\Gamma)$ under a 
a coarse section for the projection
${\cal M\cal C\cal G}(S_1)\to {\cal M\cal C\cal G}(S)$.

For this note that 
the preimage of a complete train track $\tau$ on $S_0$ under
the covering $S_1\to S_0$ 
is a $\Gamma$-invariant
graph $\xi$ in $S$ which
decomposes $S$ into polygons and once
punctured polygons.
The preimage of 
each trigon component of $\tau$ is a union of  
$n$ trigon components of $\xi$ where $n=
\vert \Gamma\vert$ is the number of sheets
of the covering. Each once punctured monogon in $\tau$ encloses one 
of the points $p_i$ and
lifts to a punctured $m_i$-gon in $S_1-\xi$  
where $2\leq m_i\leq n$ is 
the ramification index of $p_i$.

The branch points of the covering define
a set of marked points contained
in complementary regions 
of $\xi$. Each complementary region of $\xi$ contains at
most one such point. Thus $\xi$ defines a 
(non-complete) train track on the
punctured surface $S_1$, again denoted by $\xi$. Now 
a positive transverse measure
on $\tau$ lifts to a positive 
transverse measure on $\xi$ and therefore 
$\xi$ is recurrent. The same argument also 
shows that $\xi$ is transversely recurrent. Then 
$\xi$ is a subtrack of a complete train track
on $S_1$ obtained by subdividing some of 
the complementary regions 
as in the proof of Proposition \ref{qiembeddingclosed}.
As before, the resulting complete train track $\eta$ depends on 
choices among a uniformly bounded number of possibilities
(compare the proof 
of Proposition \ref{qiembeddingclosed}).

Now if the complete train track $\tau_1$ on $S_0$ 
is obtained from the complete train track
$\tau$ by a single
split at a large branch $e$ then the preimage $\xi_1$ of $\tau_1$ can
be obtained from the preimage $\xi$ of $\tau$ by a splitting 
sequence of length $n$. Namely, the preimage of 
any large branch of 
$\tau$ is the union of $n$ large branches of $\xi$.
Such a splitting sequence then induces a splitting
sequence of length at most $qn$ of the complete train track
$\eta$ on $S_1$ constructed in the previous paragraph 
where $q>0$ is a universal constant.

By Proposition \ref{density1}, splitting sequences 
in the train track complex ${\cal T\cal T}(S_0)$ 
of $S_0$ connnect
a coarsely dense set of pairs of points.
By Theorem \ref{cubicaleuclid} and the remark
thereafter, each such
splitting sequence defines a uniform quasi-geodesic in the
subgroup ${\cal M\cal C\cal G}_0(S_0)$
of the mapping class group of $S_0$. This quasi-geodesic  
lifts to a uniform quasi-geodesic
in ${\cal M\cal C\cal G}(S_1)$ contained in 
a uniformly bounded neighborhood of the image of 
$N(\Gamma)$ under the coarse section 
for the projection ${\cal M\cal C\cal G}(S_1)\to 
{\cal M\cal C\cal G}(S)$
constructed
in Proposition \ref{qiembeddingclosed}.
As a consequence, the normalizer
$N(\Gamma)$ of $\Gamma$ is undistorted.
\end{proof}

{\bf Remark:} 1) Since splitting sequences define quasi-geodesics in 
the \emph{curve graph} of a surface of finite type 
\cite{H06},
the above argument immediately
implies the following. Let $S$ be a closed surface and let 
$\Gamma$ be a finite subgroup of 
${\cal M\cal C\cal G}(S)$. Then  
there is a quasi-isometric embedding of the
curve graph of $S/\Gamma$ into the curve graph of $S$.
This was shown in \cite{RS07}.

2) In \cite{ALS08}, Aramayona, Leininger
and Souto constructed for infinitely many
$g_i>0$ injective homomorphisms of the mapping
class group of a closed surface of genus $g_i$
into the mapping class group of a closed surface
of strictly bigger genus using unbranched coverings.
The reasoning in the proof of Proposition \ref{normalizer}
can be used to show that these homomorphisms
are quasi-isometric embeddings.

\section{Distances in the train track complex}

In this section we use the results from Section 3 and
from \cite{H09} to obtain
a control on distances in the train track complex
${\cal T\cal T}$.
The main goal is the proof of Proposition \ref{shortestdistance}
which is the key ingredient for the proof of 
Theorem \ref{thm1} 
from the introduction.

Our strategy is to investigate the geometry of triangles
in the train track complex whose sides
are bounded extensions of directed edge-paths. 
There may be many such triangles with given vertices,
but we show that we can 
always find such a triangle which
is $R$-thin for a universal constant $R>0$. This 
means that a side of this triangle is contained
in the $R$-neighborhood of the union of the other
two sides. This in turn is related to
a result of Behrstock, Drutu and Sapir \cite{BDS08}
who showed that the asymptotic cone of the
mapping class  group admits the structure of a median
space. 

The first step in this direction is the 
following lemma whose 
proof uses the results from the
appendix. For its proof and for later use, we say that
a train track or a geodesic lamination $\lambda$ \emph{hits
a train track $\tau$ efficiently} if up to isotopy there
is no embedded bigon in $S$ whose frontier is composed of
two $C^1$-segments, one from the train track $\tau$ and the
second one from the train track or geodesic lamination $\lambda$.
If the train track $\sigma$ hits the train track $\tau$
efficiently then every train track or geodesic lamination
which is carried by $\sigma$ hits $\tau$ efficiently as well.

\begin{lemma}\label{reverse}
For every $R>0$ there is a number
$\beta_0=\beta_0(R)
>0$ with the following property. Let $\tau,\eta\in
{\cal V}({\cal T\cal T})$ with $d(\tau,\eta)\leq R$ and let
$\tau^\prime,\eta^\prime$ be complete train tracks which can be
obtained from $\tau,\eta$ by any splitting sequence.
If $\tau,\eta$ do not carry any common geodesic
lamination then there is a train track
$\tau^{\prime\prime}\in {\cal V}({\cal T\cal T})$ in the
$\beta_0(R)$-neighborhood of $\tau^\prime$, 
a train track $\eta^{\prime\prime}\in {\cal V}({\cal T\cal T})$
in the $\beta_0(R)$-neighborhood of $\eta^\prime$
and a splitting sequence connecting 
$\tau^{\prime\prime}$ to $\eta^{\prime\prime}$ 
which passes through the
$\beta_0(R)$-neighborhood of $\eta$.
Moreover, for any complete
geodesic lamination $\nu$ which is carried by
$\eta^\prime$, we can assume that
$\nu$ is carried by $\eta^{\prime\prime}$.
\end{lemma}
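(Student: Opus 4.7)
Plan. I would fix at the outset a complete geodesic lamination $\nu$ carried by $\eta'$, anticipating the ``moreover'' statement. Since $\eta$ is splittable to $\eta'$, the lamination $\nu$ is also carried by $\eta$, but by hypothesis it is not carried by $\tau$ (nor by any descendant such as $\tau'$). The strategy is to realise the splitting sequence from $\tau''$ to $\eta''$ as a single $\nu$-split sequence passing through an intermediate train track $\eta^{*}$ in a uniformly bounded neighborhood of $\eta$.

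For the first step I would apply Proposition~\ref{density1} to the pair $(\tau',\eta)$, producing $\tau''$ in the $d_{0}$-neighborhood of $\tau'$ and a train track $\eta^{*}$ in the $d_{0}$-neighborhood of $\eta$ such that $\tau''$ is splittable to $\eta^{*}$. To obtain the ``moreover'' property I need to refine this construction so that $\eta^{*}$ itself carries $\nu$. This refinement is where the hypothesis that $\tau$ and $\eta$ share no common geodesic lamination, together with the technical result from the appendix, enters: together they supply a version of the density statement which permits prescribing that the terminal train track near $\eta$ carry the specified lamination $\nu$, exploiting the fact that $\nu$ is not carried by $\tau$ to avoid combinatorial conflicts in the sequence reaching $\eta^{*}$. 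Once $\eta^{*}$ carries $\nu$, the existing splitting sequence $\tau''\to\eta^{*}$ is automatically a $\nu$-split sequence, since every intermediate train track carries $\nu$ and $\nu$-splits at a given large branch are unique by Lemma~5.1 of~\cite{H09}.

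In the second step I extend the $\nu$-split sequence from $\eta^{*}$ to reach a train track $\eta''$ in a bounded neighborhood of $\eta'$. The cubical Euclidean cone $E(\eta,\nu)$ contains $\eta'$ by construction, and since $\eta^{*}$ also carries $\nu$, the cone $E(\eta^{*},\nu)$ is defined and embeds quasi-isometrically in $\mathcal{TT}$ by Theorem~\ref{cubicaleuclid}. A parallel-splitting argument of the same flavor as Corollary~\ref{inducing10}, exploiting the uniqueness of $\nu$-splits, produces a train track $\eta''\in E(\eta^{*},\nu)$ whose $\nu$-splitting combinatorics match those of $\eta'\in E(\eta,\nu)$; since $d(\eta,\eta^{*})\leq d_{0}$, this yields $d(\eta',\eta'')\leq p(d_{0})$ for a uniform constant $p(d_{0})$. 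By construction $\eta''$ carries $\nu$. Concatenation produces a single $\nu$-split sequence $\tau''\to\eta^{*}\to\eta''$ passing through $\eta^{*}$ in the $d_{0}$-neighborhood of $\eta$, and taking $\beta_{0}(R)$ to be the maximum of the constants involved completes the proof.

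The main obstacle is the first step: arranging the terminal train track $\eta^{*}$ from Proposition~\ref{density1} to carry the prescribed lamination $\nu$ while it remains in a bounded neighborhood of $\eta$ and is reached by a splitting sequence from a train track near $\tau'$. This compatibility is not given by Proposition~\ref{density1} alone; it requires the combinatorial machinery from the appendix, and it is in the execution of this refinement that the hypothesis excluding common laminations between $\tau$ and $\eta$ plays an essential role.
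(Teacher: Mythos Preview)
Your outline correctly identifies the two-stage structure (reach a train track near $\eta$ carrying $\nu$, then extend by $\nu$-splits to a train track near $\eta'$) and correctly locates the difficulty in the first stage. However, the proposal does not actually resolve that difficulty, and the mechanism you suggest is not the one that works.

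You propose to start from Proposition~\ref{density1} applied to $(\tau',\eta)$ and then ``refine'' the output so that the terminal train track $\eta^{*}$ carries the prescribed $\nu$, saying that this refinement is supplied by the appendix together with the no-common-lamination hypothesis. But the appendix does not furnish a refinement of the density statement; what it furnishes (Proposition~\ref{backwards}) is a duality construction. The paper's argument does not pass through Proposition~\ref{density1} at all. Instead it proceeds as follows: the hypothesis that $\tau,\eta$ carry no common lamination is used to produce, by a compactness argument in $\mathcal{CL}$, finite families $\tau_1,\dots,\tau_\ell\prec\tau$ and $\eta_1,\dots,\eta_m\prec\eta$ which cover $\mathcal{CL}(\tau)$ and $\mathcal{CL}(\eta)$ respectively and such that every $\tau_i$ \emph{hits} every $\eta_j$ \emph{efficiently}. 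Given $\tau',\eta',\nu$, one then uses Lemma~6.7 of \cite{H09} to find $\xi$ near $\tau'$ carried by some $\tau_i$; since $\xi\prec\tau_i$ hits $\eta_j$ efficiently, so does $\nu\prec\eta_j$ hit $\xi$ efficiently. Now Proposition~\ref{backwards} applied to $\xi$ and $\nu$ produces the $\nu$-collapse $\xi^*$ of the dual bigon track of $\xi$: this is a complete train track at uniformly bounded distance from $\xi$ (hence from $\tau'$) which carries $\nu$ and which carries, up to a bounded splitting and shifting sequence, the train track $\eta_j$. Setting $\tau''=\xi^*$, one has at once a train track near $\tau'$ which is splittable to something near $\eta$ carrying $\nu$; this is the content you were trying to obtain, but it comes from the efficient-hitting/dual-bigon mechanism rather than from any version of density.

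Your second step is closer to correct but cites the wrong tool: Corollary~\ref{inducing10} concerns sequences induced by splittings of a common subtrack and does not apply here. The paper instead uses Lemma~6.7 and Proposition~A.6 of \cite{H09} (carrying plus common lamination yields a nearby carried train track; carrying plus common lamination yields splittability to a nearby train track) to pass from a train track near $\eta$ carrying $\nu$ to a train track $\eta''$ near $\eta'$ carrying $\nu$, all within a single splitting sequence issuing from $\xi^*$.
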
 
\begin{proof} 
Fix a complete hyperbolic metric $g$ on $S$ of finite volume.
For every $R>0$ there
are only finitely many orbits under the
action of the mapping class group of
pairs $(\tau,\eta)\in {\cal V}({\cal T\cal T})\times
{\cal V}({\cal T\cal T})$ where $d(\tau,\eta)\leq R$ and such
that $\tau,\eta$ do not carry any common
geodesic lamination. Thus by invariance
under the mapping class group it is enough
to show the lemma for two fixed train tracks
$\tau,\eta\in {\cal V}({\cal T\cal T})$ which do not
carry any common geodesic lamination and with
a constant $\beta_0>0$ depending on $\tau,\eta$.

For a complete train track $\xi$ denote
by ${\cal C\cal L}(\xi)$ the set of all complete
geodesic laminations which are carried by $\xi$.
By Lemma 2.3 of \cite{H09}, the set ${\cal C\cal L}(\xi)$
is open and closed in the space
${\cal C\cal L}$ of all complete geodesic laminations
on $S$ equipped with the Hausdorff topology.
We first show that there are finitely many
complete train tracks $\tau_1,\dots,\tau_\ell$ and
$\eta_1,\dots,\eta_m$ with the following
properties.
\begin{enumerate}
\item For each
$i\leq \ell$ the train track $\tau_i$ is carried
by $\tau$ and $\cup_i{\cal C\cal L}(\tau_i)=
{\cal C\cal L}(\tau)$.
\item For each
$j\leq m$ the train track $\eta_j$ is carried
by $\eta$ and $\cup_j{\cal C\cal L}(\eta_j)=
{\cal C\cal L}(\eta)$.
\item For all $i\leq \ell,j\leq m$ the train tracks
$\tau_i,\eta_j$ hit efficiently.
\end{enumerate}

Since $\tau,\eta$ do not
carry any common geodesic lamination,
every complete geodesic lamination $\lambda\in {\cal C\cal L}(\tau)$
intersects every complete
geodesic lamination $\mu\in {\cal C\cal L}(\eta)$
transversely. Namely, a complete geodesic lamination
decomposes the surface $S$ into ideal triangles and
once punctured monogons. Thus if $\ell$ is any
simple geodesic on $S$ whose closure in $S$ is compact and if
$\ell$ does \emph{not} intersect
the complete geodesic lamination $\mu$ transversely
then $\ell$ is contained in $\mu$ and hence the
closure of $\ell$ is a sublamination
of $\mu$.
Now if $\ell$ is a leaf of the complete geodesic lamination
$\lambda$ then the closure of $\ell$
is a sublamination of $\lambda$ as well.
Since $\lambda$ is carried by $\tau$ and $\mu$ is
carried by $\eta$, this violates the assumption
that $\tau,\eta$ do not carry a common geodesic
lamination.

Recall from Section 2 the definition of 
the straightening of a train track $\tau$ with 
respect to the complete hyperbolic metric $g$. 
For a number $a>0$ call a train track
$\tau$ \emph{$a$-long} if the length of 
each edge from its straightening
ist at least $a$. Recall also from Section 2 the
definition of a train track which $\epsilon$-follows
a complete geodesic lamination $\lambda$ for some $\epsilon >0$.
For a fixed number $a>0$, a trainpath on an $a$-long train track
$\tau_\epsilon$ which $\epsilon$-follows $\lambda$ is a uniform
quasi-geodesic. As $\epsilon\to 0$, these quasi-geodesics
converge locally uniformly to leaves of $\lambda$.

For a fixed geodesic lamination
$\lambda\in {\cal C\cal L}(\tau)$, the 
compact space 
${\cal C\cal L}(\eta)$ consists of geodesic laminations
which intersect 
$\lambda\in {\cal C\cal L}(\tau)$
transversely. Thus the tangent lines of the leaves of 
$\lambda$ and the union of all tangent lines of all 
leaves of all geodesic laminations in ${\cal C\cal L}(\eta)$
are disjoint compact subsets of the projectivized 
tangent bundle of $S$. As a consequence, 
for any fixed number $a>0$
there is a number
$\epsilon >0$ depending on $\lambda$ 
such that every $a$-long train track $\xi$
which $\epsilon$-follows $\lambda$ 
hits every geodesic lamination $\nu\in {\cal C\cal L}(\eta)$
efficiently. By Lemma 2.2 and Lemma 2.3 of
\cite{H09}, this implies that
for every complete geodesic lamination
$\lambda\in {\cal C\cal L}(\tau)$ there is a
train track $\tau(\lambda)\in {\cal V}({\cal T\cal T})$
which carries $\lambda$, which
is carried by $\tau$ and which hits every geodesic
lamination $\nu\in {\cal C\cal L}(\eta)$ efficiently.
By Lemma 2.3 of \cite{H09}, 
${\cal C\cal L}(\tau(\lambda))$ is
an open subset of ${\cal C\cal L}(\tau)$. Since 
${\cal C\cal L}(\tau)$ is compact we conclude that there
are finitely many geodesic laminations
$\lambda_1,\dots,\lambda_\ell\in {\cal C\cal L}(\tau)$ such that
${\cal C\cal L}(\tau)=\cup_i{\cal C\cal L}(\tau(\lambda_i))$.
Write $\tau_i=\tau(\lambda_i)$.

Every geodesic lamination $\nu\in {\cal C\cal L}(\eta)$
hits each of the train tracks $\tau_i$ efficiently.
As a consequence, for any fixed number $a>0$ there is a 
number $\epsilon >0$ depending on $\nu$ such that every 
$a$-long train track which $\epsilon$-follows $\nu$ 
hits each of the
train tracks $\tau_i$ $(i\leq \ell)$ efficiently.
As before, this implies that we can find a finite
family $\eta_1,\dots,\eta_m\in {\cal V}({\cal T\cal T})$
of train tracks
which are carried by $\eta$, which hit each of the
train tracks $\tau_i$ $(i\leq \ell)$ efficiently and such that
$\cup_j{\cal C\cal L}(\eta_j)={\cal C\cal L}(\eta)$.
This shows the above claim.

Let \begin{equation}\label{one}
k=\max\{d(\tau,\tau_i),d(\eta,\eta_j)\mid
i\leq \ell, j\leq m\}.\end{equation} 
Let
$\tau^\prime,\eta^\prime$ be obtained
from $\tau,\eta$ by a splitting sequence and let
$\lambda\in {\cal C\cal L}(\tau^\prime)$ be
a complete geodesic lamination carried by $\tau^\prime$.
Then $\lambda\in {\cal C\cal L}(\tau)$ and hence
there is some $i\leq \ell$
such that $\lambda\in {\cal C\cal L}(\tau_i)$.
By Lemma 6.7 of \cite{H09}, there is a 
number $p_3(k)>0$ only depending on $k$ and there is a
complete train track
$\xi$ which carries $\lambda$, is
carried by both $\tau^\prime$ and $\tau_i$ and
such that 
\begin{equation}\label{two}
d(\tau^\prime,\xi)\leq p_3(k).\end{equation}
Similarly, for a complete train track $\eta^\prime$ which
can be obtained from $\eta$ by a splitting sequence
and for any $\nu\in {\cal C\cal L}(\eta^\prime)$
there is some $j\leq m$ and
a complete train track $\zeta$ which carries $\nu$, 
is carried by both $\eta^\prime$ and $\eta_j$
and such that
\begin{equation}\label{three}
d(\eta^\prime,\zeta)\leq p_3(k).\end{equation}

Since $\tau_i,\eta_j$
hit efficiently and $\tau_i$ carries $\xi$,
the train track $\xi$ hits the 
train track $\eta_j$ efficiently.
In particular, the geodesic lamination
$\nu$  (which is carried by $\eta_j$)
hits $\xi$ efficiently. Therefore
by Proposition \ref{backwards} from the
appendix, there is a number $p>0$ not depending 
on $\xi,\eta_j$ and there
is a $\nu$-collapse
$\xi^*$ of the dual bigon track $\xi_b^*$
of $\xi$ with the following
property. $\xi^*$ carries a train track $\beta$
which carries $\nu$ and which can be obtained from
$\eta_j$ by a splitting and shifting sequence of length
at most $p$. In particular, 
we have 
\begin{equation}\label{four}
d(\eta_j,\beta)\leq \kappa\end{equation} 
where $\kappa >0$ is a universal constant.

Since $\eta_j$ carries $\zeta$ and
$\zeta$ carries $\nu$, we conclude from Lemma 6.7
of \cite{H09}
that $\beta$ carries a train track $\sigma$
which carries $\nu$ and such that
$d(\sigma,\zeta)\leq p_3(\kappa)$. 
Together with the estimate (\ref{three}) above, 
this implies that
\begin{equation}\label{five} 
d(\sigma,\eta^\prime)\leq p_3(k)+p_3(\kappa).
\end{equation}

Now 
$d(\tau^\prime,\xi)\leq p_3(k)$ by inequality (\ref{two})
and therefore
the distance between $\xi^*$ and
$\tau^\prime$ is uniformly bounded
(compare Proposition \ref{backwards} and the following remark).
On the other hand, since $\xi^*$
carries $\beta$ and $\beta$ carries $\nu$,
Proposition A.6 of 
\cite{H09} shows that $\xi^*$ is splittable to a train track
$\beta^\prime$ which carries $\nu$ and is 
contained in a uniformly bounded neighborhood of
$\beta$. Since $d(\beta,\eta)\leq k+\kappa$ by inequalities
(\ref{one},\ref{four}), we conclude that 
$d(\beta^\prime,\eta)$ is uniformly bounded.
Now $\beta$ carries $\sigma$ and both $\beta^\prime$
and $\sigma$ carry $\nu$ and hence another application of 
Lemma 6.7 of \cite{H09} shows 
that $\beta^\prime$ is splittable to a train track
$\sigma^{\prime}$ which carries $\nu$ and is
contained in a uniformly bounded neighborhood of
$\sigma$. Since by inequality
(\ref{five}) the distance between $\sigma$ and
$\eta^\prime$ is uniformly bounded,
this implies the lemma with $\tau^{\prime\prime}=
\xi^*$ and $\eta^{\prime\prime}=\sigma^\prime$.
\end{proof}

Our next goal is to establish an extension
of  Lemma \ref{reverse} 
to train tracks $\tau,\eta\in {\cal V}({\cal T\cal T})$
containing a common subtrack $\beta$ 
which is a union of simple closed
curves and such that every minimal geodesic lamination carried
by both $\tau,\eta$ is a component of $\beta$.
The main idea is as follows. If $c$ is an embedded
simple closed curve of class $C^1$ in a complete
train track $\tau$ then a sequence  of $c$-splits 
of $\tau$ results in modifying $\tau$ by a sequence
of Dehn twists about $c$ up to 
a uniformly bounded error. The direction of the twist
(positive or negative) is determined by a complete
geodesic lamination which is carried by $\tau$ and
contains $c$ as a minimal component.
If $\eta$ is another complete train track containing
$c$ as an embedded simple closed curve and if $c$ 
is the only geodesic lamination which is 
carried by both $\tau,\eta$ 
then we find a version
of Lemma \ref{reverse} for the images of $\tau,\eta$ 
under a suitably chosen multi-twist about $c$. 

We first need a precise control of the distance between
a train track obtained from a train track $\tau$ by 
a sequence of $c$-splits and the image of $\tau$ under
a suitably chosen multi-twist about $c$.
For this consider for the moment an arbitrary 
train track $\tau$
and let $\rho:[0,k]\to \tau$
be any trainpath which either is embedded in $\tau$ 
or is such that $\rho[0,k-1)$ is embedded,
and $\rho(k-1)=\rho(0),\rho(k)=\rho(1)$. 
For $1\leq i\leq k-1$ call the
switch $\rho(i)$ of $\tau$ 
\emph{incoming} if the half-branch
$\rho[i-1/2,i]$ is small at $\rho(i)$, and call 
the switch $\rho(i)$ \emph{outgoing} otherwise.
Note that this depends on the orientation of $\rho$.
We call a neighbor of $\rho[0,k]$ in 
$\tau$ (i.e. a half-branch of $\tau$ which is incident
on a switch in $\rho[1,k-1]$ but which is not
contained in $\rho[0,k]$) incoming if
it is incident on an incoming switch, and we call
the neighbor outgoing otherwise.
Call the switch $\rho(i)$ a \emph{right}
(or \emph{left}) switch
if with respect to the orientation of $S$ and
the orientation of $\rho$, the neighbor of $\rho$ 
incident on $\rho(i)$ lies to the right (or left)
of $\rho$ in a small neighborhood of $\rho[0,k]$ in $S$.
A neighbor incident on a right (or left) switch is
called a \emph{right} (or \emph{left}) neighbor of 
$\rho[0,k]$. Again this depends on the orientation of $\rho$.

In the sequel we mean by an embedded simple closed
curve $c$ in a train track $\tau$ a subtrack of 
$\tau$ which is an 
embedded simple closed curve freely homotopic to $c$.
Call an embedded simple closed curve $c$ in a 
train track $\tau$ \emph{reduced} if 
for one (and hence every) trainpath
$\rho:[0,k]\to \tau$ with image $c$ and 
$\rho[0,1]=\rho[k-1,k]$, 
there are both left and right neighbors of 
$c$ in $\tau$ and 
either all left 
neighbors of $c=\rho[0,k]$ in $\tau$ 
are incoming and all right neighbors are outgoing,
or all left neighbors are outgoing and all
right neighbors are incoming.
Note that if $\tau$ is
recurrent then 
for any embedded simple closed curve $c$
in $\tau$ which is not isolated 
(i.e. not a component of $\tau$)
the following holds true. If
with respect to an orientation of $c$ and the
orientation of $S$ 
all left neighbors of $c$ in $\tau$ 
are incoming (or outgoing), then at least one of 
the right neighbors of $c$ in $\tau$ is outgoing
(or incoming).

We call a reduced simple closed 
curve $c$ in $\tau$ \emph{positive}
if the following holds true. Let $S_c$ be the bordered surface
with two boundary circles obtained by cutting 
$S$ open along $c$. 
The orientation of $S$ induces a boundary
orientation for each of the two
boundary components of $S_c$. Then $c$ is   
positive if with respect to this boundary orientation,
neighbors which lie to the left of $c$ are incoming.
Note that this holds true for both boundary components if
it holds true for one. A reduced simple closed
curve which is not positive is called 
\emph{negative}. 

A Dehn twist about an essential simple closed curve $c$ in
$S$ is defined to be \emph{positive} if the direction of
the twist coincides with the direction given by the boundary
orientation of $c$ in the surface obtained by cutting $S$
open along $c$. 
The following observation gives a quantitative
version of the idea that for an embedded reduced simple closed
curve $c$ in a 
train track $\tau$, a sequence of $c$-splits
of $\tau$ results in twisting of $\tau$ about $c$.

\begin{lemma}\label{dehnreduced}
Let $c$ be a reduced 
positive (or negative)
embedded simple closed curve
in a train track $\tau$ and let 
$\theta_c$
be the positive Dehn twist about $c$. 
\begin{enumerate}
\item 
There is a 
sequence of $c$-splits
of uniformly bounded length 
which transforms $\tau$
to $\theta_c\tau$ (or $\theta_c^{-1}\tau$). 
\item There is a number $a_1>0$ not depending
on $\tau$ and $c$ and for every
train track $\eta$ obtained from $\tau$ by a sequence of $c$-splits
there is some $i\geq 0$ such that
$d(\eta,\theta_c^{i}\tau)\leq a_1$ (or
$d(\eta,\theta_c^{-i}\tau)\leq a_1$).
\end{enumerate}
\end{lemma}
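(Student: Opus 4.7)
The plan is to analyze the local combinatorial picture near the reduced simple closed curve $c$ in $\tau$, which is a generalized version of the standard twist connector of Figure B. Outside a small tubular neighborhood of $c$, no $c$-split changes $\tau$, so the entire argument reduces to an analysis inside this neighborhood. I work with a trainpath $\rho:[0,k]\to \tau$ parametrizing $c$ and assume $c$ is positive, so $\rho$ is oriented in agreement with the boundary orientation of $c$ and all left neighbors of $\rho$ are incoming, all right neighbors outgoing. The negative case is symmetric, reversing the sign of the Dehn twist.

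First I observe that, at a trivalent switch on $c$, the half-branch $\rho[i-1/2,i]$ is small precisely when $\rho(i)$ is incoming, and the half-branch $\rho[i,i+1/2]$ is large precisely when $\rho(i)$ is incoming. Consequently a branch $\rho[i,i+1]$ contained in $c$ is large in $\tau$ exactly when $\rho(i)$ is incoming and $\rho(i+1)$ is outgoing, i.e., exactly at a transition from a left-neighbor switch to a right-neighbor switch along $\rho$. Since there are at most $q$ switches on $c$, there are at most $q$ large branches of $\tau$ contained in $c$. At any such large branch $b$, the split of $\tau$ at $b$ which preserves $c$ as a subtrack is unique: its losing branches are the two neighbors of $c$ incident on $b$, and its diagonal is a small branch in the complement of $c$. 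A direct inspection of the local picture shows that this $c$-split shifts the incoming/outgoing pattern of switches along $\rho$ by one position in the direction prescribed by the orientation of $S$, while preserving the reduced positive structure of $c$ in the split track.

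For part (1), a sequence of $c$-splits that performs a split at every large branch of $\tau$ on $c$, taken in any admissible order, has length at most $q$ and produces a track isotopic to $\theta_c\tau$. In the local model this is the familiar combinatorial realization of a Dehn twist as a cyclic shift of a staircase of attachments along $c$. Since the reduced positive structure survives such a maximal round, one may iterate: $i$ consecutive rounds produce $\theta_c^i\tau$ after at most $iq$ splits, all of them $c$-splits.

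For part (2), any sequence of $c$-splits issuing from $\tau$ decomposes into some number $i$ of complete rounds followed by a partial round of length less than $q$. At the end of the $i$-th complete round the track is $\theta_c^i\tau$, and the partial round keeps us within ${\cal T\cal T}$-distance at most $q$ from $\theta_c^i\tau$; setting $a_1=q$ gives the required uniform bound, independent of $\tau$ and $c$. The main obstacle is the local verification that one complete round of $c$-splits realizes exactly one positive Dehn twist about $c$ and that the reduced positive structure survives each individual $c$-split. This requires elementary but notation-heavy bookkeeping of the half-branch classes along $\rho$; once it is in place, the rest of the argument is simply counting and iteration.
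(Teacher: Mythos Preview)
Your proposal has a genuine gap in the core combinatorial claim. A single $c$-split at a large branch $\rho[j,j+1]$ does \emph{not} shift the entire incoming/outgoing pattern along $c$ by one position. What it does is move \emph{one} incoming (left) neighbor past \emph{one} outgoing (right) switch: in the split, the branches $\rho[j-1,j]$ and $\rho[j+1,j+2]$ of $c$ are the winners and the two neighbors of $c$ at the endpoints of the large branch are the losers; the effect is a transposition of that single adjacent pair along $c$. (Incidentally, the diagonal of the split lies \emph{in} $c$, not in its complement.) If $c$ carries $p$ incoming neighbors and $k$ outgoing neighbors, then the image $\theta_c\tau$ is obtained by sliding every left neighbor once all the way around $c$, i.e., past each of the $k$ outgoing switches. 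This costs $p\cdot k$ splits, not at most $q$, and in particular ``performing one split at each large branch of $\tau$ on $c$'' does not produce $\theta_c\tau$ unless $p=1$ or $k=1$.

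Because of this, your decomposition of an arbitrary sequence of $c$-splits into ``complete rounds plus a partial round of length $<q$'' fails: there is no well-defined round of length at most $q$ realizing one full twist. The paper's argument avoids this by tracking, for each incoming neighbor, the number of outgoing switches it has crossed. The key invariant is that the cyclic order of the incoming neighbors amongst themselves (and of the outgoing switches amongst themselves) is preserved by every $c$-split. Hence if one incoming neighbor has crossed $\ell\in[ki+1,k(i+1)]$ outgoing switches (i.e., made between $i$ and $i+1$ full turns), the order-preservation forces every other incoming neighbor to have made between $i-1$ and $i+1$ full turns. From this bounded spread one concludes $d(\eta,\theta_c^{i+1}\tau)\le a_1$. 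To repair your argument you would need exactly this invariant and the $p\cdot k$ count, at which point it becomes the paper's proof.
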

\begin{proof}
Let $c$ be an embedded reduced simple closed curve
in $\tau$. Assume for the purpose of exposition that
$c$ is positive (the case of a negative curve is completely
analogous). Let $\rho:[0,\ell]\to \tau$ be a parametrization of 
$c$ as a trainpath with $\rho(\ell)=\rho(0)$. 
Let $0\leq i_1<\dots<i_k$ be such that the switches
$s_j=\rho(i_j)$ are outgoing and that the switches
$\rho(j)$ for $j\not\in \{i_1,\dots,i_k\}$ are incoming.
Let moreover
$b_1,\dots,b_p$ $(p=\ell-k)$ 
be the incoming neighbors of $c$, ordered 
consecutively along $\rho$.
By assumption, these are precisely the 
left neighbors of $c$ in $\tau$ with respect to the
orientation defined by $\rho$.

Since 
there are both incoming and outgoing
switches along $c$, there is a large branch
contained in $c$.
If $\rho[j,j+1]$ is a large branch in $c$ then 
the neighbor $b$ of $c$ at $\rho(j)$ is incoming and
the neighbor of $c$ at $\rho(j+1)$ is outgoing. 
Up to isotopy,
the $c$-split of $\tau$  at $\rho[j,j+1]$ moves the 
neighbor $b$ across $\rho(j+1)$ preserving the orders
of the right neighbors and the left
neighbors of $c$, respectively, and preserving the 
types of the neighbors. 
This means that
in the split track $\eta$, for every $i\leq p$
the half-branch $\phi(\tau,\eta)(b_i)$ is 
an incoming left neighbor of $c$, and 
the neighbors 
$\phi(\tau,\eta)(b_i)$ are aligned along $c$ 
in consecutive order as $1\leq i\leq p$.
Only their relative positions to
the switches $s_j$ have changed.
The image $\theta_c(\tau)$ of $\tau$ under the positive
Dehn twist $\theta_c$ about $c$  
is the train track obtained from
$\tau$ by moving each of the left neighbors 
of $c$ one full turn about $c$ while leaving the right
neighbors fixed. 
Moreover, $\theta_c(\tau)$ is carried by 
$\tau$. There is a carrying map 
$\theta_c(\tau)\to \tau$ 
which is the identity away from a small annular
neighborhood of $c$ in $S$ and which 
maps every left neighbor of 
$c$ in $\theta_c(\tau)$ onto
the union of a left neighbor of $c$ in $\tau$ with 
the simple closed curve $c$,
traveled through precisely once.

Now let $\eta$ be a train track which carries
$\theta_c(\tau)$ and is obtained from
$\tau$ by a sequence of $c$-splits of maximal length
with this property.
This means that none of the neighbors $b_i$ of $\tau$ 
is moved more that one full turn about $c$, measured
with respect to the position of this neighbor
relative to the switches $s_j$ $(1\leq j\leq k)$ (which
we consider fixed). We claim that $\eta=\theta_c(\tau)$.
Namely, let $e$ be a large branch of  $\eta$ 
contained in $c$ and let
$b$ be an incoming neighbor of $c$ 
in $\tau$ such that $\phi(\tau,\eta)(b)$ is 
the incoming neighbor of $c$ 
at an endpoint of $e$.
Let moreover $s_j$ be the outgoing switch
on which $e$ is incident. If the switch of $\tau$ on 
which $b$ is incident is distinct from $\rho(i_j-1)$ 
then $b$ moved less than a full turn 
about $c$ in the splitting process.
Therefore the train track
obtained from $\eta$ by a $c$-split at $e$ carries
$\theta_c(\tau)$ which violates the assumption
hat no $c$-split of $\eta$ carries $\theta_c(\tau)$.

As a consequence, 
with respect to the parametrization $\rho^\prime$ of $c$ in $\eta$
with $\rho^\prime(i_1)=s_1$ which defines the same orientation
of $c$ as $\rho$, 
the branch $\rho^\prime[j,j+1]$ is large if and only if
this holds true for the branch $\rho[j,j+1]$. Moreover,
the neighbor of $\rho^\prime$ at $\rho^\prime(j),
\rho^\prime(j+1)$ is just $\phi(\tau,\eta)(b)$ where $b$ is the 
neighbor of $c$ at $\rho(j),\rho(j+1)$.
Since the orders of the incoming and outgoing neighbors
along $c$ are preserved in the splitting process, 
we necessarily have
$\eta=\theta_c(\tau)$ as claimed.

To show the second part of the lemma, let 
$\eta$ be a train track obtained from $\tau$ by
a sequence of $c$-splits and let $b$ be an incoming 
neighbor of $c$ in $\tau$ 
 which crossed the maximal number $\ell$  of outgoing
switches in a splitting sequence connecting
$\tau$ to $\eta$. If as before $k$ is the number
of outgoing switches along $c$ and if 
$\ell\in [ki+1,k(i+1)]$ then 
$b$ made at least $i$ full turns about $c$ and at most $i+1$ full turns.
Since the orders of the incoming neighbors of $c$ is preserved
in the process, 
each of the remaining left neighbors made at least
$i-1$ full turns about $c$ and at most $i+1$ full turns.
By the first part of this proof,
this means that $\eta$ can be transformed with a 
uniformly bounded number of splits to the train track
$\theta_c^{i+1}\tau$. Then $d(\eta,\theta_c^{i+1}\tau)$ is 
uniformly bounded. The second part of the lemma follows.
\end{proof}

For an embedded simple closed
curve $c$
in a complete train track 
$\tau$ which is not reduced we have the following

\begin{lemma}\label{nonreduced}
There is a number $a_2>0$ with the following property.
Let $c$ be an embedded simple closed curve in a 
complete train track
$\tau$. Let $\lambda$ be a complete geodesic
lamination which is carried by $\tau$ and
which contains $c$ as a minimal component. Then
there is a train track $\tau^\prime$ with the
following properties.
\begin{enumerate}
\item $\tau^\prime$ can be obtained from $\tau$ by
a sequence of $c$-splits of length at most $a_2$.
\item $\tau^\prime$ carries $\lambda$.
\item The curve $c$ in $\tau^\prime$ is reduced.
\end{enumerate}
\end{lemma}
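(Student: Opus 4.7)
My plan is to prove this by iterating $\lambda$-splits at large branches of $\tau$ contained in $c$. First, I will verify that such splits are automatically $c$-splits: since $\lambda$ contains $c$ as a sublamination and a $\lambda$-split preserves the carrying of $\lambda$, the split direction at a large branch $e\subset c$ is forced to preserve $c$ as a subtrack, and the resulting train track again carries $\lambda$. Thus any finite iteration of such splits yields a train track of the kind required by conclusions (1) and (2) of the lemma, and the entire issue is to bound the number of splits needed to achieve conclusion (3).

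Next I will use the hypothesis that $\lambda$ is maximal and contains $c$ as a minimal component. Because a maximal geodesic lamination admits no complementary region with $c$ as a smooth boundary, leaves of $\lambda$ must spiral onto $c$ from both sides, each in a definite direction $\epsilon_L,\epsilon_R$ determined by $\lambda$. Given a carrying map $\phi:\lambda\to\tau$ and a switch $s$ of $\tau$ lying on $c$, the image under $\phi$ of a spiraling leaf joins $c$ smoothly at $s$, which forces the large half-branch of $c$ at $s$ to point in the direction $\epsilon_L$ (if $s$ carries a left neighbor) or $\epsilon_R$ (if $s$ carries a right neighbor). This prescribes a well-defined ``target'' type, incoming or outgoing, for every switch of $c$ in $\tau$. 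Moreover I will argue that the directions $\epsilon_L$ and $\epsilon_R$ are necessarily opposite relative to the orientation of $c$, because otherwise a small annular neighborhood of $c$ would contain a bigon obstructing transverse recurrence of $\tau$; hence reaching the target is exactly the reduced configuration.

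Finally, I will define a potential $\Phi(\tau)\ge 0$ counting the number of switches of $\tau$ lying on $c$ whose type disagrees with the target prescribed by $\lambda$. Since the number of such switches is bounded by the number of branches of a complete train track on $S$, a universal constant $q$, we have $\Phi(\tau)\le q$. The key claim is a strict monotonicity: whenever $\Phi(\tau)>0$ there is a large branch of $\tau$ contained in $c$ whose $\lambda$-split produces $\tau'$ with $\Phi(\tau')<\Phi(\tau)$. To locate such a branch I will exploit that a ``wrong'' switch adjacent to a ``correct'' one on $c$ must sit on a subarc of $c$ containing a large branch of $\tau$; the $\lambda$-split there redistributes the neighbor of $c$ past that switch into the correct configuration. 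The main obstacle will be the accompanying case analysis, which must separate whether the two neighbors flanking the split branch lie on the same or opposite sides of $c$, and check that no new wrong-type switch is created elsewhere along $c$. Iterating at most $a_2:=q$ times then produces the desired train track $\tau'$.
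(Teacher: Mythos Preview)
Your proposal has a genuine gap in the definition and monotonicity of the potential $\Phi$, and the paper proceeds quite differently.

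The problem is with your ``target type'' for each switch. You assert that at every switch $s$ on $c$ the image of a spiraling leaf of $\lambda$ joins $c$ at $s$, forcing the large half-branch of $c$ at $s$ to point in the direction $\epsilon_L$ or $\epsilon_R$. But a given neighbor of $c$ need not carry a spiraling leaf at all: a leaf of $\lambda$ on (say) the left of $c$ can enter the $c$-neighborhood through one left neighbor, run along $c$ for a while, and exit through another left neighbor without ever spiraling onto $c$. Such neighbors exist whenever there are left switches of both incoming and outgoing type, and for those switches the spiraling direction $\epsilon_L$ imposes no constraint. So $\Phi$ is not well defined as stated, and even if one patches the definition, you have not shown that a $\lambda$-split at a type~1 large branch (neighbors on opposite sides of $c$) decreases $\Phi$: such a split keeps the number of switches on $c$ fixed and merely slides one neighbor past the other, and you explicitly leave this case analysis undone. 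Your justification of $\epsilon_L\neq\epsilon_R$ via a ``bigon obstructing transverse recurrence'' is also not a correct argument; the actual obstruction to all switches on $c$ being of the same in/out type is \emph{recurrence} (the switch conditions around $c$ become inconsistent), not transverse recurrence.

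The paper avoids all of this by using a much simpler decreasing quantity: the number $n$ of switches of $\tau$ lying on $c$. If $c$ is not reduced there are two neighbors $b_1,b_2$ on the \emph{same} side of $c$ with $b_1$ incoming and $b_2$ outgoing; the subarc $\rho$ of $c$ between them is a two-sided large embedded trainpath. By Lemma~\ref{tightcontrol} and the remark following it, at most $q^2$ $\rho$-splits (all of which are $c$-splits carrying $\lambda$) produce a train track $\tau_1$ in which some large branch $e\subset c$ is of type~2. The $\lambda$-split at $e$ then drops the number of switches on $c$ by one. Inducting on $n$ gives the bound $a_2\le q(q^2+1)$. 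The key point you are missing is that one should not try to head directly toward a $\lambda$-determined target configuration, but rather manufacture a type~2 branch and use it to shrink the switch count.
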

\begin{proof}
Let $q$ be the number of branches of a complete train track
$\tau$. 
We show by induction on the number $n$ of 
switches of $\tau$ contained in the simple closed 
curve $c$ that the lemma holds true for a modification of 
$\tau$ with at most $n(q^2+1)$ $c$-splits.

If $n=2$ then
$c$ is reduced since $\tau$ is complete, so assume that
the lemma is known whenever the number of switches 
of $\tau$ contained in $c$ does not exceed 
$n-1$ for some $n\geq 3$.

Let $c$ be a simple closed curve which is a subtrack of 
a complete train track $\tau$ and such that
the number of switches of $\tau$ contained
in $c$ equals $n$. 
Assume that $c$ is 
\emph{not} reduced. Then with respect to
an orientation of $c$, there are 
two neighbors $b_1,b_2$ of $c$ in $\tau$
which lie on the same side of $c$ in an annular
neighborhood of $c$ in $S$ and such that with respect to
some orientation of $c$, $b_1$ is incoming 
and $b_2$ is outgoing. Let $\rho:[0,\ell]\to \tau$ be a trainpath which
parametrizes the oriented subarc of $c$ connecting
the switch $\rho(0)$ on which $b_1$ is incident to 
the switch $\rho(\ell)$ on which $b_2$ is incident.
Then $\rho[0,\ell]$ is embedded and 
begins and ends with a large half-branch.

Let $\lambda$ be a complete geodesic
lamination carried by $\tau$ which contains
$c$ as a minimal component.
By Lemma \ref{tightcontrol} and the following
remark, there is a sequence of $\rho$-splits
of length at most $q^2$ which modifies
$\tau$ to a complete train track 
$\tau_1$ with the following properties. $\tau_1$ 
contains $c$ as an embedded
simple closed curve, it carries $\lambda$, and
there is a large branch $e$ in $\tau_1$ contained in $c$
which is of type 2 (here the type of a proper
subbranch of $c$ is defined as in Section 3). The $\lambda$-split
of $\tau_1$ at $e$ is a complete train track 
$\tau_2$ which contains
$c$ as an embedded simple closed curve and such that the
number of switches of $\tau_2$ contained in $c$
does not exceed $n-1$. It is obtained from $\tau$ by
a sequence of $c$-splits of length at most $q^2+1$.
Thus we can now use the induction hypothesis 
to complete the proof of the lemma.
\end{proof}

Recall from Section 2 the definition of the cubical
Euclidean cone $E(\tau,\lambda)$ for a complete
train track $\tau$ and a complete geodesic lamination
$\lambda$ carried by $\tau$.
From Lemma \ref{dehnreduced} and Lemma \ref{nonreduced}
we obtain

\begin{corollary}\label{Dehn} 
There is a number $a_3>0$ with the
following property. Let $\tau\in {\cal V}({\cal T\cal T})$, let
$c<\tau$ be an embedded simple closed curve and let $\theta_c$ be
the positive Dehn twist about $c$. 
Let
$\lambda\in {\cal C\cal L}$ be a complete geodesic lamination
which is carried by $\tau$ and contains $c$ as a minimal
component and let $\{\tau_i\}_{0\leq i\leq k}\subset E(\tau,\lambda)$ 
be a splitting sequence issuing from $\tau_0=\tau$ which
consists of $c$-splits at large proper subbranches of $c$. 
Then there is some $i\in \mathbb{Z}$ such that
\[d(\tau_k,\theta_c^{i}\tau) \leq  a_3.\]
\end{corollary}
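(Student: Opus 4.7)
The plan is to reduce to the situation handled by Lemma \ref{dehnreduced} and transfer the conclusion back using the isometric action of $\theta_c$ on ${\cal T\cal T}$.

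First I would invoke Lemma \ref{nonreduced} to produce a train track $\tau^* \in E(\tau,\lambda)$ reachable from $\tau$ by a sequence of at most $a_2$ $c$-splits, such that $c$ is reduced in $\tau^*$ and $\tau^*$ carries $\lambda$; if $c$ is already reduced in $\tau$ I take $\tau^* = \tau$. In particular $d(\tau,\tau^*) \leq a_2$.

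Both $\tau^*$ and $\tau_k$ lie in the cubical Euclidean cone $E(\tau,\lambda)$, and each is obtained from $\tau$ by a $\lambda$-splitting sequence at large proper subbranches of $c$. Since a $\lambda$-split is uniquely determined by the large branch at which it is performed, any two $\lambda$-splitting sequences between a fixed pair of vertices of $E(\tau,\lambda)$ involve the same multiset of branch splits (via the natural bijection of Lemma~5.1 of \cite{H09}). Exploiting this confluence, I would form the common extension $\tau^{**}\in E(\tau,\lambda)$ of $\tau^*$ and $\tau_k$: the sequence from $\tau_k$ to $\tau^{**}$ consists of at most $a_2$ splits from the reduction sequence $\tau\to\tau^*$ that were not already performed in $\tau\to\tau_k$, so $d(\tau_k,\tau^{**})\leq a_2$; and the sequence from $\tau^*$ to $\tau^{**}$ consists of the splits from $\tau\to\tau_k$ not already performed in $\tau\to\tau^*$, all of which are again $c$-splits at large proper subbranches of $c$. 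Since $c$ is reduced in $\tau^*$, Lemma \ref{dehnreduced}(2) applied to this last sequence produces some $i\in\mathbb{Z}$ with $d(\tau^{**},\theta_c^i\tau^*)\leq a_1$.

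Finally, since $\theta_c\in {\cal M\cal C\cal G}(S)$ acts on ${\cal T\cal T}$ by simplicial isometries, $d(\theta_c^i\tau^*,\theta_c^i\tau)=d(\tau^*,\tau)\leq a_2$. Combining via the triangle inequality,
\[
d(\tau_k,\theta_c^i\tau)\leq d(\tau_k,\tau^{**})+d(\tau^{**},\theta_c^i\tau^*)+d(\theta_c^i\tau^*,\theta_c^i\tau)\leq a_2+a_1+a_2,
\]
and the corollary follows with $a_3=2a_2+a_1$. The main obstacle is the confluence argument used to construct the join $\tau^{**}$, and in particular the verification that the sequence from $\tau^*$ to $\tau^{**}$ stays within the class of $c$-splits at large proper subbranches of $c$ so that Lemma \ref{dehnreduced} applies; both points follow from the uniqueness of $\lambda$-splits at a specified large branch and from the fact that every split in the given sequence $\{\tau_i\}$, as well as every split in the reduction sequence supplied by Lemma \ref{nonreduced}, is of this form.
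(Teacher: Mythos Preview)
Your proposal is correct and follows exactly the route the paper indicates: the corollary is stated as an immediate consequence of Lemma~\ref{dehnreduced} and Lemma~\ref{nonreduced}, with no further proof given, and your argument is the natural way to combine them. The join $\tau^{**}$ you construct is the only point requiring care; its existence follows from the confluence of $\lambda$-splits (Lemma~5.1 of \cite{H09}), since distinct large branches never share a switch and hence $\lambda$-splits at them commute, so the sub-poset of $E(\tau,\lambda)$ reached by $c$-splits is a lattice and your bound $d(\tau_k,\tau^{**})\leq a_2$ is the modular law there.
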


To use
Corollary \ref{Dehn} for an extension of 
Lemma \ref{reverse} we have to overcome 
a technical difficulty
arising from the fact that 
for a given complete train track $\tau$
and a large branch $e$ of $\tau$, 
there may be a choice of a right or left
split of $\tau$ at $e$ so that the 
split track is 
not recurrent. We call such a large branch 
\emph{rigid}. Note that $e$ is a rigid large branch of $\tau$
if and only if the (unique) complete train track
obtained from $\tau$ by a split at $e$ carries \emph{every}
complete geodesic lamination which is carried by $\tau$. It
follows from Lemma 2.1.3 of \cite{PH92} that a branch $e$ in
$\tau$ is rigid if and only if the train track $\zeta$ obtained
from $\tau$ by a collision at $e$, 
i.e. a split at $e$ followed by the removal
of the diagonal of the split, is \emph{not} recurrent.

\begin{lemma}\label{rigid} There is a number $a_4>0$
with the following
property. For every $\eta\in {\cal V}({\cal T\cal T})$ there is a
splitting sequence $\{\eta(i)\}_{0\leq i\leq s}\subset 
{\cal V}({\cal T\cal T})$ 
issuing from
$\eta=\eta(0)$ of length $s\leq a_4$ such that for every $i$,
$\eta(i+1)$ is obtained from $\eta(i)$ by a split at a rigid large
branch and such that $\eta(s)$ does not contain any rigid large
branches.
\end{lemma}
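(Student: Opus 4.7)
The plan is to establish a uniform bound by reducing, via cocompactness, to ruling out infinite rigid splitting sequences starting at a fixed complete train track. Since ${\cal M\cal C\cal G}(S)$ acts cocompactly on ${\cal T\cal T}$, there are only finitely many orbits of complete train tracks, so I would prove a bound $a_4(\eta)$ for each of finitely many orbit representatives and take the maximum. For a fixed $\eta$, the tree of rigid splitting sequences issuing from $\eta$ is finitely branching (at each vertex there are at most $q$ rigid large branches, and the split at a rigid large branch is uniquely determined since the non-recurrent alternative is forbidden by definition). By K\"onig's lemma it then suffices to show that no infinite rigid splitting sequence $\{\tau_i\}_{i\geq 0}$ issuing from $\tau_0=\eta$ exists.

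The central observation is that splits at rigid large branches preserve the entire carrying structure. The characterization given immediately before the lemma states that $e$ is rigid in $\tau$ if and only if the unique complete split $\tau'$ carries every complete geodesic lamination carried by $\tau$, i.e.\ ${\cal C\cal L}(\tau')={\cal C\cal L}(\tau)$. Strengthening this to measures, an elementary weight computation at the split (with small half-branches $a,b,c,d$ incident on $e$ satisfying the switch condition $w(a)+w(b)=w(c)+w(d)$) shows that the rigid right split is forced by the inequality $w(a)\geq w(d)$ holding throughout $V(\tau)$, and the new diagonal receives weight $w(a)-w(d)$, a linear function of the original weights; in particular the carrying map identifies the cones of transverse measures, so ${\cal M\cal L}(\tau')={\cal M\cal L}(\tau)$ as subsets of ${\cal M\cal L}$. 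Iterating, every $\tau_i$ in the hypothetical infinite sequence satisfies ${\cal M\cal L}(\tau_i)={\cal M\cal L}(\eta)$.

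The main obstacle is then to show that the set of complete train tracks $\tau\prec\eta$ with ${\cal M\cal L}(\tau)={\cal M\cal L}(\eta)$ is finite, since this would contradict the fact that an infinite splitting sequence cannot repeat vertices (its vertices lie on an $L_0$-quasi-geodesic in ${\cal T\cal T}$ by Theorem~\ref{cubicaleuclid}). The plan for the finiteness is to argue that the carrying map $\tau\to\eta$ must send branches bijectively onto branches: otherwise some branch of $\tau$ would map to a trainpath of length at least two in $\eta$, which would impose nontrivial linear equalities $w(b_1')=w(b_2')$ on the weights at the branches of $\eta$ belonging to this trainpath for every measure pulled back from $V(\tau)$, strictly shrinking ${\cal M\cal L}(\tau)$ inside ${\cal M\cal L}(\eta)$ and contradicting equality. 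Carrying maps between complete train tracks which are bijective on branches arise, by the combinatorial classification in \cite{PH92}, as compositions of shifts, and only finitely many shift-equivalent variants of $\eta$ exist, completing the argument and hence the proof of the lemma.
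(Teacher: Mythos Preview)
Your reduction via cocompactness and K\"onig's lemma to ruling out an infinite rigid splitting sequence from a fixed $\eta$ is fine and matches the paper's setup (which uses a diagonal argument to the same effect). Your observation that a rigid split preserves the full cone of transverse measures, so that ${\cal M\cal L}(\tau_i)={\cal M\cal L}(\eta)$ along any rigid splitting sequence, is also correct and is exactly the mechanism the paper exploits.

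The gap is in your finiteness argument. The claim that ${\cal M\cal L}(\tau)={\cal M\cal L}(\eta)$ with $\tau\prec\eta$ forces the carrying map to be a bijection on branches is false, and a single rigid split already shows why. If $\tau'$ is the (rigid) right split of $\tau$ at $e$ with winners $a,c$ and losers $b,d$, then under the natural carrying map $\tau'\to\tau$ the losing branches map to the length-two trainpaths $b\cup e$ and $d\cup e$; yet, as you yourself computed, the induced map $V(\tau')\to V(\tau)$ is a linear isomorphism of cones and ${\cal M\cal L}(\tau')={\cal M\cal L}(\tau)$. No equality ``$w(b_1')=w(b_2')$'' is forced: the pushforward of a measure on $\tau'$ assigns to a branch of $\tau$ the sum of weights of all branches of $\tau'$ passing through it, and different branches of $\tau$ on the image trainpath can receive different such sums. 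Consequently the conclusion that $\tau$ must be shift-equivalent to $\eta$ does not follow, and your finiteness argument collapses.

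What is actually needed is a reason why an \emph{infinite} splitting sequence cannot preserve an open set of projective measured laminations. This is a nontrivial dynamical fact; the paper imports it as Theorem~8.5.1 of \cite{M03}: for any infinite splitting sequence $\{\eta(i)\}$ the nested intersection $\bigcap_i{\cal P\cal M}(\eta(i))$ contains no open subset of projective measured lamination space. Since $\eta$ is complete, ${\cal P\cal M}(\eta)$ does contain an open set, and the contradiction follows. Your argument provides no substitute for this input.
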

\begin{proof} We show the existence of a number
$a_4>0$ as in the
lemma with an argument by contradiction. Assume to the contrary
that the lemma 
does not hold true. Then there is a sequence of pairs
$(\beta_i,\xi_i)\in {\cal V}( {\cal T\cal T})
\times {\cal V}({\cal T\cal T})$ such that $\xi_i$
can be obtained from $\beta_i$ by a splitting sequence of length
at least $i$ consisting of splits at rigid large branches. Every
complete geodesic lamination which is carried by $\beta_i$ is also
carried by $\xi_i$. 

By invariance under the action of the mapping
class group and the fact that there are only
finitely many orbits for the action of ${\cal M\cal C\cal G}(S)$
on ${\cal V}( {\cal T\cal T})$,
by passing to a subsequence we may assume that there is some
$\eta\in {\cal V}({\cal T\cal T})$ such that $\beta_i=\eta$ for
all $i$. Since $\eta$ has only finitely many large branches, with a
standard diagonal procedure we can construct 
from $(\eta,\xi_i)$ an \emph{infinite}
splitting sequence $\{\eta(i)\}_{i\geq 0}\subset
{\cal V}({\cal T\cal T})$ issuing from
$\eta=\eta(0)$ such that for every $i$ the train track $\eta(i+1)$
is obtained from $\eta(i)$ by a split at a rigid large branch.
Then for every $i$, every complete geodesic lamination which is
carried by $\eta$ is also carried by $\eta(i)$. Now for every
\emph{projective measured geodesic lamination} whose support $\nu$
is carried by $\eta$ there is a complete geodesic lamination
$\lambda$ carried by $\eta$ which contains $\nu$ as a
sublamination (see the discussion in the proof of Lemma 2.3 of 
\cite{H09} and the discussion in Section 3 of this
paper). Thus for each $i$, the space ${\cal
P\cal M}(i)$ of projective measured
geodesic laminations carried by
$\eta(i)$ coincides with the space ${\cal P\cal M}(0)$ of
projective measured geodesic laminations carried by $\eta$. Since 
$\eta$ is complete.
the set ${\cal P\cal M}(0)$ contains an open subset of
the space of all projective measured geodesic laminations
(Lemma 2.1.1 of \cite{PH92}). 
Therefore $\cap_i{\cal P\cal
M}(i)={\cal P\cal M}(0)$ contains an open subset of projective
measured geodesic
lamination space which contradicts Theorem 8.5.1 of
\cite{M03}. This shows the lemma.
\end{proof}

A multi-twist $\theta$ about a simple geodesic
multi-curve $c=\cup_ic_i$ is a mapping class which 
can be represented in the form
$\theta=\theta_{c_1}^{m_1}\circ\dots\circ \theta_{c_k}^{m_k}$ for some
$m_i\in \mathbb{Z}$ and where as before, $\theta_{c_i}$ is the
positive Dehn twist about $c_i$. The next lemma is a version
of Lemma \ref{reverse} for train tracks which
carry a common multi-curve. As in Section 3,
for a train track $\tau\in
{\cal V}({\cal T\cal T})$ which is splittable to
a train track $\tau^\prime\in {\cal V}({\cal T\cal T})$
let $E(\tau,\tau^\prime)$ be the maximal subgraph
of ${\cal T\cal T}$ whose set of vertices consists of
all train tracks
which can be obtained from $\tau$ by a splitting sequence and 
which are
splittable to $\tau^\prime$.

\begin{lemma}\label{multitwist} 
For every $R>0$ there is a number
$\beta_1(R)>0$ with the following property. Let $\tau,\eta\in
{\cal V}({\cal T\cal T})$ with $d(\tau,\eta)\leq R$.
Assume that
$\tau,\eta$ contain a common subtrack which is a
multi-curve $c$ and that the components of $c$ are
precisely the minimal geodesic laminations which are carried by
both $\tau$ and $\eta$.
Let
$\tau^\prime,\eta^\prime$ be complete train tracks which can be
obtained from $\tau,\eta$ by a splitting sequence.
Then there is a multi-twist $\theta$
about $c$ with the following property.
There is a train track $\tau^{\prime\prime}$ in the
$\beta_1(R)$-neighborhood of $\tau^\prime$,
a train track $\eta^{\prime\prime}$ in the
$\beta_1(R)$-neighborhood of $\eta^\prime$
and a splitting sequence connecting 
$\tau^{\prime\prime}$ to $\eta^{\prime\prime}$ which passes
through the $\beta_1(R)$-neighborhood of $\theta (\eta)$.
For any complete geodesic lamination 
$\nu$ which is carried by $\eta^\prime$, we can assume
that $\nu$ is carried by $\eta^{\prime\prime}$.
Moreover,
$d(\theta(\tau),E(\tau,\tau^\prime))\leq a_5,
d(\theta(\eta),E(\eta,\eta^\prime))\leq a_5$
for a universal constant $a_5>0$.
\end{lemma}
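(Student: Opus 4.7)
\emph{Plan.} The approach is to reduce Lemma \ref{multitwist} to Lemma \ref{reverse} by isolating and then absorbing the twisting along the common multi-curve $c$ into a single multi-twist $\theta$. The key observation is that the splits which interact with $c$ are, up to bounded error, pure Dehn-twisting controlled by Corollary \ref{Dehn}; once such a multi-twist is factored out, the remaining splits occur in the complement of $c$, where the hypothesis that the only common minimal laminations carried by $\tau$ and $\eta$ are components of $c$ permits an application of Lemma \ref{reverse}.

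First, I would use Lemma \ref{rigid} to pass, at uniformly bounded cost, to complete train tracks without rigid large branches, so that subsequent splits we construct can be carried out without destroying recurrence. Next, applying Proposition \ref{inducing} with the common subtrack $c$, the splitting sequences $\tau\to\tau'$ and $\eta\to\eta'$ can be reorganized (up to bounded error in ${\cal T\cal T}$) so that the splits at large proper subbranches of $c$ come first, followed by splits disjoint from a small neighborhood of $c$; Lemma \ref{nonreduced} is used here to resolve non-reduced intermediate occurrences of $c$ that may arise in this reordering. Corollary \ref{Dehn} then identifies the $c$-split portions of the two sequences with multi-twists $\theta_\tau$ and $\theta_\eta$ about $c$, up to a universally bounded distance. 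I choose $\theta$ to be the multi-twist realizing the common twisting content, so that $\theta(\tau)$ and $\theta(\eta)$ simultaneously lie within the required distance $a_5$ of points on $E(\tau,\tau')$ and $E(\eta,\eta')$ corresponding to completion of the $c$-split portions.

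After this absorption, the residual splittings from $\theta(\tau)$ and $\theta(\eta)$ to (bounded neighborhoods of) $\tau'$ and $\eta'$ take place entirely in the complement of a tubular neighborhood of $c$. By the hypothesis on common minimal laminations, the restrictions of the residual train tracks to each complementary subsurface of $c$ carry no common complete geodesic lamination, which is precisely the setting of Lemma \ref{reverse}. Applying that lemma on each complementary component then yields the desired splitting sequence from a train track $\tau''$ near $\tau'$, through the $\beta_1(R)$-neighborhood of $\theta(\eta)$, to a train track $\eta''$ near $\eta'$, with the carrying property for the complete geodesic lamination $\nu$ carried by $\eta^\prime$ inherited from Lemma \ref{reverse}. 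The main obstacle is coordinating the two a priori distinct multi-twists $\theta_\tau,\theta_\eta$ into a single $\theta$ that simultaneously satisfies both distance bounds in the conclusion of the lemma, while also serving as the train track through which the desired splitting sequence passes; the resolution will rely on the fact that any mismatch between $\theta_\tau$ and $\theta_\eta$ can be absorbed into the splitting sequence itself, since the splits at rigid-free large branches of $c$ on either side of $\theta(\eta)$ give precisely the needed flexibility.
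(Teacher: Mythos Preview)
Your overall strategy—isolate the $c$-twisting via Corollary \ref{Dehn}, factor it into a multi-twist $\theta$, then appeal to Lemma \ref{reverse}—matches the paper's, but there are two genuine gaps in the execution.

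First, Lemma \ref{reverse} is a statement about complete train tracks on $S$ that carry \emph{no} common geodesic lamination; it does not apply ``on each complementary component'' of $c$. After your absorption step the train tracks near $\theta(\tau)$ and $\theta(\eta)$ still contain $c$ as a common subtrack, so they still carry the components of $c$ as common minimal laminations and Lemma \ref{reverse} simply does not apply. The paper resolves this by an extra step you are missing: using the absence of rigid large branches (from Lemma \ref{rigid}), one performs, for each component $c_i$, an additional split at a large proper subbranch of $c_i$ of type 1 or 3 in the direction that makes the split track \emph{stop carrying} $c_i$. Only after all components of $c$ have been eliminated on at least one side does one obtain a pair of train tracks with no common geodesic lamination, and then Lemma \ref{reverse} is invoked globally, not subsurface by subsurface.

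Second, your coordination of $\theta_\tau$ and $\theta_\eta$ into a single $\theta$ is underspecified precisely where the argument is delicate. The paper splits into two cases according to the signs of the twist exponents $b_i,p_i$. When the signs agree one takes $m(i)={\rm sgn}(b_i)\min\{|b_i|,|p_i|\}$; the side that has exhausted its $c_i$-splitting in $E(\cdot,\cdot')$ is then the one on which the $c_i$-destroying split is performed. When the signs disagree one cannot simply take $m(i)=0$: the paper instead passes all the way to $\psi=\theta_{c_1}^{b_1}\cdots\theta_{c_k}^{b_k}$ on the $\tau$ side, uses the crucial observation that (because the signs differ) $\psi(\eta_0)$ is \emph{splittable to} $\eta_0$, applies Lemma \ref{reverse} to $\tau_2$ and $\psi(\eta_0)$, and then uses Lemma 6.7 and Proposition A.6 of \cite{H09} to reroute the resulting splitting sequence through a bounded neighborhood of $\eta$ rather than of $\psi(\eta)$. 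Your proposal does not contain this mechanism, and the vague remark that mismatch ``can be absorbed into the splitting sequence itself'' does not substitute for it. (As a minor point, Proposition \ref{inducing} is not the right tool for reordering $c$-splits to the front—it produces sequences induced by splits \emph{of the subtrack}, whereas a multi-curve $c$ has no large branches of its own; the reordering you want is just commutation of splits at disjoint large branches.)
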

\begin{proof} As in the proof of Lemma \ref{reverse}, 
by invariance under the
mapping class group it suffices to show the lemma for some fixed
train tracks $\tau,\eta\in {\cal V}({\cal T\cal T})$
which satisfy
the assumptions in the lemma with a number $\beta_1>0$ depending
on $\tau,\eta$.

Thus let $\tau,\eta$ be train tracks containing a
common subtrack $c$ which is a
multi-curve. Assume that 
every minimal geodesic lamination which is carried by both $\tau$ and
$\eta$ is a component of $c$.
By Lemma \ref{reverse}, we may assume that $c$ is not empty.

Assume that $\tau$ is splittable to a train
track $\tau^\prime$ and that $\eta$ is splittable to
a train track $\eta^\prime$.
The strategy is now as follows. By Corollary \ref{Dehn}, splitting
at large proper subbranches of $c$ results in 
twisting about the components of $c$. We first 
determine the direction of the twist and the amount
of twisting about each of the components of $c$ for
train tracks $\tilde\tau\in E(\tau,\tau^\prime)$ and
$\tilde\eta\in E(\eta,\eta^\prime)$ which are obtained from
$\tau,\eta$ by a sequence of $c$-splits of maximal length.
Then we compare these
data and determine a maximal multi-twist $\theta$ 
about the components of $c$
with the property that there are 
train tracks
$\hat\tau\in E(\tau,\tau^\prime),
\hat\eta\in E(\eta,\eta^\prime)$ which are contained 
in a uniformly bounded neighborhood of 
$\theta(\tau),\theta(\eta)$. 
Here by a maximal multi-twist we mean an element of 
maximal distance to the identity
in the free abelian group generated 
by the Dehn twists 
about the components of $c$ with respect to 
the distance function induced by the standard word metric.
Finally we show
that up to replacing
$\hat \tau,\hat \eta$ and $\tau^\prime,\eta^\prime$ by
their images under suitably chosen splitting sequences of 
uniformly bounded length, we may assume that 
$\hat\tau,\tau^\prime$ 
and $\hat\eta,\eta^\prime$ 
satisfy the assumptions in Lemma \ref{reverse}.

To carry out this strategy, note first that
by Lemma \ref{rigid}, via replacing $\tau^\prime,\eta^\prime$ by
their images under a splitting sequence of uniformly bounded length
we may assume that $\tau^\prime,\eta^\prime$ do not contain any rigid
large branch.

Let $c_1,\dots,c_k$ be the components of $c$.
To count the amount of twisting about a component of $c$
along a splitting sequence
connecting $\tau$ to $\tau^\prime$ we define
inductively a sequence $\{\tilde\tau(i)\}_{0\leq i\leq
k}\subset E(\tau,\tau^\prime)$ consisting of train
tracks $\tilde\tau(i)$ which contain $c$ as a subtrack as
follows. Put $\tilde\tau(0)=\tau$ 
and assume that $\tilde\tau(i-1)$ has been
defined for some $i\in \{1,\dots,k\}$. Let $\tilde\tau(i)\in
E(\tau,\tau^\prime)$ be the train track which
can be obtained from $\tilde\tau(i-1)$ by a
sequence of $c_i$-splits of maximal length
at large subbranches of $c_i$.
Since the components $c_i,c_j$ of $c$ are disjoint, 
splits at large subbranches of $c_i,c_j$
for $i\not=j$ commute. Therefore 
the train track $\tilde\tau(k)$ only depends on $\tau,\tau^\prime,c$
but not on the ordering of the components $c_i$ of
$c$. 

By Corollary \ref{Dehn}, 
there are numbers $b_i\in \mathbb{Z}$ such that
for $\theta_{\tau,\tau^\prime}=\theta_{c_1}^{b_1}\circ \dots\circ
\theta_{c_k}^{b_k}\in {\cal M\cal C\cal G}(S)$ we have
\begin{equation}\label{lemma5.6.1}
d(\tilde\tau(k),\theta_{\tau,\tau^\prime}(\tau))\leq ka_3.
\end{equation} 
Similarly, there are numbers
$p_i\in \mathbb{Z}$ such that for $\theta_{\eta,\eta^\prime}=
\theta_{c_1}^{p_1}\circ\dots\circ \theta_{c_k}^{p_k}\in
{\cal M\cal C\cal G}(S)$ and the train track
$\tilde\eta(k)$ obtained from $\eta$ and $\eta^\prime$ by the above
procedure we have
\begin{equation}\label{lemma5.6.2}
d(\tilde\eta(k),\theta_{\eta,\eta^\prime}(\eta))\leq ka_3.\end{equation}
By Lemma \ref{nonreduced}, we may choose $b_i=0$ (or $p_i=0$)
if there is no train track which can be obtained from
$\tau$ (or $\eta$) by a sequence of $c_i$-splits and which 
contains $c_i$ as a reduced simple closed curve.

For $i\leq k$ define $m(i)=0$ if either
the signs of $b_i,p_i$ are
distinct or if $b_i=0$ or $p_i=0$.
If the signs of $b_i,p_i$
coincide then define
$m(i)={\rm sgn}(b_i)\min\{\vert b_i\vert, \vert p_i\vert \}$. Write
$\theta=\theta_{c_1}^{m(1)}\circ\dots\circ \theta_{c_k}^{m(k)}$. 
Our goal is to show that 
the lemma holds true for this
multi-twist $\theta$. 
Note that by Corollary \ref{Dehn}, we have
\[d(\theta(\tau),E(\tau,\tau^\prime))\leq ka_3,\,
d(\theta(\eta),E(\eta,\eta^\prime))\leq ka_3.\] 

The remainder
of the proof is divided into two steps.
For convenience of notation, if $b_i=0$ or if 
$p_i=0$ then we write 
${\rm sgn}(b_i)={\rm sgn}(p_i)$.

{\sl Step 1:} Reduction to the case that 
${\rm sgn}(b_i)\not={\rm sgn}(p_i) $ for all $i$.

After reordering we may assume that
there is some $s\leq k$ such that
${\rm sgn}(b_i)\not={\rm sgn}(p_i)$ for $i\leq s$ and
that ${\rm sgn}(b_i)={\rm sgn}(p_i)$ 
for $i\geq s+1$. Choose 
train tracks $\tau_1\in E(\tau,\tau^\prime),
\eta_1\in E(\eta,\eta^\prime)$ contained in the
$ka_3$-neighborhood of
$\theta(\tau),\theta(\eta)$, which 
contain the multi-curve
$c$ as a subtrack and such that for each $i\geq s+1$ the following holds
true. If $\vert b_i\vert \leq \vert p_i\vert$ then
the cubical euclidean cone $E(\tau,\tau^\prime)$ does not contain
any train track which can be obtained from $\tau_1$
by a $c_i$-split at any large subbranch of $c_i$ in 
$\tau_1$,
and similarly for $\eta_1$ in the case that
$\vert p_i\vert \leq \vert b_i\vert$.

Every minimal geodesic lamination which is carried
by both $\tau_1,\eta_1$ is a component of $c$.
Moreover, 
\begin{equation}\label{lemma5.6.3}
d(\tau_1,\theta(\tau))\leq ka_3,\, d(\eta_1,\theta(\eta))\leq ka_3
\end{equation} and therefore
by invariance of the distance function
on ${\cal T\cal T}$ under the action of the mapping
class group  we have
\begin{equation}\label{dtauone}
d(\tau_1,\eta_1)\leq d(\tau,\eta)+2ka_3.\end{equation}

Let $i\geq s+1$ and 
assume without loss of generality
that $\vert b_i\vert \leq \vert p_i\vert$. 
Then for each
large proper subbranch $e$ of $c_i$ in $\tau_1$, 
the train track obtained from $\tau_1$ by
a $c_i$-split at $e$ is \emph{not} contained in the cubical
euclidean cone $E(\tau,\tau^\prime)$.

Assume first that $\tau_1$ contains a large proper
subbranch $e$ of $c_i$ of type 1 as defined 
in Section 3. Note that this is for example the case
if $c_i$ is reduced in $\tau_1$.
There is a unique choice of a split of $\tau_1$ 
at $e$ such that the split track $\hat\tau_1$ does 
\emph{not} carry
$c_i$. We claim that 
the train track $\hat\tau_1$ is complete. Namely,
either we have
$\hat\tau_1\in E(\tau,\tau^\prime)$, in particular
$\hat\tau_1$ is complete,
or no train track which can be obtained from
$\tau_1$ by a split at $e$ is splittable
to $\tau^\prime$. In the second case, 
$\phi(\tau_1,\tau^\prime)(e) $ is a large
branch of $\tau^\prime$ by uniqueness of splitting
sequences (Lemma 5.1 of \cite{H09}).
Since $\tau^\prime$ does not
contain any rigid large branch by assumption,
the train tracks which are obtained from $\tau^\prime$
by a single right or left 
split at $\phi(\tau_1,\tau^\prime)(e)$ are both complete. As a consequence of 
uniqueness of splitting sequences, 
the train track $\hat\tau_1$ is
splittable to a complete train track which is obtained from
$\tau^\prime$ by a single
right or left  split at $\phi(\tau_1,\tau^\prime)(e)$. 
Thus the
train track $\hat\tau_1$ is complete and
does not carry the simple
closed curve $c_i$. Moreover, $\hat \tau_1$ is splittable
to a train track obtained from $\tau^\prime$ by at most one
split.

If every large proper subbranch of $c_i$ in $\tau_1$ is 
of type 2 then no train track obtained from $\tau_1$
by a split at a large branch in $c_i$ is splittable
to $\tau^\prime$. Then $\phi(\tau_1,\tau^\prime)(c_i)$ is just
the embedded simple closed curve $c_i$ in $\tau^\prime$, 
and $\phi(\tau_1,\tau^\prime)(c_i)$ does not contain
any proper large subbranch of type 1.  
By Lemma \ref{nonreduced} and Lemma \ref{rigid},
the image $\xi$ of $\tau^\prime$ under
a (suitably chosen) 
splitting sequence of uniformly bounded length
is complete, it 
contains $c_i$ as an embedded reduced curve, and it does not
contain any rigid large branch. 
Then we can use the consideration in the previous paragraph
with the pair $\tau^\prime\prec \tau$
replaced by $\xi\prec \tau$.

To summarize, up to replacing
$\tau^\prime$ by its image under a splitting sequence of 
uniformly bounded length we may assume that 
there is a train track $\hat\tau_1\in E(\tau,\tau^\prime)$
in a uniformly bounded neighborhood of $\theta(\tau)$ which
does not carry the simple closed curve $c_i$. 
In particular, a minimal geodesic lamination which is carried
by both $\tau_1$ and $\eta$ is a component $c_j$ of $c$ for some
$j\not=i$.

Since splits at large proper subbranches of the distinct
components of $c$ commute, we construct in this way
successively in $k-s$ steps
from the train tracks $\tau_1,\eta_1$
complete train tracks $\tau_2,\eta_2$ 
with the following properties.
$\tau_2,\eta_2$ can be 
obtained from $\tau_1,\eta_1$ by a splitting sequence
of uniformly bounded length.
In particular, by the estimate (\ref{dtauone}) 
and (\ref{lemma5.6.3})
there is a universal constant
$a>0$ such that 
\begin{align}\label{lemma5.6.4}
d(\tau_2,\eta_2)\leq d(\tau,\eta)+a \text{ and }\\
d(\tau_2,\theta(\tau))\leq a,d(\eta_2,\theta(\eta))\leq a.\notag
\end{align}
The train tracks $\tau_2,\eta_2$ 
 contain the simple closed curves $c_1,\dots,c_s$ as
embedded subtracks, and they
are splittable to
train tracks $\tau_2^{\prime},\eta_2^{\prime}$
which can be obtained from $\tau^\prime,\eta^\prime$
by splitting sequences of uniformly bounded length.
A minimal geodesic
lamination which is carried by both $\tau_2,\eta_2$ coincides with
one of the curves $c_i$ for $i\leq s$. 
This shows that via replacing $\tau,\eta$ 
by $\tau_2,\eta_2$ and replacing $\tau^\prime,\eta^\prime$
by $\tau_2^\prime,\eta_2^\prime$ we may assume that
${\rm sgn}(b_i)\not={\rm sgn}(p_i)$ for all $i$.

{\sl Step 2:} The case ${\rm sgn}(b_i)\not={\rm sgn}(p_i)$ 
for all $i$.

By the choice of $b_i,p_i$
and by Lemma \ref{nonreduced}, there are train tracks
$\tau_0\in E(\tau,\tau^\prime),\eta_0\in E(\eta,\eta^\prime)$ which are
obtained from $\tau,\eta$ by a splitting sequence  
of length at most $ka_2$ 
and such that $\tau_0,\eta_0$ contain 
each of the components $c_i$ of $c$ $(i\leq k)$ 
as a reduced simple closed curve.
By Lemma \ref{dehnreduced}, for each $i$ the train track
$\tau_0$ is splittable to 
$\theta_{c_i}^{b_i}\tau_0$. Since  
${\rm sgn}(b_i)\not={\rm sgn}(p_i)$, for each $i$ the train track
$\theta_{c_i}^{b_i}\eta_0$ is splittable to $\eta_0$.

Let $\tau_1$ be the train track which can be obtained from
$\tau_0$ by a sequence of 
$\cup_{i=1}^kc_i$-splits of maximal length and which
is splittable to $\tau^\prime$. By the definition of the
multiplicities $b_i$, inquality (\ref{lemma5.6.1}) above
shows that for 
$\psi=
\theta_{c_1}^{b_1}\circ \dots\circ \theta_{c_k}^{b_k}$
we have
\[d(\psi(\tau_0),\tau_1)\leq ka_3.\]
The train track $\psi (\eta_0)$
is splittable to $\eta_0$. By invariance under the 
action of the mapping
class group, a minimal geodesic lamination carried
by both $\tau_1$ and $\psi(\eta_0)$ is a component of $c$. 

As in Step 1 above, there is a train track $\tau_2$ 
which is the image of 
$\tau_1$
under a splitting sequence of uniformly bounded length, which is
splittable to the image $\tau_2^\prime$ 
of $\tau^\prime$ under a splitting
sequence of uniformly bounded length and 
such that $\tau_2$ does not carry any
of the curves $c_i$. 
Since $\psi$ acts on ${\cal T\cal T}$ as an isometry, 
we have
\[d(\tau_2,\psi(\eta_0))\leq d(\tau,\eta)+a+2ka_2\]
where $a>0$ is a universal constant as in
the estimate (\ref{lemma5.6.4}) in Step 1 of this proof.
The train tracks 
$\psi(\eta_0)$ and $\tau_2$ 
do not carry any common geodesic
lamination. Moreover, $\psi(\eta_0)$ is splittable to 
$\eta^\prime$ with
a splitting sequence which passes through $\eta_0$.

Let $\nu$ be a complete geodesic lamination which is carried by
$\eta^\prime$.
By Lemma \ref{reverse}, applied to the train 
tracks $\tau_2,\psi(\eta_0)$
which are splittable to the train tracks 
$\tau_2^\prime, \eta^\prime$, there is a number 
$\beta>0$ only depending 
on $d(\tau_2,\psi(\eta_0))$ and hence on 
$d(\tau,\eta)$ and 
there is a splitting sequence
$\{\alpha(i)\}_{0\leq i\leq n}$ which connects a train track
$\alpha(0)$ with \[d(\alpha(0),\tau^\prime)\leq \beta\]
to a train track $\alpha(n)$ 
with 
\[d(\alpha(n),\eta^\prime)\leq\beta\] and  
which carries $\nu$. Moreover, this splitting
sequence passes through the $\beta$-neighborhood of 
$\psi(\eta_0)$ and hence it passes through a uniformly
bounded neighborhood of $\psi(\eta)$. 

Let $j\in \{0,\dots,n\}$ be such that
\[d(\alpha(j),\psi(\eta_0))\leq \beta.\]
By Lemma 6.7 of \cite{H09}, applied to the
train tracks $\alpha(j),\psi(\eta_0),\eta_0$ which 
carry the common complete geodesic lamination $\nu$,
there is a number $p_3(\beta)>0$ and 
there is a train track $\zeta$ which carries $\nu$, 
which is carried by both $\alpha(j)$ and $\eta_0$ and 
such that
\[d(\zeta,\eta_0)\leq p_3(\beta).\]
Proposition A.6 of \cite{H09} then shows
that $\alpha(j)$ is splittable to a train track
$\xi$ which is contained in a uniformly bounded
neighborhood of $\zeta$ and hence $\eta$ 
and which carries $\nu$.

An application of Lema 6.7 of \cite{H09}
to the train tracks $\xi,\eta_0,\eta^\prime$ which
carry $\nu$ produces a train track $\zeta^\prime$
in a uniformly bounded neighborhood of $\eta^\prime$
which carries $\nu$ and is carried by $\xi$.
Proposition A.6 of \cite{H09} 
then shows that $\xi$ is splittable
to a train track $\xi^\prime$ which carries $\nu$
and is contained in a uniformly bounded neighborhood of
$\eta^\prime$. This shows that 
$\alpha(0)$ can be connected to 
$\xi^\prime$ by a splitting sequence which passes
through a uniformly bounded neighborhood of $\eta$ and
completes the proof of the
lemma.
\end{proof}

Finally we investigate splitting sequences issuing from
complete train tracks $\tau,\eta$ which 
contain a common proper recurrent
subtrack $\sigma$ without closed curve
components carrying every geodesic lamination which is
carried by both $\tau,\eta$. 
Our goal is to establish an analog of Lemma \ref{multitwist}
in this case. To apply the above 
strategy we recall from Section 3 
the definition
of a sequence issuing from $\tau$ which is induced by a
splitting sequence $\{\sigma_i\}$ of $\sigma$.

\begin{lemma}\label{induceincones}
For every complete train track $\tau\in {\cal V}({\cal
T\cal T})$ which is splittable to a 
complete train track $\tau^\prime$ and for every
subtrack $\sigma$ of $\tau$ 
there is a unique train track $\xi\in
E(\tau,\tau^\prime)$ with the following properties.
\begin{enumerate}
\item There is a
splitting sequence $\{\sigma(i)\}_{0\leq i\leq p}$
issuing from $\sigma(0)=\sigma$ such that
$\xi$ can be obtained from
$\tau$ by a sequence induced by 
$\{\sigma(i)\}$.
\item If $\tilde \tau\in E(\tau,\tau^\prime)$ can be
obtained from $\tau$ by a sequence induced
by a splitting sequence of 
$\sigma$ then $\tilde\tau$
is splittable to $\xi$.
\end{enumerate}
\end{lemma}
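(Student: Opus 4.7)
The plan is to construct $\xi$ as the endpoint of an induced sequence of maximal length that remains inside $E(\tau,\tau^\prime)$, and then to deduce the universal property from the fact that induced modifications at distinct large branches of $\sigma$ commute. Since $E(\tau,\tau^\prime)$ is finite, I would first consider the collection $\cal{S}$ of pairs consisting of a splitting sequence $\{\sigma(i)\}_{0\leq i\leq p}$ issuing from $\sigma(0)=\sigma$ together with an induced sequence $\{\tau_j\}_{0\leq j\leq 2p}$ from $\tau_0=\tau$ whose terminal train track $\tau_{2p}$ still lies in $E(\tau,\tau^\prime)$. Each split strictly advances the associated induced train track in the splitting order of ${\cal T\cal T}$, so such a $p$ is bounded above; I then pick an element of $\cal{S}$ with $p$ maximal and set $\xi=\tau_{2p}$.

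To establish the universal property, suppose $\tilde\tau\in E(\tau,\tau^\prime)$ is obtained from $\tau$ by a sequence induced by a splitting sequence $\{\sigma^\prime(j)\}_{0\leq j\leq m}$ of $\sigma$. Using uniqueness of splitting sequences up to reordering (Lemma 5.1 of \cite{H09}), I would combine the splits performed on $\sigma$ in the two sequences into a single splitting sequence $\{\hat\sigma(k)\}_{0\leq k\leq q}$ of $\sigma$ that factors through both $\sigma(p)$ and $\sigma^\prime(m)$. By the commutativity of $(e,\lambda)$-modifications at distinct large branches (second remark following Lemma \ref{tightcontrol}), together with property (3) of Proposition \ref{inducing}, a suitably chosen induced sequence from $\tau$ associated with $\{\hat\sigma(k)\}$ terminates at a complete train track $\hat\tau$ to which both $\xi$ and $\tilde\tau$ are splittable. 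Property (4) of Proposition \ref{inducing}, applied to the induced sequences ending at $\xi$ and $\tilde\tau$, combined with the fact that $\xi$ and $\tilde\tau$ both lie in $E(\tau,\tau^\prime)$, ensures that $\hat\tau$ likewise lies in $E(\tau,\tau^\prime)$. Maximality of $p$ then forces $\hat\tau=\xi$, and hence $\tilde\tau$ is splittable to $\xi$.

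Uniqueness of $\xi$ is then immediate: if $\xi^\prime$ were a second train track satisfying both properties, then by the universal property $\xi$ is splittable to $\xi^\prime$ and $\xi^\prime$ is splittable to $\xi$, forcing $\xi=\xi^\prime$.

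The main obstacle will be the coherent merging of the two induced sequences. The subtlety is that induced sequences depend on a choice of complete $\tau$-extension $\lambda$ of a filling measured geodesic lamination on $\sigma(p)$ (respectively $\sigma^\prime(m)$), and these choices must be reconciled before one can speak of a single induced sequence associated with the amalgamated splitting sequence $\{\hat\sigma(k)\}$. Here the commutativity recorded in the remark after Lemma \ref{tightcontrol} is essential: it guarantees that the order in which $(e,\lambda)$-modifications are carried out at distinct large branches is immaterial, so the two induced sequences can indeed be merged into a single induced sequence with the required properties, all while remaining inside $E(\tau,\tau^\prime)$.
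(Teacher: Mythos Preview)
Your overall strategy---take an induced sequence of maximal length and deduce universality by merging---is different from the paper's, which instead builds $\xi$ by an explicit iterative procedure: at each stage it marks those large branches $e_i$ of the current subtrack where either the $\sigma$-tightening cannot be completed inside $E(\tau,\tau')$ or no split at $e_i$ after tightening stays in $E(\tau,\tau')$, performs the tightening and split at all unmarked branches simultaneously, and repeats with the resulting subtrack. Because the paper's construction always does every admissible step at once, it never needs to merge two competing sequences.

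Your argument has two genuine gaps. First, Lemma 5.1 of \cite{H09} asserts uniqueness up to order of a splitting sequence between two \emph{fixed} endpoints; it does not by itself produce a common refinement $\{\hat\sigma(k)\}$ of two splitting sequences of $\sigma$ with different endpoints $\sigma(p)$ and $\sigma'(m)$. For that you must argue that no large branch of $\sigma$ (or of any intermediate subtrack) is split in opposite directions by the two sequences. This is true---the induced splits at the $\tau$-level all lie in $E(\tau,\tau')$ and hence agree by Lemma 5.1 applied there---but it needs to be said explicitly.

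Second, and more seriously, your justification that $\hat\tau\in E(\tau,\tau')$ is incorrect: Property (4) of Proposition \ref{inducing} says that every vertex of $E(\tau,\tau_{2\ell})$ contains a subtrack from $E(\sigma_0,\sigma_\ell)$, which goes in the wrong direction for your purpose. What you actually need is that every split in the merged induced sequence is a split toward $\tau'$. Each such split does appear in one of the two original induced sequences, both of which lie in $E(\tau,\tau')$, and splits toward $\tau'$ can be performed in any order by Lemma 5.1 of \cite{H09} at the $\tau$-level; but for this to go through you must choose both auxiliary laminations to be carried by $\tau'$, so that the type-2 $\sigma$-splits arising during tightening also go toward $\tau'$. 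The commutativity remark after Lemma \ref{tightcontrol} that you invoke handles order-independence at \emph{distinct} large branches for a single $\lambda$; it does not reconcile two different laminations at the same type-2 proper subbranch.
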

\begin{proof} Let $\tau\in
{\cal V}({\cal T\cal T})$ be a complete train track
which is splittable to a 
train track $\tau^\prime\in {\cal V}({\cal T\cal T})$ and
let $\sigma$ be
a subtrack of $\tau$. Recall that a \emph{$\sigma$-split}
of $\tau$ is a split of $\tau$ at a large proper
subbranch of $\sigma$ with the property that the
split track contains $\sigma$ as a subtrack.
We proceed 
similarly to the procedure in the proof of Lemma \ref{tightcontrol}.

The large branches $e_1,\dots,e_k$ of 
$\sigma$ define
pairwise disjoint embedded trainpaths in $\tau$. 
For each $i$ let $\tilde \tau_i$ be the train
track obtained from $\tau$ by a sequence of 
$\sigma$-splits of maximal length at proper
subbranches of $e_i$ and with the additional
property that $\tilde \tau_i\in E(\tau,\tau^\prime)$.
By Lemma \ref{tightcontrol}, $\tilde\tau_i$ is found
in at most $q^2$ steps where $q>0$ is the number
of branches of a complete train track on $S$.

If $\tilde \tau_i$ is \emph{not} tight at $e_i$
then put a mark on the branch $e_i$ in $\sigma$.
If $\tilde\tau_i$ is tight at $e_i$ then 
$e_i$ is a large branch in $\tilde \tau_i$. In this case
we put a mark on $e_i$ 
if there is no train track contained in 
$E(\tau,\tau^\prime)$ which can be obtained from 
$\tilde \tau_i$ by a single split at $e_i$.

After reordering we may assume that
there is some $s\leq k$ such that
the branches $e_1,\dots,e_s$ are unmarked and that
the branches $e_{s+1},\dots,e_k$ are marked.
If $s=0$ then there is no train track
which can be obtained from a sequence induced
by any splitting sequence of $\sigma$ and which is
splittable to $\tau^\prime$ and we define $\xi=\tau$.
Otherwise 
let $\tau_1$ be the train track obtained from
$\tau$ by a sequence of $\sigma$-splits of maximal
length at proper large subbranches of the 
branches $e_1,\dots,e_s$ and such that $\tau_1\in E(\tau,\tau^\prime)$. 
Then $\tau_1$ is tight
at each of the branches $e_1,\dots,e_s$ of $\sigma$.
Moreover, there is a train track 
$\tau_2\in E(\tau,\tau^\prime)$ which can
be obtained from $\tau_1$ by a single split at
each of the large branches $e_i$ $(1\leq i\leq s)$.
The train track $\tau_2$ contains a subtrack
$\sigma_1$ which is obtained from $\sigma$ by a single
split at each of the branches $e_1,\dots,e_s$.

Repeat this procedure with the train track
$\tau_2$ and its subtrack $\sigma_2$. 
After finitely many steps we
obtain a train track $\xi\in E(\tau,\tau^\prime)$ which
clearly satisfies
the requirements in the lemma.
\end{proof}

For a convenient formulation of the following lemma, we say that
a train track $\eta$ is splittable to a complete geodesic
lamination $\lambda$ if $\eta$ carries $\lambda$.

\begin{lemma}\label{projection} 
Let $\tau\in {\cal V}({\cal T\cal T})$
and let ${\cal S}(\tau)\subset {\cal V}({\cal T\cal T})$ be the
set of all complete train tracks which can be obtained from
$\tau$ by a splitting sequence. Let
$E(\tau,\eta)$ be a cubical euclidean cone
where either $\eta\in {\cal S}(\tau)$
or $\eta$ is a complete geodesic
lamination carried by $\tau$. Then
there is a projection
\[\Pi_{E(\tau,\eta)}^1:{\cal S}(\tau)\to E(\tau,\eta)\] with 
the following property.
For every $\zeta\in {\cal S}(\tau)$, 
$\Pi_{E(\tau,\eta)}^1(\zeta)$ is a train track which is
splittable to both
$\zeta,\eta$, and  there is no train track $\chi\in
E(\Pi_{E(\tau,\eta)}^1(\zeta),\eta)- \Pi_{E(\tau,\eta)}^1
(\zeta)$ with this property. Moreover,
$\Pi^1_{E(\tau,\eta)}(\zeta)=\Pi^1_{E(\tau,\zeta)}(\eta)$ for all
$\eta,\zeta\in {\cal S}(\tau)$.
\end{lemma}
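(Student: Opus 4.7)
The plan is to define $\Pi^1_{E(\tau,\eta)}(\zeta)$ as the unique maximally split train track lying in $E(\tau,\eta)\cap E(\tau,\zeta)$, where, in the case that $\eta$ is a complete geodesic lamination, $E(\tau,\eta)$ is the cubical Euclidean cone from Section 2. Concretely, I would characterize the target as the unique $\xi\in \mathcal{S}(\tau)$ which is splittable to both $\zeta$ and $\eta$ and such that no $\chi\in E(\xi,\eta)-\{\xi\}$ is splittable to $\zeta$. Once such a $\xi$ is shown to exist and to be unique, the symmetry statement $\Pi^1_{E(\tau,\eta)}(\zeta)=\Pi^1_{E(\tau,\zeta)}(\eta)$ is automatic, since both sides satisfy the same symmetric characterization.

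Existence is the easy half. The set $E(\tau,\eta)\cap E(\tau,\zeta)$ is nonempty since it contains $\tau$, and by Lemma 5.1 of \cite{H09} any splitting sequence from $\tau$ to $\eta$ (or to $\zeta$) has a well-defined finite length, so any descending chain in the splittability order starting at $\tau$ and staying in this intersection has length bounded by $\min\{d_E(\tau,\eta),d_E(\tau,\zeta)\}$. Therefore minimal elements exist.

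The main work is uniqueness, and here the key input is again Lemma 5.1 of \cite{H09}: a splitting sequence connecting two train tracks is unique up to reordering of splits. This yields a cubical structure on the splittability order, in which any two $\xi_1,\xi_2$ that are both splittable to a common train track admit a greatest common refinement. Given two minimal elements $\xi_1,\xi_2\in E(\tau,\eta)\cap E(\tau,\zeta)$, the strategy is to construct a common refinement $\xi$ to which both $\xi_1$ and $\xi_2$ are splittable and which still lies in $E(\tau,\eta)\cap E(\tau,\zeta)$. Using the natural bijection $\phi$ of Lemma 5.1 of \cite{H09}, the unordered multiset of splits from $\tau$ to $\xi_1$ and the one from $\tau$ to $\xi_2$ can both be identified with subsets of the multiset of splits from $\tau$ to $\zeta$ (and likewise to $\eta$); taking their union produces a valid splitting sequence from $\tau$ whose endpoint $\xi$ is splittable from $\xi_1$ and $\xi_2$ and remains splittable to both $\zeta$ and $\eta$. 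Minimality of $\xi_1$ and $\xi_2$ then forces $\xi_1=\xi=\xi_2$.

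The main obstacle will be formalizing the ``take the union of two sets of splits'' step cleanly, since a priori a split is defined only at a large branch of a specific train track. The correct way to do this is to work inductively: if $\xi_1\neq\xi_2$, choose a split $\xi_1\to\xi_1'$ that lies on some splitting sequence from $\xi_1$ to $\zeta$ passing through a point that is also splittable to $\eta$. Using Lemma 5.1 of \cite{H09} to reorder the splitting sequence from $\xi_2$ to $\zeta$ (resp.\ to $\eta$) so that the corresponding split appears first, one produces the same split applied to $\xi_2$ via the bijection $\phi$, yielding $\xi_2\to\xi_2'$ in $E(\tau,\eta)\cap E(\tau,\zeta)$. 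A straightforward induction on $d_E(\tau,\xi_1)+d_E(\tau,\xi_2)$ then produces the common refinement and completes the uniqueness argument, hence the lemma.
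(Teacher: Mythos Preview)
Your overall strategy---characterize $\Pi^1_{E(\tau,\eta)}(\zeta)$ as the unique furthest-split element of $E(\tau,\eta)\cap E(\tau,\zeta)$ and prove existence and uniqueness separately---is correct and is essentially a repackaging of the paper's proof. The paper constructs $\Pi^1_\eta(\zeta)$ by induction on the length of a splitting sequence from $\tau$ to $\zeta$: if some first split of $\tau$ towards $\zeta$ is also a split towards $\eta$, perform it and recurse into the smaller cone; otherwise output $\tau$. Well-definedness (independence of which large branch $e$ is split first) is then argued exactly via the mechanism in your ``union of split-sets'' paragraph: if $\xi$ lies in the intersection but its history $\tau\to\xi$ omits the split at $e$, then by Lemma~5.1 of \cite{H09} the branch $\phi(\tau,\xi)(e)$ is still large in $\xi$, and splitting there keeps $\xi$ in the intersection, so $\xi$ was not yet extremal. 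Both arguments rest on the same cubical fact.

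Your final paragraph, however, does not formalize this correctly. Having taken $\xi_1,\xi_2$ extremal in $E(\tau,\eta)\cap E(\tau,\zeta)$, you propose to ``choose a split $\xi_1\to\xi_1'$'' along a sequence from $\xi_1$ to $\zeta$ that remains splittable to $\eta$---but extremality of $\xi_1$ says precisely that no such split exists. Relatedly, the quantity $d_E(\tau,\xi_1)+d_E(\tau,\xi_2)$ \emph{increases} under the move you describe, so it cannot serve as an induction variable. The fix is already implicit in your previous paragraph: since $\xi_1\neq\xi_2$, some split in the history $\tau\to\xi_2$ is absent from the history $\tau\to\xi_1$; Lemma~5.1 of \cite{H09} then shows the corresponding branch is still large in $\xi_1$, and splitting there (in the direction determined by $\zeta$, which agrees with the direction determined by $\eta$ because this split appears in the history of $\xi_2\in E(\tau,\eta)\cap E(\tau,\zeta)$) produces a point of the intersection strictly beyond $\xi_1$. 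This contradicts extremality of $\xi_1$ directly; no induction on the pair $(\xi_1,\xi_2)$ is needed.
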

\begin{proof} Let $\tau\in {\cal V}({\cal T\cal T})$ 
and let $E(\tau,\eta)$ be a cubical euclidean cone
where either $\eta\in {\cal V}({\cal T\cal T})$ is such 
that $\tau$ is splittable to $\eta$ or where 
$\eta$ is a complete geodesic lamination carried by
$\tau$. For $\zeta\in {\cal S}(\tau)$ 
we construct by
induction on the length $n$ of a splitting sequence connecting
$\tau$ to $\zeta$ a projection point 
$\Pi_{E(\tau,\eta)}^1(\zeta)=\Pi_\eta^1(\zeta)\in 
E(\tau,\eta)$ with the required properties.

If $n=0$, i.e. if $\tau=\zeta$, then we define
$\Pi^1_{\eta}(\zeta)=\tau$.
By induction, assume that for some
$n\geq 1$ we determined for all $\tau\in {\cal V}({\cal T\cal T})$
which are splittable to some $\eta\in {\cal V}({\cal T\cal T})
\cup {\cal C\cal L}$ 
such a projection into $E(\tau,\eta)$ of 
the subset of ${\cal S}(\tau)$ of all complete train tracks
which can be obtained from $\tau$ by a splitting sequence
of length at most $n-1$.
Let $\{\alpha(i)\}_{0\leq i\leq n }\subset {\cal
V}({\cal T\cal T})$ be a splitting sequence of length $n$
connecting the train track $\tau=\alpha(0)$ to $\zeta=\alpha(n)$.
Let $\{e_1,\dots,e_\ell\}$ be the collection of all large
branches of
$\tau$ with the property that the splitting sequence
$\{\alpha(i)\}_{0\leq i\leq n}$ includes a split at $e_i$. Note
that $\ell\geq 1$ since $n\geq 1$. For each $i$, the choice of a
right or left split at $e_i$ is determined by the requirement that
the split track carries $\zeta$.

Assume first 
that there is a large branch $e\in
\{e_1,\dots,e_\ell\}$ such that the
train track $\tilde \alpha(1)$
obtained from $\tau$ by a split at $e$ and
which is splittable to $\zeta$ is also splittable
to $\eta$.
There is a splitting sequence
$\{\tilde \alpha(i)\}_{1\leq i\leq n}$
of length $n-1$ connecting $\tilde\alpha(1)$ to
$\tilde\alpha(n)=\alpha(n)=\zeta$
(Lemma 5.1 of \cite{H09}). The cubical euclidean 
cone $E(\tilde \alpha(1),\eta)$
is contained in the cubical euclidean cone 
$E(\tau,\eta)$, and we
have $\zeta\in {\cal S}(\tilde \alpha(1))$.
By induction hypothesis, there is
a unique train track
$\Pi^1_{E(\tilde \alpha(1),\eta)}(\zeta)\in
E(\tilde \alpha(1),\eta)\subset
E(\tau,\eta)$ with the property that
$\Pi_{E(\tilde \alpha(1),\eta)}^1(\zeta)$
is splittable to $\zeta$ but that this is not
the case for any train track contained in
$E(\Pi_{E(\tilde \alpha(1),\eta)}^1(\zeta),\eta)-
\Pi_{E(\tilde \alpha(1),\eta)}^1(\zeta)$.

Define
$\Pi^1_{\eta}(\zeta)=\Pi^1_{E(\tilde \alpha(1),\eta)}(\zeta)$.
Then $\Pi^1_{\eta}(\zeta)$ is splittable to $\zeta$ and
this is not the case for any train track in
$E(\Pi_\eta^1(\zeta),\eta)-\Pi_\eta^1(\zeta)$. On the other
hand, a splitting sequence connecting $\tau$ to $\zeta$
is unique up to order (see Lemma 5.1 of \cite{H09} for
a detailed discussion of this fact). Therefore
if $\xi\in E(\tau,\eta)$ is such that $\xi$ is
splittable to $\zeta$ and such that a splitting
sequence connecting $\tau$ to $\xi$ does not
include a split at $e$, then $\xi$ contains
$\phi(\tau,\xi)(e)$ as a large branch, 
and there is a train track
$\xi^\prime\in E(\tau,\eta)$ 
which can be obtained from $\xi$
by a split at $\phi(\tau,\xi)(e)$ and which is splittable to $\zeta$.
In particular, $\xi$ does not satisfy the requirement
in the lemma. As a consequence, the point
$\Pi_\eta^1(\zeta)$ does not depend on the above choice of the
large branch $e$.

If none of the train tracks $\xi\in E(\tau,\zeta)$
obtained from $\tau$ by a split at one of the branches
$e_1,\dots,e_\ell$ is splittable to $\eta$, then no train track
$\beta\in E(\tau,\eta)-\tau$ is splittable to $\zeta$ and
we define $\Pi^1_\eta(\zeta)=\tau$. This completes
the inductive construction of the map $\Pi_\eta^1:
{\cal S}(\tau)\to E(\tau,\eta)$.
Note that we have $\Pi_\eta^1(\zeta)=
\Pi_\zeta^1(\eta)$ for all $\zeta,\eta\in {\cal S}(\tau)$. Namely,
$\Pi_\eta^1(\zeta)$ is splittable to both $\zeta,\eta$, but
this is not the case for any train track which can
be obtained from $\Pi_\eta^1(\zeta)$ by a split.
This shows the lemma.
\end{proof}

{\bf Remark:} Lemma \ref{projection} is valid without
modification for any train track $\sigma$ which
is splittable to train tracks $\eta,\zeta$.
Namely, the proof of Lemma \ref{projection} nowhere uses
the assumption of completeness of the train tracks considered.
We used complete train tracks in the formulation
of the lemma to include projections to a cubical
euclidean cone $E(\tau,\lambda)$ where $\lambda$
is a complete geodesic lamination carried by $\tau$.

\bigskip

We use  Lemma \ref{projection} to establish a technical statement
which enables us to control distances in the train track complex.

\begin{lemma}\label{projection2}
For every $R\geq 0$ there is a number $\beta_2(R)>0$ with the
following property. Let $\tau,\eta\in {\cal V}({\cal T\cal T})$
with $d(\tau,\eta)\leq R$ and 
assume that $\tau,\eta$ have a common subtrack
$\sigma$ which carries every measured
geodesic lamination carried by
both $\tau,\eta$. Let $\tau,\eta$ be splittable to complete
train tracks $\tau^\prime,\eta^\prime$.
Then there are train  tracks  $\tau_1,\eta_1,\tau_1^\prime,
\eta_1^\prime\in {\cal V}({\cal T\cal T})$ 
with the following properties.
\begin{enumerate}
\item $\tau_1^\prime,\eta_1^\prime$ can be obtained
from $\tau^\prime,\eta^\prime$ by a splitting sequence of uniformly
bounded length (the bound does not depend on $R$).
\item $\tau_1\in E(\tau,\tau_1^\prime),\eta_1\in 
E(\eta,\eta_1^\prime)$.
\item $d(\tau_1,\eta_1)\leq \beta_2(R)$.
\item $\tau_1,\eta_1$ contain a (perhaps trivial) multi-curve $c$ as a 
common subtrack, and every minimal geodesic lamination which is
carried by both $\tau_1,\eta_1$ is a component of  $c$.
\item If $\tau=\eta$ and if $\tau^\prime,\eta^\prime\in 
E(\tau,\lambda)$ for a complete geodesic lamination
$\lambda$  carried
by $\tau$ then 
$d(\tau_1,\Pi^1_{E(\tau,\tau^\prime)}(\eta^\prime))\leq 
\beta_2(0)$.
\end{enumerate}
\end{lemma}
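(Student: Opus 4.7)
The strategy is to reduce to the setting of Lemma~\ref{multitwist} (where the common subtrack is a multi-curve) by splitting $\sigma$ as far as possible in a way compatible with both splittings $\tau \to \tau'$ and $\eta \to \eta'$, using the induced splitting construction of Proposition~\ref{inducing}. The degenerate case in which $\tau, \eta$ carry no common measured geodesic lamination is handled directly by Lemma~\ref{reverse} after absorbing the uniformly bounded modifications from Lemma~\ref{rigid} to clear rigid large branches from $\tau', \eta'$. In the generic case, $\sigma$ carries some common measured lamination and is therefore recurrent; fix a measured lamination $\nu$ filling $\sigma$ together with complete $\tau$- and $\eta$-extensions $\lambda, \mu$ of $\nu$.

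Next, I construct a splitting sequence $\{\sigma_i\}_{0\le i\le\ell}$ of $\sigma$ of maximal possible length such that the induced sequences from $\tau$ (using $\lambda$) and from $\eta$ (using $\mu$), as given by Proposition~\ref{inducing}, terminate at complete train tracks $\tau_1 \in E(\tau, \tau_1')$ and $\eta_1 \in E(\eta, \eta_1')$, where $\tau_1', \eta_1'$ arise from $\tau', \eta'$ by splitting sequences of uniformly bounded length (produced by Lemma~\ref{rigid}, so that no rigid large branches obstruct the induced splits). The existence of such a maximal sequence follows from Lemma~\ref{induceincones}: at each stage, the maximal induced splitting that stays inside the cubical Euclidean cones is canonically defined. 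By construction, properties~(1) and~(2) of the lemma hold. Property~(3) is then a direct consequence of property~(5) of Proposition~\ref{inducing}: one has $i_{\sigma_\ell}(\tau_1, \eta_1) \leq i_\sigma(\tau, \eta) + 4q^5$, and the right-hand side is bounded in terms of $R$ since $d(\tau, \eta) \leq R$ controls intersection numbers between complete extensions of $\sigma$; bounded intersection number then yields bounded distance in $\mathcal{TT}$.

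The main obstacle is property~(4). Here I will argue that by property~(4) of Proposition~\ref{inducing}, every train track in $E(\tau, \tau_1)$ (respectively $E(\eta, \eta_1)$) contains a subtrack obtained from $\sigma$ by splitting, so the common subtrack $\sigma_\ell$ inherits from $\sigma$ the property of carrying every measured geodesic lamination common to $\tau_1$ and $\eta_1$. If $\sigma_\ell$ still possessed a large branch whose split produced a subtrack compatible with both extended cones, we could extend the splitting sequence by one step, contradicting maximality. Hence every large branch of $\sigma_\ell$ is obstructed, which forces the remaining common minimal geodesic laminations carried by both $\tau_1, \eta_1$ to be exactly the simple closed curve components of $\sigma_\ell$; these assemble into the desired multi-curve $c$. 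For property~(5), in the specialized case $\tau = \eta$ with $\tau', \eta' \in E(\tau, \lambda)$, the maximality characterization of $\tau_1$ matches the characterization of the projection $\Pi^1_{E(\tau,\tau')}(\eta')$ in Lemma~\ref{projection} as the splittability-maximal common train track in the two cubical cones; the two therefore differ by a uniformly bounded amount, which can be absorbed into $\beta_2(0)$.
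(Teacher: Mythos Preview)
Your high-level strategy matches the paper's: push $\sigma$ forward by a splitting sequence, maximal with respect to compatibility with both cones $E(\tau,\tau')$ and $E(\eta,\eta')$, and land at $\tau_1,\eta_1$ with a smaller common subtrack. The use of Proposition~\ref{inducing} (or equivalently Corollary~\ref{inducing10}) to secure property~(3) is also correct.

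The gap is in your argument for property~(4). From maximality of $\{\sigma_i\}$ you correctly deduce that no large branch $e$ of $\sigma_\ell$ admits a split compatible with \emph{both} cones. But this does \emph{not} force the minimal measured laminations carried by both $\tau_1,\eta_1$ to be simple closed curves. Concretely, $\sigma_\ell$ may still be a recurrent train track which is not a multi-curve: at a large branch $e$, it can happen that the $\tau$-side requires a right split while the $\eta$-side requires a left split (or that one side allows no split at $e$ because it has already reached $\tau'$ resp.\ $\eta'$). In either case $\tau_1$ and $\eta_1$ both contain $\sigma_\ell$, and any $\sigma_\ell$-filling measured lamination is carried by both --- so property~(4) fails for these $\tau_1,\eta_1$. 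What is actually needed is one further split at $e$, possibly in \emph{different} directions on the two sides (and possibly after extending $\tau',\eta'$ by a single split at the image of $e$, since $\tau',\eta'$ are assumed rigid-free). The effect is that the new common subtrack has strictly fewer branches than $\sigma_\ell$. This is exactly why the paper runs an induction on the number of branches of $\sigma_0$ (the non-circle part of $\sigma$), with a careful case analysis (Case~1 and Subcases~2.1--2.3) at the inductive step to manufacture that branch decrease.

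A related issue is your justification of property~(1). Clearing rigid large branches from $\tau',\eta'$ via Lemma~\ref{rigid} is only the first of the bounded extensions of $\tau',\eta'$; further bounded extensions are needed each time the inductive step forces a split at a large branch of $\tau'$ or $\eta'$. Since the induction has depth at most $q$, these extensions accumulate to a uniformly bounded total, but they do not come solely from Lemma~\ref{rigid}.
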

\begin{proof}
Let $R>0$ and let $\tau,\eta\in {\cal V}({\cal T\cal T})$ be such that
$d(\tau,\eta)\leq R$. Assume that $\tau,\eta$ are splittable to 
$\tau^\prime,\eta^\prime\in {\cal V}({\cal T\cal T})$.
By Lemma \ref{rigid}, via possibly replacing
$\tau^\prime,\eta^\prime$ by their images under a splitting
sequence of uniformly bounded length we
may assume that $\tau^\prime,\eta^\prime$ do not contain
any rigid large branch.

Assume that $\tau,\eta$ contain
a common subtrack $\sigma$ which
carries every measured
geodesic lamination carried by both $\tau,\eta$.
Since a union of recurrent subtracks of $\sigma$ is recurrent,
all measured geodesic laminations which are carried by both 
$\tau$ and $\eta$ 
are carried by the largest recurrent subtrack of 
$\sigma$ and hence we may assume that $\sigma$ is recurrent.
Let $\sigma_0$ be the complement in $\sigma$ of the closed curve
components of $\sigma$. 
We show the lemma by induction on the number $n$ of branches of
$\sigma_0$
with a constant $\beta_2(R,n)>0$ depending on $n$ and $R$.
Since $n$ does not exceed the number $q$ of branches of a complete
train track on $S$, this suffices for the purpose of the lemma. 

In the case $n=0$ we can choose $\tau_1=\tau,
\eta_1=\eta$ and there is nothing to
show. So assume that the lemma holds
true whenver the number of branches of $\sigma_0$ does not
exceed $n-1$ for some $n\geq 1$. 
Let $\tau,\eta\in {\cal V}({\cal T\cal T})$
be such that the number of branches of $\sigma_0$ 
is bounded from above by $n$.

By Lemma \ref{induceincones}, there is a 
splitting sequence $\{\sigma_i\}_{0\leq
i\leq p}$ of maximal length
issuing from $\sigma_0$ which induces a
sequence $\{\tilde \tau_i\}_{0\leq i\leq 2p}\subset 
E(\tau,\tau^\prime)$ of
maximal length issuing from $\tilde\tau_0=
\tau$. The train track
$\sigma_p$ is a subtrack of $\tilde\tau_{2p}$, and
$\sigma_p$  and
$\tilde\tau_{2p} $ only depend on $\tau, \tau^\prime, \sigma_0$
but not
on any choices made in the construction.
Similarly, there is a splitting
sequence $\{\tilde\sigma_i\}_{0\leq i\leq u}$ issuing from
$\tilde\sigma_0=\sigma_0$  which induces a
sequence $\{\tilde\eta_j\}_{0\leq j\leq 2u}\subset
E(\eta,\eta^\prime)$
of maximal length issuing from $\eta$.

The pairs
of train tracks $(\sigma_0,\sigma_p)$ and
 $(\sigma_0,\tilde \sigma_u)$ determine cubical euclidean cones $
E(\sigma_0,\sigma_p),E(\sigma_0,\tilde
\sigma_u)$ whose vertex sets consist all
train tracks which can be
obtained from $\sigma_0$ by a splitting sequence and which are
splittable to $\sigma_p,\tilde \sigma_u.$ 
By the remark following Lemma \ref{projection}, 
we can apply Lemma \ref{projection} to $E(\sigma_0,\sigma_p)$ and 
$E(\sigma_0,\tilde \sigma_u)$.
We obtain a train track 
$\xi_0=\Pi^1_{E(\sigma_0,\sigma_p)}\tilde \sigma_u=
\Pi^1_{E(\sigma_0,\tilde \sigma_u)}\sigma_p\in 
E(\sigma_0,\sigma_p)\cap
E(\sigma_0,\tilde \sigma_u)$ with the property that $\xi_0$  is
splittable to both $\sigma_p,\tilde \sigma_u $ but that this is not
the case for any train track which can be obtained from $\xi_0$
by a split. Define $\xi=\xi_0$ if $\xi_0$ is recurrent.
Otherwise let $\xi\in E(\sigma_0,\xi_0)$ be a recurrent train track
with the following property. For every large branch $e$ of $\xi$,
either a splitting sequence connecting $\xi$ to $\xi_0$
does not include a split at $e$ or the train track obtained
from $\xi$ by a split at $e$ and which is splittable
to $\xi_0$ is not recurrent.

By Lemma \ref{induceincones}, 
a splitting sequence 
connecting $\sigma_0$ to $\xi $ induces sequences
$\{\tilde\tau_i\}\subset E(\tau,\tau^\prime), 
\{\tilde\eta_j\}\subset E(\eta,\eta^\prime)$ issuing from
$\tau_0=\tau, \eta_0=\eta$ and connecting
$\tau,\eta$ to train tracks $\tau_1,\eta_1$ which contain
$\xi$ as a subtrack and which are splittable to 
$\tau^\prime,\eta^\prime$. Since $d(\tau,\eta)\leq R$ by assumption,
Corollary \ref{inducing10} shows the existence of a number
$\chi_1(R)>0$ only depending on $R$ such that 
\begin{equation}\label{dtaueta}
d(\tau_1,\eta_1)\leq \chi_1(R).\end{equation}
A measured geodesic lamination which is
carried by both $\tau_1,\eta_1$ is carried by
the union of $\xi$ with the
closed curve components of $\sigma$.
Note that by the definition of an induced splitting sequence,
a closed curve component of $\sigma$ 
is a subtrack of both $\tau_1$ and $\eta_1$.

Let ${\cal E}_{\xi}(\tau_1),
{\cal E}_{\xi}(\eta_1)$ be the
set of all large branches $e$ of $\xi$ with the property
that there is a train track
obtained from $\xi$ by a split at $e$ which is 
splittable to $\sigma_p,\tilde \sigma_u$. 
We distinguish two cases.

{\sl Case 1:} 
${\cal E}_\xi(\tau_1)\cap {\cal E}_{\xi}(\eta_1)\not=\emptyset$.

Let $e\in {\cal E}_{\xi}(\alpha_1)\cap
{\cal E}_{\xi}(\beta_1)$. Then 
there are train tracks $\tau_2\in E(\tau_1,\tau^\prime)$ and
$\eta_2\in E(\eta_1,\eta^\prime)$ obtained
from $\tau_1,\eta_1$ by a sequence of splits at large proper
subbranches of $e$ (of uniformly bounded length) which
are tight at $e$.
There is a train track $\tau_3,\eta_3$ obtained from 
$\tau_2,\eta_2$ by a single split at $e$ which is splittable to 
$\tau^\prime,\eta^\prime$. 
The estimate (\ref{dtaueta}) implies that
\[d(\tau_3,\eta_3)\leq 
\chi_2(R)\] for a number $\chi_2(R)>0$ only depending on $R$.

By the definition
of the train track $\xi$, there are now
possibilities. In the first case, 
both $\tau_3,\eta_3$ are obtained from $\tau_2,\eta_2$ by
a right (or left) split at $e$. 
The train track $\xi^\prime$ obtained from $\xi$ by
the right (or left) split at $e$ is a common subtrack of
$\tau_3,\eta_3$, but it is not recurrent.
Let $\zeta$ be the largest recurrent 
subtrack of $\xi^\prime$. The number of branches of 
$\zeta$ does not exceed $n-1$. Moreover, $\zeta$ carries
every measured geodesic lamination which is carried by
both $\tau_3,\eta_3$.

In the second case, the train track $\tau_3$ is obtained
from $\tau_2$ by a right (or left) split at $e$, and
the train track $\eta_3$ is obtained from $\eta_2$ by
a left (or right) split at $e$. 
Let $\zeta$ be the train track obtained from
$\xi$ by a collision at the large branch
$e\in {\cal E}_{\xi}(\tau_1)\cap
{\cal E}_{\xi}(\eta_1)$ of $\xi$. Then $\zeta$ carries
every measured geodesic lamination which is carried by
both $\tau_3,\eta_3$. 
The number of branches of $\zeta$ equals $n-1$.

We can now apply the induction hypothesis to 
$\tau_3,\eta_3$ which are splittable to $\tau^\prime,\eta^\prime$
and to the common subtrack of $\tau_3,\eta_3$ which is the union of
$\zeta$ with the simple closed curve components of $\sigma$.
The statement of the lemma follows.

{\sl Case 2:} ${\cal E}_{\xi}(\tau_1)\cap {\cal E}_{\xi}(\eta_1)=\emptyset.$

Let $e$ be an arbitrary large branch of $\xi$.
Up to exchanging $\tau$ and $\eta$ we may assume that
$e\not\in {\cal E}_\xi(\tau_1)$.

Let $\alpha,\beta$ be the train tracks obtained from 
$\tau_1,\eta_1$ by a sequence of $\xi$-splits of maximal length
at proper large subbranches of $e$ and which are splittable
to $\tau^\prime,\eta^\prime$. The length of a splitting
sequence connecting $\tau_1,\eta_1$ to $\alpha,\beta$ is uniformly
bounded and hence we have
\[d(\alpha,\beta)\leq \chi_3(R)\]
for a number $\chi_3(R)>0$ only depending on $R$. The train tracks
$\alpha,\beta$ contain $\xi$ as a common subtrack, and every
geodesic lamination which is 
carried by both $\alpha,\beta$ is carried by the
union of $\xi$ with the closed curve components of $\sigma$.

We distinguish three subcases.

{\sl Subcase 2.1:} Both $\alpha$ and $\beta$ are tight at $e$.

Since $e\not\in {\cal E}_\xi(\tau_1)$, 
no train track which can be obtained
from $\alpha$ by a single split at $e$ 
is splittable to $\tau^\prime$. 
By uniqueness of splitting sequences, 
$\phi(\alpha,\tau^\prime)(e)$ is large branch of 
$\tau^\prime$.
By assumption, the branch 
$\phi(\alpha,\tau^\prime)(e)$ of $\tau^\prime$ is not rigid.
Thus the train tracks obtained from $\tau^\prime$ by 
the right and the left split at $\phi(\alpha,\tau^\prime)(e)$, 
respectively,
are both recurrent and hence complete.
Then the train tracks
obtained from $\alpha$ by the right split and 
the left split at $e$, respectively, are complete as well since
these train tracks are splittable to a complete train track obtained
from $\tau^\prime$ by a
split at 
$\phi(\alpha,\tau^\prime)(e)$.

If there is a train track $\beta^\prime$ 
which is obtained
from $\beta$ by a split at $e$ and which
is splittable to $\eta^\prime$,
say if $\beta^\prime$ is obtained from $\beta$ 
by the right split, then let $\alpha^\prime$ be the
train track obtained from $\alpha$ by the 
left split at $e$ and let $\tau^{\prime\prime}$ be the
train track obtained from $\tau^\prime$ by the 
left split at 
$\phi(\alpha,\tau^\prime)(e)$. Then 
$\alpha^\prime\in E(\alpha,\tau^{\prime\prime})$, moreover  
a measured geodesic lamination
which is carried by both $\alpha^\prime$ and 
$\beta^\prime$ is carried
by the union of the simple closed curve components
of $\sigma$ with the
subtrack $\zeta$ of $\alpha^\prime$ which is obtained
from a collision of $\xi$ at $e$, i.e. a split
followed by the removal of the diagonal of the split. 
As before, the number of 
branches of $\zeta$ is strictly smaller than the number of 
branches of $\xi$. Moreover, we have
\[d(\alpha^\prime,\beta^\prime)\leq 
\chi_4(R)\]
for a number $\chi_4(R)>0$ only depending on $R$. 
As a consequence, we can now apply the induction hypothesis
to $\alpha^\prime,\beta^\prime,\tau^{\prime\prime},\eta^\prime$
and the common subtrack of $\alpha^\prime,\beta^\prime$
which is the union of $\zeta$ with the simple closed curve
components of $\sigma$ 
to deduced the statement of the lemma.

If no train track obtained from a split of $\beta$ at $e$ is 
splittable to $\eta^\prime$ then $\phi(\beta,\eta^\prime)(e)$ 
is a large branch in $\eta^\prime$. Since $\phi(\beta,\eta^\prime)(e)$
is not rigid, the train tracks obtained from $\beta$
by a right and a left split, respectively, are both 
complete (see the above discussion). 
Replace $\alpha,\beta$ by their images 
$\alpha^\prime,\beta^\prime$
under a right and left split 
at $e$, respectively,
and replace $\tau^\prime,\eta^\prime$ by their images 
$\tau^{\prime\prime},\eta^{\prime\prime}$ under
a right split and a left split at $\phi(\alpha,\tau^\prime)(e),
\phi(\beta,\eta^\prime)(e)$, respectively.
As before, a measured geodesic lamination which is
carried by both $\alpha^\prime,\beta^\prime$ 
is carried by the union of the simple closed
curve components of $\sigma$  with the 
train track $\zeta$ obtained from $\xi$ by
a collision at $e$. The statement of the
lemma now follows as above from the induction hypothesis,
applied to $\alpha^\prime,\beta^\prime,\tau^{\prime\prime},
\eta^{\prime\prime}$ and the union of $\zeta$ with the
closed curve components of $\sigma$.

{\sl Subcase 2.2:} 
Up to exchanging $\alpha$ and $\beta$, the train track 
$\alpha$ contains a large proper subbranch $b$ of $e$
of type 1 or type 3
as introduced in Section 3. 

Then 
precisely one choice of a (right or left) split
of $\alpha$ at $b$ contains $\xi$ as a subtrack.
If this is say the right split, then the 
train track $\alpha^\prime$ obtained from
$\alpha$ by the left split 
at $e$ is recurrent. Namely,
either $\alpha^\prime\in E(\tau,\tau^\prime)$
or $\alpha^\prime$ is splittable
to a train track obtained from
$\tau^\prime$ by a single split
at the large branch
$\phi(\alpha,\tau^\prime)(b)$ which is not rigid by assumption
(see the discussion under subcase 2.1). 
Moreover, $\alpha^\prime$ 
does not contain $\xi$ as a subtrack. 
Note that either $\alpha^\prime\in E(\tau,\tau^\prime)$
or $\alpha^\prime\in E(\tau,\tau^{\prime\prime})$ where
$\tau^{\prime\prime}$ can be obtained from $\tau^\prime$
by a single split at $\phi(\alpha,\tau^\prime)(b)$.
A measured geodesic lamination
which is carried by both $\alpha^\prime,\beta$ is carried
by the union of the simple
closed curve components of $\sigma$ with 
the largest subtrack $\zeta$ of 
$\xi$ which does not contain 
the branch $e$. In particular, the number of branches of 
$\zeta$ is strictly smaller than the number of branches of 
$\xi$. As before, the statement 
of the lemma now follows from the induction
hypothesis.

{\sl Subcase 2.3:} 
Up to exchanging $\alpha$ and $\beta$, the train track 
$\alpha$ contains a proper large subbranch $b$ of $e$, and
every proper large subbranch $b$ of $e$ 
in $\alpha$ is of type 2. 

Since 
in this case every split at a large proper subbranch $b$ of
$e$ is a $\xi$-split, 
no train track obtained from 
$\alpha$ by a split at a large proper
subbranch $b$ of $e$ is splittable to $\tau^\prime$.
For such a large proper subbranch $b$ of $e$, 
the branch $\phi(\alpha,\tau^\prime)(b)$ is a large branch
in $\tau^\prime$. 

Let $\tau^{\prime\prime}$ be a complete train track obtained
from $\tau^\prime$ by a split at 
$\phi(\alpha,\tau^\prime)(b)$ and let 
$\alpha^\prime$ be the train
track obtained from $\alpha$ by a split at $b$ and which
is splittable to $\tau^{\prime\prime}$. Then 
the number of half-branches of
$\alpha^\prime-\xi$ which are incident on a switch in 
$\xi$ is strictly smaller than
the number of half-branches of $\alpha-\xi$ which are
incident on a switch in $\xi$. 
Via replacing $\tau^{\prime\prime}$ by its image under a splitting
sequence of uniformly bounded length we may assume that
$\tau^{\prime\prime}$ does not contain any rigid large branch.

Carry out the above construction with the train tracks
$\alpha^\prime,\beta,\tau^{\prime\prime},\eta^\prime$.
We find a common subtrack $\xi^\prime$ 
of train tracks 
$\alpha^{\prime\prime}\in E(\alpha^\prime,\tau^{\prime\prime}),
\beta^{\prime\prime}\in E(\beta,\eta^\prime)$
such that $\xi^\prime$ 
can be obtained from $\xi$ by a splitting sequence and 
such that $\xi^\prime$ 
carries every geodesic lamination
which is carried by both $\alpha^{\prime\prime},
\beta^{\prime\prime}$. 
Moreover, the distance between $\alpha^{\prime\prime},
\beta^{\prime\prime}$
is bounded from above by a universal constant only
depending on $R$.
The number of half-branches of $\alpha^{\prime\prime}-\xi^\prime$ which
are incident on a switch in $\xi^\prime$ is strictly
smaller than the number of half-branches of $\alpha$
which are incident on a switch in $\xi$. 
Since the number of neighbors of $\xi$ in 
$\alpha$ is uniformly bounded, 
after a uniformly bounded number
of such steps, Case 1 or Subcase 2.1 or Subcase 2.2 must occur.
Use the reasoning for these cases as before to 
reduce the statement of the lemma to the induction hypothesis.

Together this completes the proof of the lemma.
\end{proof}

Let $F$ be a marking for $S$ and 
let $X\subset {\cal V}({\cal T\cal T})$ be the set of all
train tracks which can be obtained from a train track in
standard form for $F$ by a
splitting sequence. By Proposition \ref{density} and the following
discussion, there is a
number $p>0$ such that the
$p$-neighborhood of $X$ in ${\cal T\cal
T}$ is all of ${\cal T\cal T}$. Thus
if we equip $X$ with the
restriction of the metric on ${\cal T\cal T}$
then the inclusion $X\to {\cal
T\cal T}$ is a quasi-isometry.

If $\lambda$ is any complete geodesic
lamination then by Lemma \ref{standardform}, $\lambda$ is
carried by a unique train track $\tau$ in standard
form for $F$. As a consequence,
for $\eta\in X$
there is a \emph{unique} train track $\tau$ in
standard form for $F$ which is splittable to $\eta$.
Write $E(F,\eta)=E(\tau,\eta)$.

The next result is the key to
an understanding of the geometry
of the train track complex. 


\begin{proposition}\label{shortestdistance}
There is a number $\kappa>0$ with the
following property. Let $F$ be a marking for $S$ and
let $X\subset {\cal V}({\cal T\cal T})$ be the
set of all complete train tracks which
can be obtained from a train track
in standard form for $F$ by a splitting sequence.
Then for every
$\eta\in X$ there is a map
$\Pi_{E(F,\eta)}:X\to E(F,\eta)$ such that for every $\zeta\in X$
the following is satisfied.
\begin{enumerate}
\item There is a splitting sequence connecting a train
track $\tau^\prime$ in standard form for $F$
to $\zeta$ which passes through the $\kappa$-neighborhood
of $\Pi_{E(F,\eta)}(\zeta)$.
\item There is a splitting sequence
connecting a point in the $\kappa$-neighborhood of
$\zeta$ to a point in the $\kappa$-neighborhood of
$\eta$ which passes through
the $\kappa$-neighborhood of $\Pi_{E(F,\eta)}(\zeta)$.
\item $d(\Pi_{E(F,\eta)}(\zeta),\Pi_{E(F,\zeta)}(\eta))
\leq \kappa$ for all $\eta,\zeta\in X$.
\item If $\eta,\zeta\in E(\tau,\lambda)$ for a train track
$\tau$ in standard form for $F$ which carries the complete
geodesic lamination
$\lambda\in {\cal C\cal L}$ then
$\Pi_{E(F,\eta)}(\zeta)=\Pi_{E(\tau,\eta)}^1(\zeta)=
\Pi_{E(\tau,\zeta)}^1(\eta)$.
\end{enumerate}
\end{proposition}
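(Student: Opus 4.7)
The plan is to define $\Pi_{E(F,\eta)}(\zeta)$ by applying Lemma \ref{projection2} to a pair of standard-form-for-$F$ train tracks splittable to $\eta$ and $\zeta$, and then to verify each of the four properties by unpacking the conclusions of that lemma together with Lemmas \ref{reverse} and \ref{multitwist}.

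First I would fix, for each $\eta\in X$, the unique train track $\tau_\eta$ in standard form for $F$ that is splittable to $\eta$; uniqueness is given by Lemma \ref{standardform}. Because there are only finitely many isotopy classes of train tracks in standard form for $F$, the distance $d(\tau_\eta,\tau_\zeta)$ is bounded by a universal constant $R_0$ independent of the choice of $F,\eta,\zeta$. Next, I would select a (possibly empty) common recurrent subtrack $\sigma$ of $\tau_\eta$ and $\tau_\zeta$ carrying every measured geodesic lamination carried by both (the union of all common recurrent subtracks works). Applying Lemma \ref{projection2} with input $(\tau_\eta,\tau_\zeta)$ splittable to $(\eta,\zeta)$ and common subtrack $\sigma$ produces points $\tau_1\in E(\tau_\eta,\tau_1^\prime)$ and $\eta_1\in E(\tau_\zeta,\eta_1^\prime)$ with $d(\tau_1,\eta_1)\leq \beta_2(R_0)$, where $\tau_1^\prime,\eta_1^\prime$ are obtained from $\eta,\zeta$ by splitting sequences of uniformly bounded length, and such that $\tau_1$ and $\eta_1$ share a common multi-curve subtrack $c$. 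I would define $\Pi_{E(F,\eta)}(\zeta):=\tau_1$, after a bounded adjustment if necessary to guarantee membership in $E(\tau_\eta,\eta)=E(F,\eta)$ rather than in the slightly enlarged cone $E(\tau_\eta,\tau_1^\prime)$.

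The remaining properties then follow in sequence. Property 1 is essentially immediate: since $\eta_1\in E(\tau_\zeta,\eta_1^\prime)$ and $\eta_1^\prime$ is within bounded distance of $\zeta$, the splitting sequence $\tau_\zeta\to \eta_1\to \eta_1^\prime$ gives a splitting sequence from a standard-form train track for $F$ to a point close to $\zeta$ that passes within $\beta_2(R_0)$ of $\tau_1$. Property 3, the coarse symmetry of $\Pi$, follows from the fact that exchanging $\eta\leftrightarrow\zeta$ swaps the roles of $\tau_1\leftrightarrow\eta_1$, which are at bounded distance. Property 4 is the content of conclusion (5) of Lemma \ref{projection2}: in the special case $\tau_\eta=\tau_\zeta=\tau$ with $\eta,\zeta\in E(\tau,\lambda)$, the point $\tau_1$ is at universally bounded distance from $\Pi^1_{E(\tau,\eta)}(\zeta)$, and the symmetry of $\Pi^1$ in Lemma \ref{projection} gives the remaining equality with $\Pi^1_{E(\tau,\zeta)}(\eta)$.

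The main obstacle is Property 2, which asks for a splitting sequence from a point near $\zeta$ to a point near $\eta$ passing through the neighborhood of $\Pi_{E(F,\eta)}(\zeta)$. Since splitting is directed, one cannot simply reverse an existing sequence from $\tau_\eta$ into $\eta$; rather one must manufacture a forward-moving path that begins past $\zeta$ and ends past $\eta$ via a detour through the projection point. This is where the common multi-curve subtrack $c$ of $\tau_1,\eta_1$ supplied by Lemma \ref{projection2} plays its essential role: applying Lemma \ref{multitwist} to the pair $(\tau_1,\eta_1)$ with common curve $c$ yields a multi-twist $\theta$ about $c$ and a splitting sequence from a train track near the further splitting from $\eta_1$ (close to $\zeta$) to a train track near the further splitting from $\tau_1$ (close to $\eta$), passing through the neighborhood of $\theta(\tau_1)\approx\theta(\eta_1)$. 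A careful bookkeeping then shows that this passage point can be chosen within a universal distance of $\Pi_{E(F,\eta)}(\zeta)$, which completes the verification.
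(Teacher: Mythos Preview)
Your overall architecture---apply Lemma~\ref{projection2} to the two standard-form train tracks to produce $\tau_1,\eta_1$ sharing a multi-curve $c$, then invoke Lemma~\ref{multitwist}---is exactly the paper's route. But your \emph{definition} of the projection is wrong, and this breaks Property~2.

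You set $\Pi_{E(F,\eta)}(\zeta):=\tau_1$. The paper instead takes $\Pi_{E(F,\eta)}(\zeta)$ to be a point of $E(F,\eta)$ closest to $\theta(\tau_1)$, where $\theta$ is the multi-twist about $c$ produced by Lemma~\ref{multitwist}. This is not a cosmetic difference. Lemma~\ref{multitwist} guarantees that the splitting sequence from near $\zeta$ to near $\eta$ passes through a bounded neighborhood of $\theta(\eta_1)$ (hence of $\theta(\tau_1)$), \emph{not} of $\tau_1$. The multi-twist $\theta$ records the amount of common $c$-twisting present in the two splitting sequences $\tau_1\to\tau_1'$ and $\eta_1\to\eta_1'$; since Lemma~\ref{projection2} only strips off the non-curve part of the common subtrack, that residual twisting is completely unbounded. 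Concretely, if both splitting sequences include, say, $N$ positive $c$-splits after reaching $\tau_1,\eta_1$, then $\theta\approx\theta_c^N$ and $d(\tau_1,\theta(\tau_1))$ grows linearly in $N$. Your sentence ``careful bookkeeping then shows that this passage point can be chosen within a universal distance of $\Pi_{E(F,\eta)}(\zeta)$'' is therefore false with your definition; no amount of bookkeeping closes that gap.

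The fix is to do what the paper does: define the projection near $\theta(\tau_1)$. Properties~1 and~3 still go through because Lemma~\ref{multitwist} also asserts $d(\theta(\tau_1),E(\tau_1,\tau_1'))\le a_5$ and $d(\theta(\eta_1),E(\eta_1,\eta_1'))\le a_5$, so the twisted points sit near the respective cubical cones; and $d(\theta(\tau_1),\theta(\eta_1))=d(\tau_1,\eta_1)\le\beta_2(R_0)$ gives the symmetry. Property~2 is then immediate from the splitting sequence in Lemma~\ref{multitwist}. For Property~4 note that when $\eta,\zeta\in E(\tau,\lambda)$ the common subtrack already carries everything, so the multi-curve $c$ is empty and $\theta$ is trivial; one then adjusts the definition in this special case to agree exactly with $\Pi^1$.
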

\begin{proof} Let $F$ be any marking of $S$,
let $X$ be the set of all complete train tracks
which can be obtained from a train track in standard
form for $F$ by a splitting sequence and let
$\tau^\prime,\eta^\prime\in X$ be arbitrary. Then there are unique
train tracks $\tau,\eta$ in standard form for $F$ so that
$\tau$ is splittable to $\tau^\prime$ and 
$\eta$ is splittable to
$\eta^\prime$. Let ${\cal V}(\tau),{\cal V}(\eta)$
be the set of all measured
geodesic laminations carried by $\tau,\eta$.
Since every geodesic lamination $\lambda$ contains a minimal
component and hence supports a transverse measure
(whose support may be a proper sublamination of $\lambda$),
if ${\cal V}(\tau)\cap {\cal V}(\eta)=\{0\}$ then 
by Lemma \ref{reverse} we can define
$\Pi_{E(F,\tau^\prime)}(\eta^\prime)=\tau$ and 
$\Pi_{E(F,\eta^\prime)}(\tau^\prime)=
\eta$. 

On the other hand, if ${\cal V}(\tau)\cap
{\cal V}(\eta)\not= \{0\}$ 
and if the measured geodesic lamination $\lambda$ 
with minimal support is carried
by both $\tau$ and $\eta$ then the set 
$\xi,\zeta$ of all branches of $\tau,\eta$
whose $\lambda$-weight is positive is a recurrent subtrack
of $\tau,\eta$. By Lemma \ref{markingcontrol}, 
the train tracks $\xi,\zeta$ are isotopic.
Now the union of two subtracks of $\tau$ is again a subtrack
and therefore $\tau,\eta$ contain a
common recurrent subtrack $\sigma$ which carries
every measured geodesic 
lamination in ${\cal V}(\tau)\cap
{\cal V}(\eta)$.

Since the diameter in ${\cal T\cal T}$ of the set of all
train tracks in standard form for $F$ is uniformly bounded,
Lemma \ref{projection2}, applied to the train tracks 
$\tau,\eta$ which are splittable to
$\tau^\prime,\eta^\prime$ and their common subtrack $\sigma$, yields
the existence of a universal constant $\beta_2>0$ and of 
train tracks $\tau_1^\prime,\eta_1^\prime$
and train tracks $\tau_1\in E(\tau,\tau_1^\prime),
\eta_1\in E(\eta,\eta_1^\prime)$ 
with the following 
properties.

\begin{enumerate}
\item $d(\tau_1,\eta_1)\leq\beta_2$.
\item
The train track 
$\tau_1^{\prime},\eta_1^\prime$ can
be obtained from $\tau^\prime,\eta^\prime$ by a splitting sequence
of length at most $\beta_2$.
\item $\tau_1,\eta_1$ contain a common embedded
multi-curve $c$ such that every minimal geodesic lamination
carried by both $\tau_1,\eta_1$ is a component of $c$.
\item If $\tau^\prime,\eta^\prime\in E(\tau,\lambda)$ for some
complete geodesic lamination $\lambda$ then \\
$d(\tau_1,\Pi^1_{E(\tau,\tau^\prime)}\eta^\prime)\leq \beta_2$.
\end{enumerate}

Apply Lemma \ref{multitwist} to the train tracks
$\tau_1,\eta_1$ which are splittable to $\tau_1^\prime,\eta_1^\prime$.
We find a multi-twist $\theta$ about the multi-curve $c$, 
train tracks $\tau^{\prime\prime},
\eta^{\prime\prime}$ in a uniformly bounded neighborhood
of $\tau_1^\prime,\eta_1^\prime$ and hence of $\tau^\prime,\eta^\prime$
and a splitting
sequence which connects $\tau^{\prime\prime}$ to
$\eta^{\prime\prime}$ and which passes through a 
uniformly bounded neighborhood of $\theta(\tau_1),\theta(\eta_1)$.
Moreover, $d(\theta(\tau_1),E(\tau,\tau^\prime)),
d(\theta(\eta_1),E(\eta,\eta^\prime))$ is uniformly bounded.
Note that $d(\theta(\tau_1),\theta(\eta_1))\leq \beta_2$.

Define $\Pi_{E(\tau,\tau^\prime)}\eta^\prime$
to be a point in $E(\tau,\tau^\prime)$ of smallest distance
to $\theta(\tau_1)$, and define
$\Pi_{E(\eta,\eta^\prime)}\tau^\prime$ to be a point
in $E(\eta,\eta^\prime)$ of  
smallest distance to $\theta(\eta_1)$.
By construction, the train tracks
$\Pi_{E(\tau,\tau^\prime)}\eta^\prime,
\Pi_{E(\eta,\eta^\prime)}\tau^\prime$ satisfy 
properties 1),2),3)
stated in the proposition. Moreover, we may
assume that property 4) holds true as well.
%
%
\end{proof}

\section{A bounded bicombing of the train track complex}

A \emph{discrete bicombing} of a connected
metric space $(X,d)$
assigns to every ordered pair of points
$(x,y)\in X\times X$ a discrete path
$\rho_{x,y}:[0,k_\rho]\cap \mathbb{N}\to X$ 
connecting $x=\rho_{x,y}(0)$ to
$y=\rho_{x,y}(k_\rho)$. The path $\rho_{x,y}$ is called the
\emph{combing line} connecting $x$ to $y$.
We view each such path as an eventually constant
map defined on $\mathbb{N}$. The bicombing is
\emph{reflexive} if $\rho_{x,x}(i)=x$ for all $x\in X$ and
all $i$, and
\emph{$L$-quasi-geodesic} for some $L\geq 1$
if for all $x,y\in X$ the path
$i\to \rho_{x,y}(i)$ $(i\in [0,k_\rho]\cap 
\mathbb{N})$ is an
$L$-quasi-geodesic. Call the
bicombing \emph{$L$-bounded} for some
$L>0$ if for all $x,y,x^\prime,y^\prime\in X$
and all $i\geq 0$ we have 
\[d(\rho_{x,y}(i),\rho_{x^\prime,y^\prime}(i))
\leq L(d(x,x^\prime)+d(y,y^\prime))+L.\]
As an example, if $X$ is a ${\rm Cat}(0)$-space then any two
points in $X$ can be connected by a unique geodesic
parametrized by arc length, and
these geodesics define a
reflexive $1$-quasi-geodesic $1$-bounded bicombing of $X$.

The purpose of this section is to construct
for some $L\geq 1$ 
a reflexive $L$-quasi-geodesic $L$-bounded
${\cal M\cal C\cal G}(S)$-equivariant 
bicombing
of the train track complex ${\cal T\cal T}$.
We begin with defining for a train track 
$\tau\in {\cal V}({\cal T\cal T})$ which is splittable
to a train track $\eta\in {\cal V}({\cal T\cal T})$ a
combing line connecting $\tau$ to
$\eta$. This combing line is entirely contained in 
the cubical euclidean cone
$E(\tau,\eta)$, and it is 
constructed from a particular
splitting sequence connecting $\tau$ to $\eta$. 
Here as before, $E(\tau,\eta)$ is the full subgraph
of ${\cal T\cal T}$ whose vertex set
consists of all train tracks which can
be obtained from $\tau$ by a splitting sequence
and which are splittable to $\eta$.

Note first that any bicombing constructed directly
from splitting sequences is not  
bounded. The difficulty arises
already in the case of a simple closed
curve $c$ embedded in a train track $\tau$ as the
waist curve of a twist connector as shown in 
Figure B. The curve consists of a large branch and a small
branch, connected at two common switches. A single
split at the large branch, with the small branch as the
winner, results in the train track $\theta_c(\tau)$ where
$\theta_c$ 
is a (positive or negative) Dehn twist about
$c$. 

If the twist connector is 
attached to a standard train track with stops 
of type 0 as
shown in Figure A, then $\tau$ can be modified with a
single shift to a train track $\tau^\prime$ with the 
property that $c$ is an embedded simple closed curve
in $\tau^\prime$ which consists of three branches,
one large branch, one small branch and one mixed branch.
Now a splitting sequence of length 2 results in 
$\theta_c\tau^\prime$. By invariance,
the distance
in ${\cal T\cal T}$ between 
$\theta_c^k\tau$ and $\theta_c^k\tau^\prime$ does not depend on 
$k\in \mathbb{Z}$. On the other hand, there are 
$2k$ splits needed to transform $\tau^\prime$ to 
$\theta_c^k\tau^\prime$, but $k$ splits suffice to
transform $\tau$ to $\theta_c^k\tau$. Therefore splitting
sequences do not give rise to a bounded
bicombing of ${\cal T\cal T}$: Their
parametrizations are not synchronized.

The main observation for the construction of
a bounded bicombing of ${\cal T\cal T}$ is  
that we can ``accelerate''
splitting sequences in a suitable way to yield 
indeed a quasi-geodesic bounded 
bicombing of ${\cal T\cal T}$ as required.

We begin with singling out subsets of a recurrent
train track which are used for ``acceleration'' of
splitting sequences. 
Namely, let $\rho:[0,m]\to \tau$ be an embedded trainpath
which begins and ends with a large half-branch
$\rho[0,1/2],\rho[m-1/2,m]$. We call such a 
trainpath \emph{two-sided large}. By the results of \cite{PH92}
(see Lemma 2.3.2 of \cite{PH92} and the discussion
thereafter),
such a trainpath contains at least one large branch.

As in Section 5, with respect to the
orientation of $S$ and the orientation of $\rho$,
we can distinguish right and 
left neighbors of $\rho$, namely  
half-branches of $\tau$ incident on switches
in the interior of $\rho$ which lie to
the right or to the left of $\rho$, respectively,
in a small neighborhood of $\rho[0,m]$ in $S$. A switch
on which a right (or left) neighbor is incident will
be called a right (or left) switch.
Call a switch 
$\rho(i)$ for some $i\in \{1,\dots,m-1\}$ 
incoming if the half-branch $\rho[i-1/2,i]$ is small, 
and call the switch outgoing otherwise.
A neighbor of $\rho$ incident on an incoming (or outgoing) switch
is called incoming (or outgoing). 

Define a two-sided large embedded trainpath 
$\rho$ in $\tau$ to be \emph{reduced} if 
either all left
neighbors of $\rho$ are incoming and
all right neighbors are outgoing, or if all left
neighbors of $\rho$ are 
outgoing and all right neighbors are
incoming.
If $\rho$ is reduced and if all left neighbors
of $\rho$ are incoming then the same holds true for 
the trainpath $\rho$ with the orientation reversed.
In this case we call $\rho$ a \emph{positive reduced
trainpath}, and otherwise $\rho$ is called a
\emph{negative reduced trainpath}.
Note that a subpath $\rho^\prime$
of a reduced trainpath $\rho$ is reduced 
if and only if it is two-sided large.
An embedded trainpath which consists of a single
large branch will also be called reduced.

If $m\geq 2$ and if $\rho:[0,m]\to \tau$ 
is an embedded reduced trainpath then
for every large branch $e=\rho[i,i+1]$ of $\tau$ contained in $\rho[0,m]$
there is a unique split of $\tau$ at $e$ so that 
the branches $\rho[i-1,i],\rho[i+1,i+2]$ 
are winners of the split.
We called such a split a \emph{$\rho$-split}.
For convenience of terminology, if $\rho$ is
a reduced trainpath of length one then we call every split 
of $\tau$ at this branch a $\rho$-split.

For every train track $\tau$ 
which is splittable to a train track $\sigma$ there is a
natural bijection $\phi(\tau,\sigma)$ from the branches
of $\tau$ to the branches of $\sigma$
(compare Lemma 5.1 of \cite{H09} and its proof). This
bijection also induces a natural bijection of 
the half-branches of $\tau$ 
onto the half-branches of $\sigma$. 
We denote this bijection again by $\phi(\tau,\sigma)$.

For every embedded reduced trainpath $\rho:[0,m]\to \tau$
and every large branch $e$ in $\rho[0,m]$, the train track 
$\tau_1$ obtained 
from $\tau$ by a $\rho$-split at $e$ contains 
$\phi(\tau,\tau_1)(\rho[0,m])$ as an embedded trainpath.
Namely, the split just consists in 
moving one of the neighbors of $\rho$ incident on an
endpoint of $e$ along $\rho$ 
across the second endpoint of $e$ (see Figure D).
If $e=\rho[0,1]$ or $e=\rho[m-1,m]$ then this
trainpath is not two-sided large any more. However,
if we denote this trainpath on $\tau_1$ again
by $\rho$ then a $\rho$-split of $\tau_1$ at a large branch
in $\rho$ is defined.
Thus we can talk about a sequence of splits at large
branches contained in $\rho[0,m]$.
A successive application of this observation shows
that whenever $\tau^\prime$ can be obtained from $\tau$
by a sequence of $\rho$-splits then
$\phi(\tau,\tau^\prime)(\rho)$ is an embedded
trainpath in $\tau^\prime$. Moreover,
every two-sided large subpath of this trainpath is
reduced.

The following
lemma is a fairly immediate consequence of 
Lemma \ref{tightcontrol}
and the remark thereafter. For its
formulation, denote as before by $q$
the number of branches of a complete train track
on $S$.

\begin{lemma}\label{splitadmissible} 
Let $\tau$ be a
train track and
let $\rho:[0,m]\to \tau$ be any embedded reduced
trainpath. Then there is a unique train track $\tau^\prime$
with the following properties.
\begin{enumerate}
\item $\tau^\prime$ is obtained from $\tau$ by a sequence of 
at most $q^2$ $\rho$-splits at large branches contained in $\rho[0,m]$.
\item $\phi(\tau,\tau^\prime)(\rho[0,m])$ is an embedded
trainpath 
in $\tau^\prime$ which does not
contain a large branch.
\end{enumerate}
The splitting sequence connecting $\tau$ to $\tau^\prime$
includes a split at each branch in $\rho[0,m]$.
\end{lemma}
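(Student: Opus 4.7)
The plan is to mimic the inductive construction in the proof of Lemma \ref{tightcontrol}. The remark immediately after that lemma already states that its argument extends to any embedded two-sided large trainpath, a class that includes every embedded reduced trainpath, so what remains is to identify the correct invariant and then verify the three additional features claimed here: the $q^2$ bound, the split-at-each-branch property, and uniqueness.

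For existence and the $q^{2}$ bound I would proceed by induction on an integer invariant $c(\tau,\rho)$ modeled on $c(\tau,e)$ from the proof of Lemma \ref{tightcontrol}. A natural choice is
\[
c(\tau,\rho)=\sum_{a}\ell(a,\rho),
\]
where $a$ ranges over the neighbors of $\rho$ in $\tau$ and $\ell(a,\rho)$ counts how far into $\rho$ one can travel on a trainpath beginning with $a$ before leaving the union of large branches of $\rho$. Since there are at most $q$ neighbors and $\rho$ has at most $q$ branches, $c(\tau,\rho)\leq q^{2}$, and $c(\tau,\rho)=0$ exactly when $\rho$ contains no large branch. The reducedness of $\rho$ is precisely the hypothesis needed to guarantee that for every large branch $e=\rho[i,i+1]$ in $\rho$, the prescribed pair $\rho[i-1,i],\rho[i+1,i+2]$ is a valid winner pair of either the right or the left split at $e$, so the $\rho$-split is well defined. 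A local inspection of the split---exactly in the spirit of the analysis at the end of the proof of Lemma \ref{tightcontrol}---then shows that $c$ strictly drops by at least one and that every two-sided large subpath of the image trainpath $\phi(\tau,\tau_{1})(\rho[0,m])$ is again reduced. Iterating produces the desired train track $\tau^\prime$ after at most $q^{2}$ $\rho$-splits, with property (2) being the equality $c(\tau^\prime,\rho)=0$.

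Uniqueness follows from the observation that $\rho$-splits at two large branches of $\rho$ that are not immediately adjacent commute, each being supported in a small neighborhood of its large branch and of the corresponding neighbors of $\rho$; this is the direct analogue of the second remark following Lemma \ref{tightcontrol}, and a standard reordering argument then shows that $\tau^\prime$ depends only on $\tau$ and $\rho$.

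For the final clause---that the sequence includes a split at each branch in $\rho[0,m]$---the point is a propagation observation: after a $\rho$-split at $e=\rho[i,i+1]$, one of the two winners, say $\phi(\tau,\tau_{1})(\rho[i-1,i])$, acquires a large half-branch at the newly created switch, and combined with the unchanged outer half-branch this makes it large in $\tau_{1}$ provided the adjacent switch of $\rho$ is of the appropriate type; the two-sided large hypothesis at both endpoints of $\rho$, together with the positive/negative sign of the reducedness, is exactly what one needs so that starting from the initial large branches of $\rho$ this propagation sweeps across the entire trainpath. The main technical obstacle I anticipate is precisely this last verification: checking that the propagation reaches every branch of $\rho$ and does not stall at a switch of the ``wrong'' incoming/outgoing type. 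This will require a careful case analysis at each switch of $\rho$, tracking how the left/right convention of reducedness interacts with the large/small half-branch labelling before and after the split, but once this bookkeeping is set up the induction closes cleanly and yields all four assertions of the lemma.
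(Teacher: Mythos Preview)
Your invariant $c(\tau,\rho)$ does not do what you claim. Here $\rho$ is a trainpath in $\tau$ itself, so each $\rho[j,j+1]$ is already a single branch of $\tau$; there are no ``proper subbranches'' to tighten, and the remark after Lemma~\ref{tightcontrol} (which concerns $\sigma$-splits at proper subbranches of a path in a subtrack) simply does not describe the present construction. Moreover, two consecutive branches of $\rho$ can never both be large: if $\rho[i,i+1]$ is large then $\rho[i+1,i+3/2]$ is small. Hence every $\ell(a,\rho)$ is $0$ or $1$, and $c(\tau,\rho)$ merely counts neighbors sitting at endpoints of large branches. After a $\rho$-split at $\rho[i,i+1]$ with $1\le i\le m-2$ the diagonal becomes small but the winner $\rho[i-1,i]$ becomes large, so the count of large branches (and with it your invariant) need not drop. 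For $m=1$ there are no interior switches at all, so $c(\tau,\rho)=0$ even though a split is required.

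The paper's argument is organized differently: it is an induction on the length $m$ of $\rho$, and the engine is exactly the ``propagation observation'' you relegated to the final clause. One takes the least $i$ with $\rho[i,i+1]$ large; since $\rho[0,1/2]$ is large, all branches $\rho[j,j+1]$ with $j<i$ are mixed. A $\rho$-split at $\rho[i,i+1]$ makes that branch small and makes $\rho[i-1,i]$ large, so after at most $i$ such splits the large branch has migrated to position $0$. One more $\rho$-split then makes $\phi(\rho[0,1])$ small; the remaining path $\phi(\rho[1,m])$ is reduced of length $m-1$, and the inductive hypothesis finishes. This yields the bound $m(m+1)/2\le q^2$ and, since the recursion visits every position, the split-at-each-branch clause for free. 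Your uniqueness sketch via commutativity of $\rho$-splits at distinct (hence automatically non-adjacent) large branches is fine. So the repair is not a new invariant but a reorganization: promote your propagation remark to the main induction and discard the neighbor-counting.
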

\begin{proof} Our goal is to show that after a transformation
of $\tau$ with at most $m(m+1)/2$ 
$\rho$-splits which includes at least one split at each
branch in $\rho[0,m]$ 
we arrive at a train track 
$\tau^\prime$ with the property that
$\phi(\tau,\tau^\prime)(\rho[0,m])$ does not
contain any large branch.

To see that this is the case
we proceed by induction 
on the length $m$ of the trainpath $\rho$. If this length
equals one then 
the train track $\tau^\prime$ obtained from $\tau$
by a single split at $\rho[0,1]$ contains
$\phi(\tau,\tau^\prime)(\rho[0,1])$ as a small branch
and there is nothing to show.
Thus assume that the claim holds true whenever the 
length of $\rho$ does not exceed $m-1$ for some $m\geq 2$.

Let
$\rho:[0,m]\to \tau$ be an embedded reduced trainpath 
of length $m$ and let $i\geq 0$ be the smallest 
number such that the branch $\rho[i,i+1]$ is large. 
Since the half-branch $\rho[0,1/2]$ is large, 
the trainpath $\rho[0,i+1]$ is 
\emph{one-way} in the terminology used on 
p.127 of \cite{PH92}.
In particular, the
branches $\rho[j,j+1]$ are all mixed for $j<i$
(Lemma 2.3.2 of \cite{PH92}).
Let $\tau_1$ be the train track obtained
from $\tau$ by a $\rho$-split at $\rho[i,i+1]$.

Assume first that $i=0$. Then 
the branch $\phi(\tau,\tau_1)(\rho[0,1])$ is small. 
The trainpath
$\phi(\tau,\tau_1)(\rho[1,m])$ is 
two-sided large and hence reduced, and its length
equals $m-1$. By induction hypothesis,
$\tau_1$ can be transformed with at most $(m-1)m/2$ 
$\phi(\tau,\tau_1)(\rho[1,m])$-splits 
including a split at each branch in 
$\phi(\tau,\tau_1)(\rho[1,m])$ 
to a train track
$\tau^\prime$ so that $\phi(\tau,\tau^\prime)(\rho[1,m])$
does not contain any large branch. Now the half-branch
$\phi(\tau,\tau^\prime)(\rho[0,1/2])$ is small 
and hence
$\phi(\tau,\tau^\prime)(\rho)$ does not contain any large branch
as claimed. The number of splits needed to transform $\tau$
to $\tau^\prime$ is at most $(m-1)m/2+1$.

By possibly reversing the orientation of $\rho$
we are left with the
case that both branches $\rho[0,1]$ and $\rho[m-1,m]$ 
are mixed. Then we have $1\leq i\leq m-2$, and 
$\phi(\tau,\tau_1)(\rho[0,m])$ is a reduced trainpath
in $\tau_1$ 
which contains $\phi(\tau,\tau_1)(\rho[i-1,i])$ as a large branch.
The branches $\phi(\tau,\tau_1)(\rho[j,j+1])$
are mixed for $j\leq i-2$.
Successively we can modify $\tau$ in this way with 
$i\leq m-2$ $\rho$-splits at the branches in 
the subarc $\rho[1,i]$ of $\rho$ to a train track 
$\tilde\tau$ which contains 
$\tilde\rho=\phi(\tau,\tilde\tau)(\rho[0,m])$ as 
a reduced
trainpath and such that the branch 
$\phi(\tau,\tilde\tau)(\rho[0,1])$ is large.
Then we can apply the consideration in the previous paragraph to 
$\tilde\tau$ and the
reduced trainpath $\tilde\rho$ to deduce the lemma.
%
\end{proof}

We say that the train track $\tau^\prime$ constructed
in Lemma \ref{splitadmissible} from
$\tau$ and an embedded reduced trainpath 
$\rho:[0,m]\to \tau$ is obtained from $\tau$ by the
\emph{full $\rho$-multi-split}.

Lemma \ref{splitadmissible} and its proof also show the following.
If $\rho:[0,m]\to \tau$ is an embedded reduced 
trainpath
and if $0\leq i< j\leq m$ are such that the
embedded trainpath
$\rho[i,j]$ is two-sided large then
the train track obtained from $\tau$ by 
the full $\rho[i,j]$-multi-split is splittable
to the train track obtained from $\tau$ by 
the full $\rho$-multi-split.

Recall from Section 5 the definition of a reduced circle
(i.e. a reduced embedded simple closed curve of class $C^1$) in 
a train track $\tau$.
Using the notations from Section 5, if $c$ is a reduced
circle in $\tau$ and if $b$ is any small branch contained
in $c$ then $c-b$ is an embedded arc in $\tau$ which
can be parametrized as a reduced trainpath
in $\tau$.

As before, let $\theta_c$ be the positive Dehn twist
about $c$.
By Lemma \ref{dehnreduced},
there
is a splitting sequence consisting of $c$-splits
which transforms $\tau$ to $\theta_c\tau$ (in case $c$ is positive)
or to $\theta_c^{-1}\tau$ (in case $c$ is negative).
To simplify the notation,
we write $\theta_c^{\pm}\tau$ to denote this 
train track. We have

\begin{lemma}\label{circleadmis}
Let $c$ be a reduced circle in a
train track $\tau$ which consists of at least three branches. 
Let $b$ be a small branch of 
$\tau$ contained in $c$ and
let $\rho$ be a trainpath parametrizing $c-b$. Then 
the train track obtained from $\tau$ 
by the full $\rho$-multi-split is splittable
to $\theta_c^{\pm}\tau$.
\end{lemma}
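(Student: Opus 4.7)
The plan is to reduce the splittability assertion to Lemma \ref{dehnreduced} by showing that the full $\rho$-multi-split is itself a sequence of $c$-splits, and then either reordering a canonical sequence reaching $\theta_c^{\pm}\tau$ or applying the second part of Lemma \ref{dehnreduced} directly.

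First I verify that $\rho$, the trainpath parametrizing $c-b$, is reduced so that Lemma \ref{splitadmissible} applies. Since $b$ is small, at each endpoint $s$ of $b$ the $b$-half at $s$ is small; because $c$ traverses the trivalent switch $s$ as a trainpath, it must pass between the large half and a small half, so the other $c$-half at $s$, which coincides with the half of $\rho$ at the corresponding endpoint of $\rho$, is large. Thus $\rho$ is two-sided large, and since $c$ is reduced the trainpath $\rho$ inherits the reduced property. Lemma \ref{splitadmissible} then produces a well-defined train track $\tau^\prime$ as the full $\rho$-multi-split of $\tau$.

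Next I observe that every $\rho$-split appearing in the construction of $\tau^\prime$ is also a $c$-split in the sense of Lemma \ref{dehnreduced}. A $\rho$-split at the large branch $e=\rho[i,i+1]$ has as winners $\rho[i-1,i]$ and $\rho[i+1,i+2]$; for the boundary cases $i=0$ or $i=m-1$ the natural extension of $\rho$ outside itself at the corresponding endpoint is precisely the remaining $c$-branch $b$. In all cases the two winners are the two $c$-branches adjacent to $e$, so the split preserves $c$ as an embedded simple closed curve, which is exactly the defining property of a $c$-split.

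To conclude, I apply Lemma \ref{dehnreduced}(1), which provides a splitting sequence of $c$-splits of uniformly bounded length transforming $\tau$ to $\theta_c^{\pm}\tau$, with the sign determined by the orientation of $c$. By Lemma 5.1 of \cite{H09} any two splitting sequences between the same pair of endpoints coincide up to reordering, so I may reorder this canonical sequence to perform all splits at branches in the image of $\rho$ before any split involving the image of $b$. The maximal initial segment consisting of $\rho$-splits then agrees with the full $\rho$-multi-split, since both processes split at $\rho$-branches until no large branch remains in the image of $\rho$. Hence $\tau^\prime$ appears as an intermediate vertex of a valid splitting sequence from $\tau$ to $\theta_c^{\pm}\tau$, establishing the required splittability.

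The main technical subtlety is justifying the reordering, because $\rho$-splits and $b$-splits share the endpoint switches of $b$ and a $\rho$-split adjacent to $b$ typically promotes $b$ to a mixed or large branch. The argument rests ultimately on Lemma 5.1 of \cite{H09}, which guarantees that the multiset of splits between fixed endpoints is an invariant; as a cleaner alternative I may apply Lemma \ref{dehnreduced}(2) directly to the $c$-splitting sequence yielding $\tau^\prime$, noting that because $b$ is never split, no incoming neighbor of $c$ can traverse all outgoing switches of $c$ more than once, which forces the integer $i$ appearing in that lemma to equal zero and directly yields splittability of $\tau^\prime$ to $\theta_c^{\pm(i+1)}\tau=\theta_c^{\pm}\tau$.
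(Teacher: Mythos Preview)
Your main argument via reordering has a genuine gap. Lemma~5.1 of \cite{H09} asserts that any two splitting sequences between the same endpoints use the same multiset of splits; it does \emph{not} say that an arbitrary reordering of that multiset is again a valid splitting sequence. To place the full $\rho$-multi-split $\tau'$ as an intermediate vertex of a sequence $\tau\to\theta_c^{\pm}\tau$, you would need to know that at each stage the branch you want to split is actually large, and this is precisely what is at issue. You acknowledge the difficulty but do not resolve it. A related unjustified step is your assertion that the $\rho$-split at a boundary branch $\rho[0,1]$ has $b$ as the second winner: the definition of a $\rho$-split only pins down the winner on the $\rho(1)$ side, and showing that the induced winner on the $\rho(0)$ side is $b$ (rather than the neighbor of $c$) requires using that $c$ is reduced, so that the two $c$-branches adjacent to a large branch lie on opposite sides and hence are simultaneously winners of a single split.

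Your alternative via Lemma~\ref{dehnreduced}(2) is much closer to a proof, but note that the \emph{statement} of that lemma only gives a distance bound; you are really invoking the line in its proof that $\eta$ is splittable to $\theta_c^{i+1}\tau$. The justification that $i=0$ (``$b$ is never split, hence no neighbor completes a full turn'') is correct in spirit but is exactly the sliding picture the paper uses directly: the full $\rho$-multi-split slides every left (incoming) neighbor of $\rho$ forward along $\rho$ past the endpoint of $b$, while $\theta_c^{\pm}\tau$ slides every left neighbor of $c$ once past every right neighbor; the first process is visibly an initial segment of the second. The paper's proof is just this one-sentence comparison, which is both shorter and avoids the detour through Lemma~\ref{dehnreduced}.
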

\begin{proof}
Let $c$ be a reduced circle in $\tau$ and let 
$b$ be a small branch in $c$. Then 
$c-b$ can be parametrized as a reduced
trainpath $\rho:[0,m]\to c\subset\tau$. We assume that
the length $m$ of $\rho$ is at least 2.
For simplicity of notation, assume also 
that the neighbor $b$ of $c$
at $\rho(m)$ is right outgoing with
respect to the orientation of $c$ induced
by the orientation of $\rho$ (note that 
this neighbor is outgoing since the branch $b$ is small).
Then the train track obtained from $\tau$ by the full
$\rho$-multi-split can be described as the 
train track obtained by sliding all left (and hence
incoming) neighbors
of $\rho$ successively along $\rho$ 
past the endpoint of the half-branch $b$ 
(compare the discussion in the
proof of Lemma \ref{dehnreduced} and of
Lemma \ref{splitadmissible}). On the
other hand, $\theta_c^{\pm}\tau$ is obtained from
$\tau$ by sliding all left neighbors of $c$ 
successively along $\rho$ precisely once 
across each endpoint of each
right neighbor of $c$.
This implies the lemma.
\end{proof}

Let $\tau$ be any (not necessarily
recurrent or maximal) 
train track which is
splittable to a train track $\eta$.
Define a \emph{splittable $\eta$-path} in $\tau$ 
to be an embedded reduced trainpath
$\rho:[0,m]\to \tau$
with the following property.
There is a train track $\tau^\prime$ which is 
splittable
to $\eta$ and which is obtained from $\tau$ by
a sequence of $\rho$-splits including at least one split
at each of the branches in $\rho[0,m]$.
This means that for every $i\leq m-1$ there is
a train track $\tau_i\in E(\tau,\eta)$ which can be obtained from
$\tau$ by a sequence of $\rho$-splits and 
with the following additional properties. The branch 
$\phi(\tau,\tau_i)(\rho[i,i+1])$ is large, and
the train track obtained from $\tau_i$ by the
$\rho$-split at this large branch is 
splittable to $\eta$. We call a sequence of $\rho$-splits
of maximal length with the property that the split
track is splittable to $\eta$ the \emph{$\rho-\eta$-multi-split}
of $\tau$.

The splittable $\eta$-path 
$\rho$ is called \emph{maximal} if for every 
splittable $\eta$-path $\rho^\prime$ 
which intersects
$\rho$ in at least one branch we have $\rho^\prime\subset \rho$.

A single large branch $e$ in $\tau$ such that no 
train track obtained from $\tau$ by a split at
$e$ is splittable to $\eta$ is called a \emph{maximal 
non-splittable $\eta$-path.}
A maximal splittable or non-splittable $\eta$-path
is simply called a \emph{maximal $\eta$-path}.

A \emph{splittable $\eta$-circle} is a reduced 
simple closed curve $c$
embedded in $\tau$ such that 
there is a train track $\sigma$ which is splittable
to $\eta$ and which is
obtained from $\tau$ by
a sequence of $c$-splits including a split at every
branch of $c$.
Note that if $c$ is a reduced circle
in $\tau$ of length two, i.e. if 
$c$ consists of a single large branch and a single small branch,
then $c$ is a splittable $\eta$-circle only
if $\theta_c^{\pm}\tau$ is splittable to $\eta$.

\begin{lemma}\label{unique}
Let $\tau$ be a train track
which is splittable to a train track $\eta$.
Then every large branch $e$ of $\tau$ is
contained either in a unique
maximal $\eta$-path or in a unique splittable $\eta$-circle.
\end{lemma}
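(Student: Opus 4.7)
The plan is to build, for a given large branch $e$ of $\tau$, the unique maximal $\eta$-path or splittable $\eta$-circle containing $e$ by extending $e$ branch-by-branch in both directions, each extension step being dictated by a combinatorial forcing argument. First I would dispose of the trivial case: if no split of $\tau$ at $e$ yields a train track splittable to $\eta$, then $\{e\}$ is a maximal non-splittable $\eta$-path by definition, and is the unique maximal $\eta$-path containing $e$, since any longer reduced trainpath containing $e$ would be a splittable $\eta$-path whose cascade must begin with a split at $e$.

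In the remaining case, some split of $\tau$ at $e$ is splittable to $\eta$, and by the uniqueness up to order of splitting sequences (Lemma 5.1 of \cite{H09}) exactly one of the right or left split at $e$ has this property. This choice singles out a preferred side of $e$ for the neighbor of any reduced trainpath $\rho$ through $e$ that is to remain a splittable $\eta$-path, and thereby fixes the sign (positive versus negative) of $\rho$. With the sign fixed, I would inductively extend $\rho$: at each current endpoint $v$, the reducedness condition together with the preferred side forces which of the two small half-branches at $v$ is the continuation of $\rho$ and which is its neighbor. At every stage I verify that the far endpoint of the newly attached branch is incident on its large half-branch (so that the extension is again two-sided large), that the extended path is still embedded, and that the full $\rho$-multi-split of $\tau$ remains splittable to $\eta$.

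The extension process terminates in exactly one of two ways: either one of the two ends reaches a switch where any further continuation would break two-sidedness or the $\eta$-splittability, in which case we obtain a maximal splittable $\eta$-path; or the two ends meet at a common switch, closing $\rho$ into an embedded reduced circle which is a splittable $\eta$-circle by construction. Uniqueness is forced in both cases: any splittable $\eta$-path or $\eta$-circle $\rho^\prime$ sharing a branch with our constructed object must have the same sign at $e$ (again by Lemma 5.1 of \cite{H09}) and hence the same forced sequence of extension branches, so either $\rho^\prime\subset \rho$ or $\rho^\prime$ is the same circle. The main obstacle is verifying that extending $\rho$ by one more branch in the forced direction preserves the $\eta$-splittability of its full multi-split; this amounts to a propagation statement extracted from the uniqueness-up-to-order of the splitting sequence from $\tau$ to $\eta$, matching each forced split along $\rho$ with a split in the canonical cascade.
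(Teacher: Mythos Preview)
Your greedy extension has a genuine gap at the termination step. You require each intermediate extended path to be two-sided large and halt when the far half-branch of the newly attached branch is small. But the maximal splittable $\eta$-path through $e$ may well contain a small branch $b$ in its interior: the prefix ending at $b$ is then not two-sided large, so your process stops even though the true maximal path continues through $b$ via the forced large half-branch at the far switch. Concretely, a reduced trainpath of shape large--small--large is a valid splittable $\eta$-path for suitable $\eta$; your process started at the first large branch adds the small branch, sees that its far half-branch is small, and halts with the single initial branch, which is not maximal. There is a second gap as well: you assert that embeddedness fails only by the two ends meeting. This is true but needs proof --- a priori the path could re-enter its own interior --- and the paper establishes it as a separate combinatorial claim by analyzing the left/right and incoming/outgoing constraints at a hypothetical interior self-intersection switch.

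The paper sidesteps both difficulties by taking the \emph{union} of all splittable $\eta$-paths through $e$ rather than building one greedily. Once one has (i) the embedding claim just mentioned and (ii) that any two splittable $\eta$-paths sharing a branch have the same sign (forced by the common $\eta$-split at a shared large branch, via Lemma~5.1 of \cite{H09}), the union of any two intersecting such paths is again a reduced trainpath or reduced circle, hence is itself a splittable $\eta$-path or $\eta$-circle, and maximality follows at once.
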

\begin{proof}
By the definition of a maximal $\eta$-path in a 
train track $\tau$
and by uniqueness of splitting sequences,
a large branch $e$ of $\tau$ such that
no train track which can
be obtained from $\tau$ by a split at $e$ is splittable to $\eta$
is a maximal non-splittable
$\eta$-path, and it is the unique
maximal $\eta$-path containing $e$.

If $e$ is any large branch of $\tau$
such that a splitting sequence
connecting $\tau$ to $\eta$ includes a split
at $e$ then $e$ is contained in
a splittable $\eta$-path or in a splittable
$\eta$-circle.

Let for the moment
$\rho:[0,m]\to \tau$ be any two-sided large trainpath 
with the property that with respect to the 
given orientation of  $\rho$ and the orientation of $S$,
either 
all left switches are incoming and all right switches
are outgoing or all left switches are outgoing and
all right switches are incoming (this is 
a local property). We claim that
either $\rho$ 
is embedded in $\tau$ or $\rho$ is a circle of
class $C^1$. 

We argue by contradiction and we assume that 
$\rho$ is not embedded and not a circle of class
$C^1$. Since $\rho$ is two-sided large, the self-intersection
of $\rho$ can not consist of a single
switch. Thus up to reversing the
orientation of $\rho$ there is some $i\in \{1,\dots, m-1\}$
such that 
the half-branch $\rho[i,i+1/2]$ is large
and that both half-branches which are incident and
small at $\rho(i)$ are contained in the image of $\rho$.
Since $\rho$ is two-sided large, 
there is some $j\not=i$ such that
$\rho[j-1,j+1]$ is a subarc of $\rho$ with 
$\rho(j)=\rho(i)$ 
and such that $\rho[i-1,i+1]\cup\rho[j-1,j+1]$ 
contains all three half-branches which are incident on $\rho(i)$.
Now if $\rho(j+1)=\rho(i+1)$ then
the half-branch $\rho[i-1/2,i]$ is an incoming neighbor of 
$\rho[j-1,j+1]$, and $\rho[j-1/2,j]$ is 
an incoming neighbor of $\rho[i-1,i+1]$.
Moreover, one of these neighbors is a left
neighbor along $\rho$, and the other neighbor is
a right neighbor. This violates the assumption on
$\rho$. Similarly, if $\rho(j-1)=\rho(i+1)$ 
and if $\rho(i)$ is a right (or left) incoming switch
for $\rho[i-1,i+1]$ 
then $\rho(i)$ is a right (or left) outgoing switch for 
$\rho[j-1,j+1]$.
Again this violates
the assumption on $\rho$ and shows the claim.

As a consequence, the union
of two intersecting
two-sided large embedded positive (or negative) reduced 
trainpaths is necessarily either embedded
in $\tau$ or an embedded circle of class $C^1$ 
and hence it is a two-sided large positive (or negative)
reduced trainpath or a reduced circle. Moreover,
two such paths either are disjoint or they 
intersect in at least one branch.

Now let $\rho:[0,m]\to \tau$ be any 
splittable $\eta$-path or splittable $\eta$-circle
of length $m\geq 2$. Then there is some 
$i<m$ such that the branch $\rho[i,i+1]$ is large.
Since $\rho$ is an $\eta$-path, the $\eta$-split
of $\tau$ at the branch $\rho[i,i+1]$ 
is just the unique $\rho$-split of $\tau$ 
at $\rho[i,i+1]$. This implies that 
if $\rho^\prime:[0,n]\to \tau$ is another splittable
$\eta$-path whose image in $\tau$ contains $\rho[i,i+1]$ 
then either $\rho^\prime$ consists of the single large
branch $\rho[i,i+1]$ and hence 
$\rho^\prime\subset \rho$, or the type of  
$\rho^\prime$ (positive or negative) coincides with the
type of $\rho$ (positive or negative). Thus
by the discussion in the previous paragraph,
the union of two splittable $\eta$-paths 
which intersect in at least one branch 
is a splittable $\eta$-path or a splittable
$\eta$-circle.

Let $e$ be any large branch of 
$\tau$ so that a splitting sequence connecting
$\tau$ to $\eta$ includes a split at $e$.
Let $\rho$ be the union of all
splittable $\eta$-paths in 
$\tau$ which pass through $e$. By the discussion in the previous
paragraph, $\rho$ is a splittable $\eta$-path or
a splittable $\eta$-circle, 
and it is the unique maximal $\eta$-path
or splittable $\eta$-circle containing $e$.
Moreover, any two such maximal $\eta$-paths 
or splittable $\eta$-circles either
coincide or 
are disjoint. This shows the lemma.
\end{proof}

Let again $\tau$ be a (not necessarily complete) train
track on $S$ which is splittable
to a train track $\eta$.
A \emph{splittable $\eta$-configuration} 
in $\tau$ is defined 
to be a maximal 
splittable $\eta$-path or a splittable
$\eta$-circle. If $\rho$ is a maximal splittable
$\eta$-path then there is a unique train track
$\tilde\tau$ which can be obtained from $\tau$ by
a sequence of $\rho$-splits of maximal length and which
is splittable  
to $\eta$. In other words,
we require that no train track which can be
obtained from $\tilde \tau$ by
a $\phi(\tau,\tilde\tau)(\rho)$-split at a large
branch in $\phi(\tau,\tilde \tau)(\rho)$ is
splittable to $\eta$. If $c$ is a splittable
$\eta$-circle then there is a unique train track
$\tilde\tau$ which 
can be obtained from $\tau$ by a splitting 
sequence of maximal length and which is splittable to 
both $\theta_c^{\pm}(\tau)$ and $\eta$. In both cases we say that
$\tilde\tau$ is obtained from 
$\tau$ by the \emph{$\rho-\eta$-multi-split}. 
A \emph{non-splittable
$\eta$-configuration} is a single large branch $e$ in
$\tau$ such that no train track which can be obtained from $\tau$
by a split at $e$ is splittable to $\eta$.

We define the
train track obtained from $\tau$ by an \emph{$\eta$-move}
to be the unique train track $\tau^\prime$ with the
following property. Let $\rho_1,\dots,\rho_k$ be the
splittable $\eta$-configurations of $\tau$. By Lemma
\ref{unique}, these are uniquely determined pairwise disjoint
embedded reduced trainpaths or reduced circles 
in $\tau$. In particular, 
splits at branches contained in $\rho_i,\rho_j$ for 
$i\not=j$ commute, and we
define $\tau^\prime$ to be the train track
obtained from $\tau$ by 
a successive modification with a $\rho_i-\eta$-multi-split
where $i=1,\dots,k$. The train track $\tau^\prime$ is 
uniquely determined by $\tau$ and $\eta$.
Moreover, the length of a splitting sequence 
connecting $\tau$ to $\tau^\prime$ is bounded from
above by a number $p>0$ only depending on the 
topological type of $S$.

Extending slightly the notations used earlier on,
for an arbitrary train track $\tau$ on $S$ which is splittable
to a train track $\eta$ let $E(\tau,\eta)$ be the 
connected directed graph whose set of vertices is the set 
of all train tracks which can be obtained from 
$\tau$ by a splitting sequence and which are
splittable to $\eta$. Connect $\sigma\in E(\tau,\eta)$
to $\sigma^\prime\in E(\tau,\eta)$ by a 
directed edge if $\sigma^\prime$ can be obtained from
$\sigma$ by a single split.

For a recurrent train track $\tau$ which is
splittable to a recurrent train track $\eta$
define now inductively a sequence 
$\{\gamma(\tau,\eta)(i)\}_{0\leq i\leq k}\subset
E(\tau,\eta)$ beginning at $\tau=\gamma(\tau,\eta)(0)$ 
and ending at $\eta$
by requiring that for each $i<k$ the train track
$\gamma(\tau,\eta)(i+1)$ is obtained from 
$\gamma(\tau,\eta)(i)$ by an
$\eta$-move. We call the sequence the \emph{balanced
splitting path} connecting $\tau$ to $\eta$, and
we denote it by $\gamma(\tau,\eta)$.
By construction, if
$\tau$ is splittable to $\eta$ and if 
$g\in {\cal M\cal C\cal G}(S)$ is arbitrary
then $g\gamma(\tau,\eta)$ is the balanced
splitting path connecting $g\tau$ to $g\eta$.

For convenience of notation, we extend a discrete
path in a metric space $(X,d)$ 
defined on a subset $[0,n]\cap\mathbb{N}$ 
of the natural numbers to a path defined 
on $\mathbb{N}$ which 
is constant on $[n,\infty)\cap \mathbb{N}$.
Call two eventually constant 
paths $c_1:\mathbb{N}\to X,
c_2:\mathbb{N}\to X$ 
\emph{weight-$L$ fellow travellers} if
\[d(c_1(i),c_2(i))\leq L(d(c_1(0),c_2(0))+d(c_1(\infty),c_2(\infty)))
\text{ for all }i\]
where $c_i(\infty)$ is defined by requiring that  
$c_i(j)=c_i(\infty)$ for all sufficiently large $j$.
If $c_1,c_2$ are weight-$L$ fellow travellers then
the Hausdorff distance in $X$ 
between the images 
$c_1(\mathbb{N})$ and $c_2(\mathbb{N})$ is bounded from above by
$L(d(c_1(0),c_2(0))+d(c_1(\infty),c_2(\infty)))$.

Now we are ready to formulate the 
main result of this section.

\begin{theo}\label{flatconfig3}
There is a number $L>0$ with the following property.
Let 
$\tau,\tau^\prime\in {\cal V}({\cal T\cal T})$ 
be splittable to
$\eta,\eta^\prime\in {\cal V}({\cal T\cal T})$.
Then the balanced splitting paths
\[\gamma(\tau,\eta),
\gamma(\tau^\prime,\eta^\prime)\] 
are weight-$L$ fellow travellers in ${\cal T\cal T}$.
\end{theo}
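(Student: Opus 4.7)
The plan is to reduce to two simpler cases and then exploit the synchronization built into the balanced splitting path, namely that at every step one performs simultaneously all maximal splittable $\eta$-configurations rather than a single split. By the triangle-type inequality for weight-$L$ fellow travellers, it suffices to establish the theorem in the two special cases (A) $\tau=\tau^\prime$ with $\eta,\eta^\prime$ arbitrary, and (B) $\eta=\eta^\prime$ with $\tau,\tau^\prime$ arbitrary; the general statement then follows by concatenating the two comparisons via the auxiliary path $\gamma(\tau^\prime,\eta)$, with a linear adjustment of the constant.

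For case (A), I would use the projection $\zeta=\Pi^1_{E(\tau,\eta)}(\eta^\prime)$ from Lemma \ref{projection}, which lies in both $E(\tau,\eta)$ and $E(\tau,\eta^\prime)$ and is maximal with this property. The first goal is to show that $\gamma(\tau,\eta)$ and $\gamma(\tau,\eta^\prime)$ agree, up to uniformly bounded error, until they reach a uniform neighborhood of $\zeta$. The key combinatorial input is Lemma \ref{unique}: every large branch of an intermediate train track $\sigma\in E(\tau,\zeta)$ lies in a unique maximal $\eta$-configuration and a unique maximal $\eta^\prime$-configuration, and by the uniqueness-of-splitting-sequences argument (Lemma 5.1 of \cite{H09}) these two configurations must coincide as long as the branch is used by splitting sequences toward both $\eta$ and $\eta^\prime$. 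Hence the $\eta$-move and the $\eta^\prime$-move applied to $\sigma$ differ only on the branches that lie on configurations splittable to only one of $\eta$, $\eta^\prime$, and these branches lie at a uniformly bounded distance from the projection to $\zeta$ by Proposition \ref{shortestdistance}. After the common prefix, the two tails $\gamma(\zeta,\eta)$ and $\gamma(\zeta,\eta^\prime)$ are contained in cubical Euclidean cones $E(\zeta,\eta)$ and $E(\zeta,\eta^\prime)$ whose diameters in ${\cal T\cal T}$ are uniformly comparable to $d(\eta,\eta^\prime)$ via Theorem \ref{cubicaleuclid}, yielding the desired weight-$L$ bound.

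For case (B), the two paths share the endpoint $\eta$ but start at different train tracks $\tau,\tau^\prime$. Here I would apply Proposition \ref{shortestdistance} to the pair $(\tau,\tau^\prime)$ and use Lemma \ref{multitwist} (together with Lemma \ref{reverse}) to produce a short splitting sequence from a controlled neighborhood of $\tau$ into a controlled neighborhood of $\tau^\prime$ which then merges with a splitting sequence toward $\eta$. Combined with the quasi-geodesic property of splitting sequences (Theorem \ref{cubicaleuclid} and the corollary thereafter), this shows that once the two paths have each traveled a distance comparable to $d(\tau,\tau^\prime)$, they lie in a uniformly bounded neighborhood of each other; from that point on, the argument of case (A) applied to a common intermediate vertex controls them until they both reach $\eta$.

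The principal obstacle, and the reason Section 5's entire machinery is needed, is the second half of case (A): justifying that the splittable $\eta$-circles and $\eta^\prime$-circles contribute only a bounded discrepancy. A maximal $\eta$-configuration in $\sigma$ may be a splittable $\eta$-circle while the corresponding $\eta^\prime$-configuration is only a splittable arc, and the $\rho-\eta$-multi-split then differs from the $\rho-\eta^\prime$-multi-split by a Dehn-twist-like transformation about the circle. Here one must invoke Corollary \ref{Dehn} (via Lemmas \ref{dehnreduced}, \ref{nonreduced}, \ref{circleadmis}) to identify this discrepancy with an integer multi-twist, and then use Lemma \ref{multitwist} to bound the corresponding multi-twist exponent linearly in $d(\eta,\eta^\prime)$. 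This exponent-versus-distance estimate is precisely where the balanced (i.e. full multi-split) nature of $\gamma$ is indispensable: a single $\eta$-move spans an entire full twist, so $k$ extra twists cost exactly $k$ extra steps on both sides, keeping the two paths synchronized rather than differing by an exponentially growing amount as would happen with unaccelerated splits.
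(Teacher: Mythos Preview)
Your reduction in the first paragraph does not work: the intermediate balanced splitting path $\gamma(\tau^\prime,\eta)$ is simply undefined in general, since nothing in the hypotheses says that $\tau^\prime$ is splittable to $\eta$. The whole content of the theorem is precisely that the two pairs $(\tau,\eta)$ and $(\tau^\prime,\eta^\prime)$ are linked only through the ambient metric of ${\cal T\cal T}$, not through any carrying or splitting relation, so you cannot manufacture a triangle with a third combing line. Even granting the reduction, your case (A) has a second gap: after reaching $\zeta=\Pi^1_{E(\tau,\eta)}(\eta^\prime)$ you assert that the tails $\gamma(\zeta,\eta)$ and $\gamma(\zeta,\eta^\prime)$ sit in cones of diameter comparable to $d(\eta,\eta^\prime)$. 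That would require $d(\zeta,\eta)+d(\zeta,\eta^\prime)\leq C\,d(\eta,\eta^\prime)$, which is exactly the triangle equality of Lemma~5.4 of \cite{H09}; but that lemma needs $\eta,\eta^\prime\in E(\tau,\lambda)$ for a \emph{common} complete lamination $\lambda$, and no such $\lambda$ is available here. Without it, $d(\zeta,\eta)$ can be arbitrarily large compared to $d(\eta,\eta^\prime)$.

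The paper sidesteps the undefined-intermediate-path problem rather than solving it. It never forms $\gamma(\tau^\prime,\eta)$; instead it uses Proposition~\ref{shortestdistance} to \emph{construct} auxiliary train tracks $\eta_1,\eta_1^\prime$ in uniformly bounded neighborhoods of $\tau^\prime,\eta^\prime$ such that $\eta_1$ \emph{is} splittable to $\eta_1^\prime$ and the splitting sequence passes near the projection point. The comparison is then done in stages: first within a single cubical euclidean cone (Lemma~\ref{flatconfig}, Lemma~\ref{flatconfig2}, Corollary~\ref{flatstripcontrol}), then for train tracks related by carrying and shifting (Lemma~\ref{shiftingbasic}, Lemma~\ref{shift}, Corollary~\ref{near}), then for sequences induced by a subtrack (Lemma~\ref{doubleinduced}, Corollary~\ref{induced}), and finally for train tracks in standard form for the same marking (Lemma~\ref{onesidedcomb}). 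Your last paragraph correctly identifies the Dehn-twist synchronization issue, and the paper does treat it essentially as you describe (Lemma~\ref{sccontraction}); but that ingredient only becomes usable once the preceding chain of reductions has placed both paths in a setting where a common subtrack or a common carried lamination is available.
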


The remainder of this section is devoted to the proof of
Theorem \ref{flatconfig3}. 
We begin the proof
with collecting some first
basic properties
of balanced splitting paths.

\begin{lemma}\label{basicbalanced}
Let $\tau$ be a train track which is
splittable to a train track 
$\eta$ and 
let $\sigma\in E(\tau,\eta)$. Then the following
holds true.
\begin{enumerate}
\item $\gamma(\tau,\sigma)(1)\in E(\tau,\gamma(\tau,\eta)(1))$. 
\item If $\xi\in E(\tau,\gamma(\tau,\eta)(1))$ then
$\gamma(\tau,\xi)(1)=\xi$ and 
$\gamma(\tau,\eta)(1)\in E(\tau,\gamma(\xi,\eta)(1))$.
\end{enumerate}
\end{lemma}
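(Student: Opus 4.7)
My proof would rest on a structural comparison of splittable $\sigma$-configurations and splittable $\eta$-configurations in $\tau$ when $\sigma\in E(\tau,\eta)$. Since every train track which is splittable to $\sigma$ is automatically splittable to $\eta$, every splittable $\sigma$-path or $\sigma$-circle in $\tau$ is also a splittable $\eta$-path or $\eta$-circle, and hence by Lemma \ref{unique} is contained in a unique splittable $\eta$-configuration of $\tau$. Moreover, the $\rho$–$\sigma$-multi-split at a splittable $\sigma$-configuration $\rho$ is a prefix (in the natural partial order induced by splittings) of the $\eta$-multi-split at the enclosing splittable $\eta$-configuration, because the maximality condition in the former is taken under the stronger requirement of being splittable to $\sigma$ rather than just to $\eta$.

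For (1), the splittable $\sigma$-configurations of $\tau$ are pairwise disjoint by Lemma \ref{unique} and each is nested inside a distinct splittable $\eta$-configuration; hence the $\sigma$-multi-splits and the residual $\eta$-multi-splits commute. Performing all $\sigma$-multi-splits in any order produces $\gamma(\tau,\sigma)(1)$, and continuing each one inside its enclosing $\eta$-configuration up to maximal length subject to being splittable to $\eta$ yields $\gamma(\tau,\eta)(1)$. Thus $\gamma(\tau,\sigma)(1)$ is splittable to $\gamma(\tau,\eta)(1)$, which is exactly the claim $\gamma(\tau,\sigma)(1)\in E(\tau,\gamma(\tau,\eta)(1))$.

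For (2), fix $\xi\in E(\tau,\gamma(\tau,\eta)(1))$. Applying (1) with $\sigma=\xi$ gives $\gamma(\tau,\xi)(1)$ splittable to both $\xi$ (by construction of the $\xi$-move) and $\gamma(\tau,\eta)(1)$. To obtain the converse direction $\xi$ splittable to $\gamma(\tau,\xi)(1)$, I would argue inductively along the splitting sequence from $\tau$ to $\xi$, which is unique up to order by Lemma 5.1 of \cite{H09}: each split in the sequence takes place at a large branch that trivially lies in a splittable $\xi$-path or $\xi$-circle of the current train track, and maximality of the $\xi$-multi-splits forces every such split to be executed by the $\xi$-move. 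The two directions together give $\gamma(\tau,\xi)(1)=\xi$. For the final assertion, $\gamma(\tau,\eta)(1)$ is obtainable from $\xi$ by a splitting sequence (since $\xi\in E(\tau,\gamma(\tau,\eta)(1))$) and is splittable to $\eta$; hence these splits occur within splittable $\eta$-configurations of $\xi$, and by maximality of the $\eta$-move from $\xi$, $\gamma(\tau,\eta)(1)$ is splittable to $\gamma(\xi,\eta)(1)$, which gives $\gamma(\tau,\eta)(1)\in E(\tau,\gamma(\xi,\eta)(1))$.

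The main obstacle will be the combinatorial bookkeeping in part (2): the natural branch bijections $\phi(\cdot,\cdot)$ do not preserve the large/small/mixed classification (large branches become small diagonals after a split), so tracing a splittable $\xi$-configuration backward from an intermediate train track in the $\tau$-to-$\xi$ sequence to $\tau$ itself requires care. The ingredients that make this manageable are the commutativity of multi-splits at disjoint configurations implicit in Lemma \ref{unique} and the uniqueness of splitting sequences from Lemma 5.1 of \cite{H09}, which together reduce the argument to verifying that each individual split in the $\tau$-to-$\xi$ sequence is forced to be part of some $\xi$-multi-split.
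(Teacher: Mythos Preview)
Your argument for part (1) is essentially the paper's: each splittable $\sigma$-configuration sits inside a splittable $\eta$-configuration, and the $\rho$--$\sigma$-multi-split is splittable to the enclosing $\rho'$--$\eta$-multi-split. One detail you omit is the case where a splittable $\sigma$-path lies inside a splittable $\eta$-\emph{circle}; the paper invokes Lemma~\ref{circleadmis} here to conclude that the full $\rho$-multi-split at the path is still splittable to $\theta_c^{\pm}\tau$ and hence to the $\eta$-circle-multi-split.

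For part (2) your route and the paper's genuinely diverge. You propose to show $\gamma(\tau,\xi)(1)=\xi$ by an induction along the splitting sequence $\tau\to\xi$, arguing that each split is ``forced'' by maximality of the $\xi$-multi-splits. This can be made to work, but as you yourself flag, the inductive hypothesis is unclear: the $\xi$-move is taken from $\tau$, not from the ``current train track'', so an induction on the length of $\tau\to\xi$ does not obviously reduce to a shorter instance of the same statement. What you really need is the lattice-type fact that any sequence of $\rho'$-splits from $\tau$ which stays splittable to $\xi$ is itself splittable to the $\rho'$--$\xi$-multi-split result; this is true but deserves a sentence.

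The paper sidesteps this entirely with a single structural observation. If $\nu$ is obtained from $\tau$ by some $\rho$-splits (for an $\eta$-configuration $\rho$), then $\rho'=\phi(\tau,\nu)(\rho)$ is still an embedded trainpath in $\nu$; its maximal two-sided large subpath $\rho'[i,j]$ is reduced, and the full $\rho'[i,j]$-multi-split of $\nu$ coincides with the full $\rho$-multi-split of $\tau$. This immediately gives $\gamma(\nu,\zeta)(1)=\zeta$ for any $\zeta$ obtained from $\tau$ by $\rho$-splits with $\nu\in E(\tau,\zeta)$, and the same reasoning (combined with commutativity across the disjoint $\rho_i$) yields both $\gamma(\tau,\xi)(1)=\xi$ and $\gamma(\tau,\eta)(1)$ splittable to $\gamma(\xi,\eta)(1)$ in one stroke. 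The payoff is that the ``bookkeeping'' obstacle you anticipate simply does not arise: rather than tracking individual splits through an induction, one identifies the surviving reduced core of $\rho$ inside $\nu$ and observes that completing its multi-split lands exactly where the original $\rho$-multi-split from $\tau$ does.
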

\begin{proof}
The first part of the lemma is fairly immediate
from the definitions. Namely, assume that
the train track 
$\tau$ is splittable to the train track $\eta$ and let
$\sigma\in E(\tau,\eta)$. 
Let $\rho$ be a $\sigma$-configuration in $\tau$.
If $\rho$ is a splittable $\sigma$-circle then
by the definitions and uniqueness of splitting sequences
(Lemma 5.1 of
\cite{H09} which is also valid for 
train tracks which are not complete),
$\rho$ is an $\eta$-configuration
in $\tau$ and the $\rho-\sigma$-multi-split of $\tau$
is splittable to the $\rho-\eta$-multi-split of $\tau$.

On the other hand, if $\rho$ is a splittable $\sigma$-path then 
by Lemma \ref{unique} and the definitions, 
$\rho$ is a subpath of a maximal splittable
$\eta$-path $\rho^\prime$ or of a 
splittable $\eta$-circle. 
Once again, the $\rho-\sigma$-multi-split of $\tau$ 
is splittable to the $\rho^\prime-\eta$-multi-split of $\tau$ 
by construction, by uniqueness of splitting sequences and
by Lemma \ref{circleadmis}. Note that
the $\eta$-configuration
$\rho^\prime$ may contain several 
distinct $\sigma$-configurations. The first part of the
lemma follows.

The same argument also shows the second part of the
lemma. Namely, let $\rho:[0,n]\to \tau$ be any embedded
reduced trainpath and assume that
$\nu$ is obtained from $\tau$ by a sequence
of $\rho$-splits. Then
$\rho^\prime=\phi(\tau,\nu)(\rho)$ is an embedded trainpath
in $\nu$, and any two-sided large trainpath in $\nu$ which is
contained in $\rho^\prime$ is reduced. Now if
$i\geq 0$ is the smallest number
such that the half-branch $\rho^\prime[i,i+1/2]$ is large
then no sequence of $\rho^\prime$-splits 
can include a split at any of the branches
$\rho^\prime[u,u+1]$ for $0\leq u\leq i-1$. 
Thus by symmetry, if $j\geq i$ is the largest
number such that $\rho^\prime[j-1/2,j]$ is a large half-branch
then $\rho^\prime[i,j]$ is embedded and reduced,
and the train track obtained from $\nu$ by the 
full $\rho^\prime[i,j]$-multi-split 
equals the train track obtained from
$\tau$ by a full $\rho$-multi-split.
This implies that $\gamma(\nu,\zeta)(1)=\zeta=\gamma(\tau,\zeta)(1)$ for 
any train track $\zeta$ which can be obtained from
$\tau$ by a sequence of $\rho$-splits and such that
$\nu\in E(\tau,\zeta)$. In particular, we have
$\gamma(\nu,\gamma(\tau,\eta)(1))(1)=\gamma(\tau,\eta)(1)$.
The same argument is also
valid in the case that $\rho$ is a reduced circle $c$ in $\tau$
and that $\nu,\zeta$ are obtained from $\tau$ by a sequence of
$\rho$-splits and are splittable to $\theta_c^{\pm}\tau$.

Now let $\tau$ be splittable to $\eta$ and let
$\xi\in E(\tau,\gamma(\tau,\eta)(1))$. Let
$\rho_1,\dots,\rho_k$ be the $\eta$-configurations in $\tau$.
Then $\xi$ can be obtained from $\tau$ by successively
splitting $\tau$ with a sequence of $\rho_i$-splits
for $i=1,\dots,k$. Since any $\rho_i$-split commutes
with any $\rho_j$-split for $i\not=j$, the discussion
in the previous paragraph shows that
$\xi=\gamma(\tau,\xi)(1)$ and that 
$\gamma(\tau,\eta)(1)$ is splittable to $\gamma(\xi,\eta)(1)$.
This shows the second part of the lemma.
\end{proof}

As a first step towards a proof of  
Theorem \ref{flatconfig3} we investigate
balanced splitting paths whose images are contained
in a fixed cubical euclidean cone 
$E(\tau,\lambda)$. As in Section 3, denote by $d_E$ the intrinsic
path metric on the connected graph $E(\tau,\lambda)$.

\begin{lemma}\label{flatconfig}
There is a number $L_1>0$ with the following
property.
Let $\lambda$ be a complete geodesic
lamination carried by a complete train track $\tau$
and let $\sigma,\eta\in E(\tau,\lambda)$. Then the
balanced splitting paths
$\gamma(\tau,\sigma)$ and $\gamma(\tau,\eta)$
are weight-$L_1$ fellow travellers in $(E(\tau,\lambda),d_E)$.
\end{lemma}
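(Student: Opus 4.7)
The plan is to prove the fellow-traveler property by an induction that exploits the cube-like geometry of $(E(\tau,\lambda), d_E)$ established in Theorem \ref{cubicaleuclid}, together with the structural properties of balanced splitting paths collected in Lemma \ref{basicbalanced}.

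As a preliminary step, I would establish a uniform bound $p>0$, depending only on the topological type of $S$, such that $d_E(\xi, \gamma(\xi,\eta^\prime)(1)) \le p$ for every $\xi \in E(\tau,\lambda)$ splittable to any $\eta^\prime$. This is because a complete train track carries at most $q$ large branches, which by Lemma \ref{unique} organize into at most $q$ disjoint splittable $\eta^\prime$-configurations, each modified by at most $q^2$ splits during the corresponding full multi-split (Lemma \ref{splitadmissible}). Thus each $\eta^\prime$-move corresponds to a uniformly bounded number of elementary splits.

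Next I would reduce to the case of adjacent endpoints. Given $\sigma,\eta \in E(\tau,\lambda)$, by Corollary 5.2 of \cite{H09} (which guarantees that directed edge-paths in $E(\tau,\lambda)$ are geodesics for $d_E$) there is a directed edge path $\sigma=\nu_0, \nu_1, \dots, \nu_m = \eta$ in $E(\tau,\lambda)$ with $m = d_E(\sigma,\eta)$; if $\sigma$ is not splittable to $\eta$ nor vice versa, one passes to a common refinement first. By the triangle inequality,
\[
d_E(\gamma(\tau,\sigma)(i), \gamma(\tau,\eta)(i)) \le \sum_{j=0}^{m-1} d_E(\gamma(\tau,\nu_j)(i), \gamma(\tau,\nu_{j+1})(i)),
\]
so it suffices to prove the existence of a universal constant $L_1$ with $d_E(\gamma(\tau,\nu_j)(i), \gamma(\tau,\nu_{j+1})(i)) \le L_1$ for each $i$; summation then yields the weight $L_1 \cdot m = L_1 \cdot d_E(\sigma,\eta)$ required by the definition of weight-$L_1$ fellow travellers.

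The core inductive argument handles the adjacency case. Assume without loss of generality that $\nu_{j+1}$ is obtained from $\nu_j$ by a single split at a large branch $b$; set $s_i = \gamma(\tau,\nu_j)(i)$ and $e_i = \gamma(\tau,\nu_{j+1})(i)$. For $i=1$, Lemma \ref{basicbalanced}(1) applied with $\sigma = \nu_j$ and $\eta = \nu_{j+1}$ gives $s_1 \in E(\tau, e_1)$, so $d_E(s_1, e_1) \le d_E(\tau, e_1) \le p$. For the inductive step, Lemma \ref{basicbalanced}(2) together with uniqueness of splitting sequences (Lemma 5.1 of \cite{H09}) shows that the balanced path from $s_i$ toward $\nu_{j+1}$ passes through (up to a bounded deviation) the train track $e_{i+1}$, so that the two $\nu_{j+1}$-moves on the $s$-path and the $\eta$-move on the $e$-path can be compared and yield $d_E(s_{i+1}, e_{i+1}) \le 2p$. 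Thus $L_1 := 2p$ works, completing the reduction.

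The main obstacle is controlling the discrepancy of configurations at each step: the single extra split distinguishing $\nu_j$ from $\nu_{j+1}$ could in principle cause the $\nu_j$-configurations of $s_i$ and the $\nu_{j+1}$-configurations of $e_i$ to drift apart, with each move amplifying the difference. The resolution uses the combinatorics of Lemma \ref{unique}, which forces each large branch to lie in a unique maximal configuration, together with commutativity of splits at disjoint configurations. These facts, combined with the iterative use of Lemma \ref{basicbalanced}, ensure that at each step either the two paths perform equivalent moves or they differ only by an absorbable single-split discrepancy. Making this bookkeeping precise is the crux of the proof and requires a careful case analysis distinguishing whether the branch $b$ lies inside, at the boundary of, or outside the current configurations of $s_i$ and $e_i$.
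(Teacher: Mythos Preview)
Your overall strategy matches the paper's: reduce via $\Theta_-(\sigma,\eta)$ to the case where $\eta$ is obtained from $\sigma$ by a single split, then run an induction on the balanced paths using Lemma~\ref{basicbalanced}. The reduction step is fine, though your phrasing ``directed edge path $\sigma=\nu_0,\dots,\nu_m=\eta$'' is imprecise---there is no such directed path in general, and what you actually need is exactly the $\Theta_-$ construction with equation~(\ref{star}), which you gesture at with ``common refinement''.

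The genuine gap is in your inductive step. You claim $d_E(s_{i+1},e_{i+1})\le 2p$ but justify it only by saying Lemma~\ref{basicbalanced}(2) shows the balanced path from $s_i$ toward $\nu_{j+1}$ ``passes through (up to a bounded deviation) $e_{i+1}$'', and you then concede that the needed case analysis is ``the crux of the proof''. This is where the paper does something sharper and cleaner than what you propose. The paper shows that once the two paths separate at step $i$, one has $\eta(j)$ obtained from $\sigma(j)$ by \emph{exactly one split} for every $j>i$---so the paths are weight-$1$ fellow travellers, not weight-$2p$. The mechanism is: uniqueness of splitting sequences produces a train track $\xi$ splittable to $\sigma$ and one split away from $\eta(i+1)$; Lemma~\ref{basicbalanced}(1) gives $\sigma(i+1)\in E(\sigma(i),\eta(i+1))$, hence $\sigma(i+1)$ is splittable to $\xi$; and Lemma~\ref{basicbalanced}(2) applied to $\xi\in E(\sigma(i),\eta(i+1))$ forces $\xi$ to be splittable to $\sigma(i+1)$ as well, so $\sigma(i+1)=\xi$. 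No case analysis on the position of $b$ relative to configurations is needed. Your proposed bookkeeping would presumably also work, but it is both weaker and harder than what the paper actually does.
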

\begin{proof}
By Lemma 5.4 of \cite{H09}, for 
$\sigma,\eta\in E(\tau,\lambda)$ there is a
train track $\Theta_-(\sigma,\eta)\in E(\tau,\lambda)$
so that $\Theta_-(\sigma,\eta)$ is splittable to
both $\sigma,\eta$ and that there is a geodesic
in $(E(\tau,\lambda),d_E)$ connecting 
$\sigma$ to $\eta$ which passes through $\Theta_-(\sigma,\eta)$.
In particular, we have
\begin{equation}\label{star}d_E(\sigma,\eta)=
d_E(\sigma,\Theta_-(\sigma,\eta))+d_E(\Theta_-(\sigma,\eta),\eta).
\end{equation}
This implies that
the balanced splitting paths  
$\gamma(\tau,\sigma),\gamma(\tau,\eta)$ are weight-$L$ fellow
travellers in $(E(\tau,\lambda),d_E)$ 
if the balanced splitting paths 
$\gamma(\tau,\sigma),\gamma(\tau,\Theta_-(\sigma,\eta))$ and
$\gamma(\tau,\eta)$, $\gamma(\tau,\Theta_-(\sigma,\eta))$
are weight-$L$ fellow travellers in $(E(\tau,\lambda),d_E)$.
As a consequence, it suffices to show the lemma in the 
particular case that $\sigma$ is splittable to $\eta$.

By Corollary 5.2 of \cite{H09},
splitting paths in 
$(E(\tau,\lambda),d_E)$ are geodesics.
Thus  it is enough to show the lemma for 
balanced splitting paths
$\gamma(\tau,\sigma),\gamma(\tau,\eta)$ 
connecting $\tau$ to train tracks
$\sigma,\eta\in E(\tau,\lambda)$ with the additional property that
$\eta$ can be obtained from $\sigma$ by a
single split at a large branch $e$. 

Assume that this is the case. 
Let $\{\sigma(i)\}_{0\leq i\leq k}$ 
be the balanced splitting path
connecting $\tau=\sigma(0)$ to 
$\sigma=\sigma(k)$ and let 
$\{\eta(i)\}_{0\leq i\leq
\ell}$ be the balanced splitting path connecting 
$\tau=\eta(0)$ 
to $\eta=\eta(\ell)$. 
Then there is a largest number $i\leq k$ such that
$\eta(i)=\sigma(i)$. If $i=k$ then we have
$\sigma(k)=\sigma=\eta(k)$. 
Since $\eta$ can be obtained from $\sigma$ by a
single split, 
by definition of an $\eta$-move
we obtain $\ell=k+1$, and the
balanced splitting paths $\gamma(\tau,\sigma),\gamma(\tau,\eta)$ 
connecting $\tau$ to $\sigma,\eta$ 
are weight-$1$ fellow travellers for the distance
$d_E$.

If $i<k$ then also $i<\ell$ and 
$\sigma(i+1)\not=\eta(i+1)$. 
By uniqueness of splitting sequences (Lemma 5.1 of \cite{H09}),
there is a train track $\xi$ which can
be obtained from $\eta(i)=\sigma(i)$ by a 
splitting sequence,
which is splittable to $\sigma$ and which
is splittable 
with a single
split at a large branch $e^\prime$ to $\eta(i+1)$. Moreover,
we have
$\phi(\xi,\sigma)(e^\prime)=e$.

By the first part of Lemma \ref{basicbalanced}, 
applied to $\sigma(i)=\eta(i)$ and to 
$\sigma\in E(\sigma(i),\eta)$, the train track 
$\sigma(i+1)$ is splittable to $\eta(i+1)$
and therefore $\sigma(i+1)$ is splittable to $\xi$.
The second part of Lemma \ref{basicbalanced},
applied to $\xi\in E(\eta(i),\eta(i+1))$, then
implies that $\sigma(i+1)=\xi$. 
As a consequence, the train track $\eta(i+1)$ can
be obtained from $\sigma(i+1)$ by a single split at
$e^{\prime}$.
Inductively we deduce that for every 
$j\in \{i+1,\dots,k\}$ the
train track $\eta(j)$ can be obtained from $\sigma(j)$ by a single
split at $\phi(\sigma(i+1),\sigma(j))
(e^{\prime})$. In other words, 
we have $k=\ell$ and 
the balanced splitting paths 
$\gamma(\tau,\sigma),\gamma(\tau,\eta)$ 
are weight-$1$ fellow travellers for the distance
$d_E$.
 This completes the proof of the lemma.
\end{proof}

If a train track $\tau$ is splittable to a train track $\eta$ then 
the intrinsic path metric $d_E$ on the cone $E(\tau,\eta)$ 
is defined.

\begin{lemma}\label{flatconfig2}
There is a number $L_2>0$ with the following
property. Let $\tau$ be a train track
which is splittable to a train track $\eta$
and let $\sigma\in E(\tau,\eta)$. Then the 
balanced splitting paths $\gamma(\tau,\eta),
\gamma(\sigma,\eta)$ are weight-$L_2$ fellow
travellers in $(E(\tau,\eta),d_E)$.
\end{lemma}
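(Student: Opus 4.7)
The plan is a double induction: first an outer induction on $d_E(\tau,\sigma)$ that reduces to the single-split case, and then an inner induction on $d_E(\tau,\eta)$ that handles this case using Lemma \ref{basicbalanced} and Lemma \ref{unique}.

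Since splitting sequences in $E(\tau,\eta)$ are $d_E$-geodesics (Corollary 5.2 of \cite{H09}), there is a chain $\tau=\xi_0,\xi_1,\dots,\xi_n=\sigma$ in $E(\tau,\eta)$ with $\xi_{i+1}$ obtained from $\xi_i$ by a single split and $n=d_E(\tau,\sigma)$. If one establishes the lemma whenever $d_E(\xi_i,\xi_{i+1})=1$ with a universal weight constant $c$, the triangle inequality applied iteratively yields a weight bound of $cn=c\,d_E(\tau,\sigma)$ between $\gamma(\tau,\eta)$ and $\gamma(\sigma,\eta)$, giving the lemma with $L_2=c$.

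For the single-split case, let $\sigma$ be obtained from $\tau$ by one split at a large branch $e$. Because $\sigma\in E(\tau,\eta)$, Lemma \ref{unique} forces $e$ to lie in a unique splittable $\eta$-configuration $\rho$ of $\tau$, and by independence of splits at distinct $\eta$-configurations we may realize $\sigma$ as an intermediate train track along an $\eta$-move issuing from $\tau$, so $\sigma\in E(\tau,\tau_1)$ with $\tau_1:=\gamma(\tau,\eta)(1)$. Applying Lemma \ref{basicbalanced}(2) with $\xi=\sigma$ to the pair $(\tau,\eta)$ produces $\tau_1\in E(\sigma,\sigma_1)$ for $\sigma_1:=\gamma(\sigma,\eta)(1)$, so $\sigma\to\tau_1\to\sigma_1$ is a splitting sequence and $d_E(\tau_1,\sigma_1)\leq d_E(\sigma,\sigma_1)\leq p$, where $p$ is the uniform upper bound on the $d_E$-length of one $\eta$-move. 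With $\sigma_1\in E(\tau_1,\eta)$ and $d_E(\tau_1,\eta)<d_E(\tau,\eta)$, the inner inductive hypothesis then compares $\gamma(\tau_1,\eta)$ and $\gamma(\sigma_1,\eta)$ in $(E(\tau_1,\eta),d_E)$.

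The main obstacle is the tightening of the constant: the crude bound $d_E(\tau_1,\sigma_1)\leq p$ propagates under the inner induction to $d_E(\tau_j,\sigma_j)\leq L_2 p$ at indices $j\geq 1$, which exceeds the required $L_2$ in the single-split case unless $p\leq 1$. The resolution is a finer structural analysis of $\sigma_1$: because the split at $e$ is local to $\rho$ and leaves every other $\eta$-configuration of $\tau$ unchanged, Lemma \ref{splitadmissible} together with the maximality characterization of $\eta$-configurations should identify $\rho':=\phi(\tau,\sigma)(\rho)$ as a splittable $\eta$-configuration of $\sigma$ whose $\rho'$-$\eta$-multi-split merely completes the $\rho$-$\eta$-multi-split begun at $\tau$; combined with the unchanged multi-splits at the remaining $\eta$-configurations, this forces the identity $\sigma_1=\tau_1$. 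Once this sharper equality is in hand, the inner induction closes with any $L_2\geq 1$, and the outer induction delivers the weight-$L_2$ fellow traveller bound for arbitrary $\sigma\in E(\tau,\eta)$.
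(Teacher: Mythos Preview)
Your outer reduction to a single split is fine, and you correctly identify the obstacle: naively bounding $d_E(\tau_1,\sigma_1)\le p$ and recursing on the pair $(\tau_1,\sigma_1)$ blows up the constant. The problem is your proposed fix. The identity $\sigma_1=\tau_1$ is false in general. Take the simplest case where the $\eta$-configuration $\rho$ containing $e$ has length one, so the $\rho$--$\eta$-multi-split of $\tau$ is just the single split at $e$, and suppose this is the only splittable $\eta$-configuration of $\tau$. Then $\tau_1=\sigma$, hence $\sigma_1=\gamma(\tau_1,\eta)(1)=\tau_2\neq\tau_1$. More generally, after the split at $e$ the winners can become new large branches of $\sigma$ lying outside $\phi(\tau,\sigma)(\rho)$ and participating in new $\eta$-configurations of $\sigma$; the move from $\sigma$ then splits at those branches as well, so your claim (c) that ``the remaining $\eta$-configurations are unchanged'' breaks down.

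The paper closes the induction differently, and this is the missing idea. Instead of recursing on $(\tau_1,\sigma_1,\eta)$, one recurses on $(\sigma,\tau_1,\eta)$: by Lemma~\ref{basicbalanced}(2) one has $\tau_1\in E(\sigma,\sigma_1)$, so $\tau_1$ plays the role of the inner point for the shorter pair $(\sigma,\eta)$. The inductive hypothesis then yields the interleaving
\[
\gamma(\sigma,\eta)(v)\;\text{splittable to}\;\gamma(\tau_1,\eta)(v)=\gamma(\tau,\eta)(v+1)\;\text{splittable to}\;\gamma(\sigma,\eta)(v+1),
\]
which together with $\tau\to\sigma\to\tau_1$ gives $\gamma(\tau,\eta)(u)\to\gamma(\sigma,\eta)(u)\to\gamma(\tau,\eta)(u+1)$ for all $u$. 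This immediately bounds $d_E(\gamma(\tau,\eta)(u),\gamma(\sigma,\eta)(u))$ by the uniform move-length $p$, and your outer induction then goes through with $L_2=p$. The point is that the roles of ``base'' and ``intermediate'' track swap at each step of the inner induction; trying to keep $\tau_j$ as the base forces you toward the false identity.
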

\begin{proof} Let the train track
$\tau$ be splittable to the train track $\eta$.
By definition of a balanced splitting path, if $e$ is a large
branch in $\tau$ such that a splitting sequence
connecting $\tau$ to $\eta$ includes a split at $e$ then
a splitting sequence connecting $\tau$ to
$\gamma(\tau,\eta)(1)$ includes a split at $e$ as well. 
Since directed edge-paths in $(E(\tau,\eta),d_E)$ are
geodesics (Corollary 5.2 of \cite{H09} is also valid
if the train tracks $\tau,\eta$ are not complete),
it therefore suffices to show the 
lemma in the particular case that $\sigma\in 
E(\tau,\gamma(\tau,\eta)(1))$.

For this we claim
that if $\tau$ is splittable to $\eta$ and if 
$\sigma\in E(\tau,\gamma(\tau,\eta)(1))$
then for each $u$ the train track 
$\gamma(\tau,\eta)(u)$ is splittable
to $\gamma(\sigma,\eta)(u)$, and
$\gamma(\sigma,\eta)(u)$ is splittable to
$\gamma(\tau,\eta)(u+1)$.

We proceed by induction on the length of a splitting
sequence connecting $\tau$ to $\eta$.
If this length vanishes 
then $\tau=\eta=\sigma$ and there is nothing to show, so
assume that the claim is known whenever 
the length of a splitting sequence connecting
$\tau$ to $\eta$ is at most $n-1$ for some $n\geq 1$.

Let $\tau$ be splittable to $\eta$ and assume that
the length of a splitting sequence connecting
$\tau$ to $\eta$ equals $n$.
If $\sigma=\tau$ then once again there is nothing to show, 
so assume that $\sigma$ can be obtained from 
$\tau$ by a non-trivial splitting sequence and is splittable 
to $\gamma(\tau,\eta)(1)$. 
By the second part of Lemma \ref{basicbalanced}, 
$\gamma(\tau,\eta)(1)$ is
splittable to $\gamma(\sigma,\eta)(1)$. 
Now the length of a splitting sequence 
connecting $\sigma$ to $\eta$ is at most $n-1$
and hence we can apply the 
induction hypothesis to $\sigma,\gamma(\tau,\eta)(1),\eta$
and to the balanced splitting paths 
$\gamma(\sigma,\eta)$ and 
$\gamma(\gamma(\tau,\eta)(1),\eta)$. 
This yields the above claim.

As a consequence,
we have
\[d_E(\gamma(\tau,\eta)(u),\gamma(\sigma,\eta)(u))\leq p\]
for all $u$ where $p>0$ is an upper bound for the
length of any splitting sequence connecting a 
train track to its image under a move.
Thus the balanced
splitting paths $\gamma(\tau,\eta),\gamma(\sigma,\eta)$
are weight-$p$ fellow travellers in $E(\tau,\eta),d_E)$. 
This shows the lemma.
\end{proof}

As an immediate corollary we conclude that
balanced splitting paths connecting points
in the same cubical euclidean cone are uniform fellow
travellers.

\begin{corollary}\label{flatstripcontrol}
There is a number $L_3>0$ with the following
property.
Let $E(\tau,\lambda)\subset {\cal T\cal T}$ be any cubical euclidean cone
and let $\sigma_1,\sigma_2\in E(\tau,\lambda)$ be
train tracks which are splittable to train tracks
$\eta_1,\eta_2\in E(\tau,\lambda)$. Then
the balanced splitting paths
$\gamma(\sigma_1,\eta_1),
\gamma(\sigma_2,\eta_2)$ are weight-$L_3$ fellow
travellers.
\end{corollary}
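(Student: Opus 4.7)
The plan is to synchronize $\gamma(\sigma_1,\eta_1)$ with $\gamma(\sigma_2,\eta_2)$ by routing both through a common predecessor of $\sigma_1,\sigma_2$ in the cone $E(\tau,\lambda)$, and then to chain three fellow-traveller comparisons already in hand. The point is that Lemma \ref{flatconfig} handles the ``same start, different ends'' case and Lemma \ref{flatconfig2} handles the ``different starts, same end'' case, so a single intermediate $\sigma_0$ that is splittable to all four of $\sigma_1,\sigma_2,\eta_1,\eta_2$ suffices.

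First I would invoke Lemma 5.4 of \cite{H09} to choose $\sigma_0=\Theta_-(\sigma_1,\sigma_2)\in E(\tau,\lambda)$ as the common predecessor of $\sigma_1,\sigma_2$, satisfying the meet identity $d_E(\sigma_0,\sigma_1)+d_E(\sigma_0,\sigma_2)=d_E(\sigma_1,\sigma_2)$. Since $\sigma_0$ is splittable to $\sigma_i$ and $\sigma_i$ is splittable to $\eta_i$, $\sigma_0$ is splittable to both $\eta_1$ and $\eta_2$, placing all four points inside $E(\sigma_0,\lambda)$.

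Next I would apply Lemma \ref{flatconfig2} twice, to $\sigma_0$ and $\sigma_1\in E(\sigma_0,\eta_1)$ and to $\sigma_0$ and $\sigma_2\in E(\sigma_0,\eta_2)$, yielding weight-$L_2$ fellow travellers whose synchronization is controlled by $d_E(\sigma_0,\sigma_1)$ and $d_E(\sigma_0,\sigma_2)$ respectively; and I would apply Lemma \ref{flatconfig} in $E(\sigma_0,\lambda)$ to $\gamma(\sigma_0,\eta_1)$ and $\gamma(\sigma_0,\eta_2)$, yielding weight-$L_1$ fellow travellers controlled by $d_E(\eta_1,\eta_2)$. A single triangle inequality through the two intermediate paths $\gamma(\sigma_0,\eta_1),\gamma(\sigma_0,\eta_2)$, combined with the meet identity above, then produces at every $i$ a bound of the form
\[
d(\gamma(\sigma_1,\eta_1)(i),\gamma(\sigma_2,\eta_2)(i))\le L_2\,d_E(\sigma_1,\sigma_2)+L_1\,d_E(\eta_1,\eta_2).
\]
Finally I would convert this $d_E$-bound into a $d$-bound via Theorem \ref{cubicaleuclid}, which asserts that the inclusion $(E(\tau,\lambda),d_E)\to {\cal T\cal T}$ is an $L_0$-quasi-isometric embedding. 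The resulting additive constants can be absorbed into a larger multiplicative constant provided $d(\sigma_1,\sigma_2)+d(\eta_1,\eta_2)\ge 1$; in the remaining case $\sigma_1=\sigma_2$ and $\eta_1=\eta_2$, and the two paths coincide.

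The main subtlety I anticipate is checking that the three intrinsic metrics $d_E$ involved---those on $E(\sigma_0,\eta_1)$, $E(\sigma_0,\eta_2)$, and $E(\sigma_0,\lambda)$---are consistent when restricted to splittable pairs. This is handled by the fact (Corollary 5.2 of \cite{H09}) that directed edge-paths are geodesics in each such cone, so the $d_E$-distance between two train tracks one of which is splittable to the other is independent of the ambient cone. The remainder is a routine triangle inequality and constant chasing.
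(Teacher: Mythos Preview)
Your proof is correct and follows essentially the same approach as the paper: introduce the meet $\sigma_0=\Theta_-(\sigma_1,\sigma_2)$ via Lemma~5.4 of \cite{H09}, apply Lemma~\ref{flatconfig2} twice (same end, varying start) and Lemma~\ref{flatconfig} once (same start, varying end), chain with the triangle inequality using the meet identity, and finish by converting the $d_E$-estimate to a $d$-estimate via Theorem~\ref{cubicaleuclid}. Your extra remarks on the compatibility of the intrinsic metrics across nested cones and on absorbing the additive constant are welcome clarifications that the paper leaves implicit.
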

\begin{proof}
Let $\sigma_i,\eta_i$ be as in the corollary. Let
$d_E$ be the intrinsic path metric on $E(\tau,\lambda)$.
By Lemma 5.4 of \cite{H09}, there is a unique
train track $\Theta_-(\sigma_1,\sigma_2)\in E(\tau,\lambda)$
such that $\sigma_1,\sigma_2\in E(\Theta_-(\sigma_1,\sigma_2),\lambda)$ 
and that 
there is a geodesic in $(E(\tau,\lambda),d_E)$
connecting $\sigma_1$ to $\sigma_2$ which passes through
$\Theta_-(\sigma_1,\sigma_2)$. In particular, 
we have $d_E(\sigma_1,\sigma_2)=
d_E(\sigma_1,\Theta_-(\sigma_1,\sigma_2))+
d_E(\Theta_-(\sigma_1,\sigma_2),\sigma_2)$.

By Lemma \ref{flatconfig}, there is a number
$L_1>0$ such that the balanced splitting paths 
$\gamma(\Theta_-(\sigma_1,\sigma_2),\eta_1)$ and
$\gamma(\Theta_-(\sigma_1,\sigma_2),\eta_2)$ 
with the same starting point 
$\Theta_-(\sigma_1,\sigma_2)$ 
are weight-$L_1$ fellow
travellers in 
$(E(\Theta(\sigma_1,\sigma_2),\lambda),d_E)\subset
(E(\tau,\lambda),d_E)$. 
Moreover, by Lemma \ref{flatconfig2} and its proof, there is a number
$L_2>0$ such that the
balanced splitting paths 
$\gamma(\Theta_-(\sigma_1,\sigma_2),\eta_i)$ and
$\gamma(\sigma_i,\eta_i)$
$(i=1,2)$ 
with the same endpoints 
are weight-$L_2$ fellow travellers
in $(E(\Theta_-(\sigma_1,\sigma_2),\lambda),d_E)
\subset (E(\tau,\lambda),d_E)$. 
Since $d_E(\sigma_1,\sigma_2)=
d_E(\sigma_1,\Theta_-(\sigma_1,\sigma_2))+
d_E(\Theta_-(\sigma_1,\sigma_2),\sigma_2)$,
the balanced splitting paths
$\gamma(\sigma_1,\eta_1),\gamma(\sigma_2,\eta_2)$ are
weight-$(L_1+L_2)$-fellow travellers in
$(E(\tau,\lambda),d_E)$. Together with Theorem \ref{cubicaleuclid}
this yields the corollary.
\end{proof}

Next we extend Corollary \ref{flatstripcontrol} to
train tracks related by carrying rather than
splitting. We begin with the most basic case.

\begin{lemma}\label{shiftingbasic}
There is a number $L_4>0$ with the following property.
Let $\tau,\sigma$ be complete train tracks which 
are splittable 
to the same complete train track $\eta$. If 
$\sigma$ can be obtained from $\tau$ by a single
shift then 
\[d(\gamma(\tau,\eta)(i),\gamma(\sigma,\eta)(i))\leq L_4
\text{ for all }i.\]
\end{lemma}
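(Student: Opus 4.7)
The plan is to exploit the fact that a shift is a purely local combinatorial move. Because $\sigma$ is obtained from $\tau$ by a shift along a mixed branch $b$, there is a natural bijection $\phi(\tau,\sigma)$ between branches and half-branches which preserves large/small/mixed types, carries reduced trainpaths to reduced trainpaths, and respects the carrying relation. In particular, since $\tau$ carries (and is splittable to) $\eta$, so does $\sigma$; and the splittable $\eta$-configurations in $\tau$ are in bijective correspondence with the splittable $\eta$-configurations in $\sigma$ via $\phi(\tau,\sigma)$.

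The key technical step is a commutation principle: a shift along a mixed branch $b$ commutes up to isotopy with a split at any large branch $e$ that does not share a switch with $b$. When $e$ is adjacent to $b$, a shift followed by a split (or by a full $\rho$-multi-split associated to a reduced trainpath or circle passing near $b$) can be recovered, up to a uniformly bounded number of additional shifts, by performing the split first on $\tau$ and then shifting. Equivalently, the two complete train tracks obtained by applying an $\eta$-move to $\sigma$ and by applying the corresponding $\eta$-move to $\tau$ differ by a uniformly bounded number of shifts.

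Granted this, I would argue by induction. Set $\tau_i=\gamma(\tau,\eta)(i)$ and construct inductively a sequence $\sigma_i$ with $\sigma_0=\sigma$ by declaring $\sigma_{i+1}$ to be the result of the $\eta$-move on $\sigma_i$ at those $\eta$-configurations that correspond under $\phi(\tau_i,\sigma_i)$ to the $\eta$-configurations of $\tau_i$. The inductive hypothesis is that $\tau_i$ and $\sigma_i$ differ by a number of shifts bounded by a universal constant; the commutation principle supplies the inductive step. By cocompactness of the ${\cal M\cal C\cal G}(S)$-action on ${\cal T\cal T}$, such bounded shift-equivalence translates into a uniform bound $d(\tau_i,\sigma_i)\leq C$. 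Once both paths have reached $\eta$, they remain there, so the inequality persists for all $i$ in the eventually constant extension.

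It remains to compare the auxiliary sequence $\{\sigma_i\}$ to the actual balanced splitting path $\gamma(\sigma,\eta)$. Since the $\eta$-configurations in $\sigma_i$ correspond under $\phi(\tau_i,\sigma_i)$ to those in $\tau_i$, the $\eta$-moves defining $\{\sigma_i\}$ are precisely the $\eta$-moves defining $\gamma(\sigma,\eta)$, so by the uniqueness content of Lemma \ref{basicbalanced} the two sequences coincide; this gives the conclusion with $L_4=C$. The main obstacle is the local combinatorial verification of the commutation principle near the shift branch $b$: one must analyze case by case how a shift interacts with splits at branches incident to the endpoints of $b$ and check that the shift discrepancy does not compound over iterated $\eta$-moves. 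Once this finite local analysis is done, the inductive argument runs without further difficulty.
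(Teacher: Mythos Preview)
Your outline has the right shape but conceals a genuine gap in the inductive step, and the paper's argument is organized around exactly the point where your sketch is vague.

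First, the claim that $\phi(\tau,\sigma)$ ``carries reduced trainpaths to reduced trainpaths'' and induces a bijection of splittable $\eta$-configurations is not correct as stated. When the mixed branch $b$ lies inside an $\eta$-configuration $\rho$ of $\tau$, what one can say is that $\phi(\tau,\sigma)(\rho-b)$ is reduced in $\sigma$, not $\phi(\tau,\sigma)(\rho)$; and when $b$ is incident on an endpoint of $\rho$, the $\eta$-configuration in $\sigma$ may properly contain $\phi(\tau,\sigma)(\rho\cup b)$. So the $\eta$-moves on $\tau$ and on $\sigma$ are not simply ``the same move transported by $\phi$'', and your identification of the auxiliary sequence $\{\sigma_i\}$ with $\gamma(\sigma,\eta)$ via Lemma~\ref{basicbalanced} does not go through automatically.

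Second, and more seriously, your inductive hypothesis ``$\tau_i$ and $\sigma_i$ differ by a bounded number of shifts'' cannot be maintained by the commutation principle you describe. If each $\eta$-move commutes with a shift only up to \emph{additional} shifts, the discrepancy can compound. The paper avoids this by proving a dichotomy rather than a uniform commutation: after a single $\eta$-move, \emph{either} $\gamma(\sigma,\eta)(1)$ is obtained from $\gamma(\tau,\eta)(1)$ by exactly one shift (so the hypothesis is reproduced verbatim and one iterates), \emph{or} the local analysis near $b$ (the case where $b$ is a loser of an $\eta$-split at an adjacent large branch, or where opposite splits occur on the two sides) produces a common train track $\xi\in E(\tau,\eta)\cap E(\sigma,\eta)$ at uniformly bounded distance from both, after which Corollary~\ref{flatstripcontrol} (fellow-travelling of balanced paths inside a single cubical cone) finishes the argument for all remaining $i$ at once. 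Your sketch never invokes this second branch, and without it the induction does not close. The ``finite local analysis'' you defer is precisely where this dichotomy has to be extracted; it is not merely a check that shifts do not accumulate, but the mechanism that terminates the shift-tracking altogether.
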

\begin{proof} Let $\tau,\sigma$ be splittable to $\eta$ and assume
that $\sigma$ is obtained from $\tau$ by a 
single shift at a mixed branch $b$.
Let $v,w$ be the two switches of $\tau$
on which $b$ is incident.
Note that since $\tau$ is complete
and hence recurrent, the switches $v,w$ are distinct:
otherwise $b$ defines an embedded circle of class $C^1$ in
$\tau$ which contains a single switch.
There is a trainpath
$\zeta:[0,3]\to \tau$ where 
$b=\zeta[1,2]$ and such that
the half-branches $\zeta[1,3/2],\zeta[2,5/2]$ 
are large (see Figure C). Up to isotopy,
the shift then moves the neighbor of
$\zeta$ at $\zeta(1)=v$ across the switch 
$\zeta(2)=w$.
This means that shifting
exchanges the two switches $v,w$ along $\zeta$.
The natural bijection
$\phi(\tau,\sigma)$  
of the branches of $\tau$ onto the branches
of $\sigma$ preserves the type of the 
branches (i.e. large, mixed, small). In particular,
the branch $\phi(\tau,\sigma)(b)$ in $\sigma$ is mixed,
and $\tau$ can be obtained from $\sigma$
by a single shift at $\phi(\tau,\sigma)(b)$.

Throughout we use the following immediate consequence
of uniqueness of splitting sequences
(Lemma 5.1 of \cite{H09}). 
If $\sigma\in {\cal V}({\cal T\cal T})$ 
is splittable to $\eta\in {\cal V}({\cal T\cal T})$
and if $\xi$ is obtained from $\sigma$ by a splitting
sequence and carries $\eta$ then $\xi$ is splittable
to $\eta$. 
We subdivide the argument into seven steps.

{\sl Step 1:}

Let $e$ be a large branch in $\tau$ and assume that
a splitting sequence connecting $\tau$ to $\eta$
includes a split at $e$. Then the minimal
cardinality of the preimage of a point $x$ 
contained in the interior of $e$ under \emph{any}
carrying map $F:\eta\to\tau$ is at least two
(see the proof of Lemma 5.1 of \cite{H09}). 
Up to isotopy, for any neighborhood $U$ of 
the branch $b$ in $S$ there is
a map $F:S\to S$ of class $C^1$ which is homotopic to the
identity, whose restriction to $\sigma$ is a carrying
map $\sigma\to \tau$ and which is the identity on $S-U$. 
Therefore  
the minimal cardinality of the preimage of any
point $y$ in the interior of $\phi(\tau,\sigma)(e)$ 
under any carrying map $\eta\to \sigma$ 
is at least
two as well. This implies that a splitting
sequence connecting $\sigma$ to $\eta$ includes a split at
$\phi(\tau,\sigma)(e)$ (see the proof of Lemma 5.1 of
\cite{H09}). In other words, a large branch $e$ of $\tau$
is a non-splittable $\eta$-configuration in $\tau$ if and
only if $\phi(\tau,\sigma)(e)$ is a non-splittable
$\eta$-configuration in $\sigma$.

{\sl Step 2:} 

Let $e$ be a large branch in $\tau$
which is not incident on an endpoint of $b$.
Then $e$ is contained in the complement of a small
neighborhood $U$ of $b$ in $S$.
Up to isotopy,  
the intersection of 
$\tau$ with $S-U$ coincides with the
intersection of $\sigma$ with $S-U$. 
The intersection with
$S-U$ of the train track $\tau_1$ 
obtained from $\tau$ by a right (or left) split
at $e$ is isotopic to the
intersection with $S-U$ of the train track $\sigma_1$
obtained from $\sigma$ by a right (or left) split 
at $e$. Thus $\sigma_1$ carries $\tau_1$. Therefore if 
$\tau_1$ is splittable to
$\eta$ then $\sigma_1$ carries $\eta$ and hence 
$\sigma_1$ is splittable
to $\eta$.

{\sl Step 3:} 

Let $e$ be a large branch in $\tau$ such that one
of the endpoints of $e$ is a switch $v$ on which 
the branch $b$ is incident. This means that
there is a trainpath $\rho:[0,2]\to \tau$ of length two
such that $\rho[0,1]=b$ and $\rho[1,2]=e$. 
Assume that the train track $\tau_1$ obtained from
$\tau$ by a right (or left) split at $e$ is 
splittable to $\eta$ and that the same holds true
for the train track $\sigma_1$ obtained from $\sigma$
by a right (or left) split at $\phi(\tau,\sigma)(e)$.
Figure F (which is just Figure 2.3.3 of \cite{PH92},
reproduced here for convenience) shows that 
if $b$ is a winner of the split connecting $\tau$ to $\tau_1$ then
$\phi(\tau,\sigma)(b)$ is a loser of the split
connecting $\sigma$ to $\sigma_1$. In Figure F,
this is for example the case if $\tau$ is as in the
left hand side of the figure and if $\tau_1$ is obtained
from $\tau$ by a right split. 

Assume without loss of generality that $b$ is a winner
of the right (or left) split connecting $\tau$ to $\tau_1$.
Then there is a train track $\tau_2$ which is
obtained from $\tau_1$ by a single right (or left) 
split at $\phi(\tau,\tau_1)(b)$ and such that
$\tau_2$ can be obtained from $\sigma_1$ by a single
shift. Since $\sigma_1$ is splittable to $\eta$,
the train track $\tau_2$ carries $\eta$ and hence $\tau_2$  
is splittable to $\eta$. Therefore, in this case
$\rho[0,2]$ is contained in an $\eta$-configuration of $\tau$.
There is a train track $\xi_1\in E(\tau,\gamma(\tau,\eta)(1))$ which can
be obtained from a train track $\xi_2\in E(\sigma,\gamma(\sigma,\eta)(1))$
by a single shift.
\begin{figure}[ht]
\begin{center}
\psfrag{Figure C}{Figure F}
\includegraphics
[width=0.7\textwidth]
{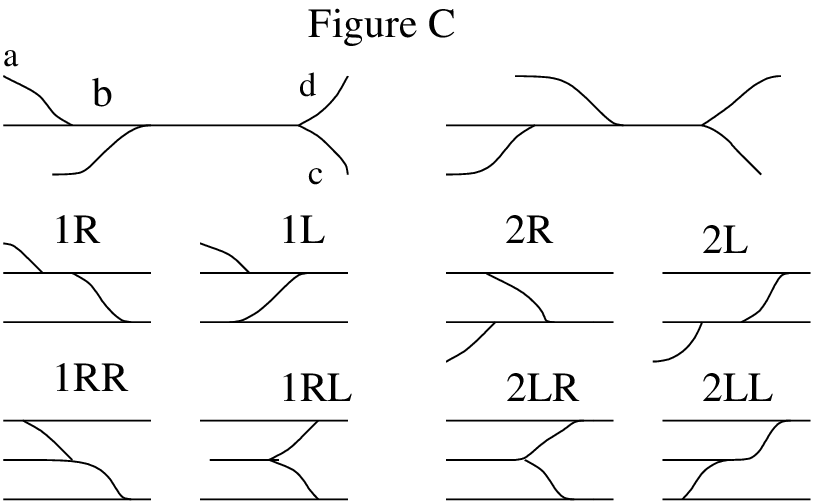}
\end{center}
\end{figure}

{\sl Step 4:} 

Let $e$ be a large branch in $\tau$ such that one
of the endpoints of $e$ is a switch $v$ on which 
the branch $b$ is incident, i.e. that
there is a trainpath $\rho:[0,2]\to \tau$ of length two
with $\rho[0,1]=b$ and $\rho[1,2]=e$. 
Assume that the train track $\tau_1$ obtained from
$\tau$ by a right (or left) split at $e$ is 
splittable to $\eta$ and that the same holds true
for the train track $\sigma_1$ obtained from $\sigma$
by a left (or right) split at $\phi(\tau,\sigma)(e)$.
We claim that in this case both $\tau$ and $\sigma$
can be modified with a splitting sequence of length 2 
to a train track $\xi\in E(\tau,\eta)\cap E(\sigma,\eta)$.

By the discussion in Step 3 above 
and uniqueness of splitting sequences,
the branch $b$ is a winner of the split connecting
$\tau$ to $\tau_1$, and $\phi(\tau,\sigma)(b)$ is a winner
of the split connecting $\sigma$ to $\sigma_1$.
The branches $b_1=\phi(\tau,\tau_1)(b)$ and 
$c_1=\phi(\sigma,\sigma_1)(\phi(\tau,\sigma)(b))$ are large.
In Figure F, this corresponds for example to the case that
$\tau$ is as in the left had side of the figure, that
$\tau_1$ is obtained from $\tau$ by a 
right split and that $\sigma_1$ is obtained from $\sigma$ by
a left split.

If the train track $\tau_1^\prime$ 
obtained from $\tau_1$ by a right
(or left) 
split at $b_1$ is splittable to $\eta$ then $\tau_1^\prime$
can be modified with a single shift to 
the train track $\sigma_1^\prime$ 
obtained from $\sigma$ by a 
right (or left) split at $\phi(\tau,\sigma)(e)$. Then
$\sigma_1^\prime$  
carries $\eta$ and hence is splittable to $\eta$. 
This violates uniqueness of splitting 
sequences connecting $\sigma$ to $\eta$ (Lemma 5.1 of \cite{H09}). 
Thus if a train track obtained from $\tau_1$ by a split
at $b_1$ is splittable to $\eta$, then this is the train track
$\tau_2$
obtained from $\tau_1$ by a left (or right) split at $b_1$.
Figure F shows that $\tau_2$ is
isotopic to a train track obtained from $\sigma_1$ by a 
right (or left) split at $c_1$ and the claim holds true.

If no train track obtained from $\tau_1$ by a 
split at $b_1$ is splittable to $\eta$ then
$\phi(\tau_1,\eta)(b_1)$ is a large branch in $\eta$.
The train track $\tau^\prime$ obtained from $\tau_1$ by
a right (or left) split at $b_1$ is splittable
to the train track $\eta^\prime$
obtained from $\eta$ by a right
(or left) split at $\phi(\tau_1,\eta)(b_1)$. 
Since $\sigma_1$ is splittable to $\eta$, it is also
splittable to $\eta^\prime$.
On the other hand, the train track $\sigma^\prime$ 
is obtained from 
$\sigma$ by a right (or left) split at $\phi(\tau,\sigma)(e)$
is a modification of $\tau^\prime$ with a single shift and
hence it carries $\eta^\prime$.
This contradicts
uniqueness of splitting sequences connecting $\sigma$
to $\eta^\prime$ (which also holds
true in the case that $\eta^\prime$ is not recurrent
and hence not complete).
The above claim is proven.

{\sl Step 5:}

Let $\rho:[0,n]\to \tau$ be
an $\eta$-configuration of $\tau$ 
which contains the mixed branch $b$. 

Figure F shows
that $\rho^\prime=\phi(\tau,\sigma)(\rho-b)$ is a reduced
trainpath in $\sigma$ or a reduced circle.
Namely, via perhaps reversing the orientation of $\rho$
we may assume that $b=\rho[i,i+1]$ for some $i\in \{0,\dots,n-2\}$ 
and that 
the neighbor of $\rho$ at $\rho(i)$ is incoming.
(This means that in Figure C, 
$\rho$ passes from the left to the right.)
The neighbor of $\rho$ at $\rho(i+1)$ is
also incoming. Since $\rho$
is reduced, the neighbor of $\rho$ 
at $\rho(i+1)$ is contained in the same
side of $\rho$ as the neighbor at $\rho(i)$ in a small
neighborhood of $\rho[0,n]$ in $S$.
(Thus in Figure C, 
if a neighborhood of $b$ in $\tau$ 
is as in the left part of the picture
then $\rho[i-1,i]$ is the
branch beginning at the top left corner of the picture,
and if a neighborhood of 
$b$ in $\tau$ is as in the right part of the picture then
$\rho[i-1,i]$ is the branch beginning at the bottom
left corner.)

The trainpath $\rho[i,n]$ is two-sided large and reduced.
Since $\rho[i+1,i+3/2]$ is a large half-branch, 
by definition of an $\eta$-configuration
there is a train track $\tau_1$ which
can be obtained from $\tau$ by a sequence of 
$\rho$-splits at branches contained in 
$\rho[i+2,n]$ such that
$\phi(\tau,\tau_1)(\rho[i+1,i+2])$ is a large branch.
There also is a train track $\sigma_1$ which can
be obtained from $\sigma$ by a sequence of 
$\rho^\prime$-splits and which can be obtained
from $\tau_1$ by a single shift at $\phi(\tau,\tau_1)(b)$.
By the definition of an $\eta$-configuration, 
the train track $\tau_2$ obtained from
$\tau_1$ by a $\rho$-split 
at the branch
$\phi(\tau,\tau_1)(\rho[i+1,i+2])$ (with the mixed branch
$\phi(\tau,\tau_1)(b)$ as a winner) followed by
a $\rho$-split at the branch 
$\phi(\tau,\tau_1)(\rho[i,i+1])$ is splittable to $\eta$.
By Step 3 above, the train track $\tau_2$  
can be modified with a single shift at the 
mixed branch $\phi(\tau,\tau_2)(b)$
to a train track 
$\sigma_2$ obtained
from $\sigma$ by a sequence of 
$\rho^\prime$-splits. 

Reapply this consideration to the maximal
two-sided large trainpath contained in 
$\phi(\tau,\tau_2)(\rho)$. Since the winner of any
split in a splitting sequence which modifies
$\tau$ to its image under the $\rho-\eta$-multi-split is 
contained in $\rho$, together with Steps 1-3 above we conclude
that there is a train track $\tilde \sigma$ which can
be obtained from $\gamma(\tau,\eta)(1)$ by a single shift
and which is splittable to $\gamma(\sigma,\eta)(1)$.

{\sl Step 6:} 

Let $\rho:[0,n]\to \tau$ be an $\eta$-configuration
such that the mixed branch $b$ is incident on a switch
contained in $\rho[0,n]$ but is not contained in $\rho[0,n]$.
There are now two possibilities. 

In the first case,
$b$ is incident on a switch $\rho(i)$ for some
$i\in \{1,\dots,n-1\}$. Assume that $b$ is an incoming neighbor of 
$\rho$, i.e. that the half-branch $\rho[i,i+1/2]$ is large.
By the discussion in Step 5 above,
there is a reduced trainpath $\rho^\prime$ in $\sigma$
which contains $\phi(\tau,\sigma)(\rho\cup b)$. Moreover,
if $\tau_1$ is obtained from $\tau$ by a sequence
of $\rho$-splits at branches in $\rho[i+1,n]$
and if $\phi(\tau,\tau_1)(\rho[i,i+1]$ is a large branch,
then $\phi(\tau,\tau_1)(b)$ is a loser of the $\rho$-split
of $\tau_1$ at $\phi(\tau,\tau_1)(\rho[i,i+1])$.
As a consequence, this case corresponds to an exchange of the
roles of $\tau$ and $\sigma$ in Step 5 above.

Together with Step 1 and Step 5 above, we conclude the following.
If either the mixed branch $b$ is contained in 
an $\eta$-configuration of $\tau$ or if $b$ is incident on 
a switch contained in the interior of an $\eta$-configuration of $\tau$
then $\gamma(\sigma,\eta)(1)$ can be obtained from
$\gamma(\tau,\eta)(1)$ by a single shift.

If $b$ is incident on the endpoint $\rho(0)$ of the $\eta$-configuration
$\rho$ of $\tau$ then $\phi(\tau,\sigma)(\rho)$ is a
reduced trainpath in $\sigma$. After modifying 
$\tau,\sigma$ with a sequence of 
$\rho$-splits (or $\phi(\tau,\sigma)(\rho)$-splits)
and perhaps reversing the orientation of $\rho$ we
may assume that $\rho[0,1]$ is a large branch.
By Step 3 above, since $b$ is not contained in the 
$\eta$-configuration $\rho$, the branch $b$  
can not be a winner of the
$\eta$-split at $\rho[0,1]$. This means that
either $\phi(\tau,\sigma)(b)\cup \phi(\tau,\sigma)(\rho)$
is contained in an $\eta$-configuration of $\sigma$
and $\gamma(\sigma,\eta)(1)$ can be obtained from
$\gamma(\tau,\eta)(1)$ by a single shift,
or $\phi(\tau,\sigma)(b)$ is a loser of an
$\eta$-split of $\sigma$ at $\phi(\tau,\sigma)(\rho[0,1])$
and by Step 4 above,
both $\tau,\sigma$ are splittable to the same
train track $\xi\in E(\tau,\eta)\cap E(\sigma,\eta)$ with 
a splitting sequence of uniformly bounded length.

{\sl Step 7:}

Steps 1-6 above 
and Corollary \ref{flatstripcontrol} now imply
the following. Either $\gamma(\sigma,\eta)(1)$ can
be obtained from $\gamma(\tau,\eta)(1)$ by a single
shift, or 
\[d(\gamma(\tau,\eta)(i),\gamma(\sigma,\eta)(i))\leq 2L_3D
\text{ for all }i\]
where $L_3>0$ is as in Corollary \ref{flatstripcontrol} and where
$D>0$ is a universal constant.

If $\gamma(\sigma,\eta)(1)$ can be obtained from
$\gamma(\tau,\eta)(1)$ by a single shift then 
use the above consideration for 
to $\gamma(\tau,\eta)(1)$ and
$\gamma(\sigma,\eta)(1)$. Since 
by invariance under the action of the mapping class
group and cocompactness, the distance in ${\cal T\cal T}$ between
any two shift equivalent train tracks is uniformly bounded,
we conclude that 
$\gamma(\tau,\eta)$ and $\gamma(\sigma,\eta)$ are
weight-$L_4$ fellow travellers for a universal constant 
$L_4>1$ as claimed. This completes the proof of the lemma.
\end{proof}

As a preparation for a general control of balanced splitting paths
for train tracks related by
shifting we need

\begin{lemma}\label{splittingcontrol}
For every $k>0$ there is a number $n(k)>0$ with the
following property. Let 
$\tau,\eta$ be complete train tracks with
$d(\tau,\eta)\leq k$ which carry a common complete
geodesic lamination $\lambda$. Then $\tau,\eta$ are
splittable to the same complete train track $\sigma$ with 
a splitting sequence of length at most $n(k)$ each.
\end{lemma}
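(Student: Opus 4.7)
Plan: The plan is to combine Lemma~6.7 of \cite{H09} with Proposition~A.6 of \cite{H09} and close with a finiteness argument exploiting the cocompact action of ${\cal M\cal C\cal G}(S)$ on ${\cal T\cal T}$.

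First I would apply Lemma~6.7 of \cite{H09} to $\tau,\eta$: since these complete train tracks satisfy $d(\tau,\eta)\leq k$ and carry the common complete geodesic lamination $\lambda$, the lemma provides a complete train track $\xi$ carrying $\lambda$ and carried by both $\tau$ and $\eta$, with $d(\tau,\xi),d(\eta,\xi)\leq p_3(k)$ for a function $p_3$ depending only on $k$. Next I would apply Proposition~A.6 of \cite{H09} to each of the pairs $\tau\succ\xi$ and $\eta\succ\xi$, each consisting of complete train tracks carrying $\lambda$, to produce complete train tracks $\sigma_\tau\in E(\tau,\lambda)$ and $\sigma_\eta\in E(\eta,\lambda)$ which still carry $\lambda$, lie in a uniformly bounded neighborhood of $\xi$, and are reached from $\tau,\eta$ by splitting sequences of length bounded only in terms of $k$. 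In particular $d(\sigma_\tau,\sigma_\eta)$ is bounded by a universal function of $k$ and the pair $(\sigma_\tau,\sigma_\eta)$ still carries $\lambda$, so the same construction can be iterated.

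The main obstacle is termination of this iteration with a uniform bound, since the bound on the mutual distance of the iterates does not a priori shrink. To handle it, I would use the following finiteness argument. Since ${\cal T\cal T}$ is locally finite and ${\cal M\cal C\cal G}(S)$ acts on it properly and cocompactly, there are only finitely many ${\cal M\cal C\cal G}(S)$-orbits of pairs $(\alpha,\beta)\in{\cal V}({\cal T\cal T})\times{\cal V}({\cal T\cal T})$ with $d(\alpha,\beta)\leq k$. For each orbit representative which does admit a common carried complete geodesic lamination, the iteration above produces, in finitely many steps, a common splittable descendant $\sigma\in E(\tau)\cap E(\eta)$ to which both entries split by sequences of some finite length (in other words, for each fixed such pair, existence of a common descendant is guaranteed by the iterated application of Lemma~6.7 and Proposition~A.6, even though the length is not uniform in the iteration). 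Taking the maximum of these lengths over the finite set of orbit representatives and invoking ${\cal M\cal C\cal G}(S)$-equivariance of distances in ${\cal T\cal T}$ yields the desired universal bound $n(k)$.
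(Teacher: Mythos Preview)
Your finiteness argument at the end is correct and matches the paper's approach for extracting the uniform bound $n(k)$: there are only finitely many ${\cal M\cal C\cal G}(S)$-orbits of pairs $(\tau,\eta)$ with $d(\tau,\eta)\leq k$ carrying a common complete geodesic lamination, so once existence of a common splittable descendant is known for each representative, taking the maximum gives $n(k)$.

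The gap is in the existence step. Your iteration of Lemma~6.7 and Proposition~A.6 of \cite{H09} does not establish that $\tau$ and $\eta$ are ever splittable to the \emph{same} train track. One round produces $\sigma_\tau\in E(\tau,\lambda)$ and $\sigma_\eta\in E(\eta,\lambda)$ which are close to each other and still carry $\lambda$; but they need not coincide, and iterating yields a sequence of such pairs whose mutual distance is bounded but does not decrease. The parenthetical assertion that ``existence of a common descendant is guaranteed by the iterated application of Lemma~6.7 and Proposition~A.6'' is precisely the point at issue, and nothing in the iteration forces the two tracks to merge. Without existence for each fixed representative, the orbit-maximum argument has nothing to take a maximum over.

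The paper supplies existence by a different route. Since the set of complete geodesic laminations carried by both $\tau$ and $\eta$ is nonempty and open-and-closed in ${\cal C\cal L}$, it contains a \emph{minimal} complete geodesic lamination $\mu$. Then Corollary~2.4.3 of \cite{PH92} applies directly: two train tracks carrying a common minimal lamination are splittable to a common train track $\zeta$, which is complete because it carries $\mu$. With this existence result in hand, your finiteness-of-orbits argument finishes the proof exactly as you wrote it.
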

\begin{proof}
Let $\tau,\eta$ be any two train tracks which carry
a common complete geodesic lamination $\lambda$.
Then the set of all complete geodesic laminations carried
by both $\tau,\eta$ is open and closed in ${\cal C\cal L}$
(Lemma 2.3 of \cite{H09}), and
it is non-empty. Therefore there is a minimal complete
geodesic lamination $\mu$ carried by both $\tau,\eta$
(this is explained in detail in the proof of 
Lemma 3.3 and Lemma 3.4 of \cite{H09}).
By Corollary 2.4.3 of \cite{PH92}, the train tracks
$\tau,\eta$ are splittable to a common train track $\zeta$ 
which carries $\mu$. Since $\mu$ is complete,
the train track $\zeta$ is complete as well.

Now there are only finitely many orbits under the action of
the mapping class group of pairs of complete train tracks
which carry a common complete geodesic lamination
and whose distance is at most $k$. 
By invariance under the action of the mapping class group,
the lemma follows.
\end{proof}

The next lemma is the main technical step towards the proof of
Theorem \ref{flatconfig3}.

\begin{lemma}\label{shift}
There is a number
$L_5>0$ with the following
property. Let $\tau,\sigma\in {\cal V}({\cal T\cal T})$ 
be splittable 
to $\eta\in {\cal V}({\cal T\cal T})$ 
and assume that $\sigma$ is carried 
by $\tau$.
Then the balanced splitting paths
$\gamma(\tau,\eta),\gamma(\sigma,\eta)$ are
weight-$L_5$ fellow travellers.
\end{lemma}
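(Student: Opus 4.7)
The plan is to reduce the general carrying hypothesis $\sigma\prec\tau$ to the cubical euclidean cone fellow-traveller statement of Corollary \ref{flatstripcontrol}, by approximating $\sigma$ with a nearby train track that is actually obtained from $\tau$ by a genuine splitting sequence.

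First I would fix a complete geodesic lamination $\lambda$ carried by $\eta$; then $\lambda$ is carried by all of $\tau,\sigma,\eta$. Using Lemma 6.6 of \cite{H09} together with Proposition A.6 of \cite{H09}, which convert the carrying relation $\sigma\prec\tau$ into a splitting sequence from $\tau$ reaching a nearby train track while preserving a chosen carried lamination, I would produce $\sigma^\prime\in E(\tau,\lambda)$ with $d(\sigma,\sigma^\prime)\leq p$ for a universal $p$, and with the additional property that $\sigma^\prime$ carries $\eta$ and hence is splittable to $\eta$. Then $\tau,\sigma^\prime,\eta\in E(\tau,\lambda)$ with $\tau,\sigma^\prime$ both splittable to $\eta$, and Corollary \ref{flatstripcontrol} yields that $\gamma(\tau,\eta)$ and $\gamma(\sigma^\prime,\eta)$ are weight-$L_3$ fellow travellers, giving the bound $d(\gamma(\tau,\eta)(i),\gamma(\sigma^\prime,\eta)(i))\leq L_3 d(\tau,\sigma^\prime)\leq L_3(d(\tau,\sigma)+p)$ for every $i$.

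It remains to compare $\gamma(\sigma,\eta)$ with $\gamma(\sigma^\prime,\eta)$. Since $d(\sigma,\sigma^\prime)\leq p$ and both $\sigma,\sigma^\prime$ carry $\eta$, Lemma \ref{splittingcontrol} applied to a complete lamination carried by $\eta$, combined with Corollary 2.4.3 of \cite{PH92}, produces a train track $\xi\in E(\sigma,\eta)\cap E(\sigma^\prime,\eta)$ which is splittable from each of $\sigma,\sigma^\prime$ by splitting sequences of universally bounded length and is itself splittable to $\eta$. Two applications of Lemma \ref{flatconfig2}, inside $E(\sigma,\eta)$ and inside $E(\sigma^\prime,\eta)$, then show that $\gamma(\sigma,\eta)$ and $\gamma(\sigma^\prime,\eta)$ both remain within a universally bounded distance of $\gamma(\xi,\eta)$, hence of each other, uniformly in the parameter. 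Combining this with the previous estimate and absorbing the additive constants at the cost of slightly enlarging the multiplicative constant (the case $\tau=\sigma$ being trivial) gives the required weight-$L_5$ fellow-traveller property.

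The main obstacle is the production of $\sigma^\prime$: it must simultaneously be genuinely obtained from $\tau$ by a splitting sequence, lie within a universally bounded distance of $\sigma$, and be splittable to $\eta$. Carrying is strictly weaker than splittability, so Lemma 6.6 of \cite{H09} on its own only provides a splitting sequence from $\tau$ ending near $\sigma$ without any control on which complete geodesic laminations the endpoint carries; it must be combined with Proposition A.6 of \cite{H09}, applied with the lamination $\lambda$ carried by $\eta$, in order to ensure that the resulting $\sigma^\prime$ carries $\lambda$ and in fact $\eta$. Once $\sigma^\prime$ has been secured the remainder of the argument is a patchwork of the fellow-traveller statements established earlier in this section.
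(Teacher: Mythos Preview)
There is a genuine gap at the heart of your reduction. Proposition A.6 of \cite{H09}, applied to the carrying relation $\sigma\prec\tau$ and the lamination $\lambda$ carried by $\eta$, produces a train track $\sigma^\prime$ obtained from $\tau$ by a splitting sequence which carries $\lambda$ and satisfies $d(\sigma,\sigma^\prime)\leq p$. It does \emph{not} assert that $\sigma^\prime$ carries $\eta$. Your inference ``$\sigma^\prime$ carries $\eta$ and hence is splittable to $\eta$'' is therefore unjustified: the ``hence'' is fine (by uniqueness of splitting sequences, since $\sigma^\prime\in E(\tau,\lambda)$ and $\tau$ is splittable to $\eta$), but the premise ``$\sigma^\prime$ carries $\eta$'' is simply not available. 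Carrying $\lambda$ is strictly weaker than carrying $\eta$, and there is no reason the particular $\lambda$-splitting path that reaches a neighborhood of $\sigma$ should stay inside $E(\tau,\eta)$; it may overshoot $\eta$ along some branches. Without this, $\gamma(\sigma^\prime,\eta)$ is not defined and your application of Corollary \ref{flatstripcontrol} collapses. The same problem reappears in your second paragraph: Lemma \ref{splittingcontrol} yields a common $\xi$ to which both $\sigma$ and $\sigma^\prime$ are splittable, but gives no reason for $\xi$ to be splittable to $\eta$, so $\xi\in E(\sigma,\eta)\cap E(\sigma^\prime,\eta)$ is again unproven.

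The paper's proof confronts exactly this obstruction and does not try to avoid it. It first establishes the single-shift case (Lemma \ref{shiftingbasic}) and then runs an induction on the length of a stretched-out splitting and shifting sequence (Step 1) to show that if $\tau$ and $\sigma$ are related by a bounded such sequence and both are splittable to the \emph{same} target, then the balanced paths are uniform fellow travellers. In Step 2 the paper produces, via Proposition A.6 and Lemma 6.7 of \cite{H09}, intermediate train tracks $\tau^\prime,\sigma^\prime$ near $\sigma$ and a modified endpoint $\eta^{\prime\prime}$ near $\eta$ to which all the relevant tracks are genuinely splittable; the comparisons $\gamma(\tau^\prime,\eta^{\prime\prime})$ vs.\ $\gamma(\sigma^\prime,\eta^{\prime\prime})$ and $\gamma(\sigma^\prime,\eta^{\prime\prime})$ vs.\ $\gamma(\sigma,\eta^{\prime\prime})$ then invoke Step 1, because these pairs are related by splitting \emph{and shifting} sequences, not by splitting alone. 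In short, Lemma \ref{shiftingbasic} is the essential ingredient your argument attempts to bypass; without it, one cannot pass from the approximating train track back to $\sigma$ while keeping a common splitting target.
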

\begin{proof} 
For convenience of terminology, call two complete  
train tracks $\zeta_1,\zeta_2$ \emph{shift equivalent}
if $\zeta_1$ can be obtained from $\zeta_2$ by
a sequence of shifts. This clearly defines an
equivalence relation on the set of all
complete train tracks on $S$.

We divide the proof of the lemma into two steps.

{\sl Step 1:}
 
By Theorem 2.4.1 of \cite{PH92},
if $\sigma\in {\cal V}({\cal T\cal T})$ 
is carried by $\tau\in {\cal V}({\cal T\cal T})$ then $\sigma$ can
be obtained from $\tau$ by a splitting and shifting
sequence. In other words, $\tau$ can be connected
to $\sigma$ by a sequence $\{\eta(i)\}_{0\leq i\leq k}$ 
of minimal length such that for all $i$,
$\eta(i+1)$ can be obtained from $\eta(i)$ either 
by a single split or a single shift. We call such a
sequence a \emph{stretched-out splitting and shifting
sequence}.

We claim 
that there is a number
$L>1$ with the following property. Assume that
$\sigma$ can be obtained from
$\tau$ by a stretched-out splitting and shifting
sequence of length at most $n$. If $\tau,\sigma$
are splittable to a common complete train track $\eta$ 
then the balanced splitting paths
$\gamma(\tau,\eta)$ 
and $\gamma(\sigma,\eta)$ 
satisfy 
\begin{equation}\label{progressfellow}
d(\gamma(\tau,\eta)(i),\gamma(\sigma,\eta)(i))
\leq nL
\text{ for all }i.
\end{equation}

To determine a
number $L>1$ with this property we need the 
following preparation. 
Let $\alpha,\beta\in {\cal V}({\cal T\cal T})$.
Assume that $\beta$ is obtained from $\alpha$ by a 
single shift and that $\alpha$ 
is splittable
to a train track $\zeta$. Then $\zeta$ is carried by $\beta$.
Let $\lambda$ be a complete
geodesic lamination carried by $\zeta$. By Proposition A.6
of \cite{H09}, there is a number $k>0$ such that 
$\beta$ is splittable to a train track $\xi$ 
which carries $\lambda$ and is such that $d(\zeta,\xi)\leq k$.
By Lemma \ref{splittingcontrol},
there is a train track $\zeta^\prime$ which can be obtained from
both $\zeta,\xi$ by a splitting sequence of length at most
$n(k)$. Thus $\alpha,\beta,\zeta,\xi$ are 
all splittable to $\zeta^\prime$.

By Corollary \ref{flatstripcontrol}, the balanced
splitting paths $\gamma(\alpha,\zeta), 
\gamma(\alpha,\zeta^\prime)$
are weight-$L_3$ fellow travellers. Thus
Lemma \ref{shiftingbasic}, applied to $\alpha,\beta,\zeta^\prime$,
implies that 
there is a number $L>2L_3n(k)$ so that the balanced
splitting paths $\gamma(\alpha,\zeta)$
and $\gamma(\beta,\zeta^\prime)$
satisfy
\begin{equation}\label{step5est}
d(\gamma(\alpha,\zeta)(i),
\gamma(\beta,\zeta^\prime)(i))\leq L/2
\text{ for all }i.
\end{equation}

We show by induction on $n\geq 0$ that the inequality
(\ref{progressfellow}) 
holds true for this number $L$. 
Note first that if the minimal length of 
a stretched out splitting and shifting sequence connecting
$\tau$ to $\sigma$ vanishes then $\tau=\sigma$ and  
there is nothing to show.
So assume that the claim holds true whenever
there is a stretched out splitting and shifting sequence 
connecting $\tau$ to $\sigma$ whose 
length does not exceed
$n-1$ for some $n\geq 1$.

Let $\sigma$ be obtained from
$\tau$ by a stretched-out splitting and shifting sequence
of length $n$. If this sequence can be
arranged to begin with a split at a large
branch $e$ of $\tau$ then this split is
an $\eta$-split since $\sigma$ is splittable
to $\eta$ by assumption. Let $\tilde \tau$
be the split track. Then $\tilde \tau$ can
be connected to $\sigma$ by a stretched-out
splitting and
shifting sequence of length $n-1$. 
Therefore by the induction hypothesis, the
balanced splitting paths 
$\gamma(\tilde\tau,\eta),\gamma(\sigma,\eta)$
satisfy the estimate (\ref{progressfellow}) for
$n-1$.
By Corollary \ref{flatstripcontrol}, the balanced
splitting paths $\gamma(\tau,\eta),
\gamma(\tilde \tau,\eta)$ are weight-$L_3$ fellow
travellers and hence they satisfy the estimate
(\ref{progressfellow}) with $n=1$.
Together we conclude that
the estimate (\ref{progressfellow}) holds true for the
balanced splitting paths $\gamma(\tau,\eta),
\gamma(\sigma,\eta)$.

If there is no splitting and shifting
sequence of length $n$ connecting $\tau$ to $\sigma$
which begins
with a single split then let $\tilde\sigma$ be
a train track obtained from $\tau$ by a single
shift which can be connected to $\sigma$ by a 
stretched-out splitting
and shifting sequence of length $n-1$.
Let $\zeta$ be a train track with the property
that both $\eta,\tilde \sigma$ are splittable
to $\zeta$ and that the length of a splitting
sequence connecting $\eta$ to $\zeta$ is at most $n(k)$.
Such a train track exists by the above discussion. 
Apply the induction hypothesis
to the balanced splitting paths $\gamma(\tilde \sigma,\zeta),
\gamma(\sigma,\zeta)$. We conclude that
\begin{equation}\label{step61}
d(\gamma(\tilde\sigma,\zeta)(i),\gamma(\sigma,\zeta)(i))\leq
(n-1)L \text{ for all }i.
\end{equation}
On the other hand, the estimate
(\ref{step5est}) yields that 
\begin{equation}\label{step62}
d(\gamma(\tau,\eta)(i),\gamma(\tilde \sigma,\zeta)(i))\leq L/2
\text{ for all }i.
\end{equation}
Moreover, since $L>2L_3n(k)$ by assumption,
Corollary \ref{flatstripcontrol} applied to
$\sigma,\eta,\zeta$ implies that
\begin{equation}\label{step63}
d(\gamma(\sigma,\zeta)(i),\gamma(\sigma,\eta)(i))\leq L/2
\text{ for all }i.
\end{equation}
The estimates (\ref{step62},\ref{step61},\ref{step63})
together yield the inequality 
(\ref{progressfellow}) which
completes the induction step. The estimate
(\ref{progressfellow}) is proven.

{\sl Step 2:}

Using Step 1 above, we are now able to complete the
proof of the lemma. Namely, assume that
$\sigma\prec\tau$ are both splittable to a complete
train track $\eta$. Let $\lambda$ be a complete geodesic
lamination carried by $\eta$. By Proposition A.6 of 
\cite{H09}, $\tau$ is splittable to a complete
train track $\tau^\prime$ which carries
$\lambda$ and is such that 
\[d(\tau^\prime,\sigma)\leq \kappa_0\]
where $\kappa_0 >0$ is a universal constant.

By Lemma 6.7 of \cite{H09},
there is a number $\kappa_1>0$ only depending on 
$\kappa_0$ and there is a train track $\sigma^\prime$
which carries $\lambda$ and which can be obtained 
from both $\tau^\prime,\sigma$ by a 
stretched-out splitting and shifting sequence
of length at most $\kappa_1$.
Another application of Lemma 6.7 of \cite{H09}
yields a universal number $\kappa_2>0$ and a 
train track $\eta^\prime$ with 
\[d(\eta,\eta^\prime)\leq \kappa_2\]
which is carried by both $\sigma^\prime$ and $\eta$
and hence by $\tau^\prime,\sigma$.
Proposition A.6 of \cite{H09} and 
Lemma \ref{splittingcontrol} then show the existence
of a train track $\eta^{\prime\prime}$ with 
\[d(\eta,\eta^{\prime\prime})\leq \kappa_3\]
for a universal constant $\kappa_3>0$ such that
both $\sigma^\prime$ and $\eta$ are splittable
to $\eta^{\prime\prime}$.

By Corollary \ref{flatstripcontrol}, 
applied to $\tau,\eta,\tau^\prime,\eta^{\prime\prime}$, the
balanced splitting paths
\[\gamma(\tau,\eta),\gamma(\tau^\prime,\eta^{\prime\prime})\]
are weight-$L_3$-fellow travellers.
Since $\tau^\prime$ can be connected to $\sigma^\prime$
by a stretched-out splitting and shifting sequence
of length at most $\kappa_1$ and since
$\sigma^\prime,\tau^\prime$ are both splittable to 
$\eta^{\prime\prime}$, Step 1 above shows that
the balanced splitting paths
\[\gamma(\tau^\prime,\eta^{\prime\prime}),
\gamma(\sigma^\prime,\eta^{\prime\prime})\]
are uniform fellow travellers. Another application
of Step 1 yields that the balanced splitting
paths \[\gamma(\sigma^\prime,\eta^{\prime\prime}),
\gamma(\sigma,\eta^{\prime\prime})\]
are uniform fellow travellers as well.
Finally Corollary \ref{flatstripcontrol} shows that
the balanced splitting paths
\[\gamma(\sigma,\eta^{\prime\prime}),\gamma(\sigma,\eta)\]
are uniform fellow travellers. 
Together the lemma follows.
\end{proof}

We use Lemma \ref{shift} to show

\begin{corollary}\label{near}
For every $R>0$ there is a number $L_6=L_6(R)>0$ 
with the following property. 
Let $\tau,\eta\in {\cal V}({\cal T\cal T})$ be
splittable to $\tau^\prime,\eta^\prime$.
Assume that $\tau^\prime,\eta^\prime$ carry a common
complete geodesic lamination $\lambda$
and that $d(\tau,\eta)\leq R,d(\tau^\prime,\eta^\prime)\leq R$.
Then 
\[d(\gamma(\tau,\tau^\prime)(i),
\gamma(\eta,\eta^\prime)(i))\leq L_6\text{ for all }i.\]
\end{corollary}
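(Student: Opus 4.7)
The plan is to reduce the comparison of $\gamma(\tau,\tau^\prime)$ and $\gamma(\eta,\eta^\prime)$ to the case of two balanced splitting paths ending at a common train track, and then to manufacture a complete train track carried by both $\tau$ and $\eta$ so that Lemma \ref{shift} can be invoked twice.

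First I would apply Lemma \ref{splittingcontrol} (together with Corollary 2.4.3 of \cite{PH92} to preserve $\lambda$) to produce a complete train track $\zeta$ carrying $\lambda$ such that both $\tau^\prime$ and $\eta^\prime$ are splittable to $\zeta$ by splitting sequences of length at most $n(R)$. Then $\tau$ and $\eta$ are themselves splittable to $\zeta$, and $\tau,\tau^\prime,\zeta$ all lie in $E(\tau,\lambda)$ while $\eta,\eta^\prime,\zeta$ all lie in $E(\eta,\lambda)$. Applying Corollary \ref{flatstripcontrol} inside $E(\tau,\lambda)$ to the pairs $(\tau,\tau^\prime)$ and $(\tau,\zeta)$ shows that $\gamma(\tau,\tau^\prime)$ and $\gamma(\tau,\zeta)$ are weight-$L_3$ fellow travellers, with absolute bound controlled by $L_3n(R)$; the analogous statement holds for $\gamma(\eta,\eta^\prime)$ and $\gamma(\eta,\zeta)$. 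It therefore suffices to bound the distance between $\gamma(\tau,\zeta)$ and $\gamma(\eta,\zeta)$.

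To handle this remaining step, I would apply Lemma 6.7 of \cite{H09} to the pair $(\tau,\eta)$: since both carry $\lambda$ and $d(\tau,\eta)\leq R$, there is a train track $\xi$ which carries $\lambda$, is carried by both $\tau$ and $\eta$, and satisfies $d(\tau,\xi),d(\eta,\xi)\leq p_3(R)$. Since $\xi$ carries $\lambda$, Proposition A.6 of \cite{H09} lets me split $\xi$ to a train track $\xi^*$ carrying $\lambda$ and at uniformly bounded distance from $\zeta$. Another application of Lemma \ref{splittingcontrol} to the pair $(\xi^*,\zeta)$ then produces a common complete train track $\zeta^*$ carrying $\lambda$ to which both $\xi^*$ and $\zeta$ are splittable via splitting sequences of uniformly bounded length. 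In particular, each of $\tau,\eta,\xi$ is splittable to $\zeta^*$, while $\xi\prec\tau$ and $\xi\prec\eta$.

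Lemma \ref{shift} now applies both to the triple $(\tau,\xi,\zeta^*)$ and to the triple $(\eta,\xi,\zeta^*)$, yielding that $\gamma(\tau,\zeta^*)$ and $\gamma(\xi,\zeta^*)$, and likewise $\gamma(\eta,\zeta^*)$ and $\gamma(\xi,\zeta^*)$, are weight-$L_5$ fellow travellers. Combining these two estimates shows that $\gamma(\tau,\zeta^*)$ and $\gamma(\eta,\zeta^*)$ are uniform fellow travellers with constant depending only on $R$. A final application of Corollary \ref{flatstripcontrol} inside $E(\tau,\lambda)$ and $E(\eta,\lambda)$ replaces $\zeta^*$ by $\zeta$ at uniformly bounded cost, yielding the desired bound between $\gamma(\tau,\zeta)$ and $\gamma(\eta,\zeta)$ and, together with the first paragraph, the corollary. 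The main technical obstacle will be arranging that $\lambda$ is carried by every intermediate train track produced along the way, so that the cubical Euclidean cone arguments and the splittability hypotheses of Lemma \ref{shift} hold with constants depending only on $R$.
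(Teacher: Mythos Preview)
Your proposal is correct and follows essentially the same route as the paper: pass to a common splitting target $\zeta$ via Lemma~\ref{splittingcontrol}, manufacture an intermediate track carried by both $\tau$ and $\eta$ via Lemma~6.7 of \cite{H09} (the paper's $\beta_0$, your $\xi$), and invoke Lemma~\ref{shift} twice through that intermediary. The only cosmetic differences are that the paper works with a fresh lamination $\mu$ carried by $\zeta$ rather than tracking $\lambda$ throughout, and in your Step~4 the paper cites Lemma~6.7 together with Lemma~\ref{splittingcontrol} (rather than Proposition~A.6 alone) to produce the common deeper target---your phrasing there is slightly loose since no carrying relation between $\xi$ and $\zeta$ has yet been established, but the repair is exactly the paper's move.
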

\begin{proof}
Let $\tau,\tau^\prime,\eta,\eta^\prime$ be as in the lemma.
By Lemma \ref{splittingcontrol}, $\tau^\prime,\eta^\prime$
are splittable to the same complete train track $\zeta$ with a
splitting sequence of length at most $n(R)$.
Let $\mu$ be a complete geodesic lamination carried
by $\zeta$.

By Lemma 6.7 of \cite{H09}, there is a complete
train track $\beta_0$ which is carried by
both $\tau,\eta$, which carries $\mu$ and such that
\[d(\tau,\beta_0)\leq p_0(R),d(\eta,\beta_0)\leq p_0(R)\]
where $p_0(R)>0$ only depends on $R$.
Another application
of Lemma 6.7 of \cite{H09} and of Lemma \ref{splittingcontrol}
shows that $\beta_0$ is splittable to a 
train track $\zeta^\prime$
which can be obtained from $\zeta$ by a splitting sequence
of length bounded from above by a number $p_1(R)>0$ 
only depending on $R$ (compare the discussion in Step 2 of the
proof of Lemma \ref{shift}).
Then Lemma \ref{shift} yields that
\[d(\gamma(\tau,\zeta^\prime)(i),
\gamma(\beta_0,\zeta^\prime)(i))\leq p_2(R),
d(\gamma(\eta,\zeta^\prime)(i),\gamma(\beta_0,
\zeta^\prime)(i))\leq p_2(R)\]
for all $i$ where $p_2(R)>0$ only depends on $R$.

On the other hand, since the length of a splitting
sequence connecting $\tau^\prime,\eta^\prime$ to 
$\zeta^\prime$ is bounded form above by a number
only depending on $R$,
Lemma \ref{flatconfig} shows that there is a number
$p_3(R)>0$ such that 
\[d(\gamma(\tau,\tau^\prime)(i),\gamma(\tau,\zeta^\prime)(i))\leq p_3(R)\]
and that 
\[d(\gamma(\eta,\eta^\prime)(i),\gamma(\eta,\zeta^\prime)(i))\leq p_3(R)\]
for all $i$.
Together this shows the corollary.
\end{proof}

For any recurrent train track $\sigma$
which is splittable to a recurrent train track $\sigma^\prime$
there is a unique balanced splitting path
$\gamma_0(\sigma,\sigma^\prime)$ connecting
$\sigma$ to $\sigma^\prime$. Let $\tau$
be a complete extension of $\sigma$ and assume that
the complete train track $\tau^\prime$ is the endpoint
of a sequence issuing from 
$\tau$ which is induced by a splitting
sequence connecting $\sigma$ to $\sigma^\prime$ as in 
Proposition \ref{inducing}. For each $i$ there is 
a train track $\tau(i)\in E(\tau,\tau^\prime)$ which
contains the train track $\gamma_0(\sigma,\sigma^\prime)(i)$
as a subtrack. An example is the endpoint of a sequence
issuing from $\tau$ which is induced by 
a splitting sequence connecting $\sigma$ to 
$\gamma_0(\sigma,\sigma^\prime)(i)$.
Such a train track is not unique, but
by Lemma \ref{tightcontrol}, the diameter in ${\cal T\cal T}$
of the set of all train tracks with this property is
uniformly bounded (by $q^3$). 

Our next goal is to estimate the
distance between the train tracks $\tau(i)$ and 
$\gamma(\tau,\tau^\prime)(i)$.

\begin{lemma}\label{doubleinduced}
There is a number $L_7>0$ with the following property.
Let $\sigma$ be a recurrent 
train track which is splittable to the recurrent train
track $\sigma^\prime$. Let
$\tau$ be a complete extension of $\sigma$ and let
$\tau^\prime$ be the endpoint of a sequence issuing from 
$\tau$ which is induced
by a splitting sequence connecting $\sigma$ to $\sigma^\prime$.
For each $i$ let $\tau(i)\in E(\tau,\tau^\prime)$ be a train 
track which contains $\gamma_0(\sigma,\sigma^\prime)(i)$ as a
subtrack. Then 
\[d(\gamma(\tau,\tau^\prime)(i),\tau(i))\leq L_7 \text{ for all }i.\]
\end{lemma}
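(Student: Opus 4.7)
My plan is to prove the inequality by induction on $i$, showing that at each step $\gamma(\tau,\tau^\prime)(i)$ lies within a uniformly bounded distance of \emph{some} complete extension of $\gamma_0(\sigma,\sigma^\prime)(i)$ that sits inside $E(\tau,\tau^\prime)$. Combined with a uniform bound on the diameter (in ${\cal T\cal T}$) of the set of all complete extensions of $\gamma_0(\sigma,\sigma^\prime)(i)$ contained in $E(\tau,\tau^\prime)$, this will yield the lemma. The latter bound comes from Proposition \ref{inducing}(5) and Lemma \ref{tightcontrol}: any two complete extensions of a common subtrack $\zeta$ in $E(\tau,\tau^\prime)$ obtained from $\tau$ via sequences induced by the same splitting sequence $\sigma \to \zeta$ have $i_\zeta$-intersection bounded by a constant depending only on the topological type of $S$, and hence the ${\cal M\cal C\cal G}(S)$-equivariance together with the counting bound $q(m)$ yields uniformly bounded distance.

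First I would establish the following structural fact. Let $\nu$ be any complete train track containing a recurrent subtrack $\alpha$, and suppose $\nu$ is splittable to $\nu^\prime$ where $\nu^\prime$ contains some $\alpha^\prime$ obtained from $\alpha$ by a splitting sequence. Then every splittable $\alpha^\prime$-configuration $\rho$ in $\alpha$ (a reduced trainpath or circle) is contained in a unique maximal splittable $\nu^\prime$-configuration of $\nu$ whose intersection with $\alpha$ is precisely $\rho$ (after possibly tightening $\nu$ at $\rho$ via the $(\rho,\lambda)$-modification from the remark after Lemma \ref{tightcontrol}). This matches the $\nu^\prime$-moves at the extension level with the $\alpha^\prime$-moves at the subtrack level, with the discrepancy controlled by Lemma \ref{tightcontrol}.

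Second, for the induction step, suppose that $\gamma(\tau,\tau^\prime)(i)$ lies within bounded distance of a complete extension $\nu_i \in E(\tau,\tau^\prime)$ of $\sigma_i = \gamma_0(\sigma,\sigma^\prime)(i)$. Performing one $\tau^\prime$-move on $\nu_i$ processes the full multi-split at every splittable $\tau^\prime$-configuration; by the structural fact above, this includes the full multi-split at every splittable $\sigma^\prime$-configuration of $\sigma_i$, producing a complete extension $\nu_{i+1}$ which contains $\sigma_{i+1} = \gamma_0(\sigma,\sigma^\prime)(i+1)$ as a subtrack and lies in $E(\tau,\tau^\prime)$. Applying the uniform control from Corollary \ref{flatstripcontrol} to the two moves (one starting from $\nu_i$, one from $\gamma(\tau,\tau^\prime)(i)$), both of which land in $E(\tau,\tau^\prime)$, yields that $\gamma(\tau,\tau^\prime)(i+1)$ is again within bounded distance of the complete extension $\nu_{i+1}$ of $\sigma_{i+1}$. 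Crucially the bound is uniform across $i$, not cumulative, because at every step we reset via the diameter bound on complete extensions of $\sigma_i$ in $E(\tau,\tau^\prime)$.

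The main obstacle is the structural fact in the first step: one has to rule out that a splittable $\tau^\prime$-configuration of $\nu_i$ either strictly contains some $\sigma^\prime$-configuration of $\sigma_i$ in a way that forces extra splits at subtrack branches not yet touched, or conversely misses some $\sigma^\prime$-configuration entirely. The latter is handled by uniqueness of splitting sequences (Lemma 5.1 of \cite{H09}) together with the fact that a reduced $\sigma^\prime$-trainpath in $\sigma_i$ remains two-sided large and reduced as a trainpath in $\nu_i$ after tightening via Lemma \ref{tightcontrol}, since the neighbor structure along such a trainpath in $\nu_i$ agrees with that in $\sigma_i$ outside a small neighborhood of the switches. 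The former is controlled by the bound $q^2$ on the number of $(\rho,\lambda)$-modifications needed to reach the tight configuration, which is absorbed into the constant $L_7$.
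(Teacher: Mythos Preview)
Your proposal has a genuine gap at the structural fact, and a second gap in how the induction is meant to close.

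\textbf{Gap 1: reducedness is not preserved.} You assert that after tightening via Lemma~\ref{tightcontrol}, a reduced $\sigma^\prime$-trainpath $\rho_0$ in $\sigma_i$ becomes a reduced trainpath in the complete extension $\nu_i$. This is false in general. Tightening makes $\nu_i$ tight at one specified large branch of $\sigma_i$; it does nothing to control the neighbors of $\sigma_i$ in $\nu_i$ attached along the \emph{mixed} branches of $\rho_0$. Those neighbors can sit on either side of $\rho_0$ and be incoming or outgoing in any pattern, so the lifted trainpath $\rho$ in $\nu_i$ need not be reduced. The paper flags this explicitly at the start of its Step~2 (``The trainpath $\rho$ may not be reduced'') and then devotes Steps~3--4 to it: one introduces the notion of a complete extension which is \emph{simple for $\sigma^\prime$}, designed precisely so that every neighbor of $\sigma$ in $\tau$ sits near a cusp and inherits the same left/right, incoming/outgoing type as the adjacent branch of $\sigma$. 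Only under this simplicity hypothesis does one get the exact containment $\gamma_0(\sigma,\sigma^\prime)(i)\subset\gamma(\tau,\tau^\prime)(i)$. Step~4 then reduces the general case to the simple one by a bounded splitting-and-shifting modification, transferring the estimate via Corollary~\ref{near}.

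\textbf{Gap 2: the induction accumulates.} Even granting your structural fact, Corollary~\ref{flatstripcontrol} gives at time $1$ the bound
\[
d\bigl(\gamma(\tau,\tau^\prime)(i+1),\,\gamma(\nu_i,\tau^\prime)(1)\bigr)\leq L_3\,d\bigl(\gamma(\tau,\tau^\prime)(i),\nu_i\bigr),
\]
which multiplies the previous bound by $L_3>1$. Your ``reset via the diameter bound'' would only work if $\gamma(\tau,\tau^\prime)(i)$ were \emph{itself} a complete extension of $\sigma_i$ --- then you could take $\nu_i=\gamma(\tau,\tau^\prime)(i)$ and the distance would be zero at every step. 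But in general $\gamma(\tau,\tau^\prime)(i)$ only contains some $\tilde\sigma_i\in E(\sigma,\sigma^\prime)$ splittable to $\sigma_i$ (this weaker statement is the paper's Step~1), not $\sigma_i$ itself; the gap between $\tilde\sigma_i$ and $\sigma_i$ is exactly what your argument does not control. The paper closes this gap only after passing to simple extensions, where the containment is exact and no accumulation occurs.
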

\begin{proof}
We divide the proof of the lemma into four steps.

{\sl Step 1:} 

Let $\sigma$ be a recurrent train track which is
splittable to a recurrent train track $\sigma^\prime$.
Then $\sigma$ is connected to $\sigma^\prime$ by a balanced
splitting path $\gamma_0(\sigma,\sigma^\prime)$.
Let $\tau$ be a complete extension of $\sigma$ and assume that
there is a complete train track $\tau^\prime$ which can
be obtained from $\tau$ by a sequence induced by 
a splitting sequence connecting $\sigma$ to $\sigma^\prime$.
Let $\gamma(\tau,\tau^\prime)$ be the balanced splitting
path connecting $\tau$ to $\tau^\prime$. By the forth property
in Proposition \ref{inducing}, 
for each $i$ the train track $\gamma(\tau,\tau^\prime)(i)$
contains a subtrack $\sigma_i\in E(\sigma,\sigma^\prime)$.
We claim that
$\sigma_i$ is splittable to $\gamma_0(\sigma,\sigma^\prime)(i)$.

For this we proceed by induction on the length of 
the balanced splitting path $\gamma(\tau,\tau^\prime)$.
If this length vanishes then there is nothing to show, so
assume the claim holds true whenever the length of this 
path is at most $k-1$ for some $k\geq 1$.

Let $\sigma,\sigma^\prime,\tau,\tau^\prime$ be such that
the length of the balanced splitting
path $\gamma(\tau,\tau^\prime)$ equals $k$.
The splittable $\tau^\prime$-configurations
in $\tau$ are 
pairwise disjoint trainpaths or circles
with images in $\sigma$.
Let $\rho:[0,m]\to \tau$ be such a 
splittable $\tau^\prime$-configuration
and let $\xi$ be the train track 
obtained from $\tau$ by a
$\rho-\tau^\prime$-multi-split.
By construction, $\xi$ contains
a subtrack $\zeta$ which can be obtained
from $\sigma$ by a splitting sequence at branches
contained in $\rho[0,m]$. Since splits at large branches
of $\sigma$ contained in the distinct
$\tau^\prime$-configurations of $\tau$ commute, 
it now suffices to show that
$\zeta$ is splittable
to $\gamma_0(\sigma,\sigma^\prime)(1)$.

Consider first the case that  
$\rho$ is a reduced trainpath in $\tau$. Then  
every two-sided large trainpath
$\rho_0:[0,n]\to \sigma$ with
$\rho_0[0,n]\subset \rho[0,m]$ is reduced. There is
a maximal two-sided large trainpath $\rho_0$ in $\sigma$
which contains every two-sided large trainpath 
in $\sigma$ with image in $\rho[0,m]$.
If $\{\xi_j\}$ is a sequence of
$\rho$-splits transforming $\tau$ to 
$\xi$, then for each $j$ there is a subtrack
$\zeta_j\in E(\sigma,\sigma^\prime)$ 
of $\xi_j$ so that either 
$\zeta_{j}=\zeta_{j-1}$ or that $\zeta_j$ can 
be obtained from $\zeta_{j-1}$ by 
a $\rho_0$-split. As a consequence,
$\zeta$
is indeed splittable to $\gamma_0(\sigma,\sigma^\prime)(1)$.

Now let $\rho[0,m]=c$ be a splittable
reduced circle in $\tau$. Since
$c$ is splittable, by the definition of an induced sequence
the image of $c$ in $\sigma$ contains
a large branch in $\sigma$ and hence $c$ is a reduced circle in 
$\sigma$. Let as before $\theta_c$ be the positive Dehn twist
about $c$.
Then $\theta_c^{\pm}\tau$
contains $\theta^{\pm}_c\sigma$ as a subtrack.
By definition of a balanced sequence, any train track
which can be obtained from $\sigma$
by a sequence of $c$-splits
and which is splittable to both $\sigma^\prime$ and 
$\theta_c^{\pm}(\sigma)$ 
is also splittable to $\gamma_0(\sigma,\sigma^\prime)(1)$.
On the other hand, by the definition of a 
$\rho-\tau^\prime$-multi-split, the train track $\xi$
is splittable to  
$\theta_c^{\pm}(\tau)$.
Together this just means once again that $\zeta$ 
is splittable
to $\gamma_0(\sigma,\sigma^\prime)(1)$. 

As a consequence, the subtrack $\sigma_1$ of 
$\gamma(\tau,\tau^\prime)(1)$ is splittable to 
$\gamma_0(\sigma,\sigma^\prime)(1)$.
An inductive application
of Lemma \ref{basicbalanced} shows that for each $i\geq 1$ the train track
$\gamma_0(\sigma_1,\sigma^\prime)(i-1)$ is splittable 
to $\gamma_0(\sigma,\sigma^\prime)(i)$. The induction
hypothesis, applied to $\gamma(\tau,\tau^\prime)(1),\tau^\prime,
\sigma_1,\sigma^\prime$,  
then implies that for each $i$ the
train track $\sigma_i$ is splittable to $\gamma_0(\sigma,\sigma^\prime)(i)$.
This completes
the proof of the above claim.

{\sl Step 2:} 

Let $\rho_0:[0,n]\to \sigma$ be an embedded 
reduced trainpath.
Then $\rho_0$ defines an embedded two-sided large
trainpath
$\rho:[0,m]\to \tau$ with $\rho[0,m]=\rho_0[0,n]$ as sets.  
The trainpath $\rho$ may not be reduced. 
Similarly, a reduced circle in $\sigma$
defines an embedded circle in $\tau$ which 
may not be reduced.

Assume however for the moment  
that each $\sigma^\prime$-configuration
in $\sigma$ determines in this way a reduced trainpath or a 
reduced circle in $\tau$. We claim that then the 
train track $\gamma(\tau,\tau^\prime)(1)$ 
contains the
train track $\gamma_0(\sigma,\sigma^\prime)(1)$
as a subtrack.

To see that this is indeed the case let 
$\{\sigma_i\}_{0\leq i\leq j}$ be
a splitting sequence connecting 
$\sigma_0=\sigma$ to $\sigma_j=\gamma_0(\sigma,\sigma^\prime)(1)$.
Let $\tau_1$ be the endpoint of 
a sequence issuing from $\tau$ which is
induced by the
sequence $\{\sigma_i\}_{0\leq i\leq j}$ as in 
Proposition \ref{inducing}. 
If $\rho_i$ $(i=1,\dots,k)$ are the trainpaths or circles
in $\tau$ whose images are the 
$\sigma^\prime$-configurations of $\sigma$ then
it follows from the definition of an induced sequence
that $\tau_1$ can be obtained from
$\tau$ by a successive transformation
with $\rho_i$-splits. The splitting sequence connecting
$\tau$ to $\tau_1$ includes a split at each branch 
of $\tau$ contained in $\rho_i$.
Since the trainpaths or circles $\rho_i$ are 
reduced by assumption, we conclude that 
for each $i$, $\rho_i$ is contained in a 
$\tau^\prime$-configuration of $\tau$ and 
$\tau_1$ is splittable to $\gamma(\tau,\tau^\prime)(1)$.

By the fourth part of Proposition \ref{inducing},
the train track $\gamma(\tau,\tau^\prime)(1)$ contains
a subtrack $\hat\sigma$ which can be obtained from 
$\sigma$ by a splitting sequence and which 
is splittable to $\sigma^\prime$. By Step 1 above,
$\hat \sigma$ is splittable to 
$\gamma_0(\sigma,\sigma^\prime)(1)$.
On the other hand, the discussion in the previous
paragraph shows that $\gamma_0(\sigma,\sigma^\prime)(1)$
is splittable to $\hat\sigma$. This shows that
$\gamma(\tau,\tau^\prime)(1)$ contains
$\gamma_0(\sigma,\sigma^\prime)(1)$ as a subtrack as claimed.

{\sl Step 3:} 

The discussion in Step 2 above shows the following.
If for every $\sigma^\prime$-configuration
$\rho_0$ in $\sigma$ the trainpath (or circle) defined
by $\rho_0$ in $\tau$ is reduced, then
$\gamma(\tau,\tau^\prime)(1)$ contains $\gamma_0(\sigma,\sigma^\prime)(1)$
as a subtrack. As a consequence, by induction
the lemma holds true for 
the balanced splitting path $\gamma(\tau,\tau^\prime)$
(with a universal constant $L=L_7>0$) 
if for every $i\geq 0$ the following
is satisfied. Let $\sigma_i$ be the subtrack of  
the train track $\gamma(\tau,\tau^\prime)(i)$ which can
be obtained from $\sigma$ by a splitting sequence
and which is splittable to $\sigma^\prime$. 
Then every $\sigma^\prime$-configuration in 
$\sigma_i$ defines a reduced  
trainpath or a reduced 
circle in $\gamma(\tau,\tau^\prime)(i)$.

The goal of this step is to determine 
a set of complete extensions
of $\sigma$ with this property.
For this
define a branch $a$ of $\sigma$ to be 
\emph{$\sigma^\prime$-split} if a splitting
sequence connecting $\sigma$ to $\sigma^\prime$
includes a split at $a$ (under the natural
identification of the branches of train tracks in a
splitting sequence, see Lemma 5.1 of \cite{H09}). Call a 
complete extension $\tau$ of $\sigma$
\emph{simple for $\sigma^\prime$} if the following holds true.
\begin{enumerate}
\item Let $b$ be a half-branch of 
$\tau$ which is contained in 
a complementary region $C$ of $\sigma$ and which is incident on 
a switch $v$ contained in 
a non-smooth side $\xi$ of $C$.
There is a unique trainpath $\zeta:[0,\ell]\to \tau$
such that $\zeta[1/2,1]=b$, $\zeta[1,\ell]\subset
\xi$ and that $\zeta(\ell)$ is a cusp of $C$.  
If the branch $a$ of $\sigma$ containing the switch $v$ in its
interior is $\sigma^\prime$-split then
we require that $a$ is incident on the switch $\zeta(\ell)$ of $\sigma$,
i.e. $\zeta[1,\ell]$ is contained in 
a single branch of $\sigma$. Moreover we require that 
$\zeta[1,\ell]$ consists of mixed branches. The neighbors
of $\zeta[1,\ell]$ are all contained in $C$.
\item Let $\xi:[0,n]\to \sigma$ be a smooth side
of a complementary region $C$ of $\sigma$. We require that
there is no half-branch of $\tau$ contained in $C$ which is
incident on a switch contained in the interior of 
any $\sigma^\prime$-split
branch in $\xi[0,n]$.
\end{enumerate}

In the sequel we call a complete extension
of $\sigma$ which is simple for $\sigma^\prime$
a simple complete extension of $\sigma$ 
whenever no confusion is possible.
We claim that a simple complete extension $\tau$ 
of $\sigma$ has the property
described in the first paragraph of this step.

For this we first show that every $\sigma^\prime$-configuration 
$\rho_0:[0,n]\to \sigma$ of 
$\sigma$ defines a reduced trainpath or a reduced circle in 
a simple complete extension $\tau$ of $\sigma$.
Namely, by definition of a $\sigma^\prime$-configuration,
every branch of $\sigma$ contained in 
$\rho_0[0,n]$ is $\sigma^\prime$-split. 
Let $b$ be a half-branch of $\tau$ contained in 
a complementary region $C$ of $\sigma$ which 
is incident on a switch $v\in \rho_0[0,n]$. By the second
part in the definition of a simple complete extension of
$\sigma$, the switch $v$ of $\tau$ is contained 
in a non-smooth side $\xi$ of $C$. 
Let $\zeta:[0,\ell]\to \tau$ be the trainpath with 
$\zeta[1/2,1]=b$ and $\zeta[1,\ell]\subset \xi$ which terminates
at a cusp $\zeta(\ell)=v_0$ of $C$. 
By the first part in the definition of a simple
complete extension of $\sigma$, $\zeta[1,\ell]$ is contained in 
a single branch $a$ of $\sigma$.
The branch $a$ is small at $v_0$.
Since $\rho_0$ is two-sided large and contains $a$,
the switch $v_0$ is contained in the interior of 
$\rho_0[0,n]$. Now by assumption, the subarc of $a$ 
connecting $v$ to $v_0$ consists of mixed branches of $\tau$, and
the neighbor of $\rho_0$ at 
$v_0$ is a half-branch of $\sigma$ contained in the boundary of 
$C$ which is of the same type (incoming or outgoing and
left or right) as $b$ along $\rho_0$.

As a consequence, if $b\subset \tau-\sigma$ is any neighbor of
the embedded arc $\rho_0[0,n]$ in $\tau$ then 
there is a neighbor of $\rho_0[0,n]$ in $\sigma$ of the same
type as $b$. Since $\rho_0$ is
reduced by assumption, this implies that the trainpath
$\rho$ in $\tau$ which coincides with $\rho_0$ as a set 
is reduced as well.
In other words, if $\tau$ is a simple complete extension
of $\sigma$ then 
every $\sigma^\prime$-configuration in $\sigma$
defines a reduced trainpath or a reduced circle in $\tau$.

Step 2 implies that 
$\tau_1=\gamma(\tau,\tau^\prime)(1)$ contains the train track
$\sigma_1=\gamma_0(\sigma,\sigma^\prime)(1)$ as an embedded subtrack.
We claim that $\tau_1$ is a simple complete extension of 
$\sigma_1$.

To see that this is the case,
let again $\rho_0$ be a $\sigma^\prime$-configuration
in $\sigma$ and let $e$ be a large branch of $\sigma$
contained in 
$\rho_0$. Then $e$ is $\sigma^\prime$-split
and hence since $\tau$ is a simple complete extension of $\sigma$, 
the train track $\tau$ is tight at $e$.
Let $\tilde \sigma$ be the train track
obtained from $\sigma$ by a $\sigma^\prime$-split at
$e$ and let $\tilde\tau$ be the complete extension
of $\tilde \sigma$ obtained from 
$\tau$ by a single split at $e$. Then 
up to isotopy, for any small neighborhood $U$ in $S$ 
of the branch $e$
the intersection of $\sigma,\tau$ with $S-U$ coincides with the
intersection of $\tilde \sigma,\tilde \tau$ with $S-U$. 
Therefore if there is a neighbor $\tilde b$ 
of $\tilde \sigma$ in $\tilde \tau$
which does not meet the
requirements in the definition of an extension
of $\tilde\sigma$ which is simple for $\sigma^\prime$ 
then $\tilde b=\phi(\tau,\tilde \tau)(b)$
where $b$ is incident on a switch contained in the interior of
a $\sigma^\prime$-split branch of $\sigma$ which is
a winner of the split connecting $\sigma$ to $\tilde \sigma$.

Now let the branch $a$ of $\sigma$ 
be a winner of the split connecting 
$\sigma$ to $\tilde \sigma$. Assume that 
$a$ is $\sigma^\prime$-split. Let $C$ be the
complementary region of $\sigma$ which 
has a cusp at an endpoint $v_0$ of $e$ and which
contains $a$ in one of its sides, say in the side $\xi$.
The complementary region
$\tilde C$ of the train track $\tilde \sigma$
which contains up to isotopy 
the complement in $C$ of a
small neighborhood of the branch $e$
has a side containing $\phi(\sigma,\tilde \sigma)(e\cup \xi)$
(see Figure G).
\begin{figure}[ht]
\begin{center}
\psfrag{Figure}{Figure G} 
\includegraphics 
[width=0.7\textwidth] 
{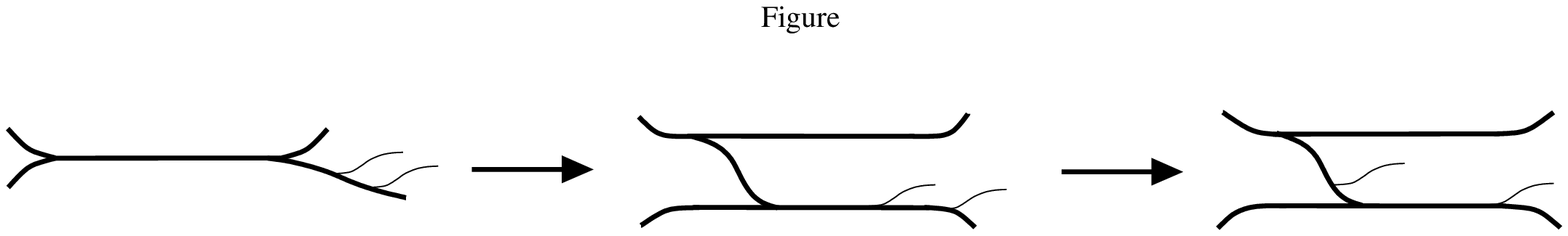}
\end{center}
\end{figure}

Let $\zeta:[0,\ell]\to a$ be the parametrization of the branch
$a$ as a trainpath on $\tau$ with $\zeta(0)=v_0$.
For $0<i<\ell$ 
denote by $b_i$ the neighbor of $a$ in $\tau$ incident on the
switch $\zeta(i)$.
Since $a$ is a winner of the split transforming $\sigma$ to 
$\tilde\sigma$, the branch $\phi(\tau,\tilde \tau)(\zeta[0,1])\subset 
\tilde \tau$ is large (and of type 3).
Since $a$ is $\sigma^\prime$-split by assumption, 
the train track $\alpha_1$ obtained from $\tilde \tau$ by the
(unique) $\tilde\sigma$-split at the large branch
$\phi(\tau,\tilde \tau)(\zeta[0,1])$ is splittable
to $\tau^\prime$. The train track $\alpha_1$ contains
$\tilde \sigma$ as a subtrack, and 
$\phi(\tau,\alpha_1)(b_1)$ 
is a neighbor of $\tilde \sigma$
in $\alpha_1$ which satisfes the first requirement
in the definition of a simple complete 
extension of $\tilde \sigma$ (see Figure G for an
illustration of this fact). Repeat this construction with the large
branch $\phi(\tau,\alpha_1)(\zeta[1,2])$ and the neighbor
$\phi(\tau,\alpha_1)(b_2)$ of $\tilde \sigma$.
After a uniformly
bounded number of such steps we obtain a train track $\alpha_2$
which is splittable to $\tau^\prime$ and such that for 
every neighbor $b$ of $a$ in $\tau$, the half-branch
$\phi(\tau,\alpha_2)(b)$ is incident on the 
branch $\phi(\sigma,\tilde \sigma)(e)$.

A second application of this
procedure to the second winner of the split connecting
$\sigma$ to $\tilde \sigma$ 
shows that there is a train track 
$\alpha_3\in E(\tau,\gamma(\tau,\tau^\prime)(1))$ which is 
a simple complete extension of $\tilde \sigma$.
By induction on the length of a splitting sequence connecting
$\sigma$ to $\gamma_0(\sigma,\sigma^\prime)(1)$ we conclude that
there is a 
train track $\alpha_4\in E(\tau,\gamma(\tau,\tau^\prime)(1))$ 
which is a simple
complete extension of $\gamma_0(\sigma,\sigma^\prime)(1)$. 
This means in particular that every large branch of $\alpha_4$ contained
in a $\sigma^\prime$-split branch of $\gamma_0(\sigma,\sigma^\prime)(1)$
is a large branch of $\alpha_4$. Then necessarily
$\alpha_4=\gamma(\tau,\tau^\prime)(1)$ (see Step 1 above). 

A successive application
of this argument shows the following.
If $\tau$ is a simple complete extension of 
$\sigma$ then for every $i$ the train track $\gamma(\tau,\tau^\prime)(i)$
contains $\gamma_0(\sigma,\sigma^\prime)(i)$ as a subtrack.
In particular, the balanced splitting path
$\gamma(\tau,\tau^\prime)$ satisfies
\begin{equation}\label{simpletry}
d(\gamma(\tau,\tau^\prime)(i),\tau(i))\leq L\end{equation}
with a universal constant $L>0$.

{\sl Step 4:} 

In this final step we use steps 1)-3) to complete the
proof of the lemma.

Let $\tau$ be any complete extension of a recurrent train track
$\sigma$.
Let $\{\sigma_i\}_{0\leq i\leq u}$
be a recurrent splitting sequence issuing from 
$\sigma=\sigma_0$ and let $\tau^\prime$ be a train track
obtained from $\tau$ by a sequence induced by the splitting
sequence $\{\sigma_i\}_{0\leq i\leq u}$. Then $\tau^\prime$
contains $\sigma_u$ as a subtrack. Let 
$\lambda$ be the complete $\tau^\prime$-extension of a 
$\sigma_u$-filling measured geodesic lamination.
The goal of this step is to show that
$\tau$ can be modified 
to a complete train track $\eta$ 
with the following properties.
\begin{enumerate}
\item $\eta$ carries $\lambda$.
\item $\eta$ contains a train track $\tilde\sigma\in
E(\sigma,\sigma^\prime)$ as a subtrack which can
be obtained from $\sigma$ by a splitting sequence of 
uniformly bounded length.
\item $\eta$ can be obtained from $\tau$ by a splitting
and shifting sequence of uniformly bounded length.
\item $\eta$ is a complete extension of $\tilde \sigma$ which 
is simple for $\sigma^\prime$.
\end{enumerate}

Before we show how to construct such a train track $\eta$, 
we assume that such a train track $\eta$ exists and use it 
to complete the proof of the lemma.
Namely, assume a train track 
$\eta$ with properties 1)-4) above is given
which contains $\tilde \sigma\in E(\sigma,\sigma^\prime)$ 
as a subtrack.
Let $\tilde \tau\in E(\tau,\tau^\prime)$ be obtained from
$\tau$ by a sequence induced by a splitting sequence 
connecting $\sigma$ to $\tilde\sigma$. Since the length
of a splitting sequence connecting $\sigma$ to $\tilde \sigma$
is uniformly bounded, the distance between
$\tau$ and $\tilde \tau$ is uniformly bounded and hence
by the third property above, 
the distance between $\tilde \tau$ and $\eta$ is uniformly bounded
as well. 

Since $\eta$ carries the
complete extension $\lambda$ of a 
$\sigma^\prime$-filling measured geodesic lamination,
Proposition \ref{inducing} shows that 
there is a sequence issuing from $\eta$ which is induced
by a splitting sequence connecting $\tilde \sigma$ to
$\sigma^\prime$ and which consists of train tracks 
carrying $\lambda$. Let $\eta^\prime$ be the endpoint of this
sequence. Corollary \ref{inducing10} shows that
the distance between $\tau^\prime$ and $\eta^\prime$ is uniformly
bounded. If for $i\geq 0$ 
we denote by $\eta(i)\in E(\eta,\eta^\prime)$ a train track
which contains $\gamma_0(\tilde \sigma,\sigma^\prime)(i)$ as a
subtrack then by the fourth property above, 
inequality (\ref{simpletry}) can be applied and shows that
\[d(\gamma(\eta,\eta^\prime)(i),\eta(i))\leq L.\]

Apply Lemma \ref{flatconfig2} to the balanced
splitting paths $\gamma_0(\sigma,\sigma^\prime),
\gamma_0(\tilde \sigma,\sigma^\prime)$ connecting
$\sigma,\tilde \sigma$ to $\sigma^\prime$. We conclude that 
for each $i$, the train track 
$\gamma_0(\sigma,\sigma^\prime)(i)$ is splittable to 
$\gamma_0(\tilde \sigma,\sigma^\prime)(i)$ with a splitting
sequence of uniformly bounded length.
Corollary \ref{inducing10} then shows that the distance
between $\eta(i)$ and a train track $\tau(i)\in E(\tau,\tau^\prime)$ 
which contains $\gamma_0(\sigma,\sigma^\prime)(i)$ as a subtrack
is bounded from
above by a universal constant.
Since by Corollary \ref{near}
the balanced splitting paths
$\gamma(\tau,\tau^\prime),\gamma(\eta,\eta^\prime)$ are uniform
fellow travellers, the distance between
$\gamma(\tau,\tau^\prime)(i)$ and $\tau(i)$ is uniformly bounded
as claimed in the lemma.

To construct a modification of $\tau$ to a complete
train track with the properties described in the second
paragraph of this step,
observe first that by 
the discussion in the proof of Proposition
\ref{inducing}, there is a train track
$\tilde \sigma\in E(\sigma,\sigma^\prime)$ which can
be obtained from $\sigma$ by a splitting sequence of
uniformly bounded length and such that a splitting
sequence connecting $\tilde \sigma$ to $\sigma^\prime$
does not include a split at any large branch 
which is contained
in a smooth boundary component of a complementary region of 
$\tilde \sigma$. By definition of an induced
sequence, there is a train track
$\tilde\tau\in E(\tau,\tau^\prime)$ which can
be obtained from $\tau$ by a splitting sequence
of uniformly bounded length and which
contains $\tilde \sigma$ as a subtrack.

Let $\xi$ be a non-smooth side of a complementary region 
$C$ of $\tilde \sigma$ and let $b$ be a half-branch 
of $\tilde \tau$ which is 
contained in $C$ and is incident on a switch $v\in \xi$.
Let $\zeta:[0,\ell]\to \xi$ be the unique trainpath
with $\zeta[1/2,1]=b$, 
$\zeta[1,\ell]\subset \xi$ and such that
$\zeta(\ell)$ is a cusp of $C$. Note that 
$\zeta[1,3/2]$ is  large half-branch. In particular, 
$\zeta[1,2]$ is the initial branch of a one-way
trainpath as defined on p.127 of \cite{PH92}. Thus 
if $\zeta[1,\ell]$ does not
contain any large branch then all 
switches of $\tau$ along
$\zeta$ are incoming, and all branches $\zeta[i,i+1]$
$(1\leq i\leq \ell-1)$ are mixed.
Assume that $b$ lies to the left (or right) of the branch
$a$ of $\xi\subset \sigma$ containing $v$ in its interior with respect
to the orientation of $a$ induced by the oriented trainpath $\zeta$.
Then all left (or right) neighbors of 
$\zeta[1,\ell]$ can be shifted forward along $\xi$ past
every right (or left) neighbor. In the resulting 
train track $\tau_1$, these shifted neighbors satisfy the
requirements for a simple extension of $\tilde \sigma$.
The train track $\tau_1$ is shift
equivalent to $\tilde\tau$ and hence it carries $\lambda$.

If $\zeta[0,\ell]$ contains a large branch then 
there is a sequence of shifts and $\sigma$-splits 
of uniformly bounded length which transforms
$\tau$ to a train track $\tau_2$ which carries $\lambda$, 
which contains $\sigma$ as a subtrack and which is
of the required form (compare the argument in the
proof of Lemma \ref{tightcontrol}).
Together this completes the 
proof of the lemma.
\end{proof}

As an immediate corollary we obtain

\begin{corollary}\label{induced}
For every $R>0$ 
there is a number $L_8=L_8(R)>0$ with the following
property. Let $\sigma_0$ be a recurrent train track 
and let 
$\{\sigma_i\}$ be a recurrent splitting
sequence issuing from $\sigma_0$. 
Let $\tau,\eta$ be complete extensions of 
$\sigma_0$ with $d(\tau,\eta)\leq R$ 
and let $\tau^\prime,\eta^\prime$ be 
the endpoints of a sequence induced
by the splitting sequence $\{\sigma_i\}$ and issuing from
$\tau,\eta$. Then the balanced splitting paths
$\gamma(\tau,\tau^\prime),\gamma(\eta,\eta^\prime)$ 
are weight-$L_8$ fellow
travellers.
\end{corollary}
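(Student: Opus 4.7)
The plan is to combine Lemma \ref{doubleinduced} with Corollary \ref{inducing10} in a straightforward triangle inequality. Let $\sigma_\ell$ denote the endpoint of the given splitting sequence $\{\sigma_i\}$, and let $\gamma_0(\sigma_0,\sigma_\ell)$ be the balanced splitting path of the subtrack. By the third property in Proposition \ref{inducing}, the train tracks $\tau^\prime,\eta^\prime$ depend only on $\tau,\eta,\sigma_0,\sigma_\ell$ and on a complete extension of a $\sigma_\ell$-filling measured geodesic lamination, not on the particular splitting sequence chosen, so we may assume throughout that the induced sequences are built from the balanced splitting path of $\sigma_0$.

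For each $i$, let $\tau(i)\in E(\tau,\tau^\prime)$ be the endpoint of a sequence issuing from $\tau$ which is induced by the restriction of the balanced splitting path $\gamma_0(\sigma_0,\sigma_\ell)$ to $[0,i]$; similarly define $\eta(i)\in E(\eta,\eta^\prime)$. By construction, $\tau(i)$ and $\eta(i)$ both contain $\gamma_0(\sigma_0,\sigma_\ell)(i)$ as a subtrack, so Lemma \ref{doubleinduced} yields
\[
d(\gamma(\tau,\tau^\prime)(i),\tau(i))\leq L_7\quad\text{and}\quad d(\gamma(\eta,\eta^\prime)(i),\eta(i))\leq L_7
\]
for all $i$. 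On the other hand, since $\tau(i)$ and $\eta(i)$ are themselves endpoints of sequences induced by a common splitting sequence (namely $\gamma_0(\sigma_0,\sigma_\ell)\vert_{[0,i]}$) issuing from complete extensions $\tau,\eta$ of $\sigma_0$ with $d(\tau,\eta)\leq R$, Corollary \ref{inducing10} applies and gives $d(\tau(i),\eta(i))\leq p(R)$. The triangle inequality then yields
\[
d(\gamma(\tau,\tau^\prime)(i),\gamma(\eta,\eta^\prime)(i))\leq 2L_7+p(R)
\]
for every $i$, and by Corollary \ref{inducing10} applied to $i=\ell$ we also have $d(\tau^\prime,\eta^\prime)\leq p(R)$, so both endpoint distances of the two balanced splitting paths are bounded solely in terms of $R$. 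Setting $L_8(R)=2L_7+p(R)$ therefore exhibits $\gamma(\tau,\tau^\prime)$ and $\gamma(\eta,\eta^\prime)$ as weight-$L_8$ fellow travellers.

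There is essentially no obstacle beyond the bookkeeping of checking that the train tracks $\tau(i),\eta(i)$ produced by the inductive construction of induced sequences are genuinely admissible choices in Lemma \ref{doubleinduced}, which is immediate from property 4 of Proposition \ref{inducing}. The only subtle point is to observe that Corollary \ref{inducing10} may be applied to every truncation $\gamma_0(\sigma_0,\sigma_\ell)\vert_{[0,i]}$ of the balanced splitting path, which is automatic since each such truncation is itself a recurrent splitting sequence and the hypothesis $d(\tau,\eta)\leq R$ is independent of $i$.
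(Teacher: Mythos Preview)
Your proof is correct and follows essentially the same approach as the paper. Both arguments introduce the intermediate train tracks $\tau(i),\eta(i)$ (which the paper calls $\beta(\tau,\tau')(i),\beta(\eta,\eta')(i)$) as endpoints of sequences induced by the truncated balanced splitting path $\gamma_0(\sigma_0,\sigma_\ell)\vert_{[0,i]}$, then apply Lemma~\ref{doubleinduced} to bound $d(\gamma(\tau,\tau')(i),\tau(i))$ and $d(\gamma(\eta,\eta')(i),\eta(i))$ by $L_7$, apply Corollary~\ref{inducing10} to bound $d(\tau(i),\eta(i))$ by $p(R)$, and conclude by the triangle inequality.
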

\begin{proof}
Let $\sigma$ be a recurrent train track which 
is splittable to a recurrent train track $\sigma^\prime$
and let $\gamma_0(\sigma,\sigma^\prime)$ be the balanced
splitting path connecting 
$\sigma$ to $\sigma^\prime$.

Let $R>0$ and  let $\tau,\eta$ 
be complete extensions of $\sigma$ with $d(\tau,\eta)\leq R$. Let
$\tau^\prime,\eta^\prime$ be a train track 
obtained from $\tau,\eta$ by 
a sequence induced by a splitting sequence
connecting $\sigma$ to $\sigma^\prime$ as in 
Proposition \ref{inducing}. 
Then for every $i\geq 0$ there are
train tracks $\beta(\tau,\tau^\prime)(i)\in E(\tau,\tau^\prime),
\beta(\eta,\eta^\prime)(i)\in E(\eta,\eta^\prime)$ 
which are obtained from
$\tau,\eta$ by a sequence
induced by a splitting sequence
connecting $\sigma$ to 
$\gamma_0(\sigma,\sigma^\prime)(i)$. 
The train tracks $\beta(\tau,\tau^\prime)(i),
\beta(\eta,\eta^\prime)(i)$ contain
$\gamma_0(\sigma,\sigma^\prime)(i)$ as a subtrack.
By Corollary \ref{inducing10}, 
there is a number $p=p(R)>0$ such that
\begin{equation}
d(\beta(\tau,\tau^\prime)(i),\beta(\eta,\eta^\prime)(i))
\leq p(R)\text{ for all }i.\notag
\end{equation}

On the other hand, by Lemma \ref{doubleinduced} 
there is a number
$L_7>0$ such that
\begin{equation}\label{closetobeta}
d(\beta(\tau,\tau^\prime)(i),
\gamma(\tau,\tau^\prime)(i))\leq L_7,
d(\beta(\eta,\eta^\prime)(i),\gamma(\tau,\tau^\prime)(i))\leq L_7
\text{ for all }i.\notag\end{equation}
Together this shows the corollary.
\end{proof}

By the discussion in Section 5, for a complete
train track $\tau$ which contains an embedded simple closed
curve $c$, splitting $\tau$ with a sequence of $c$-splits results
in twisting $\tau$ along $c$. On the other hand, 
$c$-splits of $\tau$ commute with splits at large branches
of $\tau$ which are disjoint from $c$. Thus if 
$\tau$ is splittable to a 
complete train track $\tau^\prime$ which contains
$c$ as an embedded simple closed curve, then 
there is a splitting sequence $\{\tau_i\}$ connecting
$\tau$ to $\tau^\prime$ and
there is a number $\ell \geq 0$ with the following property. 
The train track $\tau_\ell$ can
be obtained from $\tau$ by a splitting sequence at large
branches disjoint from $c$, and $\tau^\prime$ can be 
obtained from $\tau_\ell$ by a sequence of $c$-splits.
We call $\tau_\ell$ the \emph{$(S-c)$-contraction} of the 
pair $(\tau,\tau^\prime)$. Let $\theta_c$ be the positive
Dehn twist about $c$. By 
Corollary \ref{Dehn}, there is a number $p\in \mathbb{Z}$ such that
\[d(\tau^\prime,\theta_c^p(\tau_\ell))\leq a_3.\]
We call $p$ a \emph{$c$-twist parameter} of the pair
$(\tau,\tau^\prime)$.

The following observation is
immediate from the definitions, from 
Lemma \ref{dehnreduced} and 
Corollary \ref{Dehn} and from Corollary \ref{flatstripcontrol}.

\begin{lemma}\label{sccontraction}
There is a number $L_9>0$ with the following property.
Let $\tau\in {\cal V}({\cal T\cal T})$ 
be splittable to $\tau^\prime\in {\cal V}({\cal T\cal T})$ 
and let $c$ be a
simple closed curve which is embedded in both $\tau,\tau^\prime$.
Let $\eta\in {\cal V}({\cal T\cal T})$ 
be the $(S-c)$-contraction of the pair
$(\tau,\tau^\prime)$ and let 
$p\in \mathbb{Z}$ be the $c$-twist parameter of 
$(\tau,\tau^\prime)$. Then for every $i\leq \vert p\vert$ we have
\[d(\gamma(\tau,\tau^\prime)(i),\theta_c^{({\rm sgn}\,i)i}
\gamma(\tau,\eta)(i))\leq L_9,\]
and $d(\gamma(\tau,\tau^\prime)(i),
\theta_c^p\gamma(\tau,\eta)(i))\leq L_9$ for $i\geq \vert p\vert$.
\end{lemma}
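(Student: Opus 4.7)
The plan is to exploit a commutativity structure: at any train track in $E(\tau,\tau^\prime)$ containing $c$, large branches contained in $c$ and large branches contained in $S-c$ are disjoint, so splits at them commute. First, by applying Lemma \ref{nonreduced} to complete geodesic laminations carried by $\tau,\eta,\tau^\prime$ having $c$ as a minimal component, I may assume at bounded cost that $c$ is reduced in each of these three train tracks. Let $\epsilon=\operatorname{sgn}(p)$, so $\epsilon\vert p\vert=p$.

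The key observation is that for any train track $\sigma\in E(\tau,\tau^\prime)$ containing $c$ as a reduced embedded circle, the $\tau^\prime$-configurations of $\sigma$ split, by Lemma \ref{unique}, into two disjoint families: the circle $c$ itself, which is a splittable $\tau^\prime$-circle exactly when the remaining $c$-twisting has not been exhausted; and configurations contained in $S-c$. The latter coincide with the $\eta$-configurations of $\sigma$, because $\eta$ is obtained from $\tau$ by non-$c$-splits only and hence the projection of splittable $\eta$-paths and of non-$c$ splittable $\tau^\prime$-paths to the complement of $c$ agree. Consequently a single $\tau^\prime$-move on $\sigma$ factors as the commuting product of (i) the $c$-multi-split, which by Lemma \ref{circleadmis} combined with Lemma \ref{dehnreduced} and Corollary \ref{Dehn} produces a train track within universal distance of $\theta_c^\epsilon\sigma$, and (ii) the non-$c$ multi-splits, which are precisely the $\eta$-move on $\sigma$.

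I would then introduce the auxiliary sequence $\pi_i=\theta_c^{\epsilon\min(i,\vert p\vert)}\gamma(\tau,\eta)(i)$. By $\mathcal{MCG}(S)$-equivariance of balanced splitting paths and the factorization above, $\pi_{i+1}$ is obtained from $\pi_i$ by an $\tau^\prime$-move, up to a bounded error that does not depend on $i$: for $i<\vert p\vert$ this uses the commutativity together with the identification of the $c$-move with $\theta_c^{\epsilon}$, while for $i\geq\vert p\vert$ the $c$-component is frozen (since no $c$-split of $\theta_c^p\gamma(\tau,\eta)(i)$ is splittable to $\tau^\prime=\theta_c^p\eta$) and only non-$c$ moves act. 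It remains to deduce the uniform bound $d(\gamma(\tau,\tau^\prime)(i),\pi_i)\leq L_9$. Apply the isometry $\theta_c^{-\epsilon\min(i,\vert p\vert)}$ at each step: after this normalization, the comparison reduces to comparing two train tracks at bounded distance that both are splittable to $\eta$ via compatible splitting sequences obtained from the non-$c$ multi-splits. Lemma \ref{shift} and Corollary \ref{near} then provide a universal fellow-traveller bound depending only on the initial bounded data.

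The main obstacle is to prevent the per-step bounded error from accumulating into an $O(\vert p\vert)$ bound. A naive induction of the form $C_{i+1}\leq L_6(C_i)+C_0$ fails, because $L_6(R)$ need not satisfy a contraction estimate as $R\to\infty$. The resolution rests on the fact that after conjugating by $\theta_c^{-\epsilon\min(i,\vert p\vert)}$, the pair $(\theta_c^{-\epsilon i}\gamma(\tau,\tau^\prime)(i),\gamma(\tau,\eta)(i))$ belongs to a cone in which the $c$-direction plays no role, so the fellow-traveller estimate is provided by a single application of Corollary \ref{near} (or Lemma \ref{shift}) to a configuration that is uniformly controlled by the initial data $\tau,\eta,\tau^\prime$ and not by $i$ or $p$. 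This yields the uniform constant $L_9$.
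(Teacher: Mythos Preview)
Your overall strategy---reduce to the case where $c$ is reduced, then factor each move as a $c$-part commuting with a non-$c$ part---is exactly the paper's. The difference, and the source of your accumulation worry, is that you weaken the $c$-part to ``within universal distance of $\theta_c^\epsilon\sigma$'' when in fact it is \emph{equal} to $\theta_c^\epsilon\sigma$. Once $c$ is a reduced circle in $\sigma$ and the remaining twist parameter is at least $1$ in absolute value, the $c$-$\tau'$-multi-split of $\sigma$ is by definition the maximal sequence of $c$-splits splittable to both $\theta_c^\pm\sigma$ and $\tau'$; by Lemma~\ref{dehnreduced} this is precisely $\theta_c^\epsilon\sigma$. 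Moreover a split at a large branch disjoint from $c$ does not change the switches on $c$ or the incoming/outgoing type of any neighbor of $c$, so $c$ stays reduced after every non-$c$ move. Hence, after replacing $(\tau,\eta,\tau')$ by suitable $(\tau_1,\tau_2,\theta_c^p\tau_2)$ at bounded cost as you propose, one obtains the \emph{exact} identity
\[
\gamma(\tau_1,\theta_c^p\tau_2)(i)=\theta_c^{\epsilon i}\gamma(\tau_1,\tau_2)(i)\qquad(i\le |p|),
\]
and there is nothing to accumulate.

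Because of this, the final step needs only Corollary~\ref{flatstripcontrol} (comparing balanced paths with nearby endpoints inside a single cubical Euclidean cone), not Lemma~\ref{shift} or Corollary~\ref{near}. Your proposed resolution via conjugation and Corollary~\ref{near} is not a valid argument as written: to apply Corollary~\ref{near} you would need to know in advance that $\theta_c^{-\epsilon i}\gamma(\tau,\tau')(i)$ and $\gamma(\tau,\eta)(i)$ are at bounded distance, which is precisely the conclusion you are after. Replace the approximate factorization by the exact one and the proof becomes short and unconditional.
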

\begin{proof} Let $\tau,\tau^\prime,c$ be as in the lemma.
Choose a complete $\tau^\prime$-extension 
$\lambda$ of 
a $c$-filling 
measured geodesic lamination.
By Lemma \ref{nonreduced}, there is a complete
train track 
$\tau_1\in E(\tau,\lambda)$ 
which can be obtained from $\tau$ 
by a sequence of $c$-splits of
length at most $a_2$
and which contains $c$ as a reduced circle. 
Let $\eta$ be the $(S-c)$-contraction of the pair
$(\tau,\tau^\prime)$. Then $\eta$ is splittable 
with a sequence of $c$-splits of length at most $a_2$ to
a train track 
$\tau_2\in E(\tau_1,\lambda)$. 
By Lemma \ref{dehnreduced} and 
Corollary \ref{Dehn} and by the choice
of $\lambda$, we may assume that
$\theta_c^{p}\tau_2\in E(\tau_1,\lambda)$ and 
that
$\tau_2$ is splittable to $\theta_c^{p}\tau_2$.
By the definition of a move, for each $i\leq \vert p\vert$ 
we have 
\[\gamma(\tau_1,\theta_c^p\tau_2)(i)=
\theta_c^{({\rm sgn}\,i)i}\gamma(\tau_1,\tau_2)(i).\]
The lemma is now immediate from this observation and from
Corollary \ref{flatstripcontrol}.
\end{proof}

We use  Corollary \ref{flatstripcontrol} and 
Lemma \ref{sccontraction} to show

\begin{lemma}\label{onesidedcomb}
There is a number $L_{10}>0$ with the following properties.
Let $F$ be a marking for 
$S$ and let $\tau,\eta$ be complete train tracks
in standard form for $F$. Assume that
$\tau,\eta$ are splittable to 
$\tau^\prime,\eta^\prime\in {\cal V}({\cal T\cal T})$. 
Then the balanced splitting paths
$\gamma(\tau,\tau^\prime),\gamma(\eta,\eta^\prime)$ are
weight-$L_{10}$ fellow travellers.
\end{lemma}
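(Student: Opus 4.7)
The plan is to decompose each balanced splitting path into three consecutive pieces using the structural results of Section~5, and then to compare the pieces one at a time with the tools developed earlier in this section. Since $\tau$ and $\eta$ are both in standard form for the same marking $F$, there is a universal constant $R_0>0$ with $d(\tau,\eta)\leq R_0$, so all estimates need only depend on the distance $d(\tau^\prime,\eta^\prime)$.

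First, I would reduce to the generic case. If $\tau,\eta$ carry no common measured geodesic lamination, then Lemma~\ref{reverse}, applied to the pair $(\tau,\eta)$ splittable to $(\tau^\prime,\eta^\prime)$, produces a splitting sequence connecting a bounded perturbation $\tau^{\prime\prime}$ of $\tau^\prime$ to a bounded perturbation $\eta^{\prime\prime}$ of $\eta^\prime$ passing through a uniformly bounded neighborhood of $\eta$. This realizes $\eta^{\prime\prime}$ as the image of $\tau^{\prime\prime}$ under a train track obtained by carrying from $\tau$, so Lemma~\ref{shift} and Lemma~\ref{flatconfig2} combine to give the claim directly. Otherwise, by Lemma~\ref{markingcontrol} and the argument from Proposition~\ref{shortestdistance} there is a common recurrent subtrack $\sigma<\tau,\sigma<\eta$ which carries every measured geodesic lamination carried by both $\tau$ and $\eta$.

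Now I would apply Lemma~\ref{projection2} and Lemma~\ref{multitwist} to the quadruple $\tau,\eta,\tau^\prime,\eta^\prime$ and their common subtrack $\sigma$. This yields, after replacing $\tau^\prime,\eta^\prime$ by bounded perturbations $\tau_1^\prime,\eta_1^\prime$, train tracks $\tau_1\in E(\tau,\tau_1^\prime),\eta_1\in E(\eta,\eta_1^\prime)$ whose distance is bounded by a universal constant, which share a common multi-curve $c$ as the new maximal common subtrack, together with a multi-twist $\theta$ about $c$ such that $\theta(\tau_1)$ lies within uniformly bounded distance of $E(\tau,\tau_1^\prime)$ and $\theta(\eta_1)$ lies within uniformly bounded distance of $E(\eta,\eta_1^\prime)$, and a splitting sequence connecting a bounded perturbation of $\tau_1^\prime$ to a bounded perturbation of $\eta_1^\prime$ passes through bounded neighborhoods of $\theta(\tau_1)$ and $\theta(\eta_1)$. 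This decomposes $\gamma(\tau,\tau_1^\prime)$, up to bounded perturbation via Lemma~\ref{flatconfig2} and Corollary~\ref{flatstripcontrol}, into three consecutive segments: an \emph{induced segment} from $\tau$ to $\tau_1$ induced by a splitting sequence of $\sigma$ to the multi-curve $c$, a \emph{twist segment} from $\tau_1$ to $\theta(\tau_1)$ implementing the multi-twist, and a \emph{residual segment} from $\theta(\tau_1)$ to $\tau_1^\prime$ inside a cubical Euclidean cone.

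With the decomposition in place, each segment is matched with the corresponding segment of the $\eta$-side path using a previously proven result. The induced segments $\gamma(\tau,\tau_1),\gamma(\eta,\eta_1)$ are weight-uniform fellow travellers by Corollary~\ref{induced}, since they are produced from a common splitting sequence of $\sigma$ (by the second remark after Lemma~\ref{tightcontrol} the induced endpoint is independent of ordering) and the endpoints are at distance at most $R_0$ and $\beta_2(R_0)$ respectively. The twist segments are handled by Lemma~\ref{sccontraction}: both $\gamma(\tau_1,\tau_1^\prime)$ and $\gamma(\eta_1,\eta_1^\prime)$ remain uniformly close to $\theta_c^{(\mathrm{sgn}\,i)i}$ of their $(S-c)$-contractions, and since the twist parameter $\theta$ is the same for both paths (by the construction in Lemma~\ref{multitwist}) the two twist segments fellow travel in a synchronized fashion. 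Finally the residual segments both lie in a common cubical Euclidean cone and have bounded-distance endpoints, so Corollary~\ref{flatstripcontrol} applies.

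The main obstacle I anticipate is the synchronization of parametrizations across the three segments: the balanced splitting path is defined by performing \emph{all} maximal configurations in parallel at each step, whereas the decomposition above separates the moves by type (toward the subtrack, along the twist, beyond the twist). I would address this by showing that the moves belonging to distinct segments commute up to a uniformly bounded reordering, so that the balanced path can be realigned with the decomposition at the cost of a bounded error at each combing time. Concretely, at each time $i$ the move at $\gamma(\tau,\tau_1^\prime)(i)$ splits into a part supported on subbranches of $\sigma$ (contributing to the induced segment), a part supported on $c$-configurations (contributing to the twist segment), and a part disjoint from the union of these images (contributing to the residual segment); these three kinds of splits commute by the disjointness argument of Lemma~\ref{unique}, and the resulting reshuffling matches the corresponding move at $\gamma(\eta,\eta_1^\prime)(i)$ segment by segment. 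Combined with the three fellow traveller estimates above and Lemma~\ref{flatconfig2} to absorb the bounded perturbations $\tau_1^\prime,\eta_1^\prime$ back to $\tau^\prime,\eta^\prime$, this produces the desired universal constant $L_{10}$.
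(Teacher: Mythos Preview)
Your three-segment plan is close in spirit to the paper's argument, but the treatment of the residual segment contains a genuine gap. You assert that the two residual segments ``both lie in a common cubical Euclidean cone and have bounded-distance endpoints,'' and neither holds. The initial points $\theta(\tau_1)$ and $\theta(\eta_1)$ are distinct train tracks (merely at bounded distance), so the paths $\gamma(\cdot,\tau_1^\prime)$ and $\gamma(\cdot,\eta_1^\prime)$ live in different cones and Corollary~\ref{flatstripcontrol} is not applicable; nor is Corollary~\ref{near}, since nothing in your setup forces $\tau_1^\prime,\eta_1^\prime$ to carry a common complete geodesic lamination. And the terminal points $\tau_1^\prime,\eta_1^\prime$ are at distance roughly $d(\tau^\prime,\eta^\prime)$, not bounded. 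What you actually need at this stage---a fellow-traveller estimate for two balanced paths with close initial points but no common carrying hypothesis at the terminal end---is essentially Lemma~\ref{onesidedcomb} itself, so the argument as written is circular. The only genuine progress after your first two segments is that the common subtrack has shrunk (from $\sigma$ down to the multi-curve $c$), which points toward an induction you have not set up.

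The paper structures the proof to avoid exactly this circularity. First, it uses Proposition~\ref{shortestdistance} at the outset to replace $\tau^\prime,\eta^\prime$ by their mutual projections $\Pi_{E(\tau,\tau^\prime)}(\eta^\prime)$ and $\Pi_{E(\eta,\eta^\prime)}(\tau^\prime)$, thereby reducing (via Corollary~\ref{flatstripcontrol} and the quasi-geodesic estimate) to the case where $d(\tau^\prime,\eta^\prime)$ is also bounded by a universal constant $R$. Second, with \emph{both} pairs of endpoints close, it runs an induction on the number $k$ of branches of the common subtrack $\sigma$: rather than a one-shot three-segment decomposition, it locates the first time $i_0$ along $\gamma(\tau,\tau^\prime)$ at which the induced recurrent subtrack in $E(\sigma,\sigma^\prime)$ fails to persist, shows directly (via Corollary~\ref{inducing10} and Lemma~\ref{doubleinduced}) that the two balanced paths are close at time $i_0$, and then invokes the induction hypothesis beyond $i_0$, where the common subtrack has strictly fewer branches. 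This replaces your synchronization difficulty with an intrinsic stopping time read off from the balanced path itself, and converts what you call the residual segment into an honest induction step rather than a terminal appeal to a result that does not apply.
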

\begin{proof} 
Let $\tau,\eta$ be train tracks in standard form for a marking
$F$ which are splittable to train tracks 
$\tau^\prime,\eta^\prime$. By Proposition 
\ref{shortestdistance} and by Theorem \ref{cubicaleuclid} and the
following remark, 
there is a number $a>0$ such that
\[d(\tau^\prime,\eta^\prime) \geq 
a(d(\tau^\prime,\Pi_{E(\tau,\tau^\prime)}(\eta^\prime))+
d(\Pi_{E(\eta,\eta^\prime)}(\tau^\prime),\eta^\prime))-1/a.\]
Thus by two applications of Corollary \ref{flatstripcontrol}, 
applied to the train track $\tau$ which is splittable to the
train tracks 
$\tau^\prime,\Pi_{E(\tau,\tau^\prime)}(\eta^\prime)$ and
to the train track $\eta$ which is splittable to 
the train tracks 
$\eta^\prime,\Pi_{E(\eta,\eta^\prime)}(\tau^\prime)$, 
it suffices to show that
the balanced splitting paths
\[\gamma(\tau,\Pi_{E(\tau,\tau^\prime)}(\eta^\prime)),
\gamma(\eta,\Pi_{E(\eta,\eta^\prime)}(\tau^\prime))\] are
uniform fellow travellers.

By the construction of the projections
$\Pi_{E(\tau,\tau^\prime)}(\eta^\prime),
\Pi_{E(\eta,\eta^\prime)}(\tau^\prime)$ 
(see in particular the proof of Lemma \ref{projection2})
and by Lemma \ref{sccontraction}, 
for this it suffices to show that for every 
$R>0,k>0$ there is a constant 
$L(R,k)>0$ with the following property.
Let $\sigma$ be a recurrent train track on $S$ without
closed curve components.
Assume that the number of branches of 
$\sigma$ is at most $k$.
Let $\tau,\eta$ be complete extensions
of $\sigma$ with $d(\tau,\eta)\leq R$. Assume that
every minimal geodesic lamination which is carried
by both $\tau,\eta$ and which is not
a simple closed curve is carried by $\sigma$. 
Let $\tau^\prime,\eta^\prime$ be complete train tracks
which can be obtained from $\tau,\eta$ by a splitting
sequence and such that $d(\tau^\prime,\eta^\prime)\leq R$.
Then the balanced splitting paths $\gamma(\tau,\tau^\prime),
\gamma(\eta,\eta^\prime)$ satisfy
\begin{equation}\label{ellk}
d(\gamma(\tau,\tau^\prime)(i),\gamma(\eta,\eta^\prime)(i))\leq 
L(R,k)\text{ for all }i.\end{equation}
Since $k$ is bounded from above by the number of branches
of complete train tracks on $S$, this is sufficient for the 
proof of the lemma.

We show this claim by induction on $k$. 
Note first that by the results from Section 5 and by
Corollary \ref{flatstripcontrol} we may assume that
$\tau^\prime,\eta^\prime$ are obtained from $\tau,\eta$
be a splitting sequence consisting of splits at
branches contained in a common proper subtrack of 
$\tau,\eta$. Thus 
the case $k=0$ is
immediate from Lemma \ref{sccontraction}. 

Now assume that
the lemma holds true for some $k-1\geq 0$. Let $R>0$ and let
$\tau,\eta\in {\cal V}({\cal T\cal T})$ be such that
$d(\tau,\eta)\leq R$ and that $\tau,\eta$ contain a common
recurrent subtrack $\sigma$ without closed curve components
which carries every minimal geodesic lamination which is
carried by both $\tau,\eta$ and which is not a simple closed
curve. Let $\tau,\eta$ be splittable to 
$\tau^\prime,\eta^\prime$ and assume that
$d(\tau^\prime,\eta^\prime)\leq R$.
By the above remark, for the purpose of the proof
of inequality (\ref{ellk}) 
we may assume that a splitting sequence
connecting $\tau,\eta$ to $\tau^\prime,\eta^\prime$ does not include
a split at a large branch disjoint from $\sigma$.

As in the proof of Lemma \ref{projection2}, we use 
Lemma \ref{induceincones} and Lemma \ref{projection} to find a 
train track $\sigma^\prime$ 
with the following properties.
\begin{enumerate}
\item $\sigma^\prime$ can be obtained from
$\sigma$ by a splitting sequence.
\item There are train tracks $\hat \tau\in E(\tau,\tau^\prime),
\hat \eta\in E(\eta,\eta^\prime)$ which are obtained from
$\tau,\eta$ by a sequence induced from a splitting sequence
connecting $\sigma$ to $\sigma^\prime$.
\item If $\tilde \sigma$ can be obtained from $\sigma$
by a splitting sequence and if there are train tracks
$\tilde \tau\in E(\tau,\tau^\prime),
\tilde \eta\in E(\eta,\eta^\prime)$ obtained from $\tau,\eta$ 
by a sequence induced
by a splitting sequence connecting $\sigma$ to $\tilde \sigma$
then $\tilde \sigma\in E(\sigma,\sigma^\prime)$.
\end{enumerate}

Let $\ell,n>0$ be the length of the balanced splitting
path $\gamma(\tau,\tau^\prime),\gamma(\eta,\eta^\prime)$.
Let $i_0(\tau)\geq 0$ be the smallest number
with the property that
$\gamma(\tau,\tau^\prime)(i_0+1)$
does \emph{not} contain
a \emph{recurrent} subtrack contained in 
$E(\sigma,\sigma^\prime)$.
If no such number exists then define 
$i_0(\tau)=\ell$. Define similarly $i_0(\eta)\geq 0$.

Consider first the case that $i_0(\tau)=\ell$.
Then the train track $\sigma^\prime$ is recurrent.
Let as above $\hat\tau\in E(\tau,\tau^\prime)$ 
be the endpoint of a sequence
issuing from $\tau$ which is induced by a splitting
sequence connecting $\sigma$ to $\sigma^\prime$.
Since $\tau^\prime$ contains $\sigma^\prime$ as a subtrack,
the distance between $\tau^\prime$ and
$\hat\tau$ is uniformly bounded.
Let $\hat\eta\in E(\eta,\eta^\prime)$ be the train track
obtained from $\eta$ by a sequence induced by 
a splitting sequence connecting $\sigma$ to 
$\sigma^\prime$. By Corollary \ref{inducing10}, 
the distance between
$\hat\eta$ and $\hat \tau$ is bounded from above by 
a constant only depending on $R$. As a consequence,
there is a number $\chi_1(R)>0$ only depending on 
$R$ such that
\[d(\eta^\prime,\hat \eta)\leq \chi_1(R).\]
Thus in this case the estimate (\ref{ellk}) follows from
Corollary \ref{flatstripcontrol}, applied to 
$\tau$ which is splittable to $\hat\tau,\tau^\prime$ 
and to $\eta$ which is splittable to $\hat \eta,\eta^\prime$, and 
Corollary \ref{induced}.

In the case that $i_0(\tau)<\ell,i_0(\eta)<n$
let $i_0=\min\{i_0(\tau),i_0(\eta)\}$ and assume that
$i_0=i_0(\tau)$.
We claim that
the distance between $\gamma(\tau,\tau^\prime)(i_0)$ and 
$\gamma(\eta,\eta^\prime)(i_0)$ is bounded from above
by a constant only depending on $R$.
 
Namely,
by the definition of $i_0(\tau)$, 
the train track $\gamma(\tau,\tau^\prime)(i_0)$ contains a 
recurrent subtrack $\zeta_1\in E(\sigma,\sigma^\prime)$.
Since by assumption a splitting sequence connecting
$\tau$ to $\tau^\prime$ does not include a split at a 
large branch which is disjoint from $\sigma$, 
the distance between $\gamma(\tau,\tau^\prime)(i_0)$ and 
the train track $\hat \tau$ obtained from $\tau$ by a sequence
induced by a splitting sequence connecting $\sigma$ to 
$\zeta_1$ is uniformly bounded. Note that
$\hat\tau\in E(\tau,\gamma(\tau,\tau^\prime)(i_0))$ by the
definition of an induced sequence.
By Corollary \ref{flatstripcontrol}, 
applied to the
train tracks $\tau,\hat\tau,\gamma(\tau,\tau^\prime)(i_0)\in
E(\tau,\tau^\prime)$, 
if $i_1\geq 0$ denotes
the length of the balanced splitting path
$\gamma(\tau,\hat\tau)$ then $\vert i_1-i_0\vert$ is 
uniformly bounded. Moreover, there is a constant $\chi_2>0$ 
such that \begin{equation}\label{chi2}
d(\gamma(\tau,\tau^\prime)(i),\gamma(\tau,\hat\tau)(i))\leq \chi_2
\text{ for all }i\leq \min\{i_0,i_1\}.\end{equation}

Let $i_2\geq 0$ be the length
of the balanced splitting path connecting
$\sigma$ to $\zeta_1$. The first step in the proof of 
Lemma \ref{doubleinduced}, 
applied to $\tau,\hat\tau,\gamma(\tau,\tau^\prime)(i_0)$ and their 
recurrent subtracks $\sigma,\zeta_1$, shows
that $i_2\leq \min\{i_0,i_1\}$. Moreover, $i_1-i_2$ is uniformly bounded.
The estimate (\ref{chi2}) then implies that 
there is a universal
constant $\chi_3>0$ such that
\begin{equation}\label{chi3}
d(\gamma(\tau,\tau^\prime)(i_0),\gamma(\tau,\hat\tau)(i_2))\leq \chi_3,
\,d(\gamma(\tau,\hat\tau)(i_2),\hat\tau)\leq \chi_3.
\end{equation}

Let $\hat \eta$ be the endpoint of a sequence issuing from $\eta$
which is induced by a splitting sequence connecting $\sigma$
to $\zeta_1$. By Corollary \ref{inducing10}, the distance
between $\hat\tau,\hat \eta$ is bounded from above by
a constant $p(R)>0$ only depending on $R$.
Corollary \ref{induced} shows that there is a constant
$\chi_4(R)>0$ only depending on $R$ such that
\begin{equation}\label{chi4}
d(\gamma(\tau,\hat\tau)(i_2),\gamma(\eta,\hat \eta)(i_2))\leq \chi_4(R).
\end{equation}
Since $d(\gamma(\tau,\hat\tau)(i_2),\hat \tau)\leq \chi_3$
by the estimate (\ref{chi3}),
this implies that 
\begin{equation}\label{shortinduced}
d(\gamma(\eta,\hat\eta)(i_2),\hat \eta)\leq \chi_4(R)+\chi_3+p(R).
\end{equation}
In particular, if $\nu_2\in E(\sigma,\zeta_1)$ is a recurrent subtrack
of $\gamma(\eta,\hat\eta)(i_3)$ then 
the length of a splitting sequence connecting
$\nu_2$ to $\zeta_1$ is bounded by a number only depending on $R$.

On the other hand, since $i_2\leq i_0$ and since
$\hat \eta$ is splittable to $\eta^\prime$,
Lemma \ref{basicbalanced} shows that 
$\gamma(\eta,\hat\eta)(i_2)$ is splittable
to $\gamma(\eta,\eta^\prime)(i_0)$. Therefore the recurrent subtrack
$\nu_2\in E(\sigma,\zeta_1)$ of $\gamma(\eta,\hat\eta)(i_2)$ is
splittable to a recurrent subtrack $\zeta_2\in E(\sigma,\sigma^\prime)$
of $\gamma(\eta,\eta^\prime)(i_0)$.
Reversing the roles of $\tau,\tau^\prime$ and $\eta,\eta^\prime$
(which is possible since so far we only used the
fact that both $\gamma(\tau,\tau^\prime)(i_0)$ and
$\gamma(\eta,\eta^\prime)(u_0)$ contain a recurrent subtrack
in $E(\sigma,\sigma^\prime)$)
yields a train track $\nu_1\in E(\sigma,\sigma^\prime)$ 
which is splittable to $\zeta_2$ with
a splitting sequence of uniformly bounded length and which is
splittable to $\zeta_1$. 

Lemma 5.4 of \cite{H09} (which is valid for train tracks
which are not necessarily complete) 
shows that there is a unique 
train track $\nu=\Theta_-(\zeta_1,\zeta_2)\in 
E(\nu_2,\sigma^\prime)\cap E(\nu_1,\sigma^\prime)
\subset E(\sigma,\sigma^\prime)$ 
which is splittable to both 
$\zeta_1,\zeta_2$
and such that there is a geodesic in 
$E(\sigma,\sigma^\prime)$
connecting $\zeta_1$ to $\zeta_2$ which passes through 
$\nu$. Since $\nu_1,\nu_2$ are both
splittable to $\zeta_1,\zeta_2$, the length of a splitting sequence
connecting $\nu$ to $\zeta_1,\zeta_2$ does not exceed the length
of a splitting sequence connecting $\nu_2$ to 
$\zeta_1$, $\nu_1$ to $\zeta_2$. 
In particular, this length is bounded from above by a number
only depending on $R$. Moreover, both $\nu_1,\nu_2$ are
splittable to $\nu$ with a splitting sequence of uniformly bounded
length.

As a consequence,
there is a number $\chi_5(R)>0$ only
depending on $R$ such that 
\begin{equation}\label{chi5}
d(\hat\eta,\gamma(\eta,\eta^\prime)(i_0))\leq \chi_5(R).
\end{equation}
The estimates 
(\ref{chi5},\ref{shortinduced},\ref{chi4},\ref{chi3}) then show that
\[d(\gamma(\tau,\tau^\prime)(i_0),
\gamma(\eta,\eta^\prime)(i_0))\leq \chi_5(R)+2\chi_4(R)+2\chi_3+p(R)\]
and hence 
the distance between $\gamma(\tau,\tau^\prime)(i_0+1)$ and
$\gamma(\eta,\eta^\prime)(i_0+1)$ is bounded from above by a constant
only depending on $R$.
Now by the choice of $i_0+1$, the
train track $\gamma(\tau,\tau^\prime)(i_0+1)$ does not
contain a recurrent subtrack which can
be obtained from $\sigma$ by a splitting sequence.
Therefore a minimal geodesic lamination which
is carried by both $\gamma(\tau,\tau^\prime)(i_0+1)$
and $\gamma(\eta,\eta^\prime)(i_0+1)$ and which is not a simple
closed curve 
is carried by a subtrack of $\gamma(\tau,\tau^\prime)(i_0+1)$
with fewer than $k$ branches (compare the discussion in the
proof of Lemma \ref{projection2}).
The lemma now follows from the induction hypothesis, applied
to $\gamma(\tau,\tau^\prime)(i_0),\tau^\prime$ and
$\gamma(\eta,\eta^\prime)(i_0),\eta^\prime$.
\end{proof}

Finally we are ready to complete the proof of 
Theorem \ref{flatconfig3}. We have to show the
existence of a number $L>1$ such that for any
complete train tracks $\tau,\eta\in 
{\cal V}({\cal T\cal T})$ which are splittable
to complete train tracks $\tau^\prime,\eta^\prime$ the balanced
splitting paths $\gamma(\tau,\tau^\prime),
\gamma(\eta,\eta^\prime)$ are weight-$L$ fellow
travellers. 

For this observe first that 
it suffices to consider the particular
case that $\tau,\eta$ are in standard form for 
some marking of $S$. Namely, let $\lambda$ be a complete
geodesic lamination carried by $\tau^\prime$. By invariance
under the action of the mapping class group and
cocompactness, there is a number $\chi_0>0$ 
and there is a train track $\tau_1$ in standard form for
a marking $F$ of $S$ which carries $\lambda$ and such that
\[d(\tau,\tau_1)\leq \chi_0.\]
By Lemma 6.7 and Proposition A.6 of \cite{H09},
$\tau_1$ is splittable to a train track $\tau_1^\prime$
in a uniformly bounded neighborhood of $\tau^\prime$
which carries $\lambda$.
Corollary \ref{near} then shows that
the balanced splitting paths 
$\gamma(\tau,\tau^\prime),\gamma(\tau_1,\tau_1^\prime)$
are uniform fellow travellers.
Hence indeed we may
assume that $\tau,\eta$ are in standard
form for a marking of $S$.

Thus let 
$\tau,\eta$ be complete train tracks in standard form
for markings $F,G$ of $S$. 
Assume that $\tau$ is splittable 
to a complete train track $\tau^\prime$. By Proposition \ref{density}
and by Proposition A.6 of \cite{H09}, there is a universal constant
$\chi_1>0$ and 
there is a train track $\eta_0$ with 
\[d(\eta,\eta_0)\leq \chi_1\] which can be obtained
from a train track in standard form for $F$ by a splitting 
sequence. 
Using the notations from Section 5, 
let $\beta=\Pi_{E(\tau,\tau^\prime)}(\eta_0)
\in E(\tau,\tau^\prime)$ 
be as in Proposition \ref{shortestdistance}.
Let 
$\lambda$ be any complete geodesic lamination
carried by $\tau^\prime$.
By Proposition \ref{shortestdistance}, 
there is a universal number $\kappa>0$ and there are
train tracks 
$\eta_1^\prime\in {\cal V}({\cal T\cal T})$ with 
$d(\tau^\prime,\eta_1^\prime)\leq \kappa$ 
and  $\eta_1\in {\cal V}({\cal T\cal T})$ with 
$d(\eta_0,\eta_1)\leq \kappa$ and hence
$d(\eta,\eta_1)\leq \chi_1+\kappa$ 
with the following properties.
\begin{enumerate}
\item $\eta_1^\prime$ carries $\lambda$.
\item  $\eta_1$ can be connected
to $\eta_1^\prime$ by a splitting sequence.
\item There is a train track $\beta^\prime\in 
E(\eta_1,\eta_1^\prime)$ with 
$d(\beta^\prime,\beta)\leq \kappa$.
\item There is a splitting sequence connecting
a point in the $\kappa$-neighborhood of 
$\tau$ to a point in the $\kappa$-neighborhood of
$\eta_1$ which passes through the $\kappa$-neighborhood of 
$\beta$.
\end{enumerate}

We begin with showing 
that there is a universal number $L_{11}>0$ 
not depending on $\tau,\eta_1,\tau^\prime,\eta_1^\prime$ 
such that the balanced splitting paths 
$\gamma(\tau,\tau^\prime),
\gamma(\eta_1,\eta_1^\prime)$ are weight-$L_{11}$ fellow
travellers.

Since by Theorem \ref{cubicaleuclid} and the remark thereafter 
splitting sequences are uniform quasi-geodesics
in ${\cal T\cal T}$, 
by the third and the forth property above there is a 
universal constant $a>0$ such that
\[d(\tau,\eta)\geq a(d(\tau,\beta)+
d(\beta^{\prime},\eta_1))-1/a.\]
Together with Corollary \ref{flatstripcontrol}, 
applied to the train tracks $\tau,\beta$ which
are splittable to $\tau^\prime$ and to the train tracks
$\eta_1,\beta^\prime$ which are splittable to 
$\eta_1^\prime$,
it now suffices to show that the
balanced splitting paths 
$\gamma(\beta,\tau^\prime),\gamma(\beta^\prime,\eta_1^\prime)$ are 
uniform fellow travellers.
However, since $\tau^\prime,\eta_1^\prime$ both carry
$\lambda$ and since $d(\beta,\beta^\prime)\leq \kappa,
d(\tau^\prime,\eta_1^\prime)\leq \kappa$ 
this follows from Corollary \ref{near}.

We are left with showing that the balanced splitting
paths $\gamma(\eta_1,\eta_1^\prime)$ and 
$\gamma(\eta,\eta^\prime)$ are weight-$L$-fellow
travellers for a universal constant $L>0$.
For this recall that 
$d(\eta,\eta_1)\leq \kappa+\chi_1$.
Choose complete geodesic laminations $\nu,\lambda$
which are carried by $\eta^\prime,\eta_1^\prime$.
By invariance under the action of the mapping class
group and cocompactness, 
there is a marking $F$ of $S$ such that the
distance between $\eta,\eta_1$ 
and any train track in standard form for $F$ is 
uniformly bounded. Let 
$\xi_1,\xi_2$ be train tracks in
standard form for $F$ which carry the complete geodesic
laminations $\nu,\lambda$.
By Lemma 6.7 and Proposition A.6 of \cite{H09},
$\xi_1,\xi_2$ are splittable to 
complete train tracks 
$\xi_1^{\prime}$ and $\xi_2^{\prime}$ which
carry $\nu,\lambda$ and which are contained in a uniformly
bounded neighborhood of $\eta^\prime,\eta_1^\prime$.
By Corollary \ref{near}, 
the balanced splitting paths
$\gamma(\eta,\eta^\prime),\gamma(\xi_1,\xi_1^\prime)$ and
$\gamma(\eta_1,\eta_1^\prime),\gamma(\xi_2,\xi_2^\prime)$
are uniform fellow travellers. Thus it suffices
to assume that $\eta,\eta_1$ are in standard form
for the same marking of $S$.
However, Lemma \ref{onesidedcomb} shows that in this case
the balanced splitting paths 
$\gamma(\eta,\eta^{\prime})$ and
$\gamma(\eta_1,\eta_1^{\prime})$ are
weight-$L_{10}$ fellow travellers.

Together  we conclude that the
balanced splitting paths 
\[\gamma(\tau,\tau^\prime) \text{ and }
\gamma(\eta,\eta^\prime)\] 
are weight-$L$-fellow 
travellers for a universal constant $L>0$. 
This completes the proof of 
Theorem \ref{flatconfig3}.

\section{A biautomatic structure}

In this section we show Theorem \ref{thm1} from the introduction.
Our strategy is to use balanced splitting paths
to construct a regular path system on ${\cal T\cal T}$ 
(or, rather, on an ${\cal M\cal C\cal G}(S)$-graph
obtained from ${\cal T\cal T}$ by adding some edges)
in the sense of \cite{S06}.

For this we first add three families 
of edges to ${\cal T\cal T}$.
The purpose of the edges in the different families is
distinct, and to keep track of this purpose
we color the edge in each of the families
with a fixed color (red, yellow, green).
We then can talk about a colored graph.

For every marking $F$ of $S$, 
connect any two train tracks in standard form for $F$ by
a yellow edge. The resulting graph $G_1$ is of finite
valence and contains ${\cal T\cal T}$ as a subgraph. 
The mapping class group acts properly and cocompactly on $G_1$.
Note that the stabilizer 
in ${\cal M\cal C\cal G}(S)$ of a fixed
train track in standard form for $F$ is 
a subgroup of the stabilizer of the marking $F$ which in turn
is a finite subgroup of  
${\cal M\cal C\cal G}(S)$.

If $\eta$ can be obtained from $\tau$ by a move as
described in Section 6 then add a green edge to the graph $G_1$
which connects $\tau$ to $\eta$.
As before, since $\eta$ can
be obtained from $\tau$ by a splitting sequence of uniformly
bounded length, the resulting extension
$G_2$ of $G_1$ is of finite valence,
and the mapping class group acts cocompactly on this graph.

By Proposition \ref{density} and Proposition A.6 of \cite{H09},
there is a number $\chi>0$ and for every 
marking $F$ of $S$ and every train track $\eta\in 
{\cal V}({\cal T\cal T})$ there is a train track 
$\eta^\prime\in {\cal V}({\cal T\cal T})$ 
which can be obtained from a train track
in standard form for $F$ by a splitting sequence and
such that $d(\eta,\eta^\prime)\leq \chi$.
For any two vertices $\eta,\eta^\prime\in {\cal T\cal T}$ 
with $d(\eta,\eta^\prime)\leq \chi$ 
add a red edge to $G_2$ which connects $\eta$ to $\eta^\prime$.
The resulting colored graph $X$ 
is locally finite
and admits a properly discontinuous
cocompact simplicial action by
${\cal M\cal C\cal G}(S)$. In particular, the inclusion
${\cal T\cal T}\to X$ defines a ${\cal M\cal C\cal G}(S)$-equivariant
quasi-isometry between ${\cal T\cal T}$ and $X$.

Following \cite{S06}, 
a \emph{path} in $X$ is a finite sequence
$e_1,\dots,e_n$ of oriented edges in $X$ such that
for each $i$ the terminal point of $e_i$ equals
the initial point of $e_{i+1}$. 
The mapping class group naturally acts on 
paths in $X$. A ${\cal M\cal C\cal G}(S)$-invariant
set of paths in $X$ is called a 
${\cal M\cal C\cal G}(S)$-invariant 
\emph{path system} ${\cal P}$ 
on $X$. 

Fix a marking $F$ of $S$ and let $\tau$ be a train track
in standard form for $S$. Define a 
${\cal M\cal C\cal G}(S)$-invariant path system ${\cal P}$
on $X$ to consist of all paths $\gamma=(e_1,\dots,e_n)$
with the following properties.
\begin{enumerate}
\item $e_1$ is a yellow edge connecting a train track
$\tau^\prime=g\tau$ for some $g\in {\cal M\cal C\cal G}(S)$
to a (not necessarily different) train track 
$\tau^{\prime\prime}$ in standard form for $gF$.
\item Each of the edges $e_2,\dots,e_{n-1}$ is green, and
the concatenation of these edges defines a 
balanced splitting path connecting the terminal 
point of $e_1$ (a train track in standard form for $F$)
to the terminal point of $e_{n-1}$.
\item The edge $e_n$ is red and connects the terminal
point of $e_{n-1}$ to a train track in the orbit
${\cal M\cal C\cal G}(S)\tau$ of $\tau$.
\end{enumerate}

By construction, each path in ${\cal P}$
starts and terminates at vertices from the orbit
${\cal M\cal C\cal G}(S)\tau$, and for any
two points in ${\cal M\cal C\cal G}(S)\tau$ there is
a path in ${\cal P}$ connecting these two points.
Following \cite{S06}, this means that
${\cal P}$ is \emph{transitive} on 
${\cal M\cal C\cal G}(S)\tau$.
Moreover, ${\cal P}$ is ${\cal M\cal C\cal G}(S)$-invariant.
By Theorem \ref{flatconfig3}, the system ${\cal P}$
satisfies moreover the
\emph{2-sided fellow traveller property}:
There is a constant $c>0$ such that
\[d(\gamma_1(j),\gamma_2(j))\leq 
c(d(\gamma_1(0),\gamma_2(0))+
d(\gamma_1(\infty),\gamma_2(\infty)))+c\text{ for all }j\]
where as before, we view a path as an eventually constant map
defined on $\mathbb{N}$.

Theorem \ref{thm1} from the 
introduction is now an immediate consequence of 
Theorem 1.1 of \cite{S06} and the following

\begin{proposition}\label{regular}
The path system ${\cal P}$ is regular.
\end{proposition}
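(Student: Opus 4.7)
The plan is to verify the hypotheses of Shapiro's criterion \cite{S06} for a regular path system. By construction ${\cal P}$ is ${\cal M\cal C\cal G}(S)$-equivariant and transitive on the orbit ${\cal M\cal C\cal G}(S)\tau$, and Theorem \ref{flatconfig3} supplies the two-sided fellow traveller property already quoted before the statement of the proposition. What remains is to describe ${\cal P}$ as the language accepted by a finite-state automaton reading the edge labels of $X$.

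First, I would use that ${\cal M\cal C\cal G}(S)$ acts properly, cocompactly, and by simplicial isometries on $X$ with finite vertex stabilizers, so that there are only finitely many ${\cal M\cal C\cal G}(S)$-orbits of oriented edges; choose a finite alphabet of labels for these orbits and view a path in $X$ as a word. The yellow initial and red final edges of every path in ${\cal P}$ lie in finite sets by definition (their endpoints differ from an orbit vertex by at most $\chi$), so they contribute only a bounded prefix and suffix to any automaton.

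The heart of the argument is to recognize the middle (green) portion as a balanced splitting path. By Lemma \ref{splitadmissible} each $\eta$-move is realized by a splitting sequence of uniformly bounded length $p$, so the green edges group into consecutive blocks of length at most $p$, each representing a single move. By Lemma \ref{unique} the splittable $\eta$-configurations in a train track $\sigma$ are pairwise disjoint reduced trainpaths and reduced circles, and by uniqueness of splitting sequences (Lemma 5.1 of \cite{H09}) the $\eta$-move is uniquely determined by this set of configurations. Thus the automaton must verify at each intermediate vertex $\sigma$ that the next block of green edges realizes the full multi-split at some set of disjoint reduced trainpaths and circles of $\sigma$ and that this set coincides with the set of splittable $\eta$-configurations for some admissible eventual endpoint $\eta\in {\cal M\cal C\cal G}(S)\tau$.

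The main obstacle is that the splittability condition for a configuration is defined relative to the future endpoint $\eta$ and so is a priori nonlocal. The plan is to encode, in the state of the automaton at each intermediate vertex $\sigma$, a bounded amount of ``commitment'' data recording which reduced paths and circles of $\sigma$ will be processed in the next block of up to $p$ green edges and which will not. By Proposition \ref{shortestdistance}, the projection of the eventual endpoint onto cubical Euclidean cones through $\sigma$ is determined up to uniformly bounded error by this commitment data, and together with Theorem \ref{flatconfig3} and cocompactness this confines the pairs $(\sigma,\text{commitment})$ to finitely many ${\cal M\cal C\cal G}(S)$-orbits. The transitions of the automaton will implement the corresponding local rule, its initial states are those reached across a yellow edge from a vertex in ${\cal M\cal C\cal G}(S)\tau$, and its accepting states are those reached immediately after a red edge terminating in ${\cal M\cal C\cal G}(S)\tau$. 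Routine verification then shows that the language of accepted paths is precisely ${\cal P}$, so ${\cal P}$ is regular in the sense of \cite{S06}.
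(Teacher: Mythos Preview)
Your overall strategy is right and you correctly identify the real obstacle: the balanced splitting condition at each step refers to the eventual endpoint $\eta$, so one must replace this global dependence by a bounded amount of local state. However, the proposal never says what the ``commitment data'' actually is, nor what the transition rule does, and it ends with a claimed routine verification that is precisely the content of the proposition. Two specific problems: first, a green edge in $X$ already \emph{is} a single move, not a block of up to $p$ splits, so the state must decide directly which moves are admissible; second, the appeal to Proposition~\ref{shortestdistance} and Theorem~\ref{flatconfig3} to bound the number of orbits of $(\sigma,\text{commitment})$ pairs is both unnecessary and circular. Once commitment data is a bounded label on each large branch, finiteness of ${\cal M\cal C\cal G}(S)$-orbits is immediate from cocompactness; the hard part is not finiteness but correctness of the transition rule.

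The paper fills this gap by giving the state explicitly: a \emph{train track with traffic control} is a complete train track together with a label \{green, red, right, left\} on each large branch. The label \emph{red} means the branch was passed over in a previous move and must never be split (rules 1--2); the labels \emph{right}/\emph{left} are placed on the large branches left over in the image of a just-processed configuration and forbid the $\rho$-split direction there, so that no subsequent move can extend the configuration and violate its maximality (rules 3--5); \emph{green} is the default (rule 6). These six local rules are exactly what is needed so that a directed path in the state graph projects to a balanced splitting path $\gamma(\tau'',\eta)$ for \emph{some} $\eta$, and conversely. The resulting graph $M$ carries an obvious ${\cal M\cal C\cal G}(S)$-action, the forgetful map $M\to X$ is finite-to-one, and this is precisely the automaton over $(X,{\cal M\cal C\cal G}(S))$ required by \cite{S06}. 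Your sketch would become a proof once you write down an equivalent encoding and check that it characterises balanced splitting paths.
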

\begin{proof}
Following p.24 of \cite{S06}, an 
\emph{automaton} is a triple $(M,S_0,S_\Omega)$ 
where $M$ is a directed graph and where
$S_0,S_\Omega$ are subsets of the vertex set $V_M$ of 
$M$ called the sets of \emph{start} and 
\emph{accept} vertices, respectively.
An \emph{automaton over a graph $X$ }
is a quadruple ${\cal M}=(M,m,S_0,S_\Omega)$ 
in which $(M,S_0,S_\Omega)$ is an automaton and
$m:M\to X$ is a simplicial map.
If $G$ is a group of automorphisms of $X$ then
an automaton over $(X,G)$ is an automaton
${\cal M}=(M,m,S_0,S_\Omega)$ over $X$ equipped with
an action of $G$ on $M$ satisfying the following properties.
\begin{enumerate}
\item The sets $S_0$ and $S_\Omega$ are $G$-invariant.
\item The map $m:M\to X$ is $G$-equivariant.
\end{enumerate}

By \cite{S06}, a path system in a graph $X$ is called
regular if there is a finite-to-one automaton
${\cal M}$ over $(X,G)$ such that ${\cal P}={\cal P}({\cal M})$.

We construct an automaton ${\cal M}$ over the graph $X$ 
defined in the beginning of this section as follows.
Define a \emph{train track with traffic control}
to be a complete train track equipped with a labeling
of each large branch with one of four labels
green, red, right, left. 
The set $V_{M}$ of vertices of the graph $M$  
is a union $V_{M}=V_T\cup V_0\cup V_\Omega$ 
where $V_T$ is the set of all train tracks with traffic
control. The set $V_0$ is the set of start vertices,
and the set $V_\Omega$ is the set of accept vertices
which are given as follows.
Let $\Gamma<{\cal M\cal C\cal G}(S)$ 
be the stabilizer of the fixed train track $\tau$ in 
standard form for the marking $F$.
For each $g\in {\cal M\cal C\cal G}(S)/\Gamma$ there is 
an additional vertex $v_0(g\Gamma)\in V_0$ and 
a vertex $v_\Omega(g\Gamma)\in V_\Omega$. 

The directed edges of the graph $M$ are determined
as follows. Every vertex $v_0(g\Gamma)\in V_0$ 
determines a train track $g\tau\in {\cal V}({\cal T\cal T})$ 
in the ${\cal M\cal C\cal G}(S)$-orbit of $\tau$.
The train track $g\tau$ is in standard form for the
marking $gF$. For every train track $\eta$ in standard form
for $gF$ 
connect $v_0(g\Gamma)$ to the train track with traffic control
which is just $\eta$ with all large branches labeled green.
Note that the number of directed edges issuing from 
$v_0(g\Gamma)$ is uniformly bounded.

A train track $[\xi]$ with traffic control is connected
to a train track $[\eta]$ with traffic control by a directed 
edge if 
the train track $\eta$ underlying $[\eta]$ 
can be obtained from the train track $\xi$ 
underlying $[\xi]$ by a move
and if the traffic controls of $[\xi],[\eta]$ satisfy the 
following compability conditions.
\begin{enumerate}
\item 
If a large branch $e$ of 
$\xi$ is labeled red in $[\xi]$ then we require that
a splitting sequence
connecting $\xi$ to $\eta$ does not include 
a split at $e$. Then $\phi(\xi,\eta)(e)$ is a large
branch of $\eta$. We require that the label of this
large branch in the train track with traffic control
$[\eta]$ is red.
\item If $e$ is a large branch in $\xi$ labeled
green, right or left in $[\xi]$ and if 
a splitting sequence connecting $\xi$ to $\eta$
does not include a split at $e$ then 
$\phi(\xi,\eta)(e)$ is a large branch of $\eta$. We
require that the label of this large branch 
$\phi(\xi,\eta)(e)$ in $[\eta]$ is red.
\item Let $e$ be a large branch in $\xi$ whose 
label in $[\xi]$ is right
(or left).   
If a splitting sequence connecting
$\xi$ to $\eta$ includes a split at $e$ then
we require that
this split is a right (or left) split. 
\item 
Assume that $\eta$ can be obtained from $\xi$ by a 
move and let $\rho:[0,n]\to \xi$ be 
a splittable $\eta$-configuration which 
is a (left or right) reduced trainpath.
Let $e_1,\dots,e_s$ $(s\geq 0)$ be the large branches
of $\eta$ contained in $\phi(\xi,\eta)(\rho)$.
If for $i\leq s$ the $\phi(\xi,\eta)(\rho)$-split 
at $e_i$ is a right 
(or left) split then we require that the label
in $[\eta]$ of the large 
branch $e_i$ is left (or right).
\item If $\rho$ is a splittable $\eta$-circle $c$ and
if the train track $\xi^\prime$ obtained from 
$\xi$ by a $\rho-\eta$-multi split is distinct from
$\theta^{\pm}_c(\xi)$ then there are large branches
$e_1,\dots,e_s$ in $\xi^\prime\cap c$ such that
for each $i$ the train track obtained from
$\xi^\prime$ by a right (or left) split at $e_i$ is splittable
to $\theta_c^{\pm}(\xi)$. The branch $\phi(\xi^\prime,\eta)(e_i)$ 
in $\eta$ is
large in $[\eta]$, and we require that the label of this branch
in $[\eta]$ is left (or right). 
\item 
If $\eta$ is obtained from $\tau$ by a move and if
the label of a large branch $e$ of $\eta$ is not determined from 
$\tau$ by one of the five rules above 
then we require that the
label of $e$ is green.
\end{enumerate}

This list of requirements determines for every train track $[\xi]$ 
with traffic control a uniformly bounded number of directed
edges issuing from $[\xi]$. 
Finally, if $[\zeta]$ is any train track with traffic control
and if $g\in {\cal M\cal C\cal G}(S)$ is such that
$d(\zeta,g\tau)\leq \chi$ then we require that there
is a directed edge connecting $[\zeta]$ to $v_\Omega(g\Gamma)$.

The action of the mapping class group on 
${\cal V}({\cal T\cal T})$ induces an action on the graph
$M$. Moreover, there is a finite-to-one 
${\cal M\cal C\cal G}(S)$-equivariant
simplicial map $M\to X$ which maps the set of  
directed path in $M$ connecting 
a vertex in $V_0$ to a vertex in $V_\Omega$ 
onto the path system ${\cal P}$. This shows that the 
path system ${\cal P}$ is indeed regular and complete the
proof of the proposition.
\end{proof}

{\bf Remark:} The automaton which defines a
biautomatic structure on ${\cal M\cal C\cal G}(S)$ is
completely explicit. Since the number of branches of
a complete train track on a surface $S$ of genus
$g\geq 0$ with $m$ punctures does not exceed 
$18g-18+6m$ \cite{PH92} and since for every $p>0$ the number
of trivalent graphs with $p$ vertices is uniformly
exponential in $p$, the complexity of the automaton
(i.e. the number of its states and the cardinality
of its alphabet) is uniformly exponential in the
complexity of $S$.

\appendix
\section{Train tracks hitting efficiently}

In this appendix we construct complete train tracks with some
specific properties which are used to obtain a geometric control
on the train track complex ${\cal T\cal T}$. First,
define a \emph{bigon track} on $S$ to be an embedded 1-complex on
$S$ which satisfies all the requirements of a train track except
that we allow the existence of complementary bigons. Such a bigon
track is called \emph{maximal} if all complementary components are
either bigons or trigons or once punctured monogons. Recurrence,
transverse recurrence, birecurrence and carrying for bigon tracks
are defined in the same way as they are defined for train tracks.
Any complete train track is a maximal birecurrent bigon track in
this sense. 

A \emph{tangential measure} $\nu$ for a maximal bigon track
$\zeta$ assigns to each branch $b$ of $\zeta$ a nonnegative weight
$\nu(b)\in [0,\infty)$ with the following properties. Each side of
a complementary component of $\zeta$ can be parametrized as a
trainpath $\rho$ on $\zeta$ (here as in the case of
train tracks, a trainpath is a $C^1$-immersion 
$\rho:[0,m]\to \zeta$ which maps each segment $[i,i+1]$ 
onto a branch of $\zeta$).
Denote by $\nu(\rho)$ the sum of the
weights of the branches contained in $\rho[0,m]$ counted with
multiplicities. If $\rho_1,\rho_2$ are the two distinct sides of a
complementary bigon then we require that
$\nu(\rho_1)=\nu(\rho_2)$, and if $\rho_1,\rho_2,\rho_3$ are the
three distinct sides of a complementary trigon then we require
that $\nu(\rho_i)\leq \nu(\rho_{i+1})+\nu(\rho_{i+2})$ (where
indices are taken modulo 3). A bigon track is transversely
recurrent if and only if it admits a tangential measure which is
positive on every branch \cite{PH92}.

A bigon track is called \emph{generic} if all switches are
at most trivalent. A bigon track $\tau$ which is not generic can
be \emph{combed} to a generic bigon track
by successively modifying $\tau$ as shown in Figure H.
By Proposition 1.4.1 of \cite{PH92} (whose proof is also valid
for bigon tracks),
the combing of a recurrent bigon track is recurrent.
However, the combing of a transversely recurrent bigon track
need not be transversely recurrent (see the discussion on
p.41 of \cite{PH92}).

\begin{figure}[ht]
\begin{center}
\psfrag{a1}{$a_1$}
\psfrag{bm}{$b_m$}
\psfrag{c1}{$c_1$}
\psfrag{c2}{$c_2$}
\psfrag{Figure C}{Figure H} 
\includegraphics 
[width=0.7\textwidth] 
{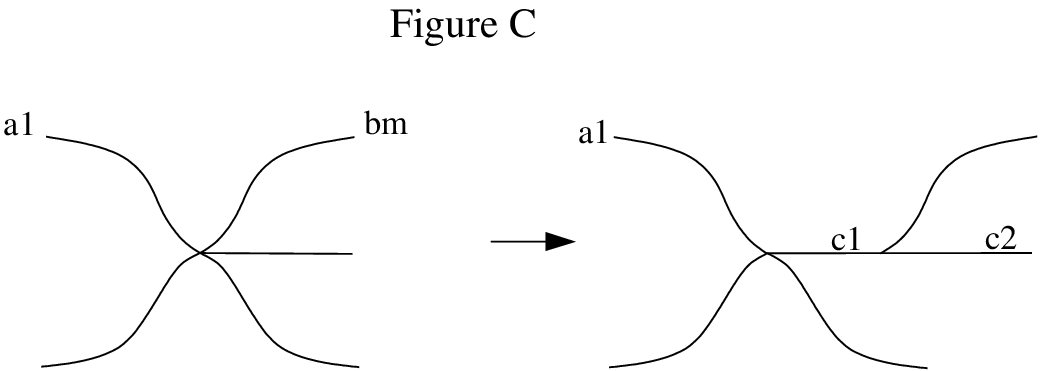}
\end{center}
\end{figure}

The next Lemma gives a criterion for a non-generic
maximal transversely recurrent bigon track to be
combable to a generic maximal transversely recurrent bigon track.
For its formulation, we say that a positive
tangential measure
$\nu$ on a maximal bigon track
$\sigma$ satisfies the \emph{strict triangle inequality
for complementary trigons} if
for every
complementary trigon of $\sigma$ with sides
$e_1,e_2,e_3$ we have $ \nu(e_i)<\nu(e_{i+1})+
\nu(e_{i+2})$. By
Theorem 1.4.3 of \cite{PH92}, a \emph{generic} maximal
train track
is transversely recurrent if and only if it admits a
positive tangential measure satisfying the strict triangle
inequality for complementary trigons.

\begin{lemma}\label{combing} Let $\zeta$ be a maximal bigon track
which admits a positive
tangential measure satisfying the strict triangle
inequality for complementary trigons.
Then $\zeta$ can be combed to a
generic transversely recurrent bigon track.
\end{lemma}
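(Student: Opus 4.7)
The plan is to argue by induction on the non-genericity excess
\[\delta(\zeta)=\sum_v\max\{0,\mathrm{val}(v)-3\},\]
the sum taken over switches $v$ of $\zeta$. When $\delta(\zeta)=0$ the bigon track $\zeta$ is already generic, and the hypothesis together with Theorem 1.4.3 of \cite{PH92} gives transverse recurrence, so the base case is free. For the inductive step I pick a switch $v$ of valence at least four and perform a single combing move as in Figure H, which replaces $v$ by a short chain of trivalent switches joined by new connecting branches $c_1,\dots,c_r$. The resulting bigon track $\zeta'$ is again maximal and satisfies $\delta(\zeta')<\delta(\zeta)$, so the induction hypothesis will apply to $\zeta'$ as soon as I produce a positive tangential measure $\nu'$ on $\zeta'$ satisfying the strict triangle inequality for its complementary trigons.

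To construct $\nu'$, I give each branch of $\zeta'$ inherited from a branch of $\zeta$ its original $\nu$-weight, and each new connecting branch $c_s$ a small weight $\epsilon_s>0$ to be chosen below. Positivity of $\nu'$ is then immediate. To analyze sides of complementary regions of $\zeta'$, I classify them as follows: a side that corresponds to a side of a complementary region of $\zeta$ avoiding $v$ has the same $\nu'$- and $\nu$-length; a side corresponding to a side of $\zeta$ passing through $v$ differs from its $\nu$-length by a sum of some of the $\epsilon_s$, hence by at most $E:=\sum_s\epsilon_s$; and the combing may create new small complementary bigons bounded on both sides by arcs composed of the same $c_s$'s, for which the tangential equality is automatic. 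Thus the tangential axioms on bigons hold for any choice of the $\epsilon_s$.

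The main obstacle is preserving the strict triangle inequality in the trigons of $\zeta'$. Here I would exploit the fact that $\zeta$ has only finitely many complementary trigons, so the strict inequalities
\[\nu(e_i)<\nu(e_{i+1})+\nu(e_{i+2})\]
hold with a uniform positive slack $\delta_0>0$. Since each trigon of $\zeta'$ arises either from a trigon of $\zeta$ with at most three of its sides perturbed by an amount in $[0,E]$, or as a new trigon possibly created by the combing itself whose sides are composed entirely of $c_s$'s together with at most one outgoing half-branch, choosing the $\epsilon_s$ to be comparable and with $E<\delta_0/3$ perturbs each side of any trigon coming from $\zeta$ by less than $\delta_0/3$ and preserves the strict triangle inequality with room. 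For genuinely new trigons introduced by the combing, the sides have total $\nu'$-length of order $E$ and the inequality is easy to verify directly by choosing the $\epsilon_s$ not too unequal. Feeding $(\zeta',\nu')$ into the induction hypothesis then completes the proof.

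The delicate point I expect to have to dwell on is bookkeeping at the switch $v$ itself: depending on the cyclic order of the half-branches $a_1,\dots,a_n$ and $b_1,\dots,b_m$ at $v$, the combing may enlarge one trigon side by several $\epsilon_s$'s while shrinking none, so the strict triangle inequality must be checked in the direction in which the side grew. Controlling $E$ by the gap $\delta_0$ is exactly what makes this bookkeeping work, and it is the only nontrivial input beyond the finiteness of the complementary regions and the local picture of Figure H.
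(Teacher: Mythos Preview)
Your overall strategy --- induct on the excess $\delta(\zeta)$ and carry a positive tangential measure through a single combing move --- is exactly the paper's. The gap is in how you build $\nu'$.

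Two preliminary corrections. A single combing move as in Figure~H slides the endpoint of one branch $b_m$ onto the interior of its neighbour $b_{m-1}$; it introduces exactly one new switch and turns two branches into three, so there is a single new branch, not a chain $c_1,\dots,c_r$. And combing never creates or destroys complementary regions: there is a natural bijection between the complementary components of $\zeta$ and those of $\zeta'$, so your discussion of ``new bigons'' and ``new trigons'' is moot.

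The real problem is the bigon constraint. For a complementary bigon the tangential measure must give the two sides \emph{equal} total weight; this is an exact identity, not a stable inequality, and your sentence ``Thus the tangential axioms on bigons hold for any choice of the $\epsilon_s$'' does not follow from what precedes it. Concretely: with your rule (old branches keep their $\nu$-weight, the new connecting branch gets weight $\epsilon$), the side of the complementary region that passes smoothly through $v$ from $b_m$ to the adjacent incoming branch $a_1$ now traverses the new branch as well and gains weight~$\epsilon$; the same happens to the region on the far side of $b_{m-1}$. If either of these regions is a bigon whose other side misses $v$, the equality fails and $\nu'$ is not a tangential measure at all.

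The paper repairs this by \emph{subtracting} as well as adding. Writing $c_1,c_2$ for the two pieces of the old $b_{m-1}$ (with $c_1$ incident on $v$) and $b_m'$ for the moved $b_m$, one sets
\[
\nu'(c_1)=\epsilon,\qquad \nu'(c_2)=\nu(b_{m-1})-\epsilon,\qquad \nu'(b_m')=\nu(b_m)-\epsilon.
\]
Then every complementary region except the one $T$ wedged between $b_{m-1}$ and $b_m$ has all its side-lengths unchanged (the side through $a_1,c_1,b_m'$ now has weight $\nu(a_1)+\epsilon+(\nu(b_m)-\epsilon)=\nu(a_1)+\nu(b_m)$, and similarly on the other side of $b_{m-1}$). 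For $T$ itself, both incident sides drop by exactly~$\epsilon$: if $T$ is a bigon or once-punctured monogon the constraint is preserved, and if $T$ is a trigon one chooses $\epsilon$ small enough that $\nu(e_3)<\nu(e_1)+\nu(e_2)-2\epsilon$. The argument is not ``perturb and invoke stability'' but a local redistribution that leaves all but one region exactly invariant.
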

\begin{proof} Let $\sigma$ be an arbitrary maximal
bigon track.
Then $\sigma$
does not have any bivalent switches.
For a switch $s$ of $\sigma$ denote the valence of $s$ by
$V(s)$ and define the excessive
total valence ${\cal V}(\sigma)$ of
$\sigma$ to be
$\sum_s (V(s)-3)$ where
the sum is taken over all switches
$s$ of $\sigma$; then ${\cal V}(\sigma)=0$
if and only if $\sigma$ is generic. By induction it is
enough to show that a maximal non-generic
bigon track $\sigma$ which admits
a positive tangential measure $\nu$ satisfying the
strict triangle inequality for complementary trigons can be
combed to a bigon track $\sigma^\prime$ which admits
a positive tangential measure $\nu^\prime$ satisfying the
strict triangle inequality for complementary trigons and such that
${\cal V}(\sigma^\prime)<{\cal V}(\sigma)$.

For this let $\sigma$ be such a non-generic maximal bigon track
with tangential measure $\nu$ satisfying the strict triangle
inequality for complementary trigons and let $s$ be a switch of
$\sigma$ of valence at least $4$. For a fixed orientation
of the tangent line of $\sigma$ at $s$
assume that there are $\ell$
incoming and $m$ outgoing branches where $1\leq \ell\leq m$ and
$\ell +m\geq 4$. We number the incoming branches in
counter-clockwise order $a_1,\dots,a_\ell$ (for the given
orientation of $S$) and do the same for the outgoing branches
$b_1,\dots,b_m$. Then the branches $b_m$ and $a_1$ are contained
in the same side of a complementary component of $\sigma$, and the
branches $b_{m-1}, b_m$ are contained in adjacent (not necessarily
distinct) sides $e_1,e_2$ of a complementary component $T$ of
$\sigma$. Assume first that $T$ is a complementary trigon. Denote
by $e_3$ the third side of $T$; by assumption, the total weight
$\nu(e_3)$ is strictly smaller than $\nu(e_1)+\nu(e_2)$ and
therefore there is a number $\epsilon\in (0,\min\{\nu(b_i)\mid 1\leq
i\leq m\})$ such that $\nu(e_3)<\nu(e_1)+\nu(e_2)-2\epsilon$. 
Move the endpoint of the branch $b_m$ to a point in the interior of
$b_{m-1}$ as shown in Figure H; we obtain a bigon track
$\sigma^\prime$ with ${\cal V}(\sigma^\prime)< {\cal V}(\sigma)$.

The branch $b_{m-1}$ decomposes in $\sigma^\prime$ into the union
of two branches $c_1,c_2$ where $c_1$ is incident on $s$ and on
an endpoint of the image $b_m^\prime$ of $b_m$ under our move.
Assign the weight $\epsilon$ 
to the branch $c_1$, the weight $\nu(b_m)-\epsilon$
to the branch $b_m^\prime$ and the weight 
$\nu(b_{m-1})-\epsilon$ to the
branch $c_2$. The remaining branches of $\sigma^\prime$ inherit
their weight from the tangential measure $\nu$ on $\sigma$. This
defines a positive weight function on the branches of
$\sigma^\prime$. By the choice of $\epsilon$, this weight function 
defines a 
tangential measure on $\sigma^\prime$ which
satisfies the strict triangle inequality for
complementary trigons.

Similarly, if the complementary component $T$ containing
$b_m$ and $b_{m-1}$ in its boundary is
a bigon or a once punctured monogon, then we can shift $b_m$ along
$b_{m-1}$ as before and modify our tangential measure to a
positive tangential measure on the combed track with the desired
properties. \end{proof}

Our next goal is to transform a maximal recurrent bigon
track $\eta$ which admits a positive tangential measure satisfying
the strict triangle inequality for complementary trigons
to a complete train track. This will be done with 
splits, combings and \emph{collapses of embedded bigons},
where by an embedded bigon we mean a bigon $B$ in $\eta$ 
whose boundary does not have self-intersections. 
A collapse of an embedded bigon $B$ 
consists in identifying the two boundary arcs
of $B$ with a map $F:S\to S$ which is homotopic to the identity
and equals the identity
in the complement of an arbitrarily
small neighborhood of $B$. 
This procedure is described more explicitly in the following lemma.

\begin{lemma}\label{collapse}
Let $\eta$ be a maximal bigon track which admits
a positive tangential measure $\nu$ satisfying the strict triangle
inequality for complementary trigons. Let $B\subset\eta$ be
an embedded bigon. Then there is a maximal bigon track
$\tilde \eta$ which 
carries $\eta$ with a carrying
map which equals the identity
outside a small neighborhood of $B$.
The tangential measure $\nu$ on $\eta$ induces 
a positive tangential measure $\tilde \nu$ on $\tilde \eta$ 
satisfying the strict triangle inequality for complementary trigons.
The number of bigons of $\tilde \eta$ is strictly smaller
than the number of bigons of $\eta$.
\end{lemma}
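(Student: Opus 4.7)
The plan is to construct $\tilde\eta$ by collapsing $B$ along a measure-matching identification. Write $\rho_1,\rho_2$ for the two boundary trainpaths of $B$ joining its cusps $c_1,c_2$. Since $B$ is a complementary bigon, the defining property of a tangential measure gives $\nu(\rho_1)=\nu(\rho_2)=L$, so parametrizing each by $\nu$-arclength yields a canonical $\nu$-isometry $\rho_1\to\rho_2$ sending $c_i$ to $c_i$. I would then choose a small tubular neighborhood $U$ of $\overline{B}$ and define $F\colon S\to S$ to be the identity outside $U$ and to collapse $\overline{B}$ onto an arc $\gamma$ from $F(c_1)$ to $F(c_2)$ by identifying $\rho_1(t)$ with $\rho_2(t)$ for every $t\in[0,L]$. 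Set $\tilde\eta=F(\eta)$. Then $F$ is homotopic to the identity, restricts to a homeomorphism $S-B\to S-\gamma$, and defines a $C^1$ map $\eta\to\tilde\eta$ whose differential is nonvanishing on the tangent lines of $\eta$ (it is a diffeomorphism on $\eta-B$ and a $\nu$-isometric diffeomorphism on each $\rho_i$), providing the carrying of $\eta$ by $\tilde\eta$ that equals the identity outside $U$.

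Next I would analyze the local structure of $\tilde\eta$. The arc $\gamma$ inherits switches at the union of the $\nu$-positions of the interior switches of $\eta$ on $\rho_1$ and on $\rho_2$, trivalent generically and tetravalent at a coincidence. At each cusp $c_i$ of $B$, the two small half-branches of $\rho_1,\rho_2$ pointing into $B$ (which are tangent to each other on the cusp side of the tangent line at $c_i$) get identified into a single half-branch collinear in the opposite direction with the outgoing tip half-branch at $c_i$, so $F(c_i)$ becomes a valence two vertex which I would smooth out, declaring $F(c_i)$ to be an interior point of the single branch of $\tilde\eta$ formed by concatenating the tip with the initial segment of $\gamma$. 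The key geometric observation is that in $\eta$ the half-branch of $\rho_i$ at $c_i$ and the tip at $c_i$ are tangent with opposite directions, so the boundary of each region $\hat R$ of $\eta$ adjacent to $B$ at $c_i$ already passes smoothly through $c_i$; the cusp at $c_i$ belongs to $B$ alone. Consequently the smoothing does not alter the cusp count of any region other than $B$, and since $F$ restricts to a homeomorphism $S-B\to S-\gamma$, it induces a cusp- and topological-type-preserving bijection between the complementary regions of $\eta$ distinct from $B$ and the complementary regions of $\tilde\eta$. Thus $\tilde\eta$ is a maximal bigon track and the bigon $B$ is the only bigon removed, proving the last assertion.

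Finally I would define $\tilde\nu$ by letting $\tilde\nu=\nu$ on branches of $\tilde\eta$ lying outside $\gamma$ and $\tilde\nu(\gamma[u,v])=v-u$ for each branch of $\gamma$ between consecutive switch positions $u<v$ in the $\nu$-parametrization; positivity is immediate. For each complementary region $R=F(\hat R)$ of $\tilde\eta$, every side of $R$ is obtained from the corresponding side of $\hat R$ in $\eta$ by replacing each maximal subpath lying in $\rho_1$ or $\rho_2$, possibly extended across a smoothed vertex $F(c_i)$ through the concatenated tip branch, by the $\nu$-isometric subpath in $\gamma$. By additivity of $\nu$ along trainpaths and by the definition of $\tilde\nu$ on $\gamma$, the $\tilde\nu$-measure of each side of $R$ equals the $\nu$-measure of the corresponding side of $\hat R$; hence the bigon equality and the strict triangle inequality for complementary trigons transfer verbatim from $\nu$ to $\tilde\nu$. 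The main obstacle I anticipate is precisely this bookkeeping at the smoothed vertices $F(c_i)$: one must verify that a boundary trainpath of $\tilde\eta$ traversing $F(c_i)$ as an interior point corresponds correctly to a trainpath on $\eta$ passing through the old cusp $c_i$ onto the tip, but this is guaranteed by the $\nu$-isometric matching of $\rho_1$ with $\rho_2$ built into $F$.
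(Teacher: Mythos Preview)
Your proposal is correct and follows essentially the same approach as the paper: both collapse the embedded bigon $B$ by identifying its two sides via the $\nu$-arclength parametrization, then observe that this induces a bijection between complementary regions of $\eta$ distinct from $B$ and complementary regions of $\tilde\eta$ which preserves the tangential measure of each side. Your treatment is in fact more detailed than the paper's in two places---the explicit smoothing of the valence-two vertices $F(c_i)$ and the verification that no side of a region $\hat R\neq B$ has a cusp at $c_i$---both of which the paper leaves implicit.
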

\begin{proof}
Let $\eta$ be a maximal bigon track which admits
a positive tangential measure $\nu$ satisfying the
strict triangle inequality for complementary trigons.
Assume that $\eta$ contains an embedded bigon $B$.

We construct
from $\eta$ and $\nu$ a maximal birecurrent bigon track $\tilde \eta$
which carries $\eta$. Namely,
assume that the side $E$ of the bigon $B$ 
is the image of an embedded trainpath $\rho_E:[0,\ell]\to \eta$
and that 
the second side $F$ of $B$ 
is the image the embedded trainpath $\rho_2:[0,m]\to \eta$.
Assume also that $\rho_1(0)=\rho_2(0)$. 
We collapse the bigon $B$ to a single arc in $S$
with a homeomorphism $\Psi:E\to F$ which is defined as follows. 

If for
some $p\geq 1$, $q\geq 1$  we have 
\[\sum_{j=1}^{q-1}
\nu(\rho_2[j-1,j])< \sum_{i=1}^p 
\nu(\rho_1[i-1,i])<\sum_{j=1}^q
\nu(\rho_2[j-i,j])\] 
then $\Psi$ maps the subarc $\rho_1[0,p]$ of
$E$ homeomorphically onto a subarc of $F$ which contains
$\rho_2[0,q-1]$ and has its endpoint in the interior of the
branch $\rho_2[q-1,q]$. If 
\[\sum_{i=1}^p\nu(\rho_1[i-1,i])=\sum_{j=1}^q
\nu(\rho_2[j-1,j])\] 
then $\Psi$ maps $\rho_1[0,p]$ onto
$\rho_2[0,q]$, i.e. an endpoint of 
$\rho_1[p-1,p]$ is mapped to an
endpoint of $\rho_2[q-1,q]$. 
The resulting bigon track $\tilde \eta$ carries
$\eta$, and it is maximal. 

There is a natural carrying map $\Phi:\eta \to
\tilde \eta$ which maps each complementary trigon of $\eta$ to a
complementary trigon of $\tilde \eta$. The
positive tangential measure $\nu$ on $\eta$ induces a positive
weight function $\tilde \nu$ on the branches of $\tilde \eta$.
Note that the total weight of $\tilde \nu$ is
strictly smaller than the total weight of $\nu$ and that the
$\nu$-weight of a side $\rho$ of a complementary component $T\not=
B$ in $\eta$ coincides with the $\tilde \nu$-weight of the side
$\Phi(\rho)$ of the complementary component
$\Phi(T)$ in $\tilde \eta$.
In particular, the weight function $\tilde \nu$
is a tangential measure on $\tilde \eta$
which satisfies the strict triangle inequality for
complementary trigons.
The number of complementary bigons in $\tilde \eta$ is strictly
smaller than the number of complementary bigons in $\eta$.
More precisely, there is a bijection 
between the complementary bigons of $\tilde \eta$
and the complementary bigons of $\eta$ distinct from $B$.
The image of the bigon $B$ under the map $\Phi$ is an
embedded arc in $\tilde \eta$. The number of branches of
$\tilde \eta$ does not exceed the number of branches of $\eta$.
\end{proof}

We use Lemma \ref{collapse} to show

\begin{lemma}\label{finitecombinatorics}
For every $m>0$ there is a number $h(m)>0$ with the 
following property.
Let $\eta$ be a maximal bigon track with at most $m$ bigon which 
admits a positive tangential measure satisfying the
strict triangle inequality for complementary trigons.
Let $\lambda$ be a complete geodesic lamination carried by $\eta$.
Then there is a complete train track $\eta^\prime$ 
which carries $\lambda$ and which is obtained from $\eta$ by at
most $h(m)$ splits, combings and collapses.
\end{lemma}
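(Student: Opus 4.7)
The plan is to iteratively simplify $\eta$ using combings, collapses, and (if necessary) splits until reaching a generic maximal transversely recurrent train track that still carries $\lambda$. By the characterization of complete train tracks recalled in Section 2 (a generic, maximal, transversely recurrent train track carrying a complete geodesic lamination is automatically complete), such an object is the desired $\eta^\prime$.

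The first observation I would set up is a branch-count bound: any maximal bigon track on $S$ with at most $m$ complementary bigons has a number of branches bounded by a constant $C(m,S)$ depending only on $m$ and the topological type of $S$. Indeed, an Euler characteristic computation forces the number of complementary trigons and once-punctured monogons of any maximal bigon track on $S$ to be equal to the topological constant $4g-4+2p$ (where $p$ is the puncture count of $S$); counting cusps and noting that each generic switch contributes exactly one cusp bounds the switch count, and hence the branch count, by a constant depending only on $m$ and $S$. In particular the excessive total valence $\mathcal{V}$ of any such bigon track is bounded by $C(m,S)$.

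Given this, I would proceed as follows. First, apply Lemma \ref{combing} to $\eta$, obtaining a generic maximal transversely recurrent bigon track $\eta_1$ which still carries $\lambda$ and whose positive tangential measure continues to satisfy the strict triangle inequality for complementary trigons; the number of combings required is at most $\mathcal{V}(\eta)\leq C(m,S)$. If $\eta_1$ has no complementary bigons, then it is already a complete train track carrying $\lambda$ and we stop. Otherwise, locate an embedded bigon $B\subset\eta_1$ and apply Lemma \ref{collapse} to obtain a maximal bigon track $\eta_2$, carrying $\lambda$, admitting a positive tangential measure with strict triangle inequality, and having strictly fewer bigons. Iterating the comb-then-collapse procedure terminates after at most $m$ collapses (since each collapse strictly decreases the bigon count), and in total uses at most $m(C(m,S)+1)$ operations, which provides the desired bound $h(m)$.

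The main obstacle will be ensuring that at each stage an embedded bigon exists to feed into Lemma \ref{collapse}. For this I would argue with an innermost-bigon principle: among the (at most $m$) bigon complementary regions of $\eta_1$, select one whose closure in $S$ is minimal under inclusion among bigon closures. Any tangential self-intersection of the boundary of such a region would bound a strictly smaller bigon complementary region, contradicting minimality, so the chosen bigon is necessarily embedded. In any residual edge case where this argument is delicate — for instance, when two bigon closures overlap in a nontrivial way along a shared boundary arc — an auxiliary $\lambda$-split of $\eta_1$ at a large branch lying on the boundary of a non-embedded bigon reduces matters to the embedded situation while preserving the carrying of $\lambda$ and the tangential measure with strict triangle inequality. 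Because the branch count stays bounded by $C(m,S)$ throughout the process, at most $C(m,S)$ such auxiliary splits are ever required, and this is absorbed in the final constant $h(m)$.
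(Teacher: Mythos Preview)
There is a genuine gap in the ``innermost-bigon'' step. A tangential self-intersection of $\partial B$ does \emph{not} bound a smaller bigon complementary region of $\eta$: since the interior of $B$ is already an embedded disc, any loop created by a self-intersection of $\partial B$ encloses a region on the \emph{outside} of $B$, and that region is a union of complementary components of $\eta$ which may consist entirely of trigons and once-punctured monogons, with no bigon at all. Moreover, distinct complementary bigons have disjoint interiors, so their closures are never properly nested and the inclusion order you propose is vacuous. Thus nothing in your argument forces the existence of an embedded bigon after combing to a generic track.

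Your fallback of performing $\lambda$-splits at large branches on the boundary of a non-embedded bigon is the right idea---it is exactly Step~4 of the paper's algorithm---but two things are missing. First, the bound on the number of splits does not follow from the branch count alone (splits do not change the number of branches); the correct monovariant, used in the paper, is the number of branches in the self-intersection locus of $\partial B$, which strictly decreases under each such split. Second, and more seriously, once all \emph{large} self-intersection branches have been removed, the remaining self-intersections of $\partial B$ sit on branches incident to the cusps of $B$, and these branches are small or mixed, hence cannot be split. The paper's Step~5 handles this residual case: a mixed self-intersection branch at a cusp is removed by a shift, and a small one is handled by a direct collapse along $B$ (with positivity of the tangential measure ruling out the one obstructed configuration). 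Without this analysis your procedure can stall with a non-embedded bigon and no available split, comb, or collapse.
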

\begin{proof}
We modify a bigon track $\eta$ as in the lemma 
in a uniformly
bounded number of steps to a complete
train track $\eta^\prime$ with
a (non-deterministic) algorithm as follows.

The set of input data
for the algorithm is the set ${\cal B}$ of quadruples
$(\eta,\lambda,\nu,B)$
which consist of a
maximal birecurrent bigon track
$\eta$, a complete geodesic lamination $\lambda$ carried by
$\eta$, a positive tangential measure $\nu$ on $\eta$
which satisfies the strict triangle inequality
for complementary trigons and
a complementary bigon $B$ of $\eta$. If $\eta$ does not
have any complementary bigons, i.e. if
$\eta$ is a train track, then we put $B=\emptyset$.
The algorithm
modifies the quadruple $(\eta,\lambda,\nu,B)\in {\cal B}$ to a
quadruple $(\eta^\prime,\lambda,\nu^\prime,B^\prime)\in {\cal B}$
with $B^\prime=\emptyset$
as follows.

{\sl Step 1:}

Let $(\eta,\lambda,\nu,B)\in {\cal B}$ be an input
quadruple with
$B\not=\emptyset$. 
Check whether the boundary $\partial B$ of $B$ is embedded in $\eta$.
If this is not the case then go to Step 2.

Otherwise collapse $B$ with the procedure described in 
Lemma \ref{collapse}. This yields a bigon track 
$\tilde \eta$ which carries $\eta$ and hence $\lambda$
and which admits 
a positive transverse measure $\tilde \nu$
satisfying the strict triangle inequality for complementary
trigons. The number of bigons of $\tilde \eta$ equals the number
of bigons of $\eta$ minus one.

Choose an input quadruple of the form
$(\tilde \eta,\lambda,\tilde\nu,\tilde B)\in {\cal B}$
for a complementary bigon $\tilde B$ of $\tilde \eta$
(or $\tilde B=\emptyset$ if $\tilde \eta$ is a train track)
and repeat Step 1 with this input quadruple.

{\sl Step 2:}

Let $(\eta,\lambda,\nu,B)\in {\cal B}$ be an input quadruple
such that $B\not=\emptyset$ and that the
boundary $\partial B$ of $B$ is
\emph{not}
embedded. Then the sides $E,F$ of $B$
are immersed arcs of class $C^1$ in $S$ which
intersect tangentially or have tangential self-intersections.
Check whether the two cusp points of $B$ meet at the same 
switch of $\eta$. If this
is not the case, continue with Step 3.

Otherwise the two cusps of $B$ meet at a common
switch $s$ of $\eta$ which is necessarily at least
4-valent. By Lemma \ref{combing} and its proof, we can modify $\eta$ with a
sequence of combings near
$s$ to a maximal birecurrent bigon track $\tilde\eta$
in such a way that the two cusps of the
complementary bigon $\tilde B$ in $\tilde \eta$ corresponding
to $B$ under the combing are distinct and such that
the tangential measure $\nu$ on $\eta$
induces a tangential measure $\tilde \nu$
on $\tilde\eta$ which satisfies the strict
triangle inequality for complementary trigons.
The bigon track $\tilde\eta$ carries the complete
geodesic lamination $\lambda$, moreover the number
of its bigons coincides with the number of bigons of $\eta$.
Continue with Step 1 above with the input
quadruple $(\tilde \eta,\lambda,\tilde \nu,\tilde B)\in {\cal B}$.

{\sl Step 3:}

Let $(\eta,\lambda,\nu,B)\in {\cal B}$ be an input quadruple
where $B$ is a bigon in $\eta$ whose boundary
$\partial B$ is not embedded and whose sides $E,F$ 
have distinct endpoints.
Check whether the boundary $\partial B$ of $B$ contains
any isolated self-intersection points.
Such a point is a switch $s$ contained in
the interior of at least two distinct embedded subarcs
$\rho_1,\rho_2$ of
$\partial B$ of class $C^1$ with the additional property
that $(\rho_1-\{s\})\cap (\rho_2-\{s\})=\emptyset$.
If $\partial B$ does not contain such an isolated
self-intersection point
then go to Step 4.
 
Otherwise any such isolated self-intersection
point $s$ is a switch of $\eta$ which is at
least 4-valent. Thus using once more
Lemma \ref{combing}, we can
modify $\eta$ with a sequence of combings
to a complete birecurrent bigon track
$\tilde \eta$ with the property that
all self-intersection points of the boundary
of the bigon $\tilde B$ in $\tilde \eta$
corresponding to $B$
are non-isolated, i.e. they
are contained in a self-intersection branch, and that
the tangential measure $\nu$ on $\eta$
induces a tangential
measure $\tilde \nu$ on $\tilde \eta$ satisfying
the strict triangle inequality for
complementary trigons.
Continue with Step 4 with the input quadruple
$(\tilde \eta,\lambda,\tilde \nu,\tilde B)\in {\cal B}$.

{\sl Step 4:}

Let $(\eta,\lambda,\nu,B)\in {\cal B}$ be an input quadruple
where $B$ is a complementary bigon for $\eta$
whose boundary $\partial B$ does not contain any
isolated self-intersection points, whose
sides have distinct endpoints and such that
$\partial B$ has tangential
self-intersection branches.

Check whether there is a
branch $e$ of $\eta$ contained in the self-intersection
locus of $\partial B$
which is not incident on any one of the two
cusps $s_1,s_2$ of $B$. If there is no such branch continue
with Step 5. 

Otherwise note that 
since the
interior of the bigon $B$ is an embedded topological disc in $S$,
such a self-intersection 
branch $e$ is necessarily a large branch. Now $\eta$
carries $\lambda$ and therefore
there is a bigon track $\tilde \eta$ which is the
image of $\eta$ under a split at $e$ and which carries $\lambda$.
To each complementary region of
$\tilde \eta$ naturally corresponds a complementary
region of $\eta$ of the same
topological type.
In particular, the number of complementary bigons in
$\tilde\eta$ and $\eta$ coincide, and the bigon
$B$ in $\eta$ corresponds to a bigon $\tilde B$ in
$\tilde \eta$.
The bigon track $\tilde \eta$
is recurrent, and it admits
a positive tangential measure $\tilde \nu$ induced
from the measure $\nu$ on $\eta$
which satisfies
the strict triangle inequality for complementary trigons.
The number of branches contained in the self-intersection locus
of the boundary $\partial \tilde B$ of
the bigon $\tilde B$
is strictly smaller than the number of branches in
the self-intersection locus of $\partial B$.

After a number of splits of this kind which is bounded from
above by the number of branches of the bigon track $\eta$
and hence by the number of bigons of $\eta$ 
we obtain from $\eta$ a
bigon track $\eta_1$ which is maximal and birecurrent.
There is a natural bijection from the
set of complementary bigons of $\eta$
onto the set of complementary bigons of $\eta_1$.
If $B_1$ is the bigon of $\eta_1$ corresponding to $B$
then
the self-intersection locus of the boundary
$\partial B_1$ of $B_1$ is a union of branches
which are incident on one of the two
cusps $s_1\not=s_2$ of $B_1$. As before, $\eta_1$ admits
a positive tangential measure $\nu_1$ which satisfies
the strict triangle inequality for complementary trigons
and is induced from $\nu$. Moreover, $\eta_1$ carries 
the complete geodesic lamination $\lambda$.

If the boundary $\partial B_1$ of $B_1$ is embedded
then we proceed
with Step 1 above for the input
quadruple $(\eta_1,\lambda,\nu_1,B_1)$.
Otherwise proceed with Step 5 using again the
input quadruple $(\eta_1,\lambda,\nu_1,B_1)$.

{\sl Step 5:}

Let $(\eta,\lambda,\nu,B)$ be an input quadruple such that
the cusps of $B$ are distinct and that 
there is no branch of self-intersection 
of the boundary $\partial B$ of $B$ 
which is entirely contained in 
the interior of a side of $B$.
Check whether there is a self-intersection
branch $b$ of $\partial B$ which is incident on
a cusp $s$ of $B$. Note that the branch
$b$ can \emph{not} be large, so it is either small
or mixed.

For a small branch $b$, there are two possibilities
which are shown in Figure I. 
\begin{figure}[ht]
\begin{center}
\psfrag{Figure F}{Figure I}
\includegraphics{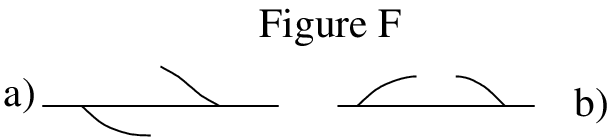}
\end{center}
\end{figure}

Assume first that $b$ is as in the left hand side of 
Figure I. 
The boundary of a 
bigon in a bigon track is the boundary of an embedded disc in $S$
and hence it 
admits a natural orientation
induced from the orientation of $S$. This easily implies that
the small branch $b$ is contained in the intersection
of the two \emph{distinct} sides of $B$.
Since there is no branch in the interior of 
a side of $\partial B$ which is a branch
of self-intersection of $\partial B$, and
since the tangential measure $\nu$ on $\eta$ is
positive by assumption, the construction in
Lemma \ref{collapse} can be used to collapse the bigon
$B$ in $\eta$ to a single simple closed curve.
We obtain a maximal birecurrent bigon
track $\tilde \eta$ which carries $\lambda$ and
admits a positive tangential measure $\tilde \nu$ satisfying
the strict triangle inequality for complementary trigons.
The number of bigons of $\tilde \eta$ is strictly
smaller than the number of bigons of $\eta$.
Choose an arbitrary complementary bigon $\tilde B$
in $\tilde \eta$ or put $\tilde B=\emptyset$ if there
is no such bigon and
continue with Step 1 above and the input
quadruple $(\tilde \eta,\lambda,\tilde \nu,\tilde B)\in {\cal B}$.

A small branch $b$ of self-intersection of $\partial B$ 
as shown on the right hand side of Figure I
can not be collapsed. In this case the
branch $b$ coincides with a side $E$
of the bigon $B$, and the second side $F$ of
$B$ contains $b$ as a proper subarc.
However, this configuration violates the assumption that 
the tangential measure
$\nu$ on $\eta$ is \emph{positive}.

If the branch $b$ is mixed then
$b$ and the cusp $s$ of $B$ are
contained in the interior
of a side $E$ of $B$. The bigon track
$\eta$ can be modified with a single shift to
a maximal birecurrent bigon track $\eta_1$ such
that the switch $s$ is not contained any more
in the interior of a side of the bigon $B_1$
corresponding to $B$ in $\eta$.
The tangential measure $\nu$ on $\eta$
naturally induces a positive tangential
measure $\nu_1$ on $\eta_1$ which satisfies the
strict triangle inequality for complementary trigons.
We now proceed with Step 1 for the input
quadruple $(\eta_1,\lambda,\nu_1,B_1)$.
The algorithm stops if there is no bigon left.
This completes the description of the algorithm.

The algorithm produces a 
not necessarily generic maximal recurrent
train track which admits a positive tangential measure
satisfying the strict triangle inequality for complementary
trigons and hence which can be combed to a complete train track
$\eta^\prime$ carrying $\lambda$. The train track $\eta^\prime$
depends on choices made in the above construction. However,
the number of possibilities in each step is uniformly bounded and
hence $\eta^\prime$ depends on choices among a uniformly
bounded number of possibilities.
The lemma follows.
\end{proof}

A train track $\tau$ on the
surface $S$ \emph{hits efficiently} a train track or a geodesic
lamination $\sigma$ if $\tau$ can be isotoped to a train track
$\tau^\prime$ which intersects $\sigma$ transversely in such a way
that $S-\tau^\prime-\sigma$ does not contain any embedded bigon.
A \emph{splitting and shifting
sequence} is a sequence
$\{\tau_i\}\subset {\cal V}({\cal T\cal T})$
such that for every $i$ the train track $\tau_{i+1}$ can
be obtained from $\tau_i$ by a sequence of shifts and
a single split. Denote by $d$ the distance on ${\cal T\cal T}$.
The following technical result  
relates a train track $\tau$ to train
tracks which hit $\tau$ efficiently. Its proof 
relies on a construction
in Section 3.4 of \cite{PH92} and uses Lemma \ref{finitecombinatorics}.

\begin{proposition}\label{backwards}
There is a number $p>0$ and for every
$\tau\in {\cal V}({\cal T\cal T})$ and every complete geodesic lamination
$\lambda$ which hits $\tau$ efficiently there is a complete train
track $\tau^{*}\in {\cal V}({\cal T\cal T})$
with the following properties.
\begin{enumerate}
\item $d(\tau,\tau^*)\leq p$.
\item $\tau^{*}$ carries $\lambda$.
\item
Let $\sigma\in {\cal V}({\cal T\cal T})$
be a train track which hits $\tau$ efficiently and
carries $\lambda$; then $\tau^{*}$
carries a train track $\sigma^\prime$ which carries
$\lambda$ and can be obtained from $\sigma$ by
a splitting and shifting sequence
of length at most $p$.
\end{enumerate}
\end{proposition}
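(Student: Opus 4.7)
The plan is to use the dual bigon track construction of Penner and Harer (Section 3.4 of \cite{PH92}). Given $\tau\in {\cal V}({\cal T\cal T})$ one associates a maximal birecurrent bigon track $\tau_b^*$ on $S$ whose combinatorial complexity (number of branches, switches, and complementary regions, in particular the number of complementary bigons) is bounded in terms of the topological type of $S$ alone, and which has the key property that any train track or geodesic lamination hitting $\tau$ efficiently is carried by $\tau_b^*$. In particular, both the complete geodesic lamination $\lambda$ and the train track $\sigma$ of the statement are carried by $\tau_b^*$. Moreover, $\tau_b^*$ admits a positive tangential measure satisfying the strict triangle inequality for complementary trigons, so Lemma \ref{combing} and Lemma \ref{collapse} apply to it.

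First I would apply Lemma \ref{finitecombinatorics} to the pair $(\tau_b^*,\lambda)$. This produces a complete train track $\tau^*$ carrying $\lambda$, obtained from $\tau_b^*$ by a sequence of at most $h(m)$ splits, combings and collapses of embedded bigons. Since the number $m$ of bigons of $\tau_b^*$ is bounded by the topology of $S$, the bound $h(m)$ is a universal constant depending only on $S$. For the first requirement $d(\tau,\tau^*)\le p$, the point is that $\tau_b^*$ and $\tau$ are combinatorially linked by local data (complementary regions of one correspond to switches of the other), so any complete train track obtained from $\tau_b^*$ by a uniformly bounded number of combinatorial operations lies in a uniformly bounded neighborhood of $\tau$ in ${\cal T\cal T}$. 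Invariance under ${\cal M\cal C\cal G}(S)$ and cocompactness of the action on ${\cal T\cal T}$ promote this local bound to the required universal estimate.

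The substantive part of the argument, and the step I expect to be the main obstacle, is the third claim: producing $\sigma'$ carried by $\tau^*$ and obtained from $\sigma$ by a splitting and shifting sequence of length at most $p$. My plan is to track $\sigma$ along the sequence of ambient modifications $\tau_b^*=\zeta_0\to\zeta_1\to\cdots\to\zeta_N=\tau^*$ coming from Lemma \ref{finitecombinatorics}, producing inductively at each stage a train track $\sigma_i$ which carries $\lambda$, is carried by $\zeta_i$, and is obtained from $\sigma_{i-1}$ by a splitting and shifting sequence of uniformly bounded length. The easy cases are when $\zeta_{i+1}$ arises from $\zeta_i$ by a combing or a split: combings affect only switches of $\zeta_i$ and so $\sigma_i\prec\zeta_{i+1}$ automatically up to an isotopy, while a split of $\zeta_i$ at a large branch is compatible with the carrying of $\sigma_i$ because $\lambda\prec\sigma_i$ determines a preferred side of the split and $\sigma_i$ may be assumed disjoint from the diagonal of the split after at most one $\lambda$-split in $\sigma_i$ itself.

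The hard case is the collapse of an embedded bigon $B$ of $\zeta_i$. Here the carrying map $\sigma_i\to\zeta_i$ may send several disjoint arcs of $\sigma_i$ onto subarcs of the two sides $E,F$ of $B$, and identifying $E$ with $F$ can introduce tangencies or new complementary bigons in the image. I would resolve this by a bounded case analysis: near $B$ the train track $\sigma_i$ decomposes, up to isotopy, into a uniformly bounded number of strands running along $E$ and $F$, and a bounded sequence of shifts within the regular neighborhood of $B$ combined with at most a bounded number of splits at large branches of $\sigma_i$ carried into $E\cup F$ produces a train track $\sigma_i'$ whose image under the collapse is a generic train track carrying $\lambda$ and carried by $\zeta_{i+1}$. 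The boundedness here is genuine because the combinatorial types of $\zeta_i$ and its bigons, and hence the patterns in which $\sigma_i$ can be carried near $B$, are controlled by the (universally bounded) total number of operations already performed by the algorithm of Lemma \ref{finitecombinatorics}. Iterating gives the required $\sigma'=\sigma_N$; the total length of the splitting and shifting sequence from $\sigma$ to $\sigma'$ is then bounded by a universal constant $p$, completing the proof.
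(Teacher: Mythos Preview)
Your overall strategy matches the paper's: build the dual bigon track $\tau_b^*$, feed it into Lemma~\ref{finitecombinatorics} to produce $\tau^*$, and then track $\sigma$ through the sequence of modifications. The distance bound via ${\cal M\cal C\cal G}(S)$-equivariance and cocompactness is also how the paper argues property~(1).

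However, your case analysis for property~(3) is inverted, and this reflects a genuine misunderstanding of the direction of carrying under a collapse. Look again at Lemma~\ref{collapse}: the collapsed bigon track $\tilde\eta$ \emph{carries} $\eta$, not the other way around. Hence if $\sigma_i\prec\eta$ then automatically $\sigma_i\prec\tilde\eta$, with no modification of $\sigma_i$ whatsoever. The same holds for combings and shifts of the ambient bigon track, since those preserve the set of carried objects. So collapses are the \emph{trivial} case, and your elaborate local analysis near the bigon $B$ is unnecessary. The paper disposes of combings, shifts, and collapses in a single sentence.

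The case that actually requires work is the split. Your claim that ``$\sigma_i$ may be assumed disjoint from the diagonal of the split after at most one $\lambda$-split in $\sigma_i$ itself'' is too optimistic: a carrying map $\sigma_i\to\zeta_i$ can send many branches of $\sigma_i$ across the large branch $e$, and arranging all of them to be compatible with the $\lambda$-split of $\zeta_i$ may require more than a single split of $\sigma_i$. The paper handles this by invoking Lemma~A.3 and Lemma~6.7 of \cite{H09} (noting these are local and so valid for bigon tracks), which give a universal bound $m$ on the length of the splitting and shifting sequence needed to pass from $\sigma_i$ to some $\tilde\sigma$ carried by the split track and still carrying $\lambda$.

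A smaller gap: you assert that $\tau_b^*$ admits a positive tangential measure satisfying the strict triangle inequality for complementary trigons. Transverse recurrence alone does not give the strict inequality; the paper constructs such a measure explicitly, starting from a positive integral transverse measure $\mu$ on $\tau$ with every branch weight at least~$4$, dualizing, and then using the ``sneaking up'' move of \cite{PH92} to push positive mass onto the branches interior to complementary regions of $\tau$. This yields side-weight exactly~$2$ on every trigon side, from which the strict inequality is immediate.
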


\begin{proof} By Lemma 3.4.4 and
Proposition 3.4.5 of \cite{PH92}, for every
complete train track $\tau$ there is a maximal birecurrent
\emph{dual bigon track} $\tau_b^*$ with the following property. A
geodesic lamination or a train track $\sigma$ hits $\tau$
efficiently if and only if $\sigma$ is carried by $\tau_b^*$. We
construct the train track $\tau^{*}$ with the properties stated
in the proposition from
this dual bigon track and a complete geodesic
lamination $\lambda\in {\cal
C\cal L}$ which hits $\tau$ efficiently and hence is carried by
$\tau_b^*$.

For this we recall from p.194 of \cite{PH92} the precise
construction of the dual bigon track $\tau_b^*$ of a complete
train track $\tau$. Namely, for each branch $b$ of $\tau$ choose a
compact embedded arc $b^*$ of class $C^1$ 
meeting $\tau$ transversely in a single point in
the interior of $b$ and such that all these arcs are pairwise
disjoint. Let $T\subset S-\tau$ be a complementary trigon of
$\tau$ and let $E$ be a side of $T$ which is composed of the
branches $b_1,\dots,b_\ell$. Choose a point $x\in T$ and extend
all the arcs $b_1^*,\dots,b_\ell^*$ within $T$ in such a way that
they end at $x$, with the same inward pointing tangents at $x$. In
the case $\ell\geq 2$ we then add an arc of class $C^1$ 
which connects $x$ within
$T$ to a point $x^\prime\in T$ and whose 
inward pointing tangent at $x$
equals the outward pointing tangent at $x$ of the arcs
$b_1^*,\dots,b_\ell^*$. We do this in such a way that the
different configurations from the different sides of $T$ are
disjoint. If $y^\prime\in T$ is the point in $T$ arising in this
way from a second side, then we connect $x^\prime$ (or $x$ if
$\ell=1$) and $y^\prime$ by an embedded arc of class $C^1$ 
whose outward pointing
tangent at $x^\prime,y^\prime$ 
coincides with the inward pointing
tangents of the arcs constructed before which end at
$x^\prime,y^\prime$. In a similar way we construct the
intersection of $\tau_b^*$ with a complementary once punctured
monogon of $\tau$. Note that the resulting graph $\tau_b^*$ is in
general not generic, but its only vertices which are not trivalent
arise from the sides of the complementary components of $\tau$.
Figure J shows the intersection of the dual bigon track $\tau_b^*$
with a neighborhood in $S$ of a complementary trigon of $\tau$ and
with a neighborhood in $S$ of a complementary once punctured
monogon.

\begin{figure}[ht]
\begin{center}
\psfrag{a1}{$a_1$}
\psfrag{bm}{$b_m$}
\psfrag{c1}{$c_1$}
\psfrag{c2}{$c_2$}
\psfrag{Figure D}{Figure J} 
\includegraphics 
[width=0.7\textwidth] 
{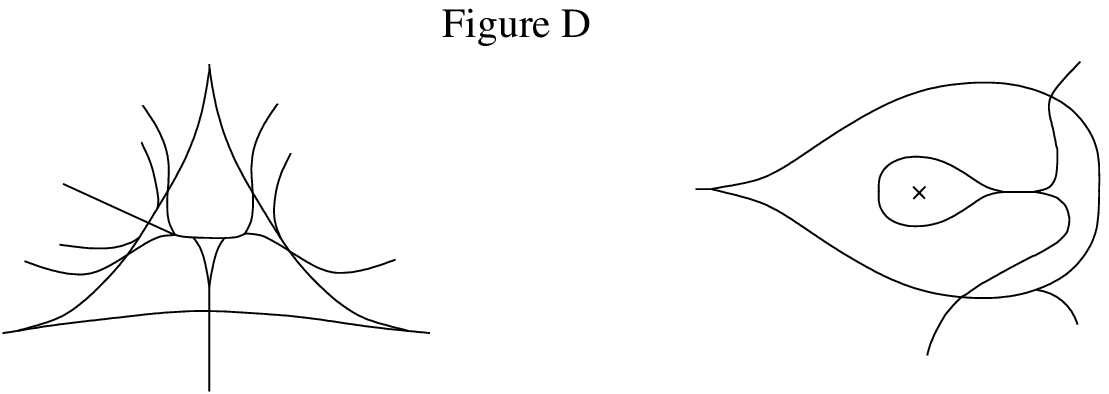}
\end{center}
\end{figure}

Following \cite{PH92}, $\tau_b^*$ is a maximal birecurrent bigon
track, and the number of its branches is bounded from above by a
constant only depending on the topological type of $S$.
Each complementary trigon of
$\tau$ contains exactly one complementary trigon of $\tau_b^*$ in
its interior, and  these are the only complementary trigons. Each
complementary once punctured monogon of $\tau$ contains exactly
one complementary once punctured monogon of $\tau_b^*$ in its
interior. All other complementary components of $\tau_b^*$ are
bigons. The number of these bigons is uniformly bounded.

Now let $\mu$ be a positive integral transverse measure on $\tau$
with the additional property that the $\mu$-weight of every branch
of $\tau$ is at least $4$.
This weight then defines a
\emph{simple multi-curve} $c$ carried by $\tau$
in such a way that $\mu$
is just the counting measure for $c$ (see \cite{PH92}).
Here a simple multi-curve consists of a disjoint
union of essential simple closed curves which
can be realized disjointly; we allow that some of the 
component curves
are freely homotopic. For every
side $\rho$ of a complementary component of $\tau$ there are at
least $4$ connected subarcs of $c$ which are mapped by the natural
carrying map $c\to \tau$ \emph{onto} $\rho$. Namely, the number of
such arcs is just the minimum of the $\mu$-weights of a branch
contained in $\rho$.

Assign to a branch $b^*$ of $\tau_b^*$ which is dual to the branch
$b$ of $\tau$ the weight $\nu(b^*)=\mu(b)$, and to a branch of
$\tau_b^*$ which is contained in the interior of a complementary
region of $\tau$ assign the weight $0$. The resulting weight
function $\nu$ is a tangential measure for $\tau_b^*$, but it is
not positive (this relation between transverse measures
on $\tau$ and tangential measures on $\tau_b^*$ is
discussed in detail in Section 3.4 of \cite{PH92}).
However by construction, every branch of vanishing
$\nu$-mass is contained in the interior of a complementary trigon
or once punctured monogon of $\tau$, and positive mass can be
pushed onto these branches by ``sneaking up'' as described on p.39
and p.200 of \cite{PH92}. Namely, the closed multi-curve $c$ defined by
the positive integral transverse measure $\mu$ on $\tau$ hits the
bigon track
$\tau_b^*$ efficiently. For every branch $b$ of $\tau$
the weight $\nu(b^*)=\mu(b)$ equals the number of
intersections between $b^*$ and $c$.
For each side of a complementary component
$T$ of $\tau$ there are at least $4$ arcs from $c$ which are
mapped by the carrying map onto this side. If the side consists of
more than one branch then we pull two of these arcs into $T$ as
shown in Figure K. If the side consists of a single branch then we
pull a single arc into $T$ in the same way.

\begin{figure}[ht]
\begin{center}
\psfrag{Figure E}{Figure K} 
\includegraphics 
[width=0.7\textwidth] 
{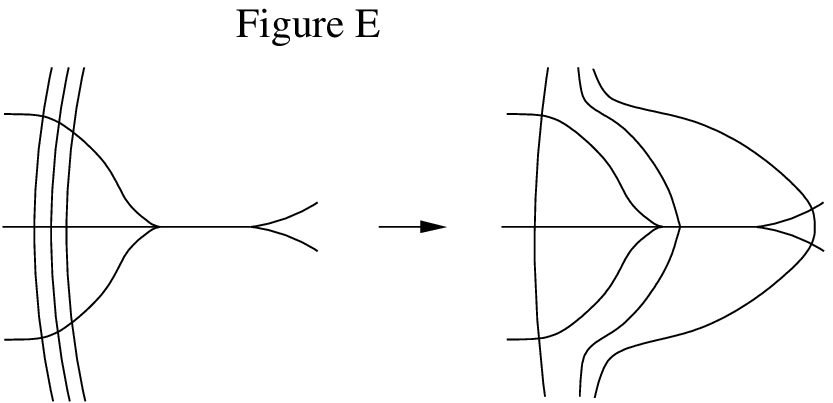}
\end{center}
\end{figure}

For a branch
$e$ of $\tau_b^*$ define $\mu^*(e)$ to be the
number of intersections between $e$ and the deformed
multi-curve. The resulting weight function
$\mu^*$ is
a positive integral tangential measure for $\tau_b^*$.
Note that the weight of each
side of a complementary trigon in $\tau_b^*$ is exactly 2 by
construction, and the weight of a side of a once punctured
monogon is 2 as well. In particular, the tangential measure
$\mu^*$ satisfies the strict triangle inequality for complementary
trigons: If $T$ is any complementary trigon with sides
$e_1,e_2,e_3$ then $\mu^*(e_i)<\mu^*(e_{i+1})+\mu^*(e_{i+2})$
(compare the proof of Lemma \ref{combing}).

Apply the algorithm from the proof 
of Lemma \ref{finitecombinatorics} to the
bigon track $\tau_b^*$, the tangential measure
$\mu^*$ for $\tau_b^*$ constructed from an integral transverse
measure $\mu$ on $\tau$ with $\mu(b)\geq 4$ for every
branch $b$ 
and a complete geodesic lamination $\lambda$ which
hits $\tau$ efficiently and hence is carried
by $\tau_b^*$. Since the number of branches of
$\tau_b^*$ is bounded from above by a constant
only depending on the topological type of
$S$, in at most $p$ steps for a universal number 
$p>0$ the algorithm 
constructs from these data a complete
train track $\tau^*$ which carries $\lambda$.
The train track $\tau^*$ is not unique, and it depends
on $\lambda$ and $\mu$. However,
since the number of steps in the algorithm is 
uniformly bounded and since each step involves
only a uniformly bounded number of choices,
the number of such train tracks which can be obtained
from $\tau_b^*$ by this procedure is uniformly bounded.
Moroever, the algorithm is equivariant with respect to
the action of the mapping class group 
and therefore by invariance under
the action of ${\cal M\cal C\cal G}(S)$, the distance between
$\tau$ and $\tau^*$ is uniformly bounded.
In other words, $\tau^*$ has properties 1) and 2) stated in the
proposition.

To show property 3),
let $\sigma$ be a complete train track on $S$ which hits $\tau$
efficiently and carries a complete geodesic lamination $\lambda\in
{\cal C\cal L}$. Then $\sigma$ is carried by $\tau_b^*$
and hence it is carried by every bigon track which
can be obtained from $\tau_b^*$ by a sequence
of combings, shifts and collapses.
On the other hand, if $\eta_i$ $(0\leq i\leq k)$
are the successive bigon tracks obtained from
an application of the algorithm to
$\eta_0=\tau_b^*$ and if $\eta_i$ is obtained from
$\eta_{i-1}$ by a split at a large branch $e$,
then this split is a $\lambda$-split.
By Lemma A.3 and Lemma 6.7 
of \cite{H09}
(which are local statements and hence they are also
valid for bigon tracks)
there is a universal number $m>0$ such that
if $\sigma$ is carried by $\eta_{i-1}$ and carries
$\lambda$ then there is
a train track $\tilde \sigma$ which
carries $\lambda$, which is carried by $\eta_i$ and which
can be obtained from $\sigma$ by a splitting and shifting
sequence of length at most $m$.
Since the number of splits which occur during the
modification of $\tau_b^*$ to $\tau^*$
is uniformly bounded, this means that
$\tau^*$ also satisfies the third requirement
in the proposition.
This completes the proof of the proposition.
\end{proof}

{\bf Remark:} We call the train
track $\tau^*$ constructed in the
proof of Proposition \ref{backwards} from a complete
train track $\tau$ and a complete geodesic lamination $\lambda$
which hits $\tau$ efficiently
a \emph{$\lambda$-collapse} of $\tau_b^*$. Note
that a $\lambda$-collapse is not unique, 
but the number of different train
tracks which can be obtained from the construction is
bounded by a constant only depending on the topological
type of $S$. In general, a $\lambda$-collaps of $\tau_b^*$ 
is neither carried by the dual bigon
track of $\tau$ nor carries this bigon track.
In particular, in general a $\lambda$-collapse of
$\tau$ does not hit $\tau$ efficiently.

\bigskip

\noindent
MATHEMATISCHES INSTITUT DER UNIVERSIT\"AT BONN\\
ENDENICHER ALLEE 60, D-53115 BONN, GERMANY


\begin{thebibliography}{CEG87}


\bibitem[ALS08]{ALS08} J.~Aramayona, C.~Leininger,
J.~Souto, {\em Injections of mapping class groups},
Geom. Topol. 13 (2009), 2523--2541.





\bibitem[BDS08]{BDS08} J. Behrstock, C. Drutu, M. Sapir, 
{\em Median structures on asymptotic cones and homomorphisms into
mapping class group}, arXiv:0810.0376.


\bibitem[BH73]{BH73} J.~Birman, H.~Hilden, {\em On isotopies
of homeomorphisms of Riemann surfaces}, Ann. Math. 97 (1973),
424-439.

\bibitem[B74]{B74} J.~Birman, {\em Braids, links and mapping class
groups}, Ann. Math. Studies, Princeton Univ. Press,
Princeton 1974. 

\bibitem[BH99]{BH99} M.~Bridson, A.~Haefliger, {\sl Metric
spaces of non-positive curvature}, Springer Grund\-leh\-ren 319,
Springer, Berlin 1999.


\bibitem[BFP07]{BFP07} N.~Braddeus, B.~Farb, A.~Putman,
{\em Irreducible $Sp$-representations and subgroup
distortion in the mapping class group}, 
arXiv:0707.2264, to appear in Comm. Math. Helv.



\bibitem[CEG87]{CEG87} R.~Canary, D.~Epstein, P.~Green,
{\em Notes on notes of Thurston}, in ``Analytical and geometric
aspects of hyperbolic space'', edited by D.~Epstein, London Math.
Soc. Lecture Notes 111, Cambridge University Press, Cambridge 1987.




\bibitem[CB88]{CB88} A.~Casson with S.~Bleiler, {\sl Automorphisms
of surfaces after Nielsen and Thurston}, Cambridge University
Press, Cambridge 1988.

\bibitem[E92]{E92} D.~A.~Epstein, with J.~Cannon, D.~Holt, S.~Levy,
M.~Paterson, W.~Thurston, {\sl Word processing in groups}, Jones
and Bartlett Publ., Boston 1992.



\bibitem[FLM01]{FLM01} B.~Farb, A.~Lubotzky, Y.~Minsky, {\em Rank one phenomena
for mapping class groups}, Duke Math. J. 106 (2001), 581-597.

\bibitem[FLP91]{FLP91} A.~Fathi, F.~Laudenbach, V.~Po\'enaru, {\sl Travaux de
Thurston sur les surfaces,} Ast\'erisque 1991.



\bibitem[H06]{H06} U.~Hamenst\"adt, {\em Train tracks
and the Gromov boundary of the complex of curves}, 
in ``Spaces of Kleinian groups'' (Y.~Minsky, M.~Sakuma,
C.~Series, eds.), London Math. Soc. Lec. Notes 329 (2006),
187--207.

\bibitem[H09]{H09} U.~Hamenst\"adt, {\em Geometry of the
mapping class groups I: Boundary amenability}, 
Invent. Math. 175 (2009), 545--609.






\bibitem[He79]{He79} G.~Hemion, {\it On the classification of homeomorphisms
of $2$-manifolds and the classification of $3$-manifolds},
Acta Math. 142 (1979), 123-155.

\bibitem[I02]{I02} N.~V.~Ivanov, {\sl Mapping class groups},
Chapter 12 in Handbook of Geometric Topology (Editors
R.J.~Daverman and R.B.~Sher), Elsevier Science (2002), 523-633.



\bibitem[Ke83]{Ke83} S.~Kerckhoff, {\em The Nielsen
realization problem}, Ann. Math. 117 (1983), 235--265.





\bibitem[MM00]{MM00} H.~Masur, Y.~Minsky, {\em Geometry of the complex
of curves II: Hierarchical structure}, GAFA 10 (2000), 902-974.

\bibitem[M86]{M86} L.~Mosher, {\em The classification of pseudo-Anosovs},
in ``Low-dimensional to\-po\-logy and Kleinian groups'' (Coventry/Durham
1984), London Math. Soc. Lecture Notes 112, Cambr. Univ. Press,
Cambridge (1986), 13-75.

\bibitem[M95]{M95} L.~Mosher, {\em Mapping class groups are
automatic}, Ann. Math. 142 (1995), 303-384.

\bibitem[M03]{M03} L.~Mosher, {\em Train track expansions of measured
foliations}, unpublished manuscript.



\bibitem[RS07]{RS07} K.~Rafi, S.~Schleimer, {\em Covers and
the curve complex}, Geom. Topol. 13 (2009), 2141--2162.

\bibitem[PH92]{PH92} R.~Penner with J.~Harer, {\sl Combinatorics
of train tracks}, Ann. Math. Studies 125, Princeton University
Press, Princeton 1992.


\bibitem[S06]{S06} J.~\'Swiatkowski,
{\em Regular path systems and (bi)automatic groups},
Geom. Dedicata 118 (2006), 23--48.

\bibitem[JT09]{JT09} J.~Tao, to appear.






\end{thebibliography}
\end{document}